\newcommand\res{\mathop{\hbox{\vrule height 7pt width .5pt depth 0pt
			\vrule height .5pt width 6pt depth 0pt}}\nolimits}
\newcommand\weakto{\rightharpoonup}
\newcommand\eps{\varepsilon}
\newcommand\R{\mathbb{R}}
\newcommand\Q{\mathbb{Q}}
\newcommand\N{\mathbb{N}}
\newcommand\Z{\mathbb{Z}}
\newcommand\Sf{\mathbb{S}}
\newcommand\corA{\mathscr{A}}
\newcommand\calL{\mathcal{L}}
\newcommand\calT{\mathcal{T}}
\newcommand\calH{\mathcal{H}}
\newcommand\calI{\mathcal{I}}
\newcommand\dx{{\mathrm d}x}
\newcommand\dy{{\mathrm d}y}
\newcommand\dt{{\mathrm d}t}
\newcommand\dd{{\mathrm d}}
\newcommand\Functeps{F}
\newcommand\Functminv{E}
\newcommand\Functmins{S}
\newcommand\minprobv{m_{\textup{b}}}
\newcommand\minprobs{m_{\textup{s}}}
\newcommand\homm{_\textup{hom}}
\newcommand\trunca{\alpha}
\newcommand\truncb{\beta}
\newcommand\Fsucc{\widehat{F}}
\newcommand\Bor{\mathscr{B}}
\newcommand\ie{\textit{; i.e., }}
\newtheorem{theorem}{Theorem}[section]
\newtheorem{definition}[theorem]{Definition}
\newtheorem{proposition}[theorem]{Proposition}
\newtheorem{lemma}[theorem]{Lemma}
\newtheorem{remark}[theorem]{Remark}
\newtheorem{corollary}[theorem]{Corollary}
\numberwithin{equation}{section}
\title[Homogenisation of phase-field functionals with linear growth]{Homogenisation of phase-field functionals\\ with linear growth}
\author[F. Colasanto]{Francesco Colasanto} 
\address[Francesco Colasanto]{DiMaI U.\ Dini, Universit\`a di Firenze, V.le G.B. Morgagni 67/A, 50134 Firenze, Italy}
\email[Francesco Colasanto]{francesco.colasanto@unifi.it}
\author[M. Focardi]{Matteo Focardi} 
\address[Matteo Focardi]{DiMaI U.\ Dini, Universit\`a di Firenze, V.le G.B. Morgagni 67/A, 50134 Firenze, Italy}
\email[Matteo Focardi]{matteo.focardi@unifi.it}
\author[C. I. Zeppieri]{Caterina Ida Zeppieri}
\address[Caterina Ida Zeppieri]{Applied Mathematics M\"unster, University of M\"unster\\
	Einsteinstrasse 62, 48149 M\"unster, Germany}
\email{caterina.zeppieri@uni-muenster.de}
\begin{document}
\begin{abstract}
We propose a first rigorous homogenisation procedure in image-segmentation models by analysing   
the relative impact of (possibly random) fine-scale oscillations and phase-field regularisations for a family of elliptic functionals 
of Ambrosio and Tortorelli type, when the regularised volume term grows \emph{linearly} in the gradient variable. 
In contrast to the more classical case of superlinear growth, we show that our functionals homogenise to a free-discontinuity energy whose surface term explicitly depends 
on the jump amplitude of the limit variable. The convergence result as above is obtained under very mild assumptions which allow us to treat, among other, the case of \emph{stationary random integrands}.   
 \end{abstract}

\maketitle


\section{Introduction}
In this paper we study the combined effect of \emph{homogenisation} and \emph{elliptic regularisation} for phase-field functionals of the form
\begin{equation}\label{e:random functional 1}
    \Functeps_{\eps}(u,v,A)=\int_{A}v^2{\textstyle f(\frac{x}{\eps},\nabla u)}\dx +\int_A\Big(\textstyle{\frac{(1-v)^2}{\eps}}+\eps |\nabla v|^2 \Big)\dx,
\end{equation}
where $\eps>0$ describes both the oscillation and the regularisation scale, and $f$ grows \emph{linearly} in the gradient variable.  In \eqref{e:random functional 1} $A \subset \R^n$ is open, bounded, with Lipschitz boundary,  $u$ is a vector-valued function which belongs to $W^{1,1}(A,\R^N)$, while $v$ is a phase-field variable lying in $W^{1,2}(A)$. 

As mentioned above, we require that the integrand $f: \R^n \times \R^{N\times n}\to [0,+\infty)$ obeys \emph{linear} growth and coercivity conditions in the second variable; that is
 \begin{equation}\label{intro:linear}        
    C^{-1}|\xi|\leq f(x,\xi)\leq C(|\xi|+1),
    \end{equation}
for every $(x,\xi)\in \R^n \times \R^{N\times n}$ and for some $C\in (0,+\infty)$. Besides \eqref{intro:linear}, we work under very mild assumptions on $f$ which \emph{do not include any spatial periodicity} (cf.\ Definition \ref{d:Admissible deterministic integrand}). Working in such a general setting allows us to prove a homogenisation result which also covers the case of random \emph{stationary} integrands, as we are going to explain below.   

The elliptic functionals in \eqref{e:random functional 1} are reminiscent of the celebrated phase-field model given by
\begin{equation*}
    AT_{\eps}(u,v,A)=\int_{A}v^2{|\nabla u|}^2\dx +\int_A\Big(\textstyle{\frac{(1-v)^2}{\eps}}+\eps |\nabla v|^2 \Big)\dx,
\end{equation*}
which was proposed by Ambrosio and Tortorelli in the seminal works \cite{AT90,AT} to approximate the (relaxed) Mumford-Shah functional \cite{MumfordShah}. The latter was introduced in the $2$d framework of image segmentation to recover shapes in noisy images via curve evolution. In this setting the  Ambrosio-Tortorelli functional is employed for implementation by gradient descent, where  curves are replaced by a continuous edge-strength function ($1-v$ in our notation) which gives the probability of an object boundary to be present at any point in the image domain. Then, the actual shape boundaries are determined in the form of geodesics defined in a metric determined by $v$ itself (cf. \cite{Shah95,Shah96-d}).

After the revisitation of Griffith's brittle-fracture theory due to Francfort and Marigo \cite{Marigo1998} (see also \cite{Bourdin2000b,Bourdin2008}),
a number of variants of the Ambrosio-Tortorelli model have been proposed and extensively used 
also to approximate brittle fracture models 
\cite{BBZ,BachCicaleseRuf, BMZ, BEZ,CicaleseFocardiZeppieri,Focardi2001},  just to mention few examples.
The advantage of this kind of approximations is twofold: on the one hand they establish a rigorous connection between variational fracture models and gradient-damage models \cite{Pham2010c,Pham2010a}, on the other hand, in most of the cases, they provide efficient algorithms for numerical simulations \cite{Bourdin2008,Bourdin2000b,Marigo1998}. 

If instead  in  \eqref{e:random functional 1} we choose $f(x,\xi)= |\xi|$, the corresponding phase-field functionals 
were originally proposed by Shah \cite{Shah96-b, Shah96-a} as possible regularisations of 
an image-segmentation model, alternative to the Mumford-Shah's functional, which provides a common framework for image segmentation and isotropic curve evolution in Computer Vision. Moreover, Shah's functional overcomes a number of limitations of the earlier models. Loosely speaking, in this framework the domain $A$ is interpreted as a Riemannian manifold endowed with a metric defined by the image properties so that the image-segmentation problem amounts to finding a minimal cut in a Riemannian manifold (cf. \cite{Shah96-c}).

The main difference between the Ambrosio-Tortorelli functionals (and their more ``classical'' variants) 
and \eqref{e:random functional 1}-\eqref{intro:linear} rests on the growth of the function $f$: superlinear in the former versus linear in the latter.
Such different behaviours lead to some structural differences in the corresponding, attainable limit models. In fact, the weaker gradient penalisation in \eqref{intro:linear} allows for an
interaction between the two competing terms in \eqref{e:random functional 1}, as it also typical of free-discontinuity functionals in the linear setting \cite{BouchitteFonsecaMascarenhas, CDMSZGlobal}.  
As a result, the surface energy densities obtained in this case are of \emph{cohesive} type as proven in \cite{AlicBrShah}, in the scalar isotropic case, and in \cite{AlFoc}, in the vector-valued anisotropic case. That is, the resulting limit surface integrands in the linear setting are bounded, increasing, and concave functions of the jump amplitude $[u]$ of the (possibly discontinuous) limit variable $u$, moreover they exhibit a linear growth at the origin. We observe though, that the linear growth of $f$ 
for large gradients is not justified in the applications to Fracture Mechanics, so that more recently other variants of the Ambrosio-Tortorelli functional related to the gradient-damage models in \cite{Pham2010c,Pham2010a} were designed to provide a variational approximation of  cohesive energies (cf. \cite{ContiFocardiIurlano2016,Wu2017,Wu2018b,Feng2021,ContiFocardiIurlano2024,
ContiFocardiIurlano2025,Lammen2025,ACF-I,Colasanto2024}). 
It is also worth mentioning that in these models the parameters can be tuned to approximate prescribed cohesive laws (satisfying suitable assumptions) as shown in \cite{ACF-II} (see also \cite{ACF-III} for applications to an engineering problem).

Furthermore, we observe that the coercivity assumption in \eqref{intro:linear} yields the ``weaker'' lower bound  
\begin{equation}\label{intro:trivial-lb}
 C^{-1}\int_{A}v^2{|\nabla u|}\dx +\int_A\Big(\textstyle{\frac{(1-v)^2}{\eps}}+\eps |\nabla v|^2 \Big)\dx   \leq \Functeps_{\eps}(u,v,A),
\end{equation}
where the functionals on the left-hand side are those proposed by Shah and studied in \cite{AlicBrShah}. Hence, from \eqref{intro:trivial-lb} and the analysis in \cite{AlicBrShah} (see also \cite{AlFoc}) we readily deduce that if $(u_\eps)\subset W^{1,1}(A,\R^N)$ is a sequence with equi-bounded energy which additionally satisfy
$\sup_\eps \|u_\eps\|_{L^q}<+\infty$, for some $q>1$, then (up to subsequences) $u_\eps \to u$ with respect to the strong $L^1(A,\R^N)$-convergence, for some $u\in (G)BV(A,\R^N)$. Therefore, in the linear setting, the limit functional shall contain a term depending on the Cantor part of the measure derivative $Du$. 
These features are  in sharp contrast with the case where $f$ grows superlinearly in the gradient variable. Indeed in this case the limit functional is defined on the smaller space $(G)SBV(A, \R^N)$. Additionally, the superlinear growth of $f$ in $|\nabla u|$ makes it energetically unfavourable to approximate a pure jump function with elastic deformations, so that the only surface energy densities which can be obtained in the limit are necessarily  independent of the jump amplitude of $u$, as recently proven in \cite{BEMZ, BEMZ-1, BMZ}. 

Motivated by the applications to anisotropic curve evolution \cite{Shah96-b, Shah96-a,Shah96-c}, in this paper we study the homogenisation of the phase-field functionals in \eqref{e:random functional 1} which encompass the case of highly oscillating, possibly random metrics.
Moreover,  since image-segmentation models are highly sensitive to the presence of heterogeneities in regions or objects due to noise,  it is in general of great importance to incorporate a homogenisation procedure in these models and in their phase-field counterparts. ln fact, the presence of noise can cause random variations in the image intensity values, which in turn produce false detections in the image so that homogenisation may help to reduce the impact of noise, shadows, and changes in the illumination intensity, which usually make it difficult to accurately segment the image into its relevant parts. Therefore, in practice, by removing such, it can be easier to detect boundaries between different objects in the image, and to distinguish between foreground and background regions. 
More specifically,  in the present work we rigorously analyse the interplay between fine-scale oscillations and phase-field approximations in linear models as in  \eqref{e:random functional 1}. Due to the presence of microscopic heterogeneities, as $\eps$ tends to zero we expect to obtain an effective model where the (cohesive) energy density depends both on the homogenised integrand $f\homm$ (through its recession function) and on the regularised surface-term in \eqref{e:random functional 1}. On the other hand, on account of the analysis in \cite{AlicBrShah, AlFoc} we also expect a limit volume energy which only depends on the first term in  \eqref{e:random functional 1} and therefore in this case on $f\homm$.  A central feature of our analysis is that we study the homogenisation of $\Functeps_{\eps}$ without imposing any periodicity of $f$ in the spatial variable. In fact, in the same spirit as in \cite{CDMSZStochom, CDMSZGlobal}, we work under more general assumptions which, notably, are satisfied in the random stationary case.  

Finally, it is also worth noticing that the homogenisation problem analysed in this paper can be seen as a case study of a homogenisation problem for the gradient-damage models proposed in \cite{ContiFocardiIurlano2016,Wu2017,Wu2018b,Feng2021,ContiFocardiIurlano2024,
ContiFocardiIurlano2025,Lammen2025,ACF-I,Colasanto2024} to approximate cohesive energies in Fracture Mechanics. Indeed, on account of the analysis performed in these papers, also in this case we expect effective surface integrands defined by asymptotic minimisation problems in which all the terms in the approximating functionals interact with one another. Moreover, when working with such approximations, a more technically demanding analysis shall be expected 
due to the superlinear growth of the bulk energy density and to the more complex, parameter-dependent, choice of the degenerate function
multiplying $f$.    

\medskip
{
Below we briefly outline the proof strategy leading to our homogenisation result. Our approach is inspired by \cite{CDMSZStochom, CDMSZGlobal}, where the authors extend to the setting of free-discontinuity functionals the seminal work of Dal Maso and Modica \cite{dalmaso-modica}. In that foundational contribution, ergodic theory is combined with $\Gamma$-convergence for the first time, in the context of volume functionals. We also refer to \cite{AbddaimiMichailleLicht1997} for a related homogenisation result concerning volume functionals with linear growth.}

Loosely speaking, our strategy consists of two main steps: a purely deterministic one, where we devise sufficient conditions (on $f$) leading to homogenisation and a probabilistic step, where we show that if $f$ is a stationary random variable, then the sufficient conditions mentioned above are indeed fulfilled. Therefore a stochastic homogenisation result readily follows as a corollary of the deterministic analysis.
\subsection{Deterministic homogenisation}
Here we assume that  $f$ satisfies the assumptions listed in Definition~\ref{d:Admissible deterministic integrand}. Besides \eqref{intro:linear} these require that
the recession function $f^{\infty}$ is defined at every point. 
We stress here that we do not require any continuity of $f$ in the spatial variable, since this would
be unnatural for the applications.

Under these general assumptions, using the localisation method of $\Gamma$-convergence \cite{Dalmaso1993}, we can
prove the existence of a subsequence $(\eps_j)$ such that, for every $A\subset \R^n$ open and bounded, the functionals $\Functeps_{\eps_j}(\cdot, \cdot, A)$ $\Gamma$-converge to an
abstract functional $\Fsucc(\cdot,\cdot,A)$. Furthermore, the latter has the property that for every $u\in BV_{\textup{loc}}(\R^n,\R^N)$ the set function $A\mapsto \Fsucc(u,1,A)$ is the restriction to the open subsets of $\R^n$ of a Borel measure (cf.\ Theorem~\ref{t:Sottosucc gamma-conv.}). We observe that since we do not assume any spatial periodicity of $f$, the continuity of $z\mapsto \Fsucc(u(\cdot-z),1,A+z)$ may fail and therefore we cannot directly use the integral representation result in $BV$ \cite{BouchitteFonsecaMascarenhas} to deduce the form of $\Fsucc$. Our integral representation result is then obtained under some additional assumptions,
which are however more general than periodicity. We require that the limits of some scaled minimisation
problems, defined in terms of $f$ and $f^{\infty}$, exist and are independent of the spatial variable. These
limits will then define the volume and surface integrands of $\Fsucc$. Eventually, the Cantor integrand will be automatically identified due to the lower semicontinuity of $\Fsucc$.

Specifically, we make the two following assumptions.  If $Q_r(rx)$ denotes the open cube with side-length $r$ centred at $rx$ and $\ell_\xi(x)=\xi x$, the first assumption amounts to asking that for every $\xi \in \R^{N\times n}$ the limit 
\begin{equation}\label{e:esistenza limite per fhom}
    \lim_{r\to \infty}\frac{1}{r^n}\inf \left \{\int_{Q_r(rx)} f(y,\nabla u)\dy \colon u\in W^{1,1}(Q_r(rx),\R^N), \; u= \ell_{\xi} \textup{ on } \partial Q_r(rx) \right\}
 \end{equation}
exists and it is independent of $x\in \R^n$. The value of \eqref{e:esistenza limite per fhom} is denoted by $f\homm(\xi)$. 

Moreover, if $Q^\nu_r(rx)$ denotes the open cube with side-length $r$ centred at $rx$, one side orthogonal to $\nu \in \Sf^{n-1}$, and
 \begin{equation*}
        u_{rx,\zeta,\nu}(y)=
        \begin{cases}
            \zeta \; & \; \textup{if $(y-rx)\cdot \nu \geq 0$} \\
            0 \; & \; \textup{if $(y-rx)\cdot \nu < 0$},
        \end{cases}
\end{equation*}
we also require that for every $\zeta\in \R^N$ and $\nu \in \Sf^{n-1}$ the limit
\begin{multline}\label{e:esistenza limite per ghom}
    \lim_{r\to +\infty}\frac{1}{r^{n-1}}\inf \bigg\{\int_{Q^\nu_r(rx)} \big(v^2 f^\infty (y,\nabla u) + (1-v)^2 + |\nabla v|^2\big) \dy \colon u \in  W^{1,1}(Q^\nu_r(rx),\R^N),\\  v\in W^{1,2}(Q^\nu_r(rx)) \textup{ and } (u,v)=(u_{rx,\zeta,\nu},1) \textup{ on } \partial Q^\nu_r(rx) \bigg\}
    \end{multline}
exists and is independent of $x\in \R^N$. The value of \eqref{e:esistenza limite per ghom} is denoted by $g\homm(\zeta,\nu)$. 

It is worth mentioning here that $f\homm$ and $g\homm$ satisfy a number of properties (cf.\ Section \ref{s:homgenised densities}) which ensure, in particular, that they are Borel measurable.  

Then, assuming \eqref{e:esistenza limite per fhom} and \eqref{e:esistenza limite per ghom} we resort to the blow-up technique in $BV$ \cite{BouchitteFonsecaMascarenhas} to show that for every $u\in BV(A,\R^N)$ the following identities hold true
\begin{align*}
    \frac{\dd \Fsucc(u,1,\cdot)}{\dd \calL^n}(x)&=f\homm(\nabla u(x)) \quad \textup{for $\calL^n$-a.e. $x\in A$},  \\  \frac{\dd \Fsucc(u,1,\cdot)}{\dd |D^cu|}(x) &= f^{\infty}\homm(\nabla u(x)) \quad \textup{for $|D^cu|$-a.e. $x\in A$}, \\   \frac{\dd \Fsucc(u,1,\cdot)}{\dd \mathcal{H}^{n-1}}(x)&= g\homm([u](x),\nu_{u}(x)) \quad \textup{for $\mathcal{H}^{n-1}$-a.e. $x\in J_u\cap A$}.
\end{align*}
In their turn, these allow us to represent $\Fsucc$ in an integral form first on $BV$, and then by standard truncation arguments on the domain of the $\Gamma$-limit, that is, on $GBV$. 
   Furthermore, since in the equalities above the right-hand side does not depend on the subsequence $(\eps_j)$, under assumptions \eqref{e:esistenza limite per fhom} and \eqref{e:esistenza limite per ghom} we obtain a $\Gamma$-convergence result for the whole sequence $(\Functeps_{\eps})$ (see Theorem~\ref{t:Deterministic Gamma-conv}).  

\subsection{Stochastic homogenisation}
Here we consider an underlying complete probability space $(\Omega, \mathcal T, P)$ endowed with a group of $P$-preserving transformations, and allow the integrand $f$ to additionally depend on $\omega \in \Omega$, in a suitable measurable way. Then,  
if $f$ is a \emph{stationary random integrand} in the sense of Definition \ref{d:def stationary integrand}, we show that assumptions \eqref{e:esistenza limite per fhom} and \eqref{e:esistenza limite per ghom} are automatically satisfied for $P$-a.e.\ $\omega \in \Omega$, that is, \emph{almost surely}.  As it is by-now costumery (see \cite{CDMSZStochom, CDMSZGlobal}) this is done by appealing to the Ackoglu and Krengel Subadditive Ergodic Theorem \cite{Ackoglu}.   
More specifically, the proof that \eqref{e:esistenza limite per fhom} holds is standard and follows as in \cite{RufZepp}. On the other hand, the verification of \eqref{e:esistenza limite per ghom} is highly non trivial, as it is always the case when working with ``surface terms'' where there is a dimensional mismatch between the domain of integration and the scaling, and, moreover, a boundary datum which is inherently inhomogeneous (cf.\ \ref{e:esistenza limite per ghom}).   

Once  \eqref{e:esistenza limite per fhom} and \eqref{e:esistenza limite per ghom} are shown to hold (cf. Proposition \ref{p:Volume part stochastic proposition} and Proposition \ref{p:surface part stochastic}) we can immediately resort to the deterministic analysis to deduce that the random functionals 
\[
\Functeps_{\eps}(\omega)(u,v,A)=\int_{A}v^2{\textstyle f(\omega, \frac{x}{\eps},\nabla u)}\dx +\int_A\Big(\textstyle{\frac{(1-v)^2}{\eps}}+\eps |\nabla v|^2 \Big)\dx,
\]
homogenise, almost surely, to the random, autonomous, free-discontinuity functional  
\begin{equation*}
    \Functeps\homm(\omega)(u,v,A)=\int_A f\homm(\omega,\nabla u) \dx + \int_A f^{\infty}\homm\textstyle{(\omega,\frac{\dd D^cu}{\dd |D^cu|})}\dd |D^cu| + \displaystyle{\int_{J_u\cap A}g\homm(\omega,[u],\nu_u) \dd \calH^{n-1}},
\end{equation*}
if $u\in GBV(A,\R^N)$ and $v=1\; \calL^n$-a.e in $A$, where $f\homm$ and $g\homm$ are defined, respectively, by  \eqref{e:esistenza limite per fhom} and \eqref{e:esistenza limite per ghom} while $f^\infty\homm$ is the recession function of $f\homm$ (cf.\ Theorem \ref{t:Homogenised Formulas} and Theroem \ref{t:Stochastic homogenisation}). Eventually, if $f$ is stationary with respect to an ergodic group of $P$-preserving transformations on $(\Omega, \mathcal T, P)$, then the homogenisation procedure becomes effective and thus $ \Functeps\homm$ is deterministic. 

\section{Preliminaries and set up}

\subsection{Notation}
We introduce some notation which will be used throughout the paper.
\begin{enumerate} [label=(\alph*)]

\item\label{e:scalar product}  Let $n,N \in \N$ be fixed with $n\geq 2$. For $x,y\in \R^n$ and $\zeta\in \R^N$, $x\cdot y:=x_1y_1+\dots+x_ny_n$ 
is the euclidean scalar product of $x$ and $y$, while $\zeta \otimes x:=(\zeta_ix_j)_{ij}\in \R^{N\times n}$ is the tensor product of $\zeta$ and $x$.

\item\label{e:iperpiani} For $x\in \R^n$ and $\nu\in \Sf^{n-1}$, we set
$\Pi^{\nu}:=\{y\in \R^n \colon y\cdot \nu=0\}$ and $\Pi^{\nu}_{x}:=x+\Pi^{\nu}$.

\item\label{e:mappa lineare} For $\xi \in \R^{N\times n}$, $\ell_{\xi}$ denotes the linear function from $\R^n$ to $\R^N$ with gradient $\xi$.

\item\label{e:Sf} For $k\in \N$ and $x=(x_1,\dots,x_k)\in \R^k $, $|x|:=\sqrt{x^2_1+\dots+x_k^2}$ is the euclidean norm of the vector $x$. $\Sf^{k-1}:=\{x\in \R^k \; | \; |x|=1\}$ is the $k-1$-dimensional sphere centered in the origin and $\Hat{\Sf}^{k-1}_{\pm}:=\{x\in \Sf^{k-1} \; | \; \pm x_{i(x)}>0\}$, where $i(x)$ is the largest $i\in \{1,\dots,k\}$ such that $x_i\neq 0$. Note that $\Sf^{k-1}=\Hat{\Sf}^{k-1}_{+}\cup \Hat{\Sf}^{k-1}_{-}$ and $\Hat{\Sf}^{k-1}_{\pm}$ is a Borel set.

\item\label{e:Rnu} For $\nu\in \Sf^{n-1}$, let $R_{\nu}$ be an orthogonal $n\times n$ matrix such that $R_{\nu}e_n=\nu$; we assume that the restriction of the function $\nu \mapsto R_{\nu}$ to the sets $\Hat{\Sf}^{n-1}_{\pm}$, defined in \ref{e:Sf} of the notation list, are continuous and that $R_{-\nu}Q_1=R_{\nu}Q_1$; moreover we assume that $R_{\nu}\in \Q^{n\times n}$ if $\nu\in \Q^n$. A map $\nu \mapsto R_{\nu}$ satisfying these properties is provided in \cite[Example~A.1 and Remark~A.2]{CDMSZGammaconv}.

\item\label{e:cubo ruotato} For $x\in \R^n$ and $\rho>0$ we set $B_{\rho}(x):=\{y\in \R^n \colon \; |y-x|<\rho\}$ and $Q_{\rho}(x):=\{y\in \R^n \colon \; |(y-x)\cdot e_i|<\rho/2 \textup{ for $i=1,\dots,n$}\}$, where $\{e_1,\dots,e_n\}$ is the standard basis of $\R^n$. Moreover $B_{\rho}$ and $Q_{\rho}$ stand, respectively, for $B_{\rho}(0)$ and $Q_{\rho}(0)$.
    
For $x\in \R^n$, $\rho>0$, and $\nu \in \Sf^{n-1}$ we set
    \begin{equation*}
        Q^{\nu}_{\rho}(x):=x+R_{\nu}Q_{\rho}.
    \end{equation*}
    For $k\in \N$ we define the rectangle
    \begin{equation*}
        Q^{\nu,k}_{\rho}(x):=x+Q^{\nu,k}_{\rho}
    \end{equation*}
    where $Q^{\nu,k}_{\rho}:=R_{\nu}\textstyle{((-\frac{k\rho}{2},\frac{k\rho}{2})^{n-1}\times (-\frac{\rho}{2},\frac{\rho}{2}))}$. Moreover we set
    \begin{equation*}
      \partial^{\perp}Q^{\nu,k}_{\rho}(x):=\partial Q^{\nu,k}_{\rho}(x) \cap R_{\nu}\textstyle{((-\frac{k\rho}{2},\frac{k\rho}{2})^{n-1}\times \R)} 
    \end{equation*}
    \begin{equation*}
        \partial^{\|}Q^{\nu,k}_{\rho}(x):=\partial Q^{\nu,k}_{\rho}(x) \cap R_{\nu}\textstyle{(\R^{n-1} \times  (-\frac{\rho}{2},\frac{\rho}{2}))}.
    \end{equation*}

\item\label{e:corA} $\corA$ and $\corA_{\infty}$ denotes the collection of all bounded open sets and of all bounded open Lipschitz sets of $\R^n$ respectively; if $A,B \in \corA$, by $A \subset \subset B$ we mean that exists a compact set $K$ such that $A\subset K \subset B$. For every $C\in \corA$, we define $\corA(C):=\{A\in \corA \; | \; A\subseteq C\}$ and $\corA_{\infty}(C):=\{A\in \corA_{\infty} \; | \; A\subseteq C\}$.

\item\label{e:Borel} For every topological space $X$, $\Bor(X)$ denotes its Borel $\sigma$-algebra. For every integer $k\geq 1$, $\Bor^k$ is the Borel 
$\sigma$-algebra of $\R^k$, while $\Bor^n_S$ denotes the Borel $\sigma$-algebra of $\Sf^{n-1}$. 

\item\label{e:LkHk} $\calL^k$ and $\calH^{k-1}$ denote respectively the Lebesgue and the $(k-1)$-dimensional Hausdorff measure on $\R^k$.

\item\label{e:misure} Let $\mu$ and $\lambda$ two Radon measures on $A\in \corA$, with values in a finite dimensional real vector space $X$ and in $[0,+\infty]$, respectively; then $\frac{\dd \mu}{\dd \lambda}:=\frac{\dd \mu^a}{\dd \lambda} \in L^1_{\textup{loc}}(A,X)$, where $\mu^a \ll \lambda$, $\mu^a+\mu^s$ is the Radon-Nykodym decomposition of $\mu$ respect to $\lambda$ and $\mu^a(B)=\int_B \frac{\dd \mu^a}{\dd \lambda} \dd \lambda $ for every Borel set $B\subseteq A$.

\item\label{e:BV} For $u\in BV(A,\R^N)$, with $A\in \corA$, the jump of $u$ on the jump set $J_u$ is denoted by $[u]:=u^+-u^-$, while $\nu_u$ denotes the normal to $J_u$. The distributional gradient $Du$, is a $\R^{N\times n}$-valued Radon measure on $A$, whose absolutely continuous part with respect to $\calL^n$, denoted by $D^au$, has density $\nabla u \in L^1(A,\R^{N\times n})$ (which coincides with that approximate gradient of $u$), while the singular part $D^su$ can be decomposed as $D^su=D^ju+D^cu$ where the jump part $D^ju$ is given by $D^j u=[u]\otimes \nu_u \calH^{n-1}\res J_u $,
    and the Cantor part $D^cu$ is a $\R^{N\times n}$-valued Radon measure on $A$ which vanishes on all Borel sets $B\subseteq A$ with $\calH^{n-1}(B)<+\infty$.

We refer to the book \cite{AFP} for all the properties of $(G)BV$ and $(G)SBV$ functions, giving precise references.   
    
\item\label{e:funzione salto} For $x\in \R^n$, $\zeta\in \R^m$, $\nu \in \Sf^{n-1}$ and $\eps>0$ we define the function $u_{x,\zeta,\nu}, \overline{u}^{\eps}_{x,\zeta,\nu}:\R^{n}\to \R^N$ as
    \begin{equation*}
        u_{x,\zeta,\nu}(y):=
        \begin{cases}
            \zeta \; & \; \textup{if $(y-x)\cdot \nu \geq 0$} \\
            0 \; & \; \textup{if $(y-x)\cdot \nu < 0$},
        \end{cases}
        \quad \textup{and} \quad \overline{u}^{\eps}_{x,\zeta,\nu}(y):=\zeta \overline{\textup{u}}\textstyle{(\frac{1}{\eps}(y-x)\cdot \nu)}
    \end{equation*}
    where $\overline{\textup{u}}:\R\to [0,1]$ is a fixed smooth cut-off function such that $\overline{\textup{u}}\equiv 1$ on $[1/2,+\infty)$ and $\overline{\textup{u}}\equiv 0$ on $(-\infty,-1/2]$.

    We also use the shorthand notation $u_{\zeta,\nu}:=u_{0,\zeta,\nu}$, $\overline{u}_{x,\zeta,\nu}:=\overline{u}^{1}_{x,\zeta,\nu}$ and $\overline{u}_{\zeta,\nu}:=\overline{u}_{0,\zeta,\nu}$.

\item\label{i:truncation} We define the truncation functions $\mathcal{T}_k\in C^1_c(\R^N,\R^N)$ satisfying 
\begin{equation}\label{e:prop 1 Tk}
    \mathcal{T}_k(\zeta):=
    \begin{cases}
        \zeta \; & \; \textup{if $|\zeta|\leq a_k$}, \\
        0 \; & \; \textup{if $|\zeta|\geq a_{k+1}$},
    \end{cases}
\end{equation}
and 
\begin{equation}\label{e:prop 2 Tk}
\mathrm{Lip}(\mathcal{T}_k)\leq 1 \quad \textup{and} \quad |\mathcal{T}_k(\zeta)|\leq a_{k+1} \; \; \textup{for every $\zeta \in \R^n$},
\end{equation}
for some diverging and strictly increasing sequence of positive numbers $(a_k)$.

\item\label{e:funzione recessione} Given $h:\R^{N\times n}\to [0,+\infty]$ its recession function $h^{\infty}:\R^{N\times n}\to [0,+\infty]$ is defined as
    \begin{equation*}
        h^{\infty}(\xi):=\limsup_{t\to +\infty}\frac{h(t\xi)}{t}.
    \end{equation*}
\end{enumerate}

{{We will quote the book \cite{Dalmaso1993} for all the results on the abstract
theory of $\Gamma$-convergence needed in what follows.}}

\subsection{The subadditive ergodic Theorem}
In this subsection we recall a variant of the pointwise subaddtive ergodic Theorem of Ackoglu and Krengel \cite[Theorem~2.7]{Ackoglu} which is useful for our purposes (cf.\ \cite[Theorem~4.1]{Licht}).

Let $d \in \N$. Let $(\Omega,{{\mathcal{T}}},P)$ be a probability space and let $\tau:=(\tau_z)_{z\in\mathbb{Z}^d}$ denote a group of $P$-preserving transformations on $(\Omega,{{\mathcal{T}}},P)$, that is, $\tau$ is a family of measurable mappings $\tau_z:\Omega\to\Omega$ satisfying the following properties:
\begin{itemize}
	\item $\tau_z\tau_{z'}=\tau_{z+z'}$, $\tau_z^{-1}=\tau_{-z}$, for every $z,z'\in\mathbb{Z}^d$;
	\item $\tau$ preserves the probability measure $P$\ie $P(\tau_zE)=P(E)$, for every $z\in\mathbb{Z}^d$ and every $E\in{{\mathcal{T}}}$;
	\end{itemize}
If in addition every $\tau$-invariant set $E\in{{\mathcal{T}}}$, {{i.e. $\tau_zE=E$ for every $z\in\mathbb{Z}^d$},}
has either probability $0$ or $1$, then $\tau$ is called \textit{ergodic}.

For every $a,b\in \R^d$ with $a_i<b_i$ for $i=1,\dots,d$, we define 
\begin{equation*}
    [a,b)=\{x\in \R^d \; : \; a_i\leq x_i <b_i \; \; \textup{for $i=1,\dots,d$}\},
\end{equation*}
and we set
\begin{equation*}
  \calI_d:=\{[a,b) \; : \; a,b\in \R^d, \; a_i<b_i \; \; \textup{for $i=1,\dots,d$}\}.
\end{equation*}

\begin{definition}[Subadditive process]\label{d:subadditive process}  Let $\tau:=(\tau_z)_{z\in\mathbb{Z}^d}$ be a group of $P$-preserving transformations on $(\Omega,{{\mathcal{T}}},P)$.
A $d$-dimensional subadditive process is a function $\mu:\Omega \times \calI_d \to \R$ satisfying the following properties:
\begin{enumerate} [label= (\alph*)]
    \item for every $A\in \calI_d$ the map $\omega \mapsto \mu(\omega,A)$ is $\calT$-measurable;
    \item for every $\omega\in \Omega$, $A\in \calI_d$, and $z\in \Z^d$ we have $\mu(\omega,A+z)=\mu(\tau_z\omega,A)$;
    \item for every $A\in \calI_d$ and for every finite family $(A_i)_{i\in I}$ in $\calI_d$ of pairwise disjoint sets such that $\cup_{i\in I}A_i=A$, we have 
    \begin{equation*}
        \mu(\omega,A)\leq \sum_{i\in I}\mu(\omega,A_i),
    \end{equation*}
    for every $\omega\in \Omega$;
    \item there exists $c>0$ such that 
    \[
    0\leq \mu(\omega,A)\leq c\calL^d(A),
    \] 
    for every $\omega\in \Omega$ and every $A\in \calI_d$.
\end{enumerate}
\end{definition}

\begin{definition}[Regular family of sets]\label{d:regular family}
A family of sets $(A_t)_{t>0}$ in $\calI_d$ is called regular with constant $M \in (0,+\infty)$ if there exists another family of sets $(A'_t)_{t>0}$ in $\calI_d$ such that:
\begin{itemize}
    \item $A_t\subset A'_t$ for every $t>0$;
    \item $A'_s \subset A'_t$ whenever $0<s<t$;
    \item $0<\calL^d(A'_t)\leq M\calL^d(A_t)$ for every $t>0$;
    \item $\bigcup_{t>0}A'_t=\R^d$.
\end{itemize}
\end{definition}
\begin{theorem}[Subadditive Ergodic Theorem]\label{t:ergodic theorem}
Let $\tau=(\tau_z)_{z\in \Z^d}$ be a group of $P$-preserving transformations on $(\Omega,\calT,P)$. Let $\mu:\Omega\times \calI_d \to [0,+\infty)$ be a $d$-dimensional subadditive process. Then there exist a $\calT$-measurable function $\varphi:\Omega \to [0,+\infty)$ and a set $\Omega'\in \calT$ with $P(\Omega')=1$ such that 
\begin{equation*}
    \lim_{t\to +\infty}\frac{\mu(\omega,A_t)}{\calL^d(A_t)}=\varphi(\omega)
\end{equation*}
for every regular family of sets $(A_t)_{t>0}$ in $\calI_d$ and for every $\omega\in \Omega'$. If in addition $\tau$ is ergodic, then $\varphi$ is constant $P$-a.e.
\end{theorem}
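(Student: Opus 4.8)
The plan is to deduce the statement from the multidimensional subadditive ergodic theorem of Akcoglu and Krengel, \cite[Theorem~2.7]{Ackoglu} (see also \cite[Theorem~4.1]{Licht}), while accounting for the extra features built into Definition~\ref{d:subadditive process}: non-negativity, the two-sided bound, and the fact that $\mu$ is defined on all of $\calI_d$ rather than on a prescribed increasing sequence. Throughout I write $\mathbf 1:=(1,\dots,1)\in\R^d$.

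First I would restrict $\mu$ to rectangles with vertices in $\Z^d$. By properties (a)--(d) of Definition~\ref{d:subadditive process} this is a discrete subadditive process in the sense of \cite{Ackoglu}, so that theorem provides a $\calT$-measurable function $\varphi\colon\Omega\to[0,+\infty)$ with $0\le\varphi\le c$, together with a set $\Omega_0\in\calT$ of full probability, such that $\mu(\omega,[0,n\mathbf 1))/n^d\to\varphi(\omega)$ for every $\omega\in\Omega_0$ as $n\to+\infty$. The covariance property (b) then forces $\varphi$ to be $\tau$-invariant modulo null sets: comparing $\mu(\omega,[0,n\mathbf 1))$ with $\mu(\omega,z+[0,n\mathbf 1))=\mu(\tau_z\omega,[0,n\mathbf 1))$ for $z\in\Z^d$, through pavings whose error is $O(n^{d-1})$ (controlled by (d)), gives $\varphi\circ\tau_z=\varphi$ $P$-a.e.\ for each $z$; hence $\varphi$ is $P$-a.e.\ constant when $\tau$ is ergodic.

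The main work is then to upgrade the convergence from the sequence $([0,n\mathbf 1))_n$ to an arbitrary regular family $(A_t)_{t>0}$. The geometric lemma I would prove first is that a regular family cannot degenerate: writing $A_t=[a_t,b_t)$, the shortest side $\sigma_t:=\min_i(b_t^i-a_t^i)$ satisfies $\sigma_t\to+\infty$. This follows by contradiction: if $\sigma_{t_k}\le C$ along a subsequence, then since $(A'_t)$ increases with $\bigcup_t A'_t=\R^d$ we have $A'_{t_k}\supseteq[-\rho,\rho]^d$ for $k$ large, so every side of $A'_{t_k}$ is at least $\max(b_{t_k}^i-a_{t_k}^i,\,2\rho)$; comparing volumes with $\calL^d(A'_{t_k})\le M\calL^d(A_{t_k})$ and using that one side of $A_{t_k}$ is $\le C$ forces $2\rho\le MC$, which is absurd since $\rho$ is arbitrary. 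With this in hand I would proceed in two steps. For a corner-anchored cube $[a,a+t\mathbf 1)$ with $a\in\R^d$, setting $a':=\lfloor a\rfloor\in\Z^d$ I would use the inclusions $[a'+\mathbf 1,a'+t\mathbf 1)\subseteq[a,a+t\mathbf 1)\subseteq[a',a'+(t+1)\mathbf 1)$, estimate the two L-shaped remainders by (c)--(d) (each of volume $O(t^{d-1})$), and invoke the first step at the integer corners $a'$ and $a'+\mathbf 1$ together with the $\tau$-invariance of $\varphi$; this yields $\mu(\omega,[a,a+t\mathbf 1))/t^d\to\varphi(\omega)$ for every $a\in\R^d$ and every $\omega$ in one fixed full-measure set, since only the countably many corners in $\Z^d$ enter. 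For a general regular family I would fix $m\in\N$, pave $A_t$ by the cubes of the lattice $m\Z^d$ anchored at a corner of $A_t$ (the non-integer shift costing only an $O(m^{d-1})$ correction per cube), and use that, because $\sigma_t\to+\infty$, the inner cubes exhaust $A_t$ up to a layer of relative volume $o(1)$ as $t\to+\infty$; then (c)--(d) give
\[
\mu(\omega,A_t)\ \le\ \sum_{z\in I_t}\mu\bigl(\tau_{mz}\omega,[0,m\mathbf 1)\bigr)+o\bigl(\calL^d(A_t)\bigr),
\]
with the inner index set $I_t\subseteq\Z^d$ a F\o lner sequence as $t\to+\infty$. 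Applying the multidimensional Birkhoff ergodic theorem to $\omega\mapsto\mu(\omega,[0,m\mathbf 1))$ to pass to the spatial average, then letting $m\to+\infty$ and using $\mu(\cdot,[0,m\mathbf 1))/m^d\to\varphi$ in $L^1$ (dominated convergence via (d)) and the $\tau$-invariance of $\varphi$, I would obtain $\limsup_{t\to+\infty}\mu(\omega,A_t)/\calL^d(A_t)\le\varphi(\omega)$ $P$-a.e. Intersecting the countably many exceptional sets arising along the way defines the set $\Omega'$ in the statement.

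I expect the genuine obstacle to be the matching lower bound for \emph{anisotropic} regular families, and this is precisely why it is cleaner to invoke \cite[Theorem~2.7]{Ackoglu} than to reprove it from scratch: subadditivity gives only one-sided comparisons, so a box such as $[0,t)\times[0,\sqrt t)$ cannot be trapped inside a cube of comparable volume, and $\liminf_{t\to+\infty}\mu(\omega,A_t)/\calL^d(A_t)\ge\varphi(\omega)$ is not reachable by elementary pavings. This inequality is the core content of the subadditive ergodic theorem, where it rests on identifying $\varphi$ with the infimum of the spatial averages $\calL^d(A)^{-1}\,\mathbb{E}[\mu(\cdot,A)]$ and on a Kingman-type maximal/contradiction argument; once it is available, combining it with the $\limsup$ estimate above and the non-negativity of $\mu$ propagates the full limit to every regular family and concludes the proof.
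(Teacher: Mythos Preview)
The paper does not prove this theorem at all: it is stated as a known result, with the proof delegated entirely to \cite[Theorem~2.7]{Ackoglu} and \cite[Theorem~4.1]{Licht}. Your proposal is therefore not comparable to a proof in the paper, since there is none; you are instead sketching how the statement can be extracted from those references, which is a reasonable exercise but goes beyond what the paper does.

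As for the content of your sketch, the upper-bound part and the reduction to integer corners are sound, and you are right to flag the lower bound for anisotropic regular families as the non-elementary core that must be imported from Akcoglu--Krengel. One small point: your argument that $\sigma_t\to+\infty$ uses that $A_t'$ eventually contains $[-\rho,\rho]^d$, but the definition of regular family only guarantees $A_t\subset A_t'$ and $\calL^d(A_t')\le M\calL^d(A_t)$, not that $A_t'$ has sides bounded below by those of $A_t$; you would need to argue more carefully that a large cube inside $A_t'$ forces $\calL^d(A_t')$ to be large. Since in the end you invoke the cited theorems anyway, this is a minor issue.
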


\subsection{Assumptions} In this subsection we introduce the class of the admissible random integrands.  
\begin{definition}[Admissible integrand]\label{d:Admissible deterministic integrand}
Let $C\geq 1$ and $\alpha \in (0,1)$ be given, then $\mathcal{F}(C,\alpha)$ denotes the collection of all functions $f:\R^n\times \R^{N\times n}\to [0,+\infty)$ with the following properties: 
\begin{enumerate}[label=(f\arabic*)]
    \item\label{meas}\textup{(measurability)} $f$ is $\Bor^n\otimes \Bor^{N\times n}$-measurable;
    \item \label{e:crescita lineare}
 \textup{(linear growth)} for every $x\in\R^n$ and every $\xi \in \R^{N\times n}$
    \begin{equation*}        
    C^{-1}|\xi|\leq f(x,\xi)\leq C(|\xi|+1);
    \end{equation*}
    \item \label{e:semicontinuità f e finf} \textup{(continuity)} for every $x\in \R^n$ the maps
       $\xi\mapsto f(x,\xi)$ and $\xi \mapsto f^{\infty}(x,\xi)$
    are continuous;
    \item \label{e:limite funz rec} \textup{(recession function)} for every $x\in \R^n$ every $\xi\in \R^{N\times n}$ and every $t>0$
    \begin{equation*}
         \Big|f^{\infty}(x,\xi)-\frac{f(x,t\xi)}{t}\Big|<\frac{C}{t}(1+f(x,t\xi)^{1-\alpha}).
    \end{equation*}
 \end{enumerate}
\end{definition}

\begin{remark}\label{r:oss. funz. recessione}
Let $f \in \mathcal F(C,\alpha)$, then thanks to \ref{e:crescita lineare} and \ref{e:limite funz rec}, for every $x\in \R^n$ and every $\xi\in \R^{N\times n}$ we have that there exists the limit
\begin{equation*}
   \lim_{t\to+\infty}\frac{f(x,t\xi)}{t} = f^{\infty}(x,\xi),
\end{equation*}
and
\begin{equation}\label{e:crescita funz recess}
    C^{-1}|\xi|\leq f^{\infty}(x,\xi)\leq C|\xi|.
\end{equation}
Moreover, for every $L>0$ there exists $M>0$ such that for every $x\in \R^n$, $\xi\in \R^{N\times n}$ with $|\xi|=1$ and $t>L$ we have that
\begin{equation}\label{e:H4 BFM}
    \Big|f^{\infty}(x,\xi)-\frac{f(x,t\xi)}{t}\Big|\leq \frac{M}{t^{\alpha}}.
\end{equation}
\end{remark}
\begin{definition}[Random integrand]
A function $f:\Omega\times \R^n\times \R^{N\times n} \to [0,+\infty)$ is called a random integrand if
\begin{enumerate}[label=(s-f\arabic*)]
    \item[(s-f1)] $f$ is $\calT \otimes \Bor^n \otimes \Bor^{N\times n}$-measurable;
      \item[(s-f2)] $f(\omega,\cdot,\cdot)\in \mathcal{F}(C,\alpha)$ for every $\omega \in \Omega$, where $\mathcal{F}(C,\alpha)$ is as in Definition~\ref{d:Admissible deterministic integrand}.
\end{enumerate}
\end{definition}
If $f$ is a random integrand then $f^{\infty}:\Omega \times \R^n \times \R^{N\times n} \to [0,+\infty)$ is given by
\begin{equation*}
    f^{\infty}(\omega,x,\xi)=\lim_{t\to +\infty}\frac{f(\omega,x,t\xi)}{t},
\end{equation*}
where the existence of the limit above is ensured by the very definition of random integrand together with Remark~\ref{r:oss. funz. recessione}. 

\begin{definition}[Stationary random integrand]\label{d:def stationary integrand}
A random integrand $f$ is stationary if there exists $\tau=(\tau_{z})_{z\in \Z^n}$ $n$-dimensional group of $P$-preserving transformation on $(\Omega,\calT,P)$ such that
\begin{equation*}
    f(\omega,x+z,\xi)=f(\tau_z \omega,x,\xi)
\end{equation*}
for every $\omega\in \Omega$, $x\in \R^n$, $z\in {{\Z}^n}$, and $\xi\in \R^{N\times n}$.

If in addition $\tau$ is ergodic we call $f$ an ergodic random integrand. 
\end{definition}

\section{Statements of the main results}

Let $f$ be a given stationary random integrand. For $\eps>0$ we consider the phase-field functionals $\Functeps_{\eps}(\omega):L^1_{\textup{loc}}(\R^n,\R^{N+1})\times \corA\longrightarrow [0,+\infty]$ defined as 
\begin{equation}\label{e:Random Functionals}
    \Functeps_{\eps}(\omega)(u,v,A):=
    \begin{cases}
        \displaystyle \int_A \hspace{-1mm}\textstyle{(v^2 f(\omega,\frac{x}{\eps},\nabla u)+\frac{(1-v)^2}{\eps}+\eps|\nabla v|^2)}\dx, &  (u,v)\in W^{1,1}(A,\R^N)\times W^{1,2}(A) \\
        +\infty  & \textup{otherwise}.
    \end{cases}
\end{equation}
\begin{remark}
For $v\in W^{1,2}(A)$ set $\tilde v:= \min\{\max\{0,v\},1\}$. We notice that for every $\eps>0$, $\omega \in \Omega$ there holds
\[
 \Functeps_{\eps}(\omega)(u,\tilde v,A) \leq  \Functeps_{\eps}(\omega)(u,v,A),
\]
for every $(u,v)\in W^{1,1}(A,\R^N)\times W^{1,2}(A)$ and $A\in \corA$.
Therefore it is not restrictive to assume that the phase-field variable $v$ satisfies the pointwise bounds $0\leq v\leq 1$ for $\calL^n$-a.e. $x\in A$. 
\end{remark}

\begin{remark}[Equi-coercivity]
The coercivity assumption in \ref{e:crescita lineare} immediately gives that 
\begin{equation*}
 C^{-1}\int_{A}v^2{|\nabla u|}\dx +\int_A\Big(\textstyle{\frac{(1-v)^2}{\eps}}+\eps |\nabla v|^2 \Big)\dx   \leq \Functeps_{\eps}(\omega)(u,v,A)
\end{equation*}
where the functionals on the left-hand side are those studied in \cite{AlicBrShah} (see also \cite{AlFoc}). Hence, up to considering the perturbed functionals 
\[
\Functeps_{\eps}(\omega)(u,v,A)+ \|u\|_{L^q(A,\R^N)},
\]
for some $q>1$, we can appeal to \cite[Lemma 7.1]{AlFoc} to deduce that if $(u_\eps,v_\eps)\subset W^{1,1}(A,\R^N)\times W^{1,2}(A,[0,1])$ satisfies
\[
\sup_{\eps>0} \Big(\Functeps_{\eps}(\omega)(u_\eps,v_\eps,A)+ \|u_\eps\|_{L^q(A,\R^N)}\Big)<+\infty,
\]
then, up to subsequences, $(u_\eps,v_\eps) \to (u,1)$ strongly in $L^1(A,\R^{N+1})$ for some $u\in GBV(A,\R^N)$.
For this reason, in what follows we are going to study the $\Gamma$-convergence of $\Functeps_{\eps}$ with respect to the strong $L^1$-convergence. 
\end{remark}

Before stating our main results we need some additional notation. Let  $h:\R^n\times \R^{n\times N}\to [0,\infty)$ satisfy \ref{e:crescita lineare} and \ref{meas}. For $A\in \corA_{\infty}$ and $(u,v)\in W^{1,1}(A,\R^N) \times W^{1,2}(A,[0,1])$ consider the following auxiliary integral functionals  
\begin{equation}\label{e:FunzHomob}
 \Functminv^h(u,A):=\int_A h(x,\nabla u) \dx,
\end{equation}
and
\begin{equation}\label{e:FunzHomos}
\Functmins^h(u,v,A):=\int_A (v^2h(x,\nabla u) + (1-v)^2 + |\nabla v|^2) \dx.
\end{equation}
Moreover, let $w\in BV_{\textup{loc}}(\R^n,\R^N)$ and define the minimisation problems
\begin{equation}\label{e:def minprobv}
    \minprobv^h(w,A):=\inf\{E^h(u,A)\; : \;u\in W^{1,1}(A,\R^N), \; u=w \textup{ on } \partial A \}
\end{equation}
and
\begin{multline}\label{e:def minprobs}
    \minprobs^h(w,A):= \inf\{\Functmins^h(u,v,A) \colon  u\in  W^{1,1} (A,\R^N),  \; v\in W^{1,2}(A,[0,1]), \;(u,v)=(w,1) \textup{ on } \partial A\},
\end{multline}
where $u=w$ on $\partial A$ has to be intended in the sense of traces and inner traces for $u$ and $w$, respectively.
If $A\subseteq \R^n$ is a set such that $\textup{int} A \in \corA_{\infty}$ then we use the following convention $\minprobv^h(w,A):=\minprobv^h(w,\textup{int} A)$ and $\minprobs^h(w,A):=\minprobs^h(w,\textup{int} A)$.

The main result of this paper is contained in Theorem~\ref{t:Stochastic homogenisation}, below, and provides an \emph{almost sure} $\Gamma$-convergence result for the functionals $\Functeps_{\eps}$ defined in \eqref{e:Random Functionals}. 
In order to state this result we preliminarily need to state a theorem which guarantees the almost sure existence of the integrands of the $\Gamma$-limit. Namely, the next theorem establishes the existence and spatial homogeneity of the limits defining the asymptotic cell formulas appearing in Theorem~\ref{t:Stochastic homogenisation} below.

Throughout the paper we adopt the following shorthand notation.

\begin{equation*}
    \minprobv^{f_{\omega}}:=\minprobv^{f(\omega,\cdot,\cdot)} \quad \textup{and} \quad  \minprobs^{f^{\infty}_{\omega}}:=\minprobs^{f^{\infty}(\omega,\cdot,\cdot)}.
\end{equation*}
\begin{theorem}[Homogenisation formulas]\label{t:Homogenised Formulas}
  Let $f$ be a stationary random integrand. Then there exists $\Omega'\in \mathcal{T}$ with $P(\Omega')=1$, such that for every $\omega\in \Omega'$
  \begin{itemize}
      \item[(i)] every $x\in \R^n$, $\nu \in \Sf^{n-1}$, $k\in \N$, and $\xi \in \R^{N\times n}$, the limit
      \begin{equation*}
            \lim_{r\to +\infty}\frac{\minprobv^{f_{\omega}}(\ell_{\xi},Q^{\nu,k}_{r}(rx))}{k^{n-1}r^n}
      \end{equation*}
      exists and it is independent of $x,\nu$ and $k$;
      \item[(ii)] every $x\in \R^n$, $\zeta \in \R^N$, and $\nu \in \Sf^{n-1}$, the limit
      \begin{equation*}
          \lim_{r\to +\infty}\frac{\minprobs^{f^{\infty}_{\omega}}(u_{rx,\zeta,\nu},Q^{\nu}_{r}(rx))}{r^{n-1}}
      \end{equation*}
      exists and it is independent of $x$.
  \end{itemize}
  More precisely there exist a $\calT \otimes \Bor^{N\times n}$-measurable function $f\homm:\Omega\times \R^{N\times n}\to [0,\infty)$ and a $\calT \otimes \Bor^N \otimes \Bor_S^n$-measurable function $g\homm:\Omega \times \R^N \times \Sf^{n-1} \to [0,+\infty)$ such that for every $\omega \in \Omega'$, $x\in \R^n$, $\xi \in \R^{N\times n}$, $\zeta \in \R^N$, and $\nu \in \Sf^{n-1}$
  \begin{align*}
      f\homm(\omega,\xi)&=\lim_{r\to +\infty}\frac{\minprobv^{f_{\omega}}(\ell_{\xi},Q^{\nu,k}_{r}(rx))}{k^{n-1}r^n}
      \\
      &=\lim_{r\to +\infty}\frac{\minprobv^{f_{\omega}}(\ell_{\xi},Q_{r}(rx))}{r^n}=\lim_{r\to +\infty}\frac{\minprobv^{f_{\omega}}(\ell_{\xi},Q_{r})}{r^n},
  \end{align*}
  \begin{align*}
       f\homm^{\infty}(\omega,\xi)&=\lim_{r\to +\infty}\frac{\minprobv^{f^{\infty}_{\omega}}(\ell_{\xi},Q^{\nu,k}_{r}(rx))}{k^{n-1}r^n}
       \\
       &=\lim_{r\to +\infty}\frac{\minprobv^{f^{\infty}_{\omega}}(\ell_{\xi},Q_{r}(rx))}{r^n}=\lim_{r\to +\infty}\frac{\minprobv^{f^{\infty}_{\omega}}(\ell_{\xi},Q_{r})}{r^n},
  \end{align*}
  \begin{equation*}
      g\homm(\omega,\zeta,\nu)=\lim_{r\to +\infty}\frac{\minprobs^{f^{\infty}_{\omega}}(u_{rx,\zeta,\nu},Q^{\nu}_{r}(rx))}{r^{n-1}}=\lim_{r\to +\infty}\frac{\minprobs^{f^{\infty}_{\omega}}(u_{\zeta,\nu},Q^{\nu}_{r})}{r^{n-1}},
  \end{equation*}
where $f\homm^{\infty}$ denotes the recession function of $f\homm$.
 
If we additionally assume that $f$ is ergodic, then $f\homm$ and $g\homm$ are independent of $\omega$ and 
\begin{equation*}
    f\homm(\xi)=\lim_{r\to +\infty}\frac{1}{r^n}\int_{\Omega}\minprobv^{f_{\omega}}(\ell_{\xi},Q_{r}) \dd P(\omega),
\end{equation*}
\begin{equation*}
    f\homm^{\infty}(\xi)=\lim_{r\to +\infty}\frac{1}{r^n}\int_{\Omega}\minprobv^{f^{\infty}_{\omega}}(\ell_{\xi},Q_{r}) \dd P(\omega),
\end{equation*}
\begin{equation*}
    g\homm(\zeta,\nu)=\lim_{r\to +\infty}\frac{1}{r^n}\int_{\Omega}\minprobs^{f^{\infty}_{\omega}}(u_{\zeta,\nu},Q_{r}) \dd P(\omega).
\end{equation*}
\end{theorem}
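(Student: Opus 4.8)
The plan is to apply the Subadditive Ergodic Theorem (Theorem~\ref{t:ergodic theorem}) separately to three subadditive processes built out of the minimisation problems $\minprobv^{f_\omega}(\ell_\xi,\cdot)$, $\minprobv^{f^\infty_\omega}(\ell_\xi,\cdot)$, and $\minprobs^{f^\infty_\omega}(u_{\cdot,\zeta,\nu},\cdot)$, then to upgrade the resulting pointwise limits into limits along \emph{all} exhausting families of cubes (with the various centrings and aspect ratios appearing in the statement), and finally to extract Borel/joint measurability of the limit functions and the ergodic averaging formulas.

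\emph{Volume parts.} For fixed $\xi$, I would define $\mu_\xi(\omega,[a,b)):=\minprobv^{f_\omega}(\ell_\xi,[a,b))$ on $\calI_n$. Measurability in $\omega$ follows since the infimum can be taken over a countable dense set of admissible $u$'s (using \ref{e:semicontinuità f e finf} and the direct method); covariance $\mu_\xi(\omega,A+z)=\mu_\xi(\tau_z\omega,A)$ is a direct consequence of stationarity and the translation-covariance of $\ell_\xi$ up to an additive constant (which does not affect the gradient, hence the energy); subadditivity on disjoint decompositions is the standard gluing argument — patch competitors on a partition, the boundary datum being compatible; and the bounds $0\le\mu_\xi\le C(1+|\xi|)\calL^n$ come from \ref{e:crescita lineare} together with $\ell_\xi$ itself as a competitor. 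Theorem~\ref{t:ergodic theorem} then yields a full-measure set and a $\calT$-measurable limit $f\homm(\omega,\xi)$ along every regular family; the independence of the limit from $x$, $\nu$, $k$ and the equality of the three displayed expressions is then a comparison between cubes $Q^{\nu,k}_r(rx)$ and coordinate cubes $Q_r$, squeezing one family between dilates of another and using the uniform bounds to control the ``corridor'' error (this is exactly the kind of argument used in \cite{RufZepp}, as noted in the introduction). The same scheme applied to $f^\infty_\omega$ (which again lies in a class with linear growth by \eqref{e:crescita funz recess}, and is continuous by \ref{e:semicontinuità f e finf}) produces a limit; identifying it with the recession function $f\homm^\infty$ of $f\homm$ requires the quantitative estimate \eqref{e:H4 BFM} to pass from $f(\omega,y,t\xi)/t$ to $f^\infty(\omega,y,\xi)$ uniformly in $y$ and in the competitor, and then letting $t\to\infty$ commute with $r\to\infty$.

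\emph{Surface part.} Here, for fixed $\zeta$ and $\nu$, I would work in the rotated frame $R_\nu$ and set, on $(n-1)$-dimensional intervals $I\in\calI_{n-1}$, something like $\nu_{\zeta,\nu}(\omega,I):=\minprobs^{f^\infty_\omega}(u_{0,\zeta,\nu},R_\nu(I\times(-\tfrac12,\tfrac12)))$, interpreting the $\Z^{n-1}$-action through the group generated by the $n-1$ lattice directions orthogonal to $\nu$ — which, since $\nu$ can be irrational, forces one to first treat rational $\nu$ and then recover general $\nu$ by a separate continuity-in-$\nu$ argument, or else to argue directly with the regular-family version of Theorem~\ref{t:ergodic theorem}. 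Covariance and the $0\le\nu_{\zeta,\nu}\le c\calL^{n-1}$ bound (competitor $(\overline u^1_{0,\zeta,\nu},1)$ or rather a one-dimensional profile, using \eqref{e:crescita funz recess}) are as before; subadditivity is again a gluing argument, now on slabs, with the transition layer between adjacent slabs contributing a lower-order surface cost that must be absorbed — this is the ``dimensional mismatch'' difficulty flagged in the introduction. I would then pass from the subadditive-theorem limit to the limit over $Q^\nu_r(rx)$ by comparing that cube with a thin slab of comparable $(n-1)$-volume and showing the extra thickness costs $o(r^{n-1})$, using that the boundary datum $u_{rx,\zeta,\nu}$ is essentially one-dimensional transverse to $\nu$; the independence of $x$ is then automatic from covariance plus stationarity. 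Joint measurability of $g\homm$ in $(\omega,\zeta,\nu)$ I would get from continuity/monotonicity in $(\zeta,\nu)$ (on the hemispheres $\Hat\Sf^{n-1}_\pm$ where $\nu\mapsto R_\nu$ is continuous) combined with $\calT$-measurability in $\omega$ for fixed $(\zeta,\nu)$, invoking a Carathéodory-type argument; the measurability of $f\homm$ in $(\omega,\xi)$ is analogous but easier since $\xi\mapsto f\homm(\omega,\xi)$ is convex and locally Lipschitz.

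\emph{Ergodic formulas and conclusion.} When $\tau$ is ergodic, each limit function is $P$-a.e.\ constant by the last assertion of Theorem~\ref{t:ergodic theorem}, so $\omega$-independence is immediate; the integral representations $f\homm(\xi)=\lim_r r^{-n}\int_\Omega \minprobv^{f_\omega}(\ell_\xi,Q_r)\,dP$, etc., follow by integrating the a.e.\ convergence over $\Omega$ and using dominated convergence (the integrands are uniformly bounded by $C(1+|\xi|)\calL^n(Q_r)/r^n=C(1+|\xi|)$, resp.\ by $c$). The main obstacle I anticipate is the surface part: setting up a genuinely $(n-1)$-dimensional subadditive process with the right covariance under a rank-$(n-1)$ sublattice when $\nu$ is irrational, and — more seriously — controlling the surface energy of the transition layers in the gluing so that subadditivity holds with no error (or with an error that can be shunted into the $A'_t$ enlargement of the regular family). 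This is where the argument genuinely departs from the volume case and from \cite{RufZepp}, and where the bulk of the technical work in Proposition~\ref{p:surface part stochastic} will lie.
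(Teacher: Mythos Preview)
Your overall strategy matches the paper's: an $n$-dimensional subadditive process for the volume parts (the paper simply cites \cite{CDMSZGlobal,RufZepp}), the recession identification via \ref{e:limite funz rec} (done in Proposition~\ref{p:finfhom prop}), an $(n-1)$-dimensional subadditive process for the surface part restricted first to rational $\nu,\zeta$ and then extended by continuity on $\Hat\Sf^{n-1}_\pm$, and dominated convergence for the ergodic formulas. Two points deserve correction, both in the surface step.

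First, your worry about ``transition layer costs'' in the subadditivity of the surface process is misplaced. The paper works with the \emph{smoothed} boundary datum $\overline u_{\zeta,\nu}$ (not the jump $u_{\zeta,\nu}$) and with boxes $T_\nu(A')=M_\nu R_\nu(A'\times[-c,c))$ whose thickness $2M_\nu c$ scales with the lateral size of $A'$. For a partition $A'=\bigcup_i A'_i$, the sub-boxes share the datum $(\overline u_{\zeta,\nu},1)$ on their common lateral faces, so competitors glue with no interface cost; on the residual set $T_\nu(A')\setminus\bigcup_i T_\nu(A'_i)$ one simply extends by $(\overline u_{\zeta,\nu},1)$, which has zero energy there since $|\,\cdot\,\nu|>\tfrac12$. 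Subadditivity is therefore \emph{exact}---there is nothing to absorb. Note also that with this choice $T_\nu(Q'_r)=Q^\nu_{M_\nu r}$, so the ergodic-theorem limit is directly the cube limit; your fixed-thickness slab would instead require an extra slab-to-cube comparison.

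Second, and more substantively, the claim that ``the independence of $x$ is automatic from covariance plus stationarity'' underestimates the difficulty. The $(n-1)$-dimensional process is covariant only under translations in $\Pi^\nu\cap\Z^n$, while the centre $rx$ drifts by $r(x\cdot\nu)$ in the $\nu$-direction as $r\to\infty$; the ergodic theorem says nothing about this drift. The paper's fix (Steps~3--5 of Proposition~\ref{p:surface part stochastic}) uses the full $\Z^n$-stationarity of $f$ differently: one first shows, by an argument borrowed from \cite{CDMSZStochom}, that for $\omega$ in a full-measure set and every $z\in\Z^n$ the limit along integer radii with centres $-kz$ equals $g\homm(\omega,\zeta,\nu)$; then one approximates $rx$ by such integer centres and uses Lemma~\ref{l:lemma prob di minimo superficie su cubi diversi} to compare nested cubes, controlling the boundary-datum mismatch by $O(r^{n-2})$. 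This is a genuine additional step---precisely the ``bulk of the technical work'' you anticipate---and it does not reduce to covariance of the $(n-1)$-dimensional process.
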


We are now in a position to state the main result of this paper.

\begin{theorem}[Almost sure $\Gamma$-convergence]\label{t:Stochastic homogenisation}
Let $f$ be a stationary random integrand. For $\eps>0$ and $\omega \in \Omega$ let $\Functeps_{\eps}(\omega)$ be the functionals defined in \eqref{e:Random Functionals}. Then, there exists $\Omega'\in \mathcal{T}$ with $P(\Omega')=1$ such that 
for every $\omega \in \Omega'$, $A\in \corA$, and $(u,v)\in L^1_{\textup{loc}}(\R^n,\R^{N+1})$ we have 
 \begin{equation*}
    \Gamma(L^1_{\textup{loc}}(\R^n,\R^{N+1}))\text{-}\lim_{\eps \to 0}\Functeps_{\eps}(\omega)(u,v,A)=\Functeps\homm(\omega)(u,v,A),
\end{equation*}
where $\Functeps\homm(\omega):L^1_{\textup{loc}}(\R^n,\R^{N+1})\times \corA\longrightarrow [0,\infty]$ is defined as
 \begin{equation*}
     \Functeps\homm(\omega)(u,v,A):=
     \begin{cases}
     \displaystyle \int_A f\homm(\omega,\nabla u) \dx + \int_A f^{\infty}\homm\textstyle{(\omega,\frac{\dd D^cu}{\dd |D^cu|})}\dd |D^cu| + \displaystyle\int_{J_u\cap A}g\homm(\omega,[u],\nu_u) \dd \calH^{n-1} \\[4pt]
 \hspace{4.8cm}\text{if $u\in GBV(A,\R^N)$ and $v=1$ for $\calL^n$-a.e. $x\in A$}    
 \cr
 +\infty  \hspace{4.2cm} \text{otherwise}
     \end{cases}
\end{equation*}
with $f\homm$ and $g\homm$ as in Theorem~\ref{t:Homogenised Formulas}. 

If in addition $f$ is ergodic, then the functional $\Functeps\homm$ is deterministic.
\end{theorem}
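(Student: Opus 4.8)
The plan is to derive Theorem~\ref{t:Stochastic homogenisation} as a corollary of the deterministic analysis (Theorem~\ref{t:Deterministic Gamma-conv}, referenced in the introduction) combined with the probabilistic verification of the cell-formula hypotheses provided by Theorem~\ref{t:Homogenised Formulas}. Concretely, the deterministic theory says: whenever $f\in\mathcal F(C,\alpha)$ satisfies the two limit assumptions \eqref{e:esistenza limite per fhom} and \eqref{e:esistenza limite per ghom}, the whole family $\Functeps_\eps(\cdot,\cdot,A)$ $\Gamma$-converges (with respect to strong $L^1_{\textup{loc}}$-convergence) to the free-discontinuity functional whose bulk, Cantor and surface densities are, respectively, $f\homm$, $f^\infty\homm$ and $g\homm$, these being exactly the limits in \eqref{e:esistenza limite per fhom}, its recession-function analogue, and \eqref{e:esistenza limite per ghom}. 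So the task reduces to checking that, for a stationary random integrand $f$, almost every realization $f(\omega,\cdot,\cdot)$ lies in $\mathcal F(C,\alpha)$ and satisfies those two assumptions with $x$-independent limits.

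First I would invoke Theorem~\ref{t:Homogenised Formulas}: it produces a set $\Omega'\in\mathcal T$ of full probability on which, for every $\omega\in\Omega'$, the limits
\[
\lim_{r\to+\infty}\frac{\minprobv^{f_\omega}(\ell_\xi,Q_r(rx))}{r^n}=f\homm(\omega,\xi),\qquad
\lim_{r\to+\infty}\frac{\minprobs^{f^\infty_\omega}(u_{rx,\zeta,\nu},Q^\nu_r(rx))}{r^{n-1}}=g\homm(\omega,\zeta,\nu)
\]
exist and are independent of $x$ (and, for the bulk one, of $\nu,k$). Comparing these quantities with the infima written in \eqref{e:esistenza limite per fhom} and \eqref{e:esistenza limite per ghom} — after translating and rescaling the competitor so that the cube $Q_r(rx)$ plays the role of $Q_r(rx)$ in the paper's notation, and noting $\minprobs^{f^\infty_\omega}$ is precisely the infimum of $S^{f^\infty_\omega}$ with the prescribed boundary datum — one sees the hypotheses of the deterministic theorem are met for every $\omega\in\Omega'$. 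Intersecting $\Omega'$ with the full-measure set on which $\omega\mapsto f(\omega,\cdot,\cdot)\in\mathcal F(C,\alpha)$ (which holds for every $\omega$ by definition of random integrand, so no intersection is even needed here) keeps full probability.

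Then, for fixed $\omega\in\Omega'$, I would apply Theorem~\ref{t:Deterministic Gamma-conv} to the integrand $f(\omega,\cdot,\cdot)$: it yields, for every $A\in\corA$ and every $(u,v)\in L^1_{\textup{loc}}(\R^n,\R^{N+1})$,
\[
\Gamma(L^1_{\textup{loc}})\text{-}\lim_{\eps\to0}\Functeps_\eps(\omega)(u,v,A)
=\int_A f\homm(\omega,\nabla u)\,\dx+\int_A f^\infty\homm\Big(\omega,\tfrac{\dd D^cu}{\dd|D^cu|}\Big)\,\dd|D^cu|+\int_{J_u\cap A}g\homm(\omega,[u],\nu_u)\,\dd\calH^{n-1}
\]
when $u\in GBV(A,\R^N)$ and $v=1$ a.e., and $+\infty$ otherwise; this is exactly $\Functeps\homm(\omega)(u,v,A)$. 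The measurability of $f\homm$ in $(\omega,\xi)$ and of $g\homm$ in $(\omega,\zeta,\nu)$, and the identification of $f^\infty\homm$ as the recession function of $f\homm$, are all supplied by Theorem~\ref{t:Homogenised Formulas}. Finally, if $f$ is ergodic, the same theorem asserts $f\homm$ and $g\homm$ are $\omega$-independent, hence so is $\Functeps\homm$, giving the last assertion.

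The only genuinely substantive content here lives in the two theorems being quoted, so at the level of this corollary there is no real obstacle; the one point requiring a little care is the bookkeeping that matches the auxiliary minimisation problems $\minprobv^{f_\omega}$, $\minprobs^{f^\infty_\omega}$ used in Theorem~\ref{t:Homogenised Formulas} with the cell formulas \eqref{e:esistenza limite per fhom}–\eqref{e:esistenza limite per ghom} demanded by the deterministic theorem — in particular checking that the side-length-$r$, center-$rx$ cubes and the $r^n$ versus $r^{n-1}$ scalings are aligned, and that the boundary datum $u_{rx,\zeta,\nu}$ is the correct trace — and confirming that both full-probability sets can be taken to be the \emph{single} set $\Omega'$ that then appears in the statement.
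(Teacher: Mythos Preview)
Your proposal is correct and takes essentially the same approach as the paper: the paper's proof is a one-line reference to combining Theorem~\ref{t:Deterministic Gamma-conv} with the propositions establishing the almost sure existence of $f\homm$, $f^\infty\homm$, and $g\homm$ (which together constitute Theorem~\ref{t:Homogenised Formulas}). Your bookkeeping remarks about matching the cell formulas and using a single $\Omega'$ are all in order, and the observation that $f(\omega,\cdot,\cdot)\in\mathcal F(C,\alpha)$ for \emph{every} $\omega$ by definition of random integrand is exactly right.
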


The proof of Theorem~\ref{t:Stochastic homogenisation} will be carried out in a number of steps in the next sections. 
 
\section{Properties of the homogenized integrands}\label{s:homgenised densities}
In this section we prove a number of structural properties of the homogenized integrands $f\homm$ and $g\homm$. 

For later use, it is convenient to work in a deterministic framework where the dependence of $f\homm$ and $g\homm$ on $\omega$ is not taken into account. Then, as a consequence, we need to assume that the limits defining $f\homm$ and $g\homm$ exist and are spatially homogeneous. 

We start with $f\homm$. 

\begin{proposition}\label{p:prop_fhom}
Let $f\in \mathcal{F}(C,\alpha)$ and assume that for every $x\in \R^n$ and $\xi \in \R^{N\times n}$ the limit
\begin{equation}\label{e:fhom-ass}
 \lim_{r\to +\infty}\frac{\minprobv^f(\ell_{\xi},Q_{r}(rx))}{r^{n}}=:f\homm(\xi)
\end{equation}
exists (and is independent of $x$). 
Then, $f\homm$ satisfies the following properties:
\begin{enumerate}
\item $f\homm$ is quasi-convex;
    \item for every $\xi_1,\xi_2\in \R^{N\times n}$
    \begin{equation*}
        |f\homm(\xi_1)-f\homm(\xi_2)|\leq K|\xi_1-\xi_2|,
    \end{equation*}
    where $K$ is a constant that depends only on $n, N$ and $C$;
\item for every $\xi\in \R^{N\times n}$
    \begin{equation}\label{e:crescita lineare fhom}
        C^{-1}|\xi|\leq f\homm(\xi)\leq C(|\xi|+1).
    \end{equation}
\end{enumerate}
\end{proposition}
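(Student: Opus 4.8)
The three claimed properties of $f\homm$ — quasiconvexity, Lipschitz continuity, and linear growth/coercivity — are all inherited from the corresponding Dirichlet minimisation problems $\minprobv^f(\ell_\xi,Q_r(rx))$ by passing to the limit. I would organise the proof by first recording elementary monotonicity and comparison properties of the map $(\xi,A)\mapsto \minprobv^f(\ell_\xi,A)$, then sending $r\to\infty$.

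\smallskip

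\textbf{Step 1: Linear growth and coercivity (item (iii)).} This is the easiest part and I would do it first, as a warm-up and because the bounds are needed to make sense of the rest. Fix $\xi$ and $x$. For the upper bound, test the minimisation problem defining $\minprobv^f(\ell_\xi,Q_r(rx))$ with the competitor $u=\ell_\xi$ itself (which is admissible since it agrees with its own boundary datum): by the growth bound \ref{e:crescita lineare},
\[
\minprobv^f(\ell_\xi,Q_r(rx))\le \int_{Q_r(rx)} C(|\xi|+1)\dx = C(|\xi|+1)\, r^n,
\]
so dividing by $r^n$ and letting $r\to\infty$ gives $f\homm(\xi)\le C(|\xi|+1)$. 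For the lower bound, the coercivity in \ref{e:crescita lineare} gives $\minprobv^f(\ell_\xi,Q_r(rx))\ge C^{-1}\inf\{\int_{Q_r(rx)}|\nabla u|\dx : u=\ell_\xi \text{ on } \partial Q_r(rx)\}$, and by Jensen's inequality (or simply the fact that the average of $\nabla u$ over the cube is fixed by the boundary datum, since $\int_{Q_r(rx)}\nabla u\,\dx = \xi\, r^n$ by the divergence theorem applied componentwise) one has $\int_{Q_r(rx)}|\nabla u|\dx \ge |\int_{Q_r(rx)}\nabla u\,\dx| = |\xi|\, r^n$. Dividing by $r^n$ and passing to the limit yields $f\homm(\xi)\ge C^{-1}|\xi|$, which is \eqref{e:crescita lineare fhom}.

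\smallskip

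\textbf{Step 2: Lipschitz continuity (item (ii)).} The idea is that perturbing the boundary datum from $\ell_{\xi_1}$ to $\ell_{\xi_2}$ changes the minimal value by a controlled amount. Given a near-optimal competitor $u_1$ for $\minprobv^f(\ell_{\xi_1},Q_r(rx))$, use $u_2:=u_1+\ell_{\xi_2-\xi_1}$ as a competitor for $\minprobv^f(\ell_{\xi_2},Q_r(rx))$; then $\nabla u_2 = \nabla u_1 + (\xi_2-\xi_1)$, and by \ref{e:crescita lineare},
\[
f(x,\nabla u_2)\le C(|\nabla u_1| + |\xi_2-\xi_1| + 1)\le f(x,\nabla u_1) + C|\xi_2-\xi_1| + \big(f(x,\nabla u_1) \text{ terms}\big).
\]
This naive estimate is not quite Lipschitz because of the additive $C(|\nabla u_1|+1)$ term, so the standard trick is needed: one should not compare $f(x,\nabla u_2)$ directly with $f(x,\nabla u_1)$, but rather exploit that quasiconvex functions with linear growth are automatically Lipschitz with a dimensional constant. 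Concretely, I expect the cleanest route is to first prove quasiconvexity (Step 3) and then invoke the general fact that a quasiconvex $h:\R^{N\times n}\to[0,\infty)$ satisfying $h(\xi)\le C(|\xi|+1)$ is Lipschitz with constant $K=K(n,N,C)$ (this is classical — it follows from quasiconvexity $\Rightarrow$ separate convexity in each entry, plus the standard one-dimensional estimate on the slope of a convex function bounded above by a linear function). So I would reorder: prove (i), then deduce (ii) from (i) together with (iii).

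\smallskip

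\textbf{Step 3: Quasiconvexity (item (i)) — the main obstacle.} This is the substantive point. I want to show that for every $\xi$ and every $\varphi\in C_c^\infty(Q_1,\R^N)$,
\[
f\homm(\xi)\le \int_{Q_1} f\homm(\xi+\nabla\varphi(y))\dy.
\]
The strategy is the usual one in stochastic homogenisation (as in \cite{CDMSZStochom,RufZepp}): use the representation of $f\homm$ via the cell formula to build a recovery-type competitor. Fix $\varphi$ and $r$ large; partition a large cube $Q_r$ into small subcubes of side $\delta r$ for $\delta\to 0$; on each subcube, the perturbed affine map $\ell_\xi + \varphi$ is close to an affine map $\ell_{\xi+\nabla\varphi(y_i)}$, and we can glue near-optimal competitors for each $\minprobv^f(\ell_{\xi+\nabla\varphi(y_i)},\cdot)$, paying only a lower-order gluing cost (this is where the linear growth and the freedom to modify $u$ in thin transition layers near the subcube boundaries is used; the cost of a transition layer of width $\eta$ scales like $\eta$ times the length of the datum jump, which is controlled because the competitors already satisfy affine boundary conditions). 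Passing $r\to\infty$, then $\delta\to 0$, and using the independence of the cell formula on the base point $x$ (so that $\minprobv^f(\ell_{\xi'},Q_{\delta r}(\text{center}))/(\delta r)^n\to f\homm(\xi')$ uniformly enough), one obtains the desired inequality. The delicate points are: (a) controlling the gluing error uniformly — here linear growth actually helps, since there is no superlinear penalty for large gradients in thin layers; (b) making sure the base-point independence in \eqref{e:fhom-ass} can be upgraded to a convergence that is uniform over the (bounded set of) gradients $\xi+\nabla\varphi(y)$ appearing, which follows once (ii) is known, giving a mild circularity that is resolved by noting the Lipschitz bound for the *prelimit* functionals $\xi\mapsto \minprobv^f(\ell_\xi,Q_r(rx))/r^n$ is already available with a uniform constant from the argument sketched in Step 2 combined with an approximate quasiconvexity of the prelimit. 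In practice I would prove the uniform Lipschitz bound for the prelimit quantities first, then quasiconvexity of the limit, then read off (ii) and (iii) as corollaries.
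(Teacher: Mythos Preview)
Your proof of (iii) matches the paper's exactly.

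For (ii), the paper takes a different and more direct route than the one you propose. Rather than first establishing quasiconvexity and then invoking the classical implication ``quasiconvex $+$ linear growth $\Rightarrow$ Lipschitz'', the paper proves a trace estimate at the level of the \emph{prelimit} minimum problems: using the relaxation $\overline{J}$ of the functional $J(u,A)=\int_A f(y,\nabla u)\dy$ and \cite[Lemma~3.1, Lemma~4.1.2, Lemma~4.1.3]{BouchitteFonsecaMascarenhas}, one obtains
\[
|\minprobv^f(\ell_{\xi_1},Q_r)-\minprobv^f(\ell_{\xi_2},Q_r)|\le C\int_{\partial Q_r}|\ell_{\xi_1}-\ell_{\xi_2}|\,\dd\calH^{n-1}\le K\,|\xi_1-\xi_2|\,r^n,
\]
and then divides by $r^n$ and passes to the limit. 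This has two advantages over your route: it decouples (ii) from (i), and it delivers exactly the uniform-in-$r$ Lipschitz bound on $\xi\mapsto r^{-n}\minprobv^f(\ell_\xi,Q_r)$ that you yourself identify as needed to run the subcube argument in Step~3. In other words, the paper's trace estimate is precisely the missing ingredient that dissolves the ``mild circularity'' you worry about, without having to appeal to any ``approximate quasiconvexity of the prelimit'' --- a notion you leave vague and which is not obviously available.

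For (i), the paper simply cites \cite[Proposition~5.5, Step~2]{RufZepp} and gives no argument of its own, so your sketch is already more detailed than the paper. Your gluing-on-subcubes outline is the right idea, but as written the resolution of point~(b) is the weak spot; once the prelimit Lipschitz bound from the trace argument is in hand, the locally uniform convergence on compact sets of $\xi$ follows immediately and point~(b) becomes routine.

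In summary: your overall strategy is sound, but the paper's ordering (trace estimate $\Rightarrow$ (ii) directly $\Rightarrow$ (i) via the cited reference) is cleaner and avoids the circularity you flag.
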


\begin{proof}
$(i)$ The quasi-convexity of $f\homm$ defined as in \eqref{e:fhom-ass} is shown in \cite[Proposition 5.5 Step 2]{RufZepp}. 

$(ii)$ Let $\xi_1,\xi_2 \in \R^{N\times n}$ and $r>0$ be fixed. For every $u\in BV(Q_r,\R^N)$ and  $A\in \corA(Q_r)$ consider the auxiliary functional defined as 
\begin{equation*}
    J(u,A):=
    \begin{cases}
      \displaystyle \int_{A}f(y,\nabla u) \dy \; & \textup{if $u\in W^{1,1}(Q_r,\R^N)$} 
      \cr
        +\infty \; & \textup{in} \; BV(Q_r,\R^N) \setminus W^{1,1}(Q_r,\R^N),
    \end{cases}
\end{equation*}
as well as $\overline J(\cdot,A):= sc^{-}(L^1)J(\cdot,A).$

By \ref{e:crescita lineare} we have that $\overline{J}(u,A)\leq C(|Du|(A)+\calL^n(A))$, therefore thanks to \cite[Lemma~3.1 and Lemma~4.1.2]{BouchitteFonsecaMascarenhas} we get 
\begin{align}\label{e:lip}
   |m_{\overline J}(\ell_{\xi_1},Q_r)&-m_{\overline J}(\ell_{\xi_2},Q_r)|  \leq C \|\ell_{\xi_1}-\ell_{\xi_2}\|_{L^1(\partial {{Q_r}})} 
   \nonumber \\
   & \leq C\widehat K|\xi_1-\xi_2|\int_{\partial Q_r}|y| \dd \calH^{n-1}(y)
   \leq \frac{C\widehat K\sqrt{n}}{2}\calH^{n-1}(\partial Q_1)|\xi_1-\xi_2|r^n
\end{align}
where $\widehat K$ depends only on $n$ and $N$, while
\begin{equation*}
    m_{\overline J}(\ell_{\xi},Q_r):=\inf \{\overline J(u,Q_r) \; : \; u\in BV(Q_r,\R^N) \;\; \textup{with $u=\ell_{\xi}$ on $\partial Q_r$} \}.
\end{equation*}
Appealing to \cite[Lemma~4.1.3]{BouchitteFonsecaMascarenhas} we deduce that 
\begin{equation*}
   m_{\overline J}(\ell_{\xi_1},Q_r)=\minprobv^{f}({{\ell_{\xi_1}}},Q_r) \quad \textup{and} \quad  m_{\overline J}(\ell_{\xi_2},Q_r)=\minprobv^{f}({{\ell_{\xi_2}}},Q_r)
\end{equation*}
therefore, combining \eqref{e:fhom-ass} and \eqref{e:lip} readily gives 
\begin{equation*}
    |f\homm(\xi_1)-f\homm(\xi_2)|\leq K|\xi_1-\xi_2|,
\end{equation*}
with $K:=\frac{C\widehat K\sqrt{n}}{2}\calH^{n-1}(\partial Q_1)$.

$(iii)$  Let $\xi\in \R^{N\times n}$, $r>0$, and $u\in W^{1,1}(Q_r,\R^N)$ with $u=\ell_{\xi}$ on $\partial Q_r$ be arbitrary and fixed. 
By \ref{e:crescita lineare} we have
\begin{equation*}
    C^{-1}|\xi|r^n = C^{-1}\Big|\int_{Q_r}\nabla u \;\dx \Big|\leq C^{-1}\int_{Q_r}|\nabla u|\dx \leq \int_{Q_r}f(x,\nabla u)\dx, 
\end{equation*}
therefore passing to the inf on $u$ we immediately get
\[
C^{-1}|\xi|r^n \leq \minprobv^f(\ell_{\xi},Q_r)
\]
for every $\xi \in \R^{N\times n}$ and $r>0$. Hence the {{first}} inequality in \eqref{e:crescita lineare fhom} follows by \eqref{e:fhom-ass}. 

The second inequality in \eqref{e:crescita lineare fhom} is a consequence of the trivial inequality
\begin{equation*}
    \minprobv^f(\ell_{\xi},Q_r)\leq \int_{Q_r}f(x,\nabla \ell_{\xi}) \dx \leq C(|\xi|+1)r^n
\end{equation*}
which, in turn, is implied by \ref{e:crescita lineare}.
\end{proof}

Below we prove that $f^\infty\homm$ can be equivalently expressed as the limit of suitable (scaled) minimisation problems.  To prove it we make use of the following lemma. 
\begin{lemma}\label{l:lemma funz recess integrale}
Let $g\in \mathcal{F}(C,\alpha)$, $A\in \corA$, $(u,v)\in W^{1,1}(A,\R^N)\times W^{1,2}(A,[0,1])$, then for every $t>0$ we have that 
\begin{equation*}
    \int_A \Big|v^2g^{\infty}(y,\nabla u) -v^2\frac{g(y,t\nabla u)}{t}\Big| \dy \leq \frac{K}{t}\calL^n(A)+ \frac{K}{t^{\alpha}}\calL^n(A)^{\alpha}\Big(\int_A v^2|\nabla u|\dy\Big)^{1-\alpha},
    \end{equation*}
\end{lemma}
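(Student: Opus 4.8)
The plan is to reduce the claimed $L^1$-bound to the pointwise estimate \eqref{e:H4 BFM} from Remark~\ref{r:oss. funz. recessione}, and then to separate the integration domain according to the size of $|\nabla u(y)|$. First I would fix $t>0$ and split $A$ into the two measurable sets $A_1:=\{y\in A: |\nabla u(y)|\le 1/t\}$ and $A_2:=\{y\in A:|\nabla u(y)|>1/t\}$; on $A_1$ the gradient is small, while on $A_2$ the rescaled argument $t\nabla u(y)$ has norm exceeding $1$, which is precisely the regime in which \eqref{e:H4 BFM} applies (with $L=1$, say). Throughout I use $0\le v\le 1$, so $v^2\le 1$ pointwise.

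On $A_2$, writing $\xi=\nabla u(y)$ and $\eta=\xi/|\xi|\in\Sf^{n-1}$, $s=t|\xi|>1$, and using the $1$-homogeneity of $\eta\mapsto g^\infty(y,\eta)$ together with $g(y,t\xi)=g(y,s\eta)$, the pointwise bound \eqref{e:H4 BFM} gives
\begin{equation*}
\Big|g^{\infty}(y,\nabla u(y))-\frac{g(y,t\nabla u(y))}{t}\Big| = |\xi|\,\Big|g^{\infty}(y,\eta)-\frac{g(y,s\eta)}{s}\Big| \le |\nabla u(y)|\,\frac{M}{(t|\nabla u(y)|)^{\alpha}} = \frac{M}{t^{\alpha}}\,|\nabla u(y)|^{1-\alpha}.
\end{equation*}
Multiplying by $v^2\le 1$ and integrating over $A_2$, then applying H\"older's inequality with exponents $1/(1-\alpha)$ and $1/\alpha$ to the pair $v^{2(1-\alpha)}|\nabla u|^{1-\alpha}$ and $v^{2\alpha}$ (recalling $v^2\le 1$ so $v^{2\alpha}\le 1$ and $v^{2(1-\alpha)}\le 1$, one can simply bound $v^2|\nabla u|^{1-\alpha}\le (v^2|\nabla u|)^{1-\alpha}$ since $v^2\le 1$ implies $v^{2\alpha}\le 1$), I get
\begin{equation*}
\int_{A_2} v^2\Big|g^{\infty}(y,\nabla u)-\frac{g(y,t\nabla u)}{t}\Big|\dy \le \frac{M}{t^{\alpha}}\int_{A_2}\big(v^2|\nabla u|\big)^{1-\alpha}\dy \le \frac{M}{t^{\alpha}}\,\calL^n(A)^{\alpha}\Big(\int_A v^2|\nabla u|\dy\Big)^{1-\alpha},
\end{equation*}
where the last step is H\"older on the exponents $1/\alpha$, $1/(1-\alpha)$ applied to the product $1\cdot (v^2|\nabla u|)^{1-\alpha}$ over $A_2\subseteq A$.

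On $A_1$, where $|t\nabla u(y)|\le 1$, I estimate the two terms separately using the growth bounds. By \ref{e:crescita lineare}, $\tfrac1t g(y,t\nabla u(y))\le \tfrac Ct(t|\nabla u|+1)\le C|\nabla u|+\tfrac Ct\le \tfrac{C}{t}+\tfrac Ct = \tfrac{2C}{t}$ on $A_1$; and by \eqref{e:crescita funz recess}, $g^\infty(y,\nabla u(y))\le C|\nabla u(y)|\le \tfrac Ct$ on $A_1$. Hence the integrand on $A_1$ is bounded by $v^2\,\tfrac{3C}{t}\le \tfrac{3C}{t}$, so its integral over $A_1\subseteq A$ is at most $\tfrac{3C}{t}\calL^n(A)$. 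Adding the two contributions and taking $K:=\max\{3C,M\}$ (or enlarging $K$ as needed, recalling $M$ depends on $C,\alpha$ and the choice $L=1$), I obtain exactly the asserted inequality. The only mild subtlety — and the main thing to be careful about — is the H\"older juggling on $A_2$ to turn $\int_{A_2}(v^2|\nabla u|)^{1-\alpha}$ into $\calL^n(A)^{\alpha}(\int_A v^2|\nabla u|)^{1-\alpha}$; this is routine once one notes $v^2\le 1$ lets one pull the power outside, but it must be written cleanly.
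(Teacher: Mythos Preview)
Your proof is correct. The paper takes a slightly different and somewhat shorter route: it applies assumption \ref{e:limite funz rec} directly at every point (without normalising $\nabla u$), obtaining
\[
\int_A \Big|v^2g^{\infty}(y,\nabla u) -v^2\frac{g(y,t\nabla u)}{t}\Big| \dy \leq \frac{C}{t}\calL^n(A)+ \frac{C}{t}\int_A v^{2(1-\alpha)}g(y,t\nabla u)^{1-\alpha} \dy,
\]
and then a single application of H\"older (or Jensen for $s\mapsto s^{1-\alpha}$) together with \ref{e:crescita lineare} gives the result; no case splitting is needed. Your argument instead passes through the derived estimate \eqref{e:H4 BFM} for unit vectors, which forces the split $A_1\cup A_2$ according to whether $t|\nabla u|$ exceeds $1$; this is a minor detour but entirely valid, and the H\"older step on $A_2$ is carried out cleanly. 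Both approaches ultimately rely on the same ingredients (\ref{e:crescita lineare}, \ref{e:limite funz rec}, and concavity of $s\mapsto s^{1-\alpha}$); the paper's version just avoids the decomposition by using \ref{e:limite funz rec} in its raw form rather than the unit-vector reformulation.
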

where $K$ is a positive constant depending only on $C$ and $\alpha$.
\begin{proof}
Thanks to \ref{e:limite funz rec}, $0\leq v\leq 1$ and $\alpha\in (0,1)$ we have that
\begin{equation*}
      \int_A \Big|v^2g^{\infty}(y,\nabla u) -v^2\frac{g(y,t\nabla u)}{t}\Big| \dy \leq \frac{C}{t}\calL^n(A)
      + \frac{C}{t}\int_A v^{2(1-\alpha)}g(y,t\nabla u)^{1-\alpha} \dy\,,
\end{equation*}
thus by Jensen's Inequality we deduce that
\begin{equation*}
     \frac{C}{t}\int_A v^{2(1-\alpha)}g(y,t\nabla u)^{1-\alpha} \dy \leq \frac{C}{t}\calL^n(A)^{\alpha}\Big(\int_A v^2g(y,t\nabla u)\dy\Big)^{1-\alpha}\,.
\end{equation*}
Eventually, we conclude by \ref{e:crescita lineare}.
\end{proof}
\begin{proposition}\label{p:finfhom prop}
Let $f\in \mathcal{F}(C,\alpha)$ and assume that for every $x\in \R^n$, $\nu \in \Sf^{n-1}$, $k\in \N$, and $\xi \in \R^{N\times n}$
\begin{equation}\label{e:fhom-ass-ii}
 \lim_{r\to +\infty}\frac{\minprobv^f(\ell_{\xi},Q^{\nu,k}_{r}(rx))}{k^{n-1}r^{n}}=f\homm(\xi)
\end{equation}
where $f\homm$ is as in \eqref{e:fhom-ass}. 
Let $f^\infty\homm$ be the recession 
function of $f\homm$, then for every $x\in \R^n$, $\xi\in \R^{n\times N}$, $\nu \in \Sf^{n-1}$ and $k\in \N$ we have
\[
   f^{\infty}\homm(\xi)=\lim_{r\to +\infty} \frac{\minprobv^{f^{\infty}}(\ell_{\xi},Q^{\nu,k}_{r}(rx))}{k^{n-1}r^n},
\]
hence, in particular, $f^{\infty}\homm=(f^\infty)\homm$.
\end{proposition}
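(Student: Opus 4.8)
The plan is to sandwich $\minprobv^{f^\infty}(\ell_\xi,\cdot)$ between two quantities built from $\tfrac1t\,\minprobv^{f}(\ell_{t\xi},\cdot)$ on the rectangles $Q^{\nu,k}_r(rx)$, then pass to the limit \emph{first} as $r\to+\infty$ — so that hypothesis \eqref{e:fhom-ass-ii} becomes available at the \emph{dilated} boundary datum $\ell_{t\xi}$ — and \emph{afterwards} as $t\to+\infty$, thereby recovering the recession function $f^\infty\homm(\xi)=\limsup_{t\to+\infty}f\homm(t\xi)/t$. The bridge between the two minimum problems is Lemma~\ref{l:lemma funz recess integrale} applied with $v\equiv1$ and $g=f$: if $u$ is admissible for $\minprobv^{f^\infty}(\ell_\xi,A)$ then $tu$ is admissible for $\minprobv^{f}(\ell_{t\xi},A)$ and conversely, and the Lemma controls $\big|\int_A f^\infty(y,\nabla u)\,\dy-\tfrac1t\int_A f(y,t\nabla u)\,\dy\big|$ by $\tfrac Kt\calL^n(A)+\tfrac K{t^\alpha}\calL^n(A)^\alpha\big(\int_A|\nabla u|\,\dy\big)^{1-\alpha}$. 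Preliminarily I would record that $f^\infty\in\mathcal F(C,\alpha)$: it is jointly measurable as a pointwise limit of measurable functions, continuous in $\xi$ by \ref{e:semicontinuità f e finf}, satisfies $C^{-1}|\xi|\le f^\infty(x,\xi)\le C|\xi|$ by Remark~\ref{r:oss. funz. recessione}, and, being positively $1$-homogeneous, has $(f^\infty)^\infty=f^\infty$, so that \ref{e:limite funz rec} holds trivially; in particular $\minprobv^{f^\infty}$ and the object $(f^\infty)\homm$ are well defined.

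For the upper bound I fix $x,\nu,k,\xi$, set $A=A_r:=Q^{\nu,k}_r(rx)$ (so $\calL^n(A_r)=k^{n-1}r^n$) and fix $t\ge1$. I pick $u_t\in W^{1,1}(A,\R^N)$ with $u_t=\ell_{t\xi}$ on $\partial A$ and $\int_A f(y,\nabla u_t)\,\dy\le\minprobv^{f}(\ell_{t\xi},A)+1$; then $\tfrac1t u_t=\ell_\xi$ on $\partial A$ is admissible for $\minprobv^{f^\infty}(\ell_\xi,A)$. Combining $|\xi|\le Cf(y,\xi)$ from \ref{e:crescita lineare} with the crude bound $\minprobv^{f}(\ell_{t\xi},A)\le C(t|\xi|+1)\calL^n(A)$ (test with $\ell_{t\xi}$) gives $\int_A|\nabla(\tfrac1t u_t)|\,\dy\le C_2\,\calL^n(A)$ as soon as $\calL^n(A)\ge1$, with $C_2$ independent of $r$ and of $t\ge1$; Lemma~\ref{l:lemma funz recess integrale} then yields
\[
\minprobv^{f^\infty}(\ell_\xi,A)\le\frac1t\,\minprobv^{f}(\ell_{t\xi},A)+\frac1t+\Big(\frac Kt+\frac{KC_2^{1-\alpha}}{t^\alpha}\Big)\calL^n(A).
\]
Dividing by $k^{n-1}r^n$, letting $r\to+\infty$ and using \eqref{e:fhom-ass-ii} at $t\xi$ produces $\limsup_{r\to+\infty}\tfrac{\minprobv^{f^\infty}(\ell_\xi,A_r)}{k^{n-1}r^n}\le\tfrac{f\homm(t\xi)}{t}+\tfrac Kt+\tfrac{KC_2^{1-\alpha}}{t^\alpha}$ for every $t\ge1$; taking the $\limsup$ as $t\to+\infty$ of the right-hand side gives $\le f^\infty\homm(\xi)$.

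The reverse inequality is symmetric: I take $w\in W^{1,1}(A,\R^N)$ with $w=\ell_\xi$ on $\partial A$ and $\int_A f^\infty(y,\nabla w)\,\dy\le\minprobv^{f^\infty}(\ell_\xi,A)+1$; then $tw=\ell_{t\xi}$ on $\partial A$ tests $\minprobv^{f}(\ell_{t\xi},A)$, and $\int_A|\nabla w|\,\dy\le C\int_A f^\infty(y,\nabla w)\,\dy\le C(C|\xi|\calL^n(A)+1)\le C_3\,\calL^n(A)$ for $\calL^n(A)\ge1$ (by \eqref{e:crescita funz recess} and $\minprobv^{f^\infty}(\ell_\xi,A)\le C|\xi|\calL^n(A)$), whence Lemma~\ref{l:lemma funz recess integrale} gives
\[
\minprobv^{f^\infty}(\ell_\xi,A)+1\ge\frac1t\,\minprobv^{f}(\ell_{t\xi},A)-\Big(\frac Kt+\frac{KC_3^{1-\alpha}}{t^\alpha}\Big)\calL^n(A);
\]
dividing, sending $r\to+\infty$ and then $t\to+\infty$ as before gives $\liminf_{r\to+\infty}\tfrac{\minprobv^{f^\infty}(\ell_\xi,A_r)}{k^{n-1}r^n}\ge f^\infty\homm(\xi)$. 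The two bounds show that the limit exists and equals $f^\infty\homm(\xi)$, independently of $x,\nu,k$; repeating the same computation with the axis-aligned cube $Q_r(rx)$ and hypothesis \eqref{e:fhom-ass} in place of $Q^{\nu,k}_r(rx)$ and \eqref{e:fhom-ass-ii} identifies this limit with $(f^\infty)\homm(\xi)$, so $f^\infty\homm=(f^\infty)\homm$. I do not expect a real obstacle beyond the bookkeeping: the order of the limits is essential ($r\to+\infty$ must precede $t\to+\infty$, since \eqref{e:fhom-ass-ii} holds only at fixed $t$), and one must check that $C_2,C_3$ are uniform in $r$ (true once $k^{n-1}r^n\ge1$) and in $t\ge1$.
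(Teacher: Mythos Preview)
Your proposal is correct and follows essentially the same approach as the paper's own proof: both arguments use Lemma~\ref{l:lemma funz recess integrale} (with $v\equiv 1$) to compare near-minimisers of $\minprobv^{f^\infty}(\ell_\xi,\cdot)$ with those of $\tfrac1t\minprobv^{f}(\ell_{t\xi},\cdot)$ on the rectangles $Q^{\nu,k}_r(rx)$, exploit the uniform bound on $\int_A|\nabla u|\,\dy$ coming from the linear growth, and then pass to the limit first in $r$ (invoking \eqref{e:fhom-ass-ii} at the dilated datum $\ell_{t\xi}$) and afterwards in $t$. The only cosmetic difference is that the paper takes near-minimisers of $\minprobv^{f^\infty}$ first and compares with $f_t(y,\xi):=f(y,t\xi)/t$, while you start from near-minimisers of $\minprobv^{f}(\ell_{t\xi},\cdot)$; these are the two symmetric halves of the same sandwich, and the paper's ``exchanging the role of $f_t$ and $f^\infty$'' is exactly your second inequality.
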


\begin{proof}
Let $x\in \R^n$, $\xi\in \R^{n\times N}$, $\nu \in \Sf^{n-1}$, $k\in \N$ and $\eta\in (0,1)$ be fixed. By \eqref{e:def minprobv}, for every $r>0$ there exists $u_r\in W^{1,1}(Q^{\nu,k}_r(rx))$ with $u_r=\ell_{\xi}$ on $\partial Q^{\nu,k}_r(rx)$, such that
\begin{equation}\label{e:stima 1 prop finfhom}
    \Functminv^{f^{\infty}}(u_r,Q^{\nu,k}_r(rx)) \leq \minprobv^{f^{\infty}}(\ell_{\xi},Q^{\nu,k}_r(rx))+\eta k^{n-1}r^n,
\end{equation}
and
\begin{equation*}
    C^{-1}\int_{Q^{\nu,k}_r(rx)}|\nabla u_r|\dy \leq \minprobv^{f^{\infty}}(\ell_{\xi},Q^{\nu,k}_r(rx))+\eta k^{n-1}r^n \leq C(|\xi|+1)k^{n-1}r^n.
\end{equation*}
In particular, by Lemma~\ref{l:lemma funz recess integrale}, for every $t\geq 1$ we obtain
\begin{align*}
  \int_{Q^{\nu,k}_{r}(rx)}&|f^{\infty}(y,\nabla u_r)-\frac{1}{t}f(y,t\nabla u_r)|\dy \\
  &\leq \frac{K}{t}k^{n-1}r^n+ \frac{K}{t^{\alpha}}(k^{n-1}r^n)^{\alpha}\Big(\int_{Q^{\nu,k}_r(rx)} |\nabla u_r|\dy\Big)^{1-\alpha} \leq \frac{1}{t^{\alpha}}\Hat K k^{n-1}r^n\,,
\end{align*}
where $\Hat K$ depends only on $C$, $\alpha$ and $\xi$. Hence, for $t\geq 1$,
\begin{equation*}
     \Functminv^{f_t}(u_r,Q^{\nu,k}_r(rx)) \leq  \Functminv^{f^{\infty}}(u_r,Q^{\nu,k}_r(rx)) +\frac{1}{t^{\alpha} }\Hat Kk^{n-1}r^n,
\end{equation*}
where $f_t(y,\xi):=\frac{f(y,t\xi)}{t}$ and consequently, by \eqref{e:stima 1 prop finfhom},
\begin{equation}\label{e:stima 2 prop finfhom}
    \frac{\minprobv^{f_t}(\ell_{\xi},Q^{\nu,k}_r(rx))}{k^{n-1}r^n}\leq \frac{\minprobv^{f^{\infty}}(\ell_{\xi},Q^{\nu,k}_r(rx))}{k^{n-1}r^n}+\eta+\frac{\Hat K}{t^{\alpha}}.
\end{equation}
Observing that $\minprobv^{f_t}(\ell_{\xi},Q^{\nu,k}_r(rx))=\frac{1}{t}\minprobv^{f}(\ell_{t\xi},Q^{\nu,k}_r(rx))$, thanks to the linearity of $\xi \mapsto \ell_{\xi}$, we get
\begin{equation}\label{e:equaz 1 prop finfhom}
 \lim_{r\to \infty}\frac{\minprobv^{f_t}(\ell_{\xi},Q^{\nu,k}_r(rx))}{k^{n-1}r^n}=  \lim_{r\to \infty}\frac{\minprobv^{f}(\ell_{t\xi},Q^{\nu,k}_r(rx))}{tk^{n-1}r^n} =\frac{f\homm(t\xi)}{t},
\end{equation}
by \eqref{e:fhom-ass-ii}. Hence, from \eqref{e:stima 2 prop finfhom} and \eqref{e:equaz 1 prop finfhom}, letting $\eta\to 0$, we have
\begin{equation*}
    \limsup_{t\to +\infty}\frac{f\homm(t\xi)}{t}\leq \liminf_{r\to +\infty}\frac{\minprobv^{f^{\infty}}(\ell_{\xi},Q^{\nu,k}_r(rx))}{k^{n-1}r^n}.
\end{equation*}
Exchanging the role of $f_t$ and $f^{\infty}$ and arguing analogously, we obtain
\begin{equation*}
    \limsup_{r\to +\infty}\frac{\minprobv^{f^{\infty}}(\ell_{\xi},Q^{\nu,k}_r(rx))}{k^{n-1}r^n} \leq \liminf_{t\to +\infty}\frac{f\homm(t\xi)}{t}.\qedhere
\end{equation*}
\end{proof}

To prove the properties satisfied by $g\homm$, we first establish some technical results in the spirit of \cite[Section~3]{BouchitteFonsecaMascarenhas}.
\begin{lemma}\label{l:bordi regolarizzati}
 Let $x\in \R^n$, $r>1$, $\nu \in \Sf^{n-1}$, and $w_1,w_2\in BV_{\textup{loc}}(\R^n,\R^N)$, then we have that
 \begin{equation*}
     |\minprobs^{f^{\infty}}(w_1,Q^{\nu}_r(x))-\minprobs^{f^{\infty}}(w_2,Q^{\nu}_{r}(x))|\leq C\int_{\partial Q^{\nu}_r(x)}|w_1-w_2|\calH^{n-1}.
 \end{equation*}
\end{lemma}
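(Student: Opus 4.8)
The plan is to compare the two minimum problems $\minprobs^{f^{\infty}}(w_1,Q^{\nu}_r(x))$ and $\minprobs^{f^{\infty}}(w_2,Q^{\nu}_r(x))$ by a standard ``competitor-patching'' argument near the boundary of the cube. By symmetry it suffices to prove the one-sided bound
\[
\minprobs^{f^{\infty}}(w_1,Q^{\nu}_r(x)) \leq \minprobs^{f^{\infty}}(w_2,Q^{\nu}_r(x)) + C\int_{\partial Q^{\nu}_r(x)}|w_1-w_2|\,\calH^{n-1},
\]
and then exchange the roles of $w_1$ and $w_2$. Fix $\delta>0$ and pick an almost-optimal pair $(u_2,v_2)\in W^{1,1}(Q^{\nu}_r(x),\R^N)\times W^{1,2}(Q^{\nu}_r(x),[0,1])$ for $\minprobs^{f^{\infty}}(w_2,Q^{\nu}_r(x))$, so that $(u_2,v_2)=(w_2,1)$ on $\partial Q^{\nu}_r(x)$ and $\Functmins^{f^{\infty}}(u_2,v_2,Q^{\nu}_r(x))\leq \minprobs^{f^{\infty}}(w_2,Q^{\nu}_r(x))+\delta$.

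First I would construct a modified competitor for $w_1$ on $Q^{\nu}_r(x)$ that coincides with the optimal pair for $w_2$ well inside the cube and is corrected in a thin boundary layer so as to attain the datum $(w_1,1)$ on $\partial Q^{\nu}_r(x)$. Concretely, for small $h>0$ let $Q_h := Q^{\nu}_{r-h}(x)$ and interpolate: on $Q_h$ set $(u_1,v_1):=(u_2,v_2)$; on the layer $Q^{\nu}_r(x)\setminus Q_h$ let $v_1:=1$ and let $u_1$ be the affine-in-distance interpolation between $u_2$ on $\partial Q_h$ and the inner trace of $w_1$ on $\partial Q^{\nu}_r(x)$ — more precisely $u_1 := \varphi_h \, u_2 + (1-\varphi_h)\,\widetilde w_1$ for a cutoff $\varphi_h$ with $\varphi_h\equiv 1$ on $Q_h$, $\varphi_h\equiv 0$ near $\partial Q^{\nu}_r(x)$, $|\nabla \varphi_h|\leq C/h$, where $\widetilde w_1$ denotes a fixed $W^{1,1}$ extension/representative agreeing with $w_1$ in trace on $\partial Q^{\nu}_r(x)$. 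Since $v_1\equiv 1$ and $v_2$ need not be $1$ on $\partial Q_h$, a small further adjustment of $v$ in the layer is needed so that $v_1$ matches $v_2$ on $\partial Q_h$; this can be absorbed because $v$ is only $W^{1,2}$ and the layer has width $h$, so the cost of this adjustment is $O(h)$ plus the Dirichlet energy of the interpolation, which is controlled. The admissibility $(u_1,v_1)=(w_1,1)$ on $\partial Q^{\nu}_r(x)$ then holds by construction.

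Next I would estimate $\Functmins^{f^{\infty}}(u_1,v_1,Q^{\nu}_r(x))$. On $Q_h$ it equals $\Functmins^{f^{\infty}}(u_2,v_2,Q_h)\leq \minprobs^{f^{\infty}}(w_2,Q^{\nu}_r(x))+\delta$. On the layer, using $v_1\equiv 1$ (so the $(1-v_1)^2$ term vanishes), the linear growth bound \eqref{e:crescita funz recess} gives $v_1^2 f^{\infty}(y,\nabla u_1)\leq C|\nabla u_1|$, and $\nabla u_1 = \varphi_h\nabla u_2 + (1-\varphi_h)\nabla\widetilde w_1 + (u_2-\widetilde w_1)\otimes\nabla\varphi_h$; integrating and using $|\nabla\varphi_h|\leq C/h$ one gets a contribution bounded by $C\int_{Q^{\nu}_r\setminus Q_h}(|\nabla u_2|+|\nabla\widetilde w_1|)\,\dy + \frac{C}{h}\int_{Q^{\nu}_r\setminus Q_h}|u_2-\widetilde w_1|\,\dy$, plus the $|\nabla v_1|^2$ layer term which is $O(1/h)$ at worst from the adjustment but is multiplied by a layer of volume $O(h)$ times a width-$h$ gradient, hence $O(1/h)$ — this is exactly the term one must handle carefully. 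The hard part will be showing that after sending $h\to 0$ the layer contributions collapse to the desired boundary integral: the first two bulk integrals tend to $0$ by absolute continuity of the integral (here one uses that $u_2$ has equi-bounded $W^{1,1}$ energy from the coercivity bound $C^{-1}\int |\nabla u_2|\leq \minprobs^{f^{\infty}}(w_2,\cdot)+\delta$), and the critical term $\frac{C}{h}\int_{Q^{\nu}_r\setminus Q_h}|u_2-\widetilde w_1|\,\dy$ converges to $C\int_{\partial Q^{\nu}_r(x)}|w_1-w_2|\,\calH^{n-1}$ by the trace formula $\lim_{h\to 0}\frac 1h\int_{Q^{\nu}_r\setminus Q^{\nu}_{r-h}(x)}|g|\,\dy = \int_{\partial Q^{\nu}_r(x)}|\mathrm{tr}\,g|\,\calH^{n-1}$ applied to $g:=u_2-\widetilde w_1$, whose trace on $\partial Q^{\nu}_r(x)$ is $w_2-w_1$. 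The $|\nabla v_1|^2$ layer term requires choosing the $v$-adjustment to cost only $O(1)$ total rather than $O(1/h)$; this is possible because $v_2\to 1$ in trace sense along the nested cubes so the oscillation to be killed in the layer can itself be made $o(1)$, making the Dirichlet layer energy $o(1)\cdot O(1/h)\cdot O(h)= o(1)$. Combining all these estimates, letting first $h\to 0$ and then $\delta\to 0$, yields the claimed inequality; exchanging $w_1\leftrightarrow w_2$ completes the proof.
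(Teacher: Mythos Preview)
Your direct competitor-patching argument is different from the paper's route and can be made to work, but two points deserve correction. First, the coercivity you invoke to control $\int_{Q_r\setminus Q_h}|\nabla u_2|$ is wrong: the lower bound in \ref{e:crescita lineare} only gives $C^{-1}\int v_2^2|\nabla u_2|\leq \Functmins^{f^\infty}(u_2,v_2,Q_r^\nu(x))$, with the factor $v_2^2$, so it does not control $\|\nabla u_2\|_{L^1}$. The conclusion you need is nonetheless true, simply because competitors in $\minprobs^{f^\infty}$ are by definition in $W^{1,1}$, so $|\nabla u_2|\in L^1$ and absolute continuity applies. Second, your handling of $v$ is unnecessarily delicate: there is no need to force $v_1\equiv 1$ in the layer and then repair the mismatch on $\partial Q_h$. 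Since $v_2$ already equals $1$ on $\partial Q_r^\nu(x)$, you may simply take $v_1:=v_2$ on all of $Q_r^\nu(x)$ and modify only $u$; in the layer one then uses $v_2^2\leq 1$ together with $f^\infty(y,\xi)\leq C|\xi|$ to bound $v_1^2 f^\infty(y,\nabla u_1)\leq C|\nabla u_1|$, and the $(1-v_2)^2+|\nabla v_2|^2$ layer contribution vanishes by absolute continuity. This removes the fragile ``$o(1)\cdot O(1/h)\cdot O(h)$'' accounting entirely.

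For comparison, the paper proceeds more indirectly: it first observes that one may restrict to competitors with $v\geq\eta$ for some $\eta\in(0,1)$ (by truncating $v$ from below), then freezes $v$ and considers the functional $\mathcal{F}_v(u,B):=\int_B v^2 f^\infty(y,\nabla u)\,\dy$ and its $L^1$-relaxation $\overline{\mathcal{F}}_v$. Since $\overline{\mathcal{F}}_v(u,B)\leq C|Du|(B)$, the Lipschitz estimate for the associated Dirichlet minima follows from \cite[Lemma~3.1 and Lemma~4.1.2]{BouchitteFonsecaMascarenhas}, and \cite[Lemma~4.1.3]{BouchitteFonsecaMascarenhas} passes this back to the unrelaxed problem. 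Taking the infimum over $v$ then yields the claim with the sharp constant $C$ from \ref{e:crescita lineare}. Your direct argument, with the cutoff bound $|\nabla\varphi_h|\leq C'/h$, would produce a constant $CC'$ rather than $C$; this is harmless for all subsequent uses of the lemma in the paper, but worth noting.
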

\begin{proof}
First we observe that for every $w\in BV_{\textup{loc}}(\R^n,\R^N)$, $x\in \R^n$, 
$\nu \in \Sf^{n-1}$, and $r>1$ there holds
$\minprobs^{f^{\infty}}(w,Q^{\nu}_r(rx))=\minprobs^{f^{\infty},\ast}(w,Q^{\nu}_r(rx))$,
where
\begin{align}\label{e:prob minimo coercivi}
    \minprobs^{f^{\infty},\ast}(w,Q^{\nu}_r(rx)):= \inf\{\Functmins^{f^{\infty}}&(u,v,Q^{\nu}_r(rx)) \; : \;  u\in  W^{1,1} (Q^{\nu}_r(rx),\R^N),  \; v\in W^{1,2}(Q^{\nu}_r(rx),[0,1]), \nonumber \\ & (u,v)=(w,1) \textup{ on } \partial Q^{\nu}_r(rx), \; \; v\geq \eta \; \textup{for some $\eta \in (0,1)$} \},
\end{align}
with $\Functmins^{f^{\infty}}$ defined in \eqref{e:FunzHomos}.
In fact, given $\eta \in (0,1)$, $u\in W^{1,1}(Q^{\nu}_r(x),\R^N)$ and $v\in W^{1,2}(Q^{\nu}_r(x),[0,1])$,  $v_{\eta} :=v\lor \eta\in W^{1,2}(Q^{\nu}_r(x),[0,1])$ with $v_{\eta}=1$ on $\partial Q^{\nu}_r(x)$, and
\begin{equation*}
    \int_{Q^{\nu}_r(x)}(1-v_{\eta})^2 + |\nabla v_{\eta}|^2 \dy \leq \int_{Q^{\nu}_r(x)}(1-v)^2 + |\nabla v|^2 \dy,
\end{equation*}
 and
\begin{equation*}
    \lim_{\eta \to 0^+} \int_{Q^{\nu}_r(x)}v^2_{\eta}f^{\infty}(y,\nabla u) \dy =  \int_{Q^{\nu}_r(x)}v^2f^{\infty}(y,\nabla u)\dy.
\end{equation*}
Let $v\in W^{1,2}(Q^{\nu}_r(x),[0,1])$ with $v=1$ on $\partial Q^{\nu}_r(x)$ and 
$v\geq \eta$ for some $\eta \in (0,1)$. Define the functional $\mathcal{F}_v:BV(Q^{\nu}_r(x),\R^N)\times \corA(Q^{\nu}_r(x))\longrightarrow [0,+\infty]$ as
\begin{equation*}
    \mathcal{F}_v(u,B):=
    \begin{cases}
      \displaystyle{\int_{B}v^2 f^{\infty}(y,\nabla u) \dy} \; & \textup{if $u\in W^{1,1}(Q^{\nu}_r(x),\R^N)$} \\
        +\infty \; & \textup{otherwise}.
    \end{cases}
\end{equation*}
Consider its relaxation $\overline{\mathcal{F}}_v:=sc^-(L^1)\mathcal{F}_v:BV(Q^{\nu}_r(x),\R^N)\times \corA(Q^{\nu}_r(x))\to [0,+\infty]$.
\cite[Lemma~3.1 and Lemma~4.1.2]{BouchitteFonsecaMascarenhas} and $\overline{\mathcal{F}}_v(u,B)\leq C|Du|(B)$ imply that
\begin{equation}\label{e:stima lip dirichlet surface}
    |m_{\overline{\mathcal{F}}_v}(w_1,Q^{\nu}_r(x))-m_{\overline{\mathcal{F}}_v}(w_2,Q^{\nu}_r(x))|\leq C\int_{\partial Q^{\nu}_r(x)}|w_1-w_2|\calH^{n-1},
\end{equation}
{{where for every $w\in BV(Q^{\nu}_r(x),\R^N)$
\begin{equation*}
  m_{\overline{\mathcal{F}}_v}(w,Q^{\nu}_r(x)):=\inf\{\overline{\mathcal{F}}_v(u,Q^{\nu}_r(x)) \; : \; u\in BV(Q^{\nu}_r(x),\R^N), \; u=w \; \textup{on $\partial Q^{\nu}_r(x)$}\}.
\end{equation*}}}
In addition, $m_{\overline{\mathcal{F}}_v}(w_i,Q^{\nu}_r(x))=m_{\mathcal{F}_v}(w_i,Q^{\nu}_r(x))$ for $i\in\{1,2\}$ by \cite[Lemma~4.1.3]{BouchitteFonsecaMascarenhas}, where {$m_{\mathcal{F}_v}(\cdot,Q^{\nu}_r(x))$ is defined as $m_{\overline{\mathcal{F}}_v}(\cdot,Q^{\nu}_r(x))$ with the functional $\overline{\mathcal{F}}_v$ replaced
by ${\mathcal{F}}_v$}.
Therefore, using \eqref{e:prob minimo coercivi} we can rewrite 
$\minprobs^{f^{\infty}}(w,Q^{\nu}_r(rx))$ as
\begin{align*}
\minprobs^{f^{\infty}}(w,Q^{\nu}_r(rx))=\inf \{&m_{\overline{\mathcal{F}}_v}(w,Q^{\nu}_r(rx))+ \textstyle\int_{Q^{\nu}_r(rx)}((1-v)^2+|\nabla v|^2) \dy:
\nonumber\\ &  
\; v\in W^{1,2}(Q^{\nu}_r(rx),[0,1])\; v\geq \eta\, 
\textup{ for some $\eta \in (0,1)$}, \;
{{v=1 \; \textup{on $\partial Q^{\nu}_r(x)$}}} \}\,,
\end{align*}
and thus we can conclude thanks to \eqref{e:stima lip dirichlet surface}.
\end{proof}
The following result readily follows from Lemma \ref{l:bordi regolarizzati}.

\begin{corollary}\label{c:comparison minimum problems}
     Let $x\in \R^n$, $r>1$, $\nu \in \Sf^{n-1}$, and $\zeta \in \R^N$ then we have that
 \begin{equation*}
     |\minprobs^{f^{\infty}}(u_{x,\zeta,\nu},Q^{\nu}_r(x))-\minprobs^{f^{\infty}}(\overline{u}_{x,\zeta,\nu},Q^{\nu}_{r}(x))|\leq 2C|\zeta| r^{n-2},
 \end{equation*}
where $u_{x,\zeta,\nu}$ 
and $\overline{u}_{x,\zeta,\nu}$ are defined in \ref{e:funzione salto}.

\end{corollary}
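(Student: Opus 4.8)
The plan is to obtain the estimate directly from Lemma~\ref{l:bordi regolarizzati}, applied with $w_1:=u_{x,\zeta,\nu}$ and $w_2:=\overline{u}_{x,\zeta,\nu}$, so that the whole matter reduces to bounding the boundary integral $\int_{\partial Q^{\nu}_r(x)}|u_{x,\zeta,\nu}-\overline{u}_{x,\zeta,\nu}|\,\dd\calH^{n-1}$ by an elementary geometric argument.

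First I would record two pointwise facts about the integrand. Since $\overline{\textup{u}}$ takes values in $[0,1]$, we have $|u_{x,\zeta,\nu}(y)-\overline{u}_{x,\zeta,\nu}(y)|\leq|\zeta|$ for every $y\in\R^n$. Moreover, as $\overline{\textup{u}}\equiv 1$ on $[1/2,+\infty)$ and $\overline{\textup{u}}\equiv 0$ on $(-\infty,-1/2]$, the functions $u_{x,\zeta,\nu}$ and $\overline{u}_{x,\zeta,\nu}$ coincide at every $y$ with $|(y-x)\cdot\nu|\geq 1/2$; hence the integrand vanishes outside the slab $S:=\{y\in\R^n\colon |(y-x)\cdot\nu|<1/2\}$ of width $1$ centred on $\Pi^{\nu}_{x}$.

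It then remains to estimate $\calH^{n-1}(\partial Q^{\nu}_r(x)\cap S)$. Since $r>1$, the slab $S$ does not meet the two faces of $Q^{\nu}_r(x)$ orthogonal to $\nu$ (which lie on $\{(y-x)\cdot\nu=\pm r/2\}$), so $\partial Q^{\nu}_r(x)\cap S$ is contained in the union of the $2(n-1)$ faces parallel to $\nu$. On each such face, using coordinates adapted to the orthonormal frame $R_{\nu}e_1,\dots,R_{\nu}e_n$ (so that $R_{\nu}e_n=\nu$), the constraint $|(y-x)\cdot\nu|<1/2$ confines the $\nu$-coordinate to an interval of length $1$, while the remaining $n-2$ coordinates range over an interval of length $r$; so each face contributes at most $r^{n-2}$. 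Summing over the $2(n-1)$ parallel faces gives $\calH^{n-1}(\partial Q^{\nu}_r(x)\cap S)\leq 2(n-1)\,r^{n-2}$, and combining this with the pointwise bound $|u_{x,\zeta,\nu}-\overline{u}_{x,\zeta,\nu}|\leq|\zeta|$ and with Lemma~\ref{l:bordi regolarizzati} yields
\[
 |\minprobs^{f^{\infty}}(u_{x,\zeta,\nu},Q^{\nu}_r(x))-\minprobs^{f^{\infty}}(\overline{u}_{x,\zeta,\nu},Q^{\nu}_{r}(x))|\leq C|\zeta|\,\calH^{n-1}(\partial Q^{\nu}_r(x)\cap S)\leq 2(n-1)C|\zeta|\,r^{n-2},
\]
which is the asserted bound (with the dimensional factor $n-1$ absorbed into the constant). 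I expect no genuine obstacle: the only points requiring (mild) care are the description of $\partial Q^{\nu}_r(x)$ and the use of $r>1$ to keep $S$ away from the two $\nu$-orthogonal faces; the analytic content — the Lipschitz dependence of $\minprobs^{f^{\infty}}$ on the boundary datum — is already contained in Lemma~\ref{l:bordi regolarizzati}.
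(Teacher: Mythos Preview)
Your proposal is correct and follows exactly the approach the paper intends: the corollary is stated as an immediate consequence of Lemma~\ref{l:bordi regolarizzati}, and your computation of the boundary integral --- localising to the slab $\{|(y-x)\cdot\nu|<1/2\}$ and counting the contribution of the $2(n-1)$ faces parallel to $\nu$ --- is precisely the routine step needed. Your remark that the dimensional factor $(n-1)$ should be absorbed into the constant is also on point; the paper is not tracking constants sharply here (compare the analogous estimate in the proof of Lemma~\ref{l:lemma prob di minimo superficie su cubi diversi}, where the factor $(n-1)$ is made explicit).
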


The next lemma will be widely used {in what follows (see
Proposition\ref{p:la ghom} and Section \ref{s:stochastic}).}
\begin{lemma}\label{l:lemma prob di minimo superficie su cubi diversi}
Let $x,z\in \R^n$, $\nu_1,\nu_2\in \Sf^{n-1}$, and $r_2>r_1>2R\geq 1$ be such that $ Q^{\nu_1}_{r_1}(x)\subset \subset Q^{\nu_2}_{r_2}(z) $ and $|(y-x)\cdot \nu_1|\leq \frac{1}{2}$ imply
\begin{equation}\label{e:condizione di contenimento}
    |(y-z)\cdot \nu_2|\leq R \quad \quad \textup{for every $y\in  Q^{\nu_2}_{r_2}(z)$}.
\end{equation}
Then, for every $\zeta \in \R^N$ and $\eta>0$ the following statements hold true:
for every $r_3\geq r_2$ such that $Q^{\nu_2}_{r_2}(z)\subset \subset Q^{\nu_1}_{r_3}(x)$, then 
    \begin{align*}
   \minprobs^{f^{\infty}}(\overline{u}_{z,\zeta,\nu_2},Q^{\nu_2}_{r_2}(z))- \minprobs^{f^{\infty}}(\overline{u}_{x,\zeta,\nu_1},Q^{\nu_1}_{r_1}(x)) \leq \eta r_1^{n-1}+  \tilde K{{|\zeta|}}((r_{3}-r_1)r_{3}^{n-2} +Rr_2^{n-2});
\end{align*}
%
with $\tilde K$ depending only on $n$ and $C$. 
\end{lemma}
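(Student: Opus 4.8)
The strategy is a gluing/comparison argument: starting from a near-optimal competitor for $\minprobs^{f^{\infty}}(\overline{u}_{x,\zeta,\nu_1},Q^{\nu_1}_{r_1}(x))$, we build an admissible competitor for the larger problem $\minprobs^{f^{\infty}}(\overline{u}_{z,\zeta,\nu_2},Q^{\nu_2}_{r_2}(z))$ on the cube $Q^{\nu_2}_{r_2}(z)$, paying only a controlled error coming from the annular region $Q^{\nu_2}_{r_2}(z)\setminus Q^{\nu_1}_{r_1}(x)$ and from the mismatch of the two jump geometries. Concretely: fix $\eta>0$ and pick $(u_1,v_1)\in W^{1,1}\times W^{1,2}(Q^{\nu_1}_{r_1}(x),[0,1])$ with $(u_1,v_1)=(\overline{u}_{x,\zeta,\nu_1},1)$ on $\partial Q^{\nu_1}_{r_1}(x)$ realising
\[
\Functmins^{f^{\infty}}(u_1,v_1,Q^{\nu_1}_{r_1}(x))\le \minprobs^{f^{\infty}}(\overline{u}_{x,\zeta,\nu_1},Q^{\nu_1}_{r_1}(x))+\eta r_1^{n-1}.
\]
Since $\overline u_{x,\zeta,\nu_1}$ is bounded by $|\zeta|$ and has gradient supported in the strip $\{|(y-x)\cdot\nu_1|\le 1/2\}$, we may additionally assume (by truncating the competitor in the range of $u$, which does not increase the energy thanks to \ref{e:crescita funz recess} and $0\le v\le 1$) that $\|u_1\|_{L^\infty}\le|\zeta|$.

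\textbf{Definition of the competitor.} On $Q^{\nu_1}_{r_1}(x)$ set $(u,v):=(u_1,v_1)$. On the intermediate annulus I interpolate: in $Q^{\nu_1}_{r_3}(x)\setminus Q^{\nu_1}_{r_1}(x)$ (which, by $r_3\ge r_2$ and the inclusion hypothesis, contains $Q^{\nu_2}_{r_2}(z)\setminus Q^{\nu_1}_{r_1}(x)$) let $u:=\overline u_{x,\zeta,\nu_1}$ and let $v$ be a fixed Lipschitz profile equal to $1$ on $\partial Q^{\nu_1}_{r_1}(x)$, decreasing to, say, a small value across a boundary layer of width $O(1)$ and then equal to $1$ again on $\partial Q^{\nu_1}_{r_3}(x)$ — actually, since on $\partial Q^{\nu_1}_{r_1}(x)$ we already have $v_1=1$, we can simply take $v\equiv 1$ there. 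The subtle point is that the boundary data on $\partial Q^{\nu_2}_{r_2}(z)$ must be $(\overline u_{z,\zeta,\nu_2},1)$, not $(\overline u_{x,\zeta,\nu_1},1)$; so near $\partial Q^{\nu_2}_{r_2}(z)$ one must further interpolate between $\overline u_{x,\zeta,\nu_1}$ and $\overline u_{z,\zeta,\nu_2}$. This is where \eqref{e:condizione di contenimento} enters decisively: it guarantees that the transition strip $\{|(y-x)\cdot\nu_1|\le 1/2\}$ of $\overline u_{x,\zeta,\nu_1}$ meets $\partial Q^{\nu_2}_{r_2}(z)$ only inside $\{|(y-z)\cdot\nu_2|\le R\}$, so the two profiles $\overline u_{x,\zeta,\nu_1}$ and $\overline u_{z,\zeta,\nu_2}$ differ on $\partial Q^{\nu_2}_{r_2}(z)$ only on a set of $\calH^{n-1}$-measure $\le \tilde C R\, r_2^{n-2}$. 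One then applies Lemma~\ref{l:bordi regolarizzati} (or its proof via \cite[Lemma~4.1.2--4.1.3]{BouchitteFonsecaMascarenhas}) to replace the boundary datum $\overline u_{x,\zeta,\nu_1}$ by $\overline u_{z,\zeta,\nu_2}$ on $Q^{\nu_2}_{r_2}(z)$ at cost $C\int_{\partial Q^{\nu_2}_{r_2}(z)}|\overline u_{x,\zeta,\nu_1}-\overline u_{z,\zeta,\nu_2}|\,\calH^{n-1}\le C|\zeta|\,\tilde C R\, r_2^{n-2}$, yielding the term $\tilde K R|\zeta|r_2^{n-2}$.

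\textbf{Energy estimate on the annulus.} It remains to bound $\Functmins^{f^{\infty}}(\overline u_{x,\zeta,\nu_1},1,Q^{\nu_1}_{r_3}(x)\setminus Q^{\nu_1}_{r_1}(x))$. With $v\equiv 1$ there, the $(1-v)^2$ and $|\nabla v|^2$ terms vanish, and the bulk term is $\int v^2 f^\infty(y,\nabla\overline u_{x,\zeta,\nu_1})\,\dy\le C\int|\nabla\overline u_{x,\zeta,\nu_1}|\,\dy$ by \ref{e:crescita funz recess}; since $|\nabla\overline u_{x,\zeta,\nu_1}|\le C|\zeta|$ pointwise and is supported in a strip of width $1$, the integrand lives on a set of measure $\le C (r_3)^{n-2}\cdot 1$ inside the rectangle $Q^{\nu_1}_{r_3}(x)$ (here the side orthogonal to $\nu_1$ has length $r_3$ and contributes the factor $r_3^{n-2}$ for the $n-2$ unconstrained directions times width $r_3$ for the last in-plane one, whence $r_3^{n-1}$; but intersecting with the complement of $Q^{\nu_1}_{r_1}(x)$ cuts the in-plane directions down to size $r_3-r_1$), so this contributes $\le \tilde K (r_3-r_1) r_3^{n-2}$. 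Collecting the three contributions — $\eta r_1^{n-1}$ from near-optimality, $\tilde K(r_3-r_1)r_3^{n-2}$ from the annular bulk, $\tilde K R|\zeta|r_2^{n-2}$ from the boundary-datum correction — and using that the restriction of our global competitor to $Q^{\nu_2}_{r_2}(z)$ is admissible for $\minprobs^{f^{\infty}}(\overline u_{z,\zeta,\nu_2},Q^{\nu_2}_{r_2}(z))$, gives the claimed inequality with $\tilde K$ depending only on $n$ and $C$.

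\textbf{Main obstacle.} The delicate point is not the annular bulk estimate (routine, using \ref{e:crescita funz recess}) but rather the bookkeeping of the geometry: one must check carefully that, under \eqref{e:condizione di contenimento} and the chain of inclusions $Q^{\nu_1}_{r_1}(x)\subset\subset Q^{\nu_2}_{r_2}(z)\subset\subset Q^{\nu_1}_{r_3}(x)$, the interpolation region where we switch from $\overline u_{x,\zeta,\nu_1}$ to $\overline u_{z,\zeta,\nu_2}$ can be fitted \emph{inside} the annulus without touching $Q^{\nu_1}_{r_1}(x)$, and that the two slab-type jump sets are close enough on $\partial Q^{\nu_2}_{r_2}(z)$ for the $\calH^{n-1}$-measure of their symmetric difference there to be $O(R r_2^{n-2})$ rather than $O(r_2^{n-1})$. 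Equivalently, one can bypass the explicit interpolation by invoking Lemma~\ref{l:bordi regolarizzati} twice (once on each cube) together with Corollary~\ref{c:comparison minimum problems} to pass between $\overline u$ and $u$; the point \eqref{e:condizione di contenimento} is exactly what makes the $L^1(\partial Q^{\nu_2}_{r_2}(z))$-distance between the two boundary data of order $R|\zeta| r_2^{n-2}$.
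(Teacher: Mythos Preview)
Your approach is correct and essentially identical to the paper's: extend a near-optimal competitor $(u_1,v_1)$ for $\minprobs^{f^{\infty}}(\overline{u}_{x,\zeta,\nu_1},Q^{\nu_1}_{r_1}(x))$ by $(\overline{u}_{x,\zeta,\nu_1},1)$ on $Q^{\nu_2}_{r_2}(z)\setminus Q^{\nu_1}_{r_1}(x)$ to obtain a competitor for $\minprobs^{f^{\infty}}(\overline{u}_{x,\zeta,\nu_1},Q^{\nu_2}_{r_2}(z))$, bound the annular bulk energy by passing to the concentric pair $Q^{\nu_1}_{r_3}(x)\setminus Q^{\nu_1}_{r_1}(x)$ (giving $C\|\overline{\textup{u}}'\|_{L^\infty}(r_3^{n-1}-r_1^{n-1})\le \tilde K(r_3-r_1)r_3^{n-2}$), and then invoke Lemma~\ref{l:bordi regolarizzati} together with \eqref{e:condizione di contenimento} to switch the boundary datum from $\overline{u}_{x,\zeta,\nu_1}$ to $\overline{u}_{z,\zeta,\nu_2}$ at cost $\tilde K R|\zeta|r_2^{n-2}$. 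The paper does exactly this; your only extraneous steps are the $L^\infty$-truncation of $u_1$ (unused) and the momentary detour through a nontrivial $v$-profile on the annulus before settling on $v\equiv 1$.
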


\begin{proof}
Fix $\zeta \in \R^N$, $\eta>0$ and let $(u,v)\in W^{1,1}(Q^{\nu_1}_{r_1}(x),\R^N)\times W^{1,2}(Q^{\nu_1}_{r_1}(x),[0,1])$, with $(u,v)=(\overline{u}_{x,\zeta,{{\nu_1}}},1)$ on $\partial Q^{\nu_1}_{r_1}(x)$, such that 
\begin{equation}\label{e:prima equaz lemma dei prob di minimo}
    S^{f^\infty}(u,v,Q^{\nu_1}_{r_1}(x))=
     \int_{Q^{\nu_1}_{r_1}(x)}v^2 (f^{\infty}(y,\nabla u) + (1-v)^2 + |\nabla v|^2)\dy \leq \minprobs^{f^{\infty}}(\overline{u}_{x,\zeta,{{\nu_1}}},Q^{\nu_1}_{r_1}(x)) +\eta r_1^{n-1}.
\end{equation}
Define $(\hat u, \hat v)$ by 
 \begin{equation*}
     (\hat u(y),\hat v(y)):=
     \begin{cases}
         (u(y),v(y)) \; & \; \textup{if $y\in Q^{\nu_1}_{r_1}(x)$} \\
         (\overline{u}_{x,\zeta,{{\nu_1}}},1) \; & \; \textup{if $y\in Q^{\nu_2}_{r_2}(z) \setminus Q^{\nu_1}_{r_1}(x)$}\,,
     \end{cases}
 \end{equation*}
 and note that $(\hat u,\hat v)\in W^{1,1}(Q^{\nu_2}_{r_2}(z),\R^N)\times W^{1,2}(Q^{\nu_2}_{r_2}(z),[0,1]) $ with $(\hat u, \hat v)=(\overline{u}_{x,\zeta,{{\nu_1}}},1)$ on $\partial Q^{\nu_2}_{r_2}(z)$.

By \eqref{e:condizione di contenimento} it follows that $\overline{u}_{z,\zeta,\nu_2}(y)=\overline{u}_{x,\zeta,\nu_1}(y)$ for every $y\in Q^{\nu_2}_{r_2}(z)$ such that $|(y-z)\cdot \nu_2|> R$; in particular Lemma \ref{l:bordi regolarizzati} yields that
 \begin{align}\label{e:seconda equaz lemmino problemi di minimo}
    & |\minprobs^{f^{\infty}}(\overline{u}_{z,\zeta,\nu_2},Q^{\nu_2}_{r_2}(z))-\minprobs^{f^{\infty}}(\overline{u}_{x,\zeta,\nu_1},Q^{\nu_2}_{r_2}(z))|\leq \int_{\partial Q^{\nu_2}_{r_2}(z)}|\overline{u}_{z,\zeta,\nu_2}-\overline{u}_{x,\zeta,\nu_1}|\dd \mathcal{H}^{n-1} \nonumber \\ & =  \int_{\partial Q^{\nu_2}_{r_2}(z)\cap {\Sigma_{\nu_2,R}}}|\overline{u}_{z,\zeta,\nu_2}-\overline{u}_{x,\zeta,\nu_1}|\dd \mathcal{H}^{n-1}\leq 8R(n-1)|\zeta|r_2^{n-2},
 \end{align}
 where $\Sigma_{\nu_2,R}:=\{|(y-z)\cdot \nu_2| \leq R\}$. Furthermore, setting $\Sigma_{\nu_1,\sfrac12}:=\{|(y-x)\cdot \nu_1|\leq \frac{1}{2}\}$, from \ref{e:funzione salto}, \eqref{e:crescita funz recess} and \eqref{e:prima equaz lemma dei prob di minimo} we get
 \begin{align}\label{e:terza equaz lemmino problemi di minimo}
    & \minprobs^{f^{\infty}}(\overline{u}_{x,\zeta,\nu_1},Q^{\nu_2}_{r_2}(z))\leq \Functmins^{f^{\infty}}(\hat u,\hat v,Q^{\nu_2}_{r_2}(z)) \nonumber
    \\ & \leq 
    S^{f^\infty}(u,v,Q^{\nu_1}_{r_1}(x))+ \int_{Q^{\nu_2}_{r_2}(z)\setminus Q^{\nu_1}_{r_1}(x)}f^{\infty}({{y}},\nabla \hat u) \dy \nonumber 
    \\ & \leq \minprobs^{f^{\infty}}(\overline{u}_{x,\zeta,\nu_1},Q^{\nu_1}_{r_1}(x)) +\eta r_1^{n-1}+ C {{|\zeta|}}\|\overline{\textup{u}}'\|_{L^{\infty}(\R)}\calL^{n}((Q^{\nu_2}_{r_2}(z)\setminus Q^{\nu_1}_{r_1}(x))\cap {\Sigma_{\nu_1,\sfrac12}} )\nonumber
    \\ & \leq \minprobs^{f^{\infty}}(\overline{u}_{x,\zeta,\nu_1},Q^{\nu_1}_{r_1}(x)) +\eta r_1^{n-1}+  C{{|\zeta|}}\|\overline{\textup{u}}'\|_{L^{\infty}(\R)}\calL^{n}((Q^{\nu_1}_{r_3}(z)\setminus Q^{\nu_1}_{r_1}(x))\cap {\Sigma_{\nu_1,\sfrac12}} )\nonumber
      \\ & \leq \minprobs^{f^{\infty}}(\overline{u}_{x,\zeta,\nu_1},Q^{\nu_1}_{r_1}(x)) +\eta r_1^{n-1}+ C{{|\zeta|}}\|\overline{\textup{u}}'\|_{L^{\infty}(\R)}(r_3^{n-1}-r_1^{n-1})\nonumber 
       \\ & \leq \minprobs^{f^{\infty}}(\overline{u}_{x,\zeta,\nu_1},Q^{\nu_1}_{r_1}(x)) +\eta r_1^{n-1}+ C(n-1){{|\zeta|}}\|\overline{\textup{u}}'\|_{L^{\infty}(\R)}(r_{3}-r_1)r_{3}^{n-2}.
 \end{align}
 Therefore, recollecting \eqref{e:seconda equaz lemmino problemi di minimo} and \eqref{e:terza equaz lemmino problemi di minimo}, we deduce
\begin{align*}
 &  \minprobs^{f^{\infty}}(\overline{u}_{z,\zeta,\nu_2},Q^{\nu_2}_{r_2}(z))\leq \minprobs^{f^{\infty}}(\overline{u}_{x,\zeta,\nu_1},Q^{\nu_2}_{r_2}(z))+ 8R(n-1)|\zeta|r_2^{n-2} \\ & \leq \minprobs^{f^{\infty}}(\overline{u}_{x,\zeta,\nu_1},Q^{\nu_1}_{r_1}(x)) +\eta r_1^{n-1}+ C(n-1){{|\zeta|}}\|\overline{\textup{u}}'\|_{L^{\infty}(\R)}(r_{3}-r_1)r_{3}^{n-2} + 8R(n-1)|\zeta|r_2^{n-2}
\end{align*}
and therefore the claim.
 \end{proof}

We are now in a position to establish some properties satisfied by $g\homm$.
\begin{proposition}\label{p:la ghom}
Let $f\in \mathcal{F}(C,\alpha)$ and assume that for every $x\in \R^n$, $\zeta \in \R^N$, and $\nu \in \Sf^{n-1}$ the limit
\begin{equation}\label{e:ghom-ass}
      \lim_{r\to +\infty}\frac{\minprobs^{f^{\infty}}(u_{rx,\zeta,\nu},Q^{\nu}_{r}(rx))}{r^{n-1}}=:g\homm(\zeta,\nu)
\end{equation} 
exists (and is independent of $x$). 
Then, $g\homm$ satisfies the following properties:
 \begin{itemize}
    \item[(i)] for every $\zeta_1,\zeta_2\in \R^N$ and every $\nu \in \Sf^{n-1}$ 
    \begin{equation}\label{e:la ghom è lipschitz}
        |g\homm(\zeta_1,\nu)-g\homm(\zeta_2,\nu)|\leq C\calH^{n-1}(\partial Q_1)|\zeta_1-\zeta_2|;
    \end{equation}
    \item[(ii)] $g\homm:\R^{N}\times \Hat \Sf^{n-1}_{\pm}\to [0,+\infty)$ is continuous;
    \item[(iii)] for every $\zeta\in \R^N$ and every $\nu \in \Sf^{n-1}$
    \begin{equation}\label{e:crescita ghom}
        \frac{2|\zeta|}{C(|\zeta|+2)}\leq g\homm(\zeta,\nu)\leq \frac{2C|\zeta|}{|\zeta|+2};
    \end{equation}
    \item[(iv)] for every $\zeta \in \R^N$ and every $\nu \in \Sf^{n-1}$
    \begin{equation*}
        g\homm(\zeta,\nu)=g\homm(-\zeta,-\nu).
    \end{equation*}
    \end{itemize}
\end{proposition}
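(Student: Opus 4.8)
The plan is to establish the four items in the order (i), (iv), (iii), (ii), the last being the only genuinely delicate one.

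\emph{Item (i).} Apply Lemma~\ref{l:bordi regolarizzati} with $w_1=u_{rx,\zeta_1,\nu}$ and $w_2=u_{rx,\zeta_2,\nu}$: since $|u_{rx,\zeta_1,\nu}-u_{rx,\zeta_2,\nu}|\le|\zeta_1-\zeta_2|$ everywhere on $\partial Q^{\nu}_r(rx)$, one gets $|\minprobs^{f^{\infty}}(u_{rx,\zeta_1,\nu},Q^{\nu}_r(rx))-\minprobs^{f^{\infty}}(u_{rx,\zeta_2,\nu},Q^{\nu}_r(rx))|\le C\,\calH^{n-1}(\partial Q_1)\,|\zeta_1-\zeta_2|\,r^{n-1}$; dividing by $r^{n-1}$ and letting $r\to+\infty$ via \eqref{e:ghom-ass} gives the Lipschitz bound. \emph{Item (iv).} The relevant facts are that $u_{rx,-\zeta,-\nu}=u_{rx,\zeta,\nu}-\zeta$ holds $\calL^n$-a.e.\ in $\R^n$, that $Q^{-\nu}_r(rx)=Q^{\nu}_r(rx)$ because $R_{-\nu}Q_1=R_{\nu}Q_1$ by \ref{e:Rnu}, and that $\Functmins^{f^{\infty}}$ depends on $u$ only through $\nabla u$, so $\minprobs^{f^{\infty}}(w-c,A)=\minprobs^{f^{\infty}}(w,A)$ for every constant $c\in\R^N$ (translate a competitor by $c$). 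Chaining these identities and passing to the limit in \eqref{e:ghom-ass} yields $g\homm(-\zeta,-\nu)=g\homm(\zeta,\nu)$.

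\emph{Item (iii).} Both bounds rest on the two-sided estimate $C^{-1}|\xi|\le f^{\infty}(y,\xi)\le C|\xi|$ of Remark~\ref{r:oss. funz. recessione} (see \eqref{e:crescita funz recess}); alternatively one may invoke the analysis of Shah-type functionals in \cite{AlicBrShah,AlFoc}. For the lower bound I would slice $Q^{\nu}_r(rx)$ by lines in direction $\nu$: for $\calH^{n-1}$-a.e.\ such line the trace of an admissible pair $(u,v)$ is a one-dimensional $W^{1,1}\times W^{1,2}$ pair joining $0$ and $\zeta$ with $v=1$ at both endpoints, so, writing $m$ for the minimum of $v$ along the line and using $\int v^2|(\nabla u)\nu|\ge m^2|\zeta|$ together with $\int\big((1-v)^2+|(\nabla v)\nu|^2\big)\ge\mathrm{Var}\big((1-v)^2\big)\ge 2(1-m)^2$, the one-dimensional contribution is at least $\min_{s\in[0,1]}\big(C^{-1}|\zeta|s^2+2(1-s)^2\big)=\frac{2|\zeta|}{|\zeta|+2C}$; integrating over the $(n-1)$-dimensional cross-section, of $\calH^{n-1}$-measure $r^{n-1}$, and using $C\ge1$ gives $\minprobs^{f^{\infty}}(u_{rx,\zeta,\nu},Q^{\nu}_r(rx))\ge\frac{2|\zeta|}{C(|\zeta|+2)}\,r^{n-1}$. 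For the upper bound I would test with $u(y)=\zeta\,\phi\big((y-rx)\cdot\nu\big)$ and $v(y)=\psi\big((y-rx)\cdot\nu\big)$, where $1-\psi(t)=(1-m)e^{-|t|}$ (truncated far from the origin) is the Modica--Mortola optimal profile, for which $\int_{\R}\big((1-\psi)^2+|\psi'|^2\big)=2(1-m)^2$, and $\phi$ is a smooth steep transition from $0$ to $1$ across the origin, so $\int v^2 f^{\infty}(y,\nabla u)\le C|\zeta|\int\psi^2|\phi'|\to C|\zeta|m^2$ as the transition concentrates; correcting the competitor in an $O(1)$-thick neighbourhood of the lateral boundary of $Q^{\nu}_r(rx)$ to restore the datum $(u_{rx,\zeta,\nu},1)$ costs only $O(r^{n-2})$, so optimising over $m$ yields $\minprobs^{f^{\infty}}(u_{rx,\zeta,\nu},Q^{\nu}_r(rx))\le\frac{2C|\zeta|}{C|\zeta|+2}r^{n-1}+O(r^{n-2})\le\frac{2C|\zeta|}{|\zeta|+2}r^{n-1}+O(r^{n-2})$. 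Dividing by $r^{n-1}$ and letting $r\to+\infty$ gives \eqref{e:crescita ghom}.

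\emph{Item (ii).} By (i) the map $\zeta\mapsto g\homm(\zeta,\nu)$ is Lipschitz uniformly in $\nu$, so by the triangle inequality it suffices to prove that $\nu\mapsto g\homm(\zeta,\nu)$ is continuous on $\Hat{\Sf}^{n-1}_{\pm}$ for each fixed $\zeta$. Fix $\zeta$ and let $\nu_j\to\nu$ within $\Hat{\Sf}^{n-1}_{+}$ (the other case being identical); by the continuity of $\nu\mapsto R_{\nu}$ on $\Hat{\Sf}^{n-1}_{+}$ one has $|\nu_j-\nu|\to0$ and $R_{\nu_j}\to R_{\nu}$. I would then apply Lemma~\ref{l:lemma prob di minimo superficie su cubi diversi} twice, with common centre $x=z$, first with $(\nu_1,\nu_2)=(\nu,\nu_j)$ and then with $(\nu_1,\nu_2)=(\nu_j,\nu)$, choosing radii $r_1=r$, $r_2=r(1+\delta_j)$, $r_3=r(1+2\delta_j)$ with $\delta_j\ge c_0|\nu_j-\nu|$ large enough for the required inclusions and for the slab $\{|(y-x)\cdot\nu_1|\le\frac12\}$ to lie inside $\{|(y-x)\cdot\nu_2|\le R\}$ with $R=\frac12+c_0 r|\nu_j-\nu|$ (the geometric content of \eqref{e:condizione di contenimento}). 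The error $\eta r_1^{n-1}+\tilde K\big((r_3-r_1)r_3^{n-2}+R|\zeta|r_2^{n-2}\big)$, divided by $r^{n-1}$, is of order $\eta+\delta_j+(r^{-1}+|\nu_j-\nu|)|\zeta|$; combining with Corollary~\ref{c:comparison minimum problems} to pass between $u_{\zeta,\nu}$ and $\overline{u}_{\zeta,\nu}$ (an $O(r^{n-2})$ correction) and with \eqref{e:ghom-ass}, and letting first $r\to+\infty$, then $\eta\to0$, and finally $j\to\infty$, the first choice of orientations gives $\limsup_j g\homm(\zeta,\nu_j)\le g\homm(\zeta,\nu)$ and the second gives $g\homm(\zeta,\nu)\le\liminf_j g\homm(\zeta,\nu_j)$; with (i) this upgrades to joint continuity on $\R^N\times\Hat{\Sf}^{n-1}_{\pm}$. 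The step I expect to be the main obstacle is precisely this one: because of the dimensional mismatch between the $(n-1)$-scaling and the $n$-dimensional domain, and because reorienting the cube is unavoidable, the parameters $r,\delta_j,R,\eta$ in Lemma~\ref{l:lemma prob di minimo superficie su cubi diversi} must be tuned in the correct order, and one must simultaneously keep track of the passage between the sharp jump datum and its mollification through Corollary~\ref{c:comparison minimum problems}; by contrast (i) and (iv) are immediate and (iii) is a routine one-dimensional computation.
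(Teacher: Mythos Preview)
Your proposal is correct and, for items (i), (ii), and (iv), follows the same strategy as the paper: Lemma~\ref{l:bordi regolarizzati} for (i), Lemma~\ref{l:lemma prob di minimo superficie su cubi diversi} together with the continuity of $\nu\mapsto R_\nu$ on $\Hat{\Sf}^{n-1}_\pm$ for (ii), and the symmetry $R_{-\nu}Q_1=R_\nu Q_1$ plus translation invariance of $\Functmins^{f^\infty}$ in $u$ for (iv). One minor point in (ii): the parameter you call $c_0|\nu_j-\nu|$ should really be controlled by the modulus of continuity of $\nu\mapsto R_\nu$ (the paper uses $\kappa_j:=\max_{i<n}|R_{\nu_j}(e_i)\cdot\nu|$), but since you only need it to vanish as $j\to\infty$ this is harmless.

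The one genuine difference is in (iii). You argue directly: slice in direction $\nu$ and use the one-dimensional optimal-profile estimate to obtain the lower bound $\frac{2|\zeta|}{|\zeta|+2C}\ge\frac{2|\zeta|}{C(|\zeta|+2)}$, and build an explicit Modica--Mortola-type competitor (plus an $O(r^{n-2})$ lateral correction) for the upper bound. The paper instead sandwiches $\Functmins^{f^\infty}$ between $C^{-1}$ and $C$ times the Shah functional $\Functmins^{|\cdot|}$, shows via a periodization argument that $\lim_r=\inf_r$ for the scaled minima, and then invokes \cite[Lemma~3.8 and Remark~3.9]{AlFoc} for the exact value $\inf_r r^{1-n}\minprobs^{|\cdot|}(u_{\zeta,\nu},Q^{\nu}_r)=\tfrac{2|\zeta|}{|\zeta|+2}$. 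Your route is more self-contained (it essentially re-derives the cited lemma), at the cost of handling the boundary-layer correction explicitly; the paper's route is shorter because the one-dimensional optimisation is outsourced.
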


\begin{proof}
To prove (i) fix $\nu \in \Sf^{n-1}$ and $\zeta_1,\zeta_2 \in \R^{N\times n}$.  Thanks to Lemma~\ref{l:bordi regolarizzati} we get
\begin{equation*}
   |\minprobs^{f^{\infty}}(\overline{u}_{\zeta_1,\nu},Q^{\nu}_r)-\minprobs^{f^{\infty}}(\overline{u}_{\zeta_2,\nu},Q^{\nu}_r)|\leq C\int_{\partial Q^{\nu}_r}|\overline{u}_{\zeta_1,\nu}-\overline{u}_{\zeta_2,\nu}|\dd \calH^{n-1}. 
\end{equation*}
By the definition of $\overline{u}_{\zeta,\nu}$ we have 
\begin{equation*}
  \int_{\partial Q^{\nu}_r}|\overline{u}_{\zeta_1,\nu}-\overline{u}_{\zeta_2,\nu}|\dd \calH^{n-1}= \int_{\partial Q^{\nu}_r}|\zeta_1-\zeta_2|\overline{\textup{u}}\left(y\cdot \nu\right) \dd \calH^{n-1}(y)\leq \calH^{n-1}(\partial Q_1)|\zeta_1-\zeta_2|r^{n-1}.
\end{equation*}
Then we conclude by \eqref{e:ghom-ass} also noticing that by Corollary~\ref{c:comparison minimum problems} we have
\begin{equation*}
 g\homm(\zeta,\nu)=\lim_{r\to +\infty}\frac{\minprobs^{f^{\infty}}(\overline{u}_{rx,\zeta,\nu},Q^{\nu}_{r}(rx))}{r^{n-1}},
\end{equation*}
for every $x\in \R^n$, $\zeta \in \R^N$, and $\nu \in \Sf^{n-1}$.

\smallskip

To prove (ii) we preliminarily show that $g\homm(\zeta,\cdot):\Hat \Sf^{n-1}_{\pm}\to [0,+\infty)$ is continuous for every $\zeta \in \R^N$. Fix $\zeta \in \R^N$, $\nu \in \Hat \Sf^{n-1}_{\pm}$ and a sequence $(\nu_j)_{j\in\N}$ in $\Hat \Sf^{n-1}_{\pm}$ such that $\nu_j\to \nu$ as $j\to +\infty$. For every $\delta\in (0,1/2)$, by the continuity of the map $\nu \mapsto R_{\nu}$ on $\Hat \Sf^{n-1}_{\pm}$
(cf. \ref{e:Rnu} of the notation list), there exists $j_{\delta}$ such that for every $r>0$ and every $j\geq j_{\delta}$
\begin{equation}\label{e:cubi compattamente contenuti prop ghom}
    Q^{\nu}_r \subset \subset Q^{\nu_j}_{(1+\delta)r} \subset \subset Q^{\nu}_{(1+2\delta)r}\,.
\end{equation}
Setting $\kappa_j:=\max \{|R_{\nu_j}(e_i)\cdot \nu| \; : \; i=1,\dots,n-1\}$, we have that $\kappa_j \to 0$ as $j\to +\infty$, by the continuity of the map $\nu \mapsto R_{\nu}$ on $\Hat \Sf^{n-1}_{\pm}$. Letting $y\in \overline{Q^{\nu_j}_{r(1+\delta)}}$, then $y=y'+(y\cdot \nu_j)\nu_j$ where 
 \begin{equation*}
     y'\in R_{\nu_j}\Big(\big[-\frac{r}{2}(1+\delta),\frac{r}{2}(1+\delta)\big]^{n-1}\times \{0\}\Big).
 \end{equation*}
 In particular $(y\cdot \nu_j)(\nu \cdot \nu_j)=y\cdot \nu-y'\cdot \nu$ and thus, if $|y\cdot \nu|\leq \frac{1}{2}$ and $j$ is large enough, we get 
 \begin{equation}\label{e:stima cornicione transizione}
   \textstyle  |y\cdot \nu_j|\leq \frac{|y'\cdot \nu|}{|\nu_j\cdot \nu|}+\frac{1}{2\nu_j \cdot \nu}\leq\frac{(n-1)\kappa_jr (1+\delta)+1}{2(1-\delta)}=K(\delta)r\kappa_j+1,
 \end{equation}
 where $K(\delta):=\frac{(n-1) (1+\delta)}{2(1-\delta)}$. Applying Lemma \ref{l:lemma prob di minimo superficie su cubi diversi} with $R=K(\delta)r\kappa_j+1$, we deduce
 \begin{align*}
    & \minprobs^{f^{\infty}}(\overline{u}_{\zeta,\nu_j},Q^{\nu_j}_{r(1+\delta)})\leq  \minprobs^{f^{\infty}}(\overline{u}_{\zeta,\nu},Q^{\nu}_{r})+\eta r^{n-1} \\ & + \tilde K(2\delta (1+2\delta)^{n-2}r^{n-1}+(K(\delta)r\kappa_j+1)|\zeta|(1+\delta)^{n-2}r^{n-2}),
 \end{align*}
 where $\tilde K$ depends only on $n$ and $C$. Consequently, letting the $r\to +\infty$, appealing to \eqref{e:ghom-ass} and to {{Corollary~\ref{c:comparison minimum problems}}}, we get
\begin{equation*}
 (1+\delta)^{n-1}g\homm(\zeta,\nu_j)  \leq  g\homm(\zeta,\nu)+\eta + \tilde K(1+2\delta)^{n-2}(2\delta+K(\delta)\kappa_j|\zeta|).
 \end{equation*}
 Taking the  $\limsup$ for $j\to +\infty$ we have
 \begin{equation*}
    (1+\delta)^{n-1} \limsup_{j\to +\infty}g\homm(\zeta,\nu_j)\leq g\homm(\zeta,\nu)+\eta+2\tilde K\delta(1+2\delta)^{n-2}
 \end{equation*}
 thus letting $\eta,\delta \to 0$ we obtain
 \begin{equation*}
    \limsup_{j\to +\infty}g\homm(\zeta,\nu_j)\leq g\homm(\zeta,\nu). 
 \end{equation*}
 An analogous argument, using the cube $Q^{\nu_j}_{(1-\delta)r}$, shows that
\begin{equation*}
    g\homm(\zeta,\nu)\leq \liminf_{j\to +\infty} g\homm(\zeta,\nu_j)
\end{equation*}
and hence the claim. 

To establish the continuity with respect to both variables, consider a sequence $(\zeta_j)_{j\in\N}$ in $\R^{N\times n}$ such that $\zeta_j \to \zeta$. 
Thanks to \eqref{e:la ghom è lipschitz}, we have that
\begin{align*}
   |g\homm(\zeta,\nu)-g\homm(\zeta_j,\nu_j)|&\leq |g\homm(\zeta,\nu)-g\homm(\zeta,\nu_j)|+ |g\homm(\zeta,\nu_j)-g\homm(\zeta_j,\nu_j)|
   \\ & \leq |g\homm(\zeta,\nu)-g\homm(\zeta,\nu_j)|+ C\calH^{n-1}(\partial Q_1)|\zeta-\zeta_j|
\end{align*}
and therefore we get (ii).
\smallskip 

To prove (iii) fix $\zeta \in \R^N$ and $\nu \in \Sf^{n-1}$, and recall that by \eqref{e:ghom-ass} and the spatial homogeneity of $g\homm$
we have that
\begin{equation}\label{e:ghom sup term la prop ghom}
    g\homm(\zeta,\nu)=\lim_{r\to +\infty}\frac{\minprobs^{f^{\infty}}(u_{\zeta,\nu},Q^{\nu}_r)}{r^{n-1}}\,.
\end{equation}
We notice that for every $r>0$ and $M\in\N$ we have 
\[
\frac{\minprobs^{f^{\infty}}(u_{\zeta,\nu},Q^{\nu}_{Mr})}{{{(Mr)}}^{n-1}}
\leq\frac{\minprobs^{f^{\infty}}(u_{\zeta,\nu},Q^{\nu}_r)}{r^{n-1}}\,.
\]
Indeed, assume for simplicity $\nu=e_n$, then if $(u,v)$ is a competitor for $\minprobs^{f^{\infty}}(u_{\zeta,e_n},Q^{e_n}_r)$ then
$(u_M,v_M)$ defined by $(u,v)(x-r{\tt i})$ for $x\in r{\tt i}+Q^{e_n}_r$, for ${\tt i}\in\Z^{n-1}\times\{0\}$ with components in $[-M+1,M-1]$, 
and equal to $u_{\zeta,e_n}$ otherwise on $Q^{e_n}_{Mr}$ is a competitor for  $\minprobs^{f^{\infty}}(u_{\zeta,e_n},Q^{e_n}_{Mr})$ 
with 
$\Functmins^{f^{\infty}}(u_M,v_M,Q^{e_n}_{Mr})=M^{n-1}\Functmins^{f^{\infty}}(u,v,Q^{e_n}_{r})$. 
Thus, we infer that
\begin{equation}\label{e:lim=inf}
\lim_{r\to +\infty}\frac{\minprobs^{f^{\infty}}(u_{\zeta,\nu},Q^{\nu}_r)}{r^{n-1}}
=\inf_{r>0}\frac{\minprobs^{f^{\infty}}(u_{\zeta,\nu},Q^{\nu}_r)}{r^{n-1}}\,.
\end{equation}
Moreover, by \ref{e:crescita lineare} and $C\geq 1$,
{recalling the definiton in \eqref{e:FunzHomos},
we have that

\[
C^{-1} S^{|\cdot|}(u,v,Q^{\nu}_r)\leq S^{f^\infty}(u,v,Q^{\nu}_r)
\leq C S^{|\cdot|}(u,v,Q^{\nu}_r)
\]
Therefore, by means of \eqref{e:lim=inf} we conclude that}
\[
C^{-1}\inf_{r>0}\frac{\minprobs^{|\cdot|}(u_{\zeta,\nu},Q^{\nu}_r)}{r^{n-1}}\leq
g\homm(\zeta,\nu)\leq C\inf_{r>0}\frac{\minprobs^{|\cdot|}(u_{\zeta,\nu},Q^{\nu}_r)}{r^{n-1}}\,.
\]
Finally, \cite[Lemma~3.8 and Remark~3.9]{AlFoc} yield that
\[
\inf_{r>0}\frac{\minprobs^{|\cdot|}(u_{\zeta,\nu},Q^{\nu}_r)}{r^{n-1}}=g({{|\zeta|}})\,, 
\]
where using the same notation as in \cite{AlFoc}
\begin{equation}\label{e:g esplicita AlFoc}
   g(s):=\min_{[0,1]}\textstyle\{t^2s+ 4\int_{t}^1(1-\lambda)\dd \lambda \}=\displaystyle\frac{2s}{s+2}\,. 
\end{equation}
Eventually, (iv) is a direct consequence of the identity $R_{\nu}(Q_1)=R_{-\nu}(Q_1)$ and of the fact that
$u=u_{-\zeta,-\nu}$ on $\partial Q^{\nu}_r$ if and only if $u+\zeta=u_{\zeta,\nu}$ on $\partial Q^{\nu}_r$ for every $\zeta \in \R^N$, $\nu \in \Sf^{n-1}$, $r>0$, and $u\in W^{1,1}(Q^{\nu}_r,\R^N)$.
\end{proof}

\section{Deterministic homogenisation}

To prove the stochastic homogenisation result in Theorem~\ref{t:Stochastic homogenisation} we follow the same proof strategy as in \cite{CDMSZStochom, CDMSZGlobal}. 
To this end, we preliminarily work in a deterministic framework (where $\omega\in \Omega$ is regarded as fixed) and prove a homogenisation result without assuming any periodicity of the integrand. Then, in Section \ref{s:stochastic}, the deterministic homogenisation result at fixed $\omega$ will be used in combination with the Subadditive Ergodic Theorem, Theorem \ref{t:ergodic theorem}, to derive an almost sure $\Gamma$-convergence result for the random functionals $F_\eps(\omega)$. 

The main result of this section is stated in the following theorem. 

\begin{theorem}[Deterministic homogenisation]\label{t:Deterministic Gamma-conv}
Let $f\in \mathcal{F}(C,\alpha)$ and consider the phase-field functionals  
 $\Functeps_{\eps}:L^1_{\textup{loc}}(\R^n,\R^{N+1})\times \corA\longrightarrow [0,+\infty]$ given by
\begin{equation}\label{e:funzionali approssimanti}
\Functeps_{\eps}(u,v,A):=\begin{cases}
        \displaystyle \int_A \hspace{-1mm}\textstyle{(v^2 f(\frac{x}{\eps},\nabla u)+\frac{(1-v)^2}{\eps}+\eps|\nabla v|^2)}\dx, &  (u,v)\in W^{1,1}(A,\R^N)\times W^{1,2}(A,[0,1]) \\
        +\infty  & \textup{otherwise}.
    \end{cases}\end{equation}
Assume that
\begin{enumerate}
\item for every $x\in \R^n$, $\xi \in \R^{N\times n}$, $\nu \in \Sf^{n-1}$ and $k\in \N$ the limit
\begin{equation}\label{e:fhom}
 \lim_{r\to +\infty}\frac{\minprobv^f(\ell_{\xi},Q^{\nu,k}_{r}(rx))}{k^{n-1}r^{n}}=:f\homm(\xi)
\end{equation}
exists and is independent of $x,\nu$ and $k$; \\
\item for every $x\in \R^n$, $\zeta\in \R^N$ and $\nu\in \Sf^{n-1}$ the limit
\begin{equation}\label{e:ghom}
   \lim_{r\to +\infty}\frac{\minprobs^{f^{\infty}}(u_{rx,\zeta,\nu},Q^{\nu}_{r}(rx))}{r^{n-1}}=:g\homm(\zeta,\nu)
\end{equation}
exists and is independent of $x$.
\end{enumerate}
Let, moreover, $f^{\infty}\homm$ be the recession function of $f\homm$. Then, for every $A\in \corA$ and every $(u,v)\in L^{1}_{\textup{loc}}(\R^n,\R^{N+1})$ we have
\begin{equation*}
 \Gamma(L^1_{\textup{loc}}(\R^n,\R^{N+1}))\text{-}\lim_{\eps \to 0}\Functeps_{\eps}(u,v,A)=\Functeps\homm(u,v,A),   
\end{equation*}
where $\Functeps\homm:L_{\textup{loc}}^1(\R^n,\R^{N+1})\times \corA\longrightarrow [0,+\infty]$ is the functional defined by
\begin{equation}\label{e:det-G-lim}
    \Functeps\homm(u,v,A):=\int_A f\homm(\nabla u) \dx + \int_A f^{\infty}\homm\textstyle{(\frac{\dd D^cu}{\dd |D^cu|})}\dd |D^cu| + \displaystyle\int_{J_u\cap A}g\homm([u],\nu_u) \dd \calH^{n-1},
\end{equation}
if $u\in GBV(A,\R^N)$ and $v=1\; \calL^n$-a.e in $A$, $\Functeps\homm(u,v,A)=+\infty$, otherwise. 
\end{theorem}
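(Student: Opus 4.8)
The plan is to follow the classical localisation method of $\Gamma$-convergence combined with a blow-up/integral-representation argument in $BV$, exactly as done in \cite{CDMSZStochom, CDMSZGlobal} and \cite{BEMZ-1, BEMZ}.

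\textbf{Step 1: Compactness of $\Gamma$-limits and the abstract functional.} Using the localisation method \cite{Dalmaso1993} together with the growth bounds \ref{e:crescita lineare}, I would first show that every sequence $\eps_j\to 0$ admits a subsequence (not relabelled) such that $\Functeps_{\eps_j}(\cdot,\cdot,A)$ $\Gamma$-converges, with respect to strong $L^1_{\mathrm{loc}}$-convergence, to some functional $\Fsucc(\cdot,\cdot,A)$ for all $A$ in a countable dense subfamily of $\corA$, and then simultaneously for all $A\in\corA$ by the usual inner-regularity and measure-property arguments. The equi-coercivity remark in the excerpt guarantees that the domain of $\Fsucc$ is contained in $\{(u,1): u\in GBV(A,\R^N)\}$, and that $\Fsucc(\cdot,1,\cdot)$ is (the trace on open sets of) a Radon measure for each fixed $u$; the superadditivity and weak-subadditivity needed for the De Giorgi--Letta criterion follow from the additivity of $\Functeps_\eps$ in the domain variable. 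This is the content referred to as Theorem~\ref{t:Sottosucc gamma-conv.}.

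\textbf{Step 2: Identification of the densities via blow-up.} For $u\in BV(A,\R^N)$ I would compute the three Radon--Nikod\'ym derivatives of the measure $A\mapsto\Fsucc(u,1,A)$ with respect to $\calL^n$, $|D^cu|$ and $\calH^{n-1}\res J_u$ by the blow-up technique of \cite{BouchitteFonsecaMascarenhas}. The volume derivative at a point of approximate differentiability blows up to a Dirichlet-type minimisation problem for $v^2 f(\cdot,\nabla u)$ with affine boundary datum; here the weaker $\eps|\nabla v|^2$ penalisation lets $v\to 1$ in the limit at no cost on a bulk scale, and assumption \eqref{e:fhom} (equivalently \eqref{e:fhom-ass} via Proposition~\ref{p:prop_fhom} and Proposition~\ref{p:finfhom prop}) identifies this limit as $f\homm(\nabla u(x))$. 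The jump derivative blows up to the surface cell problem \eqref{e:ghom} after passing from $f$ to its recession function $f^\infty$ — justified because near the jump the gradients are large, so Remark~\ref{r:oss. funz. recessione} and Lemma~\ref{l:lemma funz recess integrale} replace $f$ by $f^\infty$ up to errors controlled by the $\alpha$-rate — yielding $g\homm([u](x),\nu_u(x))$. The Cantor derivative is obtained as in \cite{BEMZ} by exploiting the lower semicontinuity and the $1$-homogeneity along $D^cu$: once the volume and surface densities are known, a standard argument (Alberti's rank-one theorem plus the chain rule, or direct comparison with the recession functional) forces the Cantor density to equal $f^\infty\homm\big(\tfrac{\dd D^cu}{\dd|D^cu|}\big)$.

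\textbf{Step 3: Integral representation and removal of the subsequence.} Having the three densities, I would invoke the integral representation theorem in $BV$ \cite{BouchitteFonsecaMascarenhas} to conclude $\Fsucc(u,1,A)=\Functeps\homm(u,1,A)$ for all $u\in BV(A,\R^N)$; the structural properties collected in Section~\ref{s:homgenised densities} (quasiconvexity and Lipschitz continuity of $f\homm$, continuity and bounds for $g\homm$) supply the hypotheses needed there. The extension from $BV$ to $GBV$ is then a routine truncation argument using the truncations $\calT_k$ of \ref{i:truncation}, monotone convergence, and the fact that $f\homm$, $f^\infty\homm$, $g\homm$ are unchanged under composition with $\calT_k$ in the limit. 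Finally, since the right-hand side $\Functeps\homm$ in \eqref{e:det-G-lim} does not depend on the chosen subsequence $(\eps_j)$, the Urysohn property of $\Gamma$-convergence upgrades the convergence to the whole family $(\Functeps_\eps)_{\eps>0}$, which is the assertion of the theorem.

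\textbf{Main obstacle.} The delicate point is Step~2 for the surface term: one must show that the blow-up limit of $\Functeps_\eps$ at a jump point genuinely equals the cell formula \eqref{e:ghom}, which requires (a) the $\Gamma$-liminf inequality, i.e.\ a lower bound by $g\homm$ obtained by a careful slicing/covering of the cube $Q^\nu_r(rx)$ together with the subadditivity of $r\mapsto\minprobs^{f^\infty}$ encoded in Lemma~\ref{l:lemma prob di minimo superficie su cubi diversi}, and (b) the $\Gamma$-limsup inequality, i.e.\ the construction of recovery sequences from near-optimal competitors of the cell problem, with the transition regularised by the cut-off $\overline{\mathrm u}$ of \ref{e:funzione salto} and boundary-datum adjustments controlled by Lemma~\ref{l:bordi regolarizzati} and Corollary~\ref{c:comparison minimum problems}. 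The interaction between the linear bulk term and the surface regularisation — the very feature that makes $g\homm$ depend on $[u]$ — means the usual decoupling used in the superlinear case \cite{BEMZ-1,BEMZ,BMZ} is unavailable, so one must handle the two terms of $\Functeps_\eps$ jointly throughout the blow-up, and this is where most of the work lies.
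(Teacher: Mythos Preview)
Your outline is largely correct and follows the paper's strategy (compactness via localisation, blow-up identification of the three densities, truncation to $GBV$, Urysohn). However, Step~3 contains a genuine conceptual slip: you cannot ``invoke the integral representation theorem in $BV$'' of \cite{BouchitteFonsecaMascarenhas} here, because that theorem requires translation invariance in the spatial variable, i.e.\ $\Fsucc(u(\cdot-z),1,A+z)=\Fsucc(u,1,A)$, which fails since $f$ is not assumed periodic --- the paper flags this obstruction explicitly in its introduction. What the paper does instead (and what your Step~2 already accomplishes, once carried out in full) is to compute each Radon--Nikod\'ym derivative of the measure $\Fsucc(u,1,\cdot)$ \emph{directly}; once these are in hand, the integral form of $\Fsucc$ follows immediately from absolute continuity with respect to $\calL^n+|D^cu|+\calH^{n-1}\res J_u$ and the Radon--Nikod\'ym theorem alone, with no abstract representation result to invoke and no further hypotheses to check.

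Two related corrections to Step~2. First, the Cantor density is not ``forced'' by the volume and surface ones; it requires its own blow-up (Alberti's rank-one theorem together with the rectangular cell formula \eqref{e:fhom} on $Q^{\nu,k}_r$), and this is precisely why hypothesis~(i) of the theorem is stated for all $k\in\N$ and all $\nu\in\Sf^{n-1}$, not just standard cubes. Second, in the lower bounds for the volume and Cantor derivatives the paper first passes from the recovery pair $(u_j,v_j)$ to an $SBV$ function via a level-set truncation on $v_j$ (Lemma~\ref{l:Lemma di troncamento}), producing functionals $H^\delta_{\eps_j}$ with a separate bulk and $\calH^{n-1}$-penalised jump term; your sentence ``the penalisation lets $v\to 1$ at no cost on a bulk scale'' gestures at this but underplays the decoupling step that makes the subsequent blow-up tractable.
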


To prove Theorem~\ref{t:Deterministic Gamma-conv} we use a standard approach in homogenisation theory based on the compactness of $\Gamma$-convergence and on the so-called localization method (cf.\ \cite{BrDeF, Dalmaso1993}). Namely, we first show that for every 
infinitesimal sequence $(\eps_j)_{j\in\N}$, up to a subsequence, the functionals $\Functeps_{\eps_j}$, defined in \eqref{e:funzionali approssimanti}, $\Gamma$-converge to some abstract functional~$\Fsucc$. Then, we prove
that $\Fsucc$ admits an integral representation as in \eqref{e:det-G-lim} on $BV(A,\R^N)$, for every $A\in\corA$. Eventually, thanks to \eqref{e:fhom} and \eqref{e:ghom} we deduce that
$\Fsucc$ does not depend on the extracted subsequence, and hence the homogenisation result for $(F_\eps)$ follows by the {{Urysohn}} property of $\Gamma$-convergence.

\medskip

We start by proving the following abstract $\Gamma$-convergence result. 

\begin{theorem}[$\Gamma$-convergence and properties of the $\Gamma$-limit]\label{t:Sottosucc gamma-conv.}
Let $f \in \mathcal{F}(C,\alpha)$ and $\Functeps_{\eps}$ be as in \eqref{e:funzionali approssimanti}, then there exists a subsequence $(\eps_j)_{j\in\N}$ and a functional $\Fsucc:L^1_{\textup{loc}}(\R^n,\R^{N+1})\times \corA\longrightarrow [0,+\infty]$ such that, for every $A\in \corA$ and every $u\in L^1_{\textup{loc}}(\R^n,\R^N)$ with $u\in BV(A,\R^N)$
\begin{equation}\label{e:Gamma-conv sottosucc}
     \Gamma(L^1_{\textup{loc}}(\R^n,\R^{N+1}))\text{-}\lim_{j \to +\infty}\Functeps_{\eps_j}(u,1,A)=\Fsucc(u,1,A).
\end{equation}
Moreover $\Fsucc$ satisfies the following properties:
\begin{enumerate}
    \item (\textup{locality}) $\Fsucc(u_1,1,A)=\Fsucc(u_2,1,A)$ for every $A\in \corA$ and every $u_1, u_2 \in L^1_{\textup{loc}}(\R^n,\R^{N})$ such that $u_1=u_2 \;\calL^n$-a.e in $A$;
    \item (\textup{semicontinuity}) for every $A\in \corA$ the functional $\Fsucc(\cdot,1,A):L^1_{\textup{loc}}(\R^n,\R^{N})\longrightarrow [0,+\infty]$ is lower semicontinuous;
    \item (\textup{upper bound}) for every $A\in \corA$ and every $u\in L^1_{\textup{loc}}(\R^n,\R^N)$ with $u\in BV(A,\R^N)$ there holds 
    \begin{equation}\label{e:upper bound Fsucc}
        \Fsucc(u,1,A) \leq C(\calL^n(A)+|Du|(A));
    \end{equation}
    \item (\textup{lower bound}) for every $M>0$ there exists $C_M>0$ such that for every $A\in \corA$ and every $u\in L^1_{\textup{loc}}(\R^n,\R^N)$ with $u\in BV(A,\R^N)$ and $\|u\|_{L^{\infty}(A,\R^N)}\leq M$ we have 
    \begin{equation}\label{e:lower bound Fsucc}
       C_M |Du|(A) \leq \Fsucc(u,1,A) ;
    \end{equation}
    \item (\textup{measure property}) for every $A\in \corA$, every $u\in L^1_{\textup{loc}}(\R^n,\R^N)$ such that $u\in BV(A,\R^N)$, the set function $\Fsucc(u,1,\cdot):\corA(A) \to [0,+\infty]$ is the restriction of a finite Radon measure on $A$;
    \item (\textup{translation invariance in $u$}) for every $A\in \corA$ and every $u\in L^1_{\textup{loc}}(\R^n,\R^{N})$ we have
    \begin{equation*}
        \Fsucc(u+z,1,A)=\Fsucc(u,1,A),
    \end{equation*}
    for every $z\in \R^N$.
    \end{enumerate}
\end{theorem}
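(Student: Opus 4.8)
The plan is to follow the classical localisation scheme for $\Gamma$-convergence (as in \cite{BrDeF, Dalmaso1993}) applied to the functionals $\Functeps_{\eps}$. First I would invoke the general compactness theorem for $\Gamma$-convergence of increasing set functions: since the family of topologies $L^1_{\mathrm{loc}}$ is separable and metrisable (on bounded sets of, say, $BV$), and since the class $\corA$ admits a countable dense subclass $\mathcal{D}$ of ``nice'' open sets, a standard diagonal argument produces a subsequence $(\eps_j)$ along which the $\Gamma$-liminf and $\Gamma$-limsup of $\Functeps_{\eps_j}(\cdot,\cdot,A)$ coincide for every $A\in\mathcal{D}$. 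One then defines $\Fsucc(u,v,A)$ via the inner-regular envelope, namely $\Fsucc(u,v,A):=\sup\{\Gamma\text{-}\lim_j\Functeps_{\eps_j}(u,v,A'):A'\in\mathcal{D},\,A'\subset\subset A\}$, and the usual De Giorgi--Letta type argument upgrades this to a genuine $\Gamma$-limit for \emph{all} $A\in\corA$, at least when evaluated on sufficiently regular configurations; here the point is that we only claim \eqref{e:Gamma-conv sottosucc} for pairs of the form $(u,1)$ with $u\in BV(A,\R^N)$, which is exactly the regime controlled by the equi-coercivity remark. Properties (i) (locality) and (ii) (semicontinuity) are then automatic consequences of the corresponding properties of each $\Functeps_{\eps_j}$ together with the stability of locality and lower semicontinuity under $\Gamma$-convergence.

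For the upper bound (iii), I would test the $\Gamma$-limsup with the recovery sequence $(u,v_{\eps})$ where $v_{\eps}\equiv 1$ (or, near the jump set, the standard one-dimensional optimal profile for the Modica--Mortola term), giving $\Fsucc(u,1,A)\le\liminf_{\eps}\Functeps_{\eps}(u,v_{\eps},A)$; using \ref{e:crescita lineare} and the elementary estimate $\int_A(v^2 f(\tfrac{x}{\eps},\nabla u)+\dots)\le C(|Du|(A)+\calL^n(A))$ for $u\in W^{1,1}$, and then a density/relaxation argument in $BV$ (approximating $u\in BV(A,\R^N)$ by mollification and exploiting the lower semicontinuity already established), yields \eqref{e:upper bound Fsucc}. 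For the lower bound (iv) I would use the trivial pointwise inequality $v^2 f(\tfrac{x}{\eps},\nabla u)\ge C^{-1}v^2|\nabla u|$ from \ref{e:crescita lineare}, so that $\Functeps_{\eps}(u,v,A)\ge C^{-1}G_{\eps}(u,v,A)$ with $G_{\eps}$ the Shah-type functional, and then appeal directly to the compactness/lower-bound analysis of \cite{AlicBrShah, AlFoc}: the $\Gamma$-liminf of $C^{-1}G_{\eps}$ on $BV$ bounds $C_M|Du|(A)$ from below whenever $\|u\|_{L^\infty}\le M$, since the limit surface density $g$ of those functionals is bounded below by a positive constant on $\{|\zeta|\le 2M\}$ (cf.\ the formula $g(s)=2s/(s+2)$) and the bulk/Cantor densities are bounded below by $C^{-1}|\cdot|$.

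The measure property (v) is the technical heart and the step I expect to be the main obstacle. The plan is to verify the hypotheses of the De Giorgi--Letta criterion for the set function $A\mapsto\Fsucc(u,1,A)$ on $\corA(A_0)$: (a) it is increasing, (b) it is inner regular, (c) it is superadditive, and (d) it is subadditive. Monotonicity and inner regularity come for free from the envelope construction. Superadditivity follows because for disjoint $A_1,A_2$ one may add the energies of recovery sequences localised in each piece. Subadditivity is the delicate point: given $A'\subset\subset A''\subset\subset A_0$ and $B\subset\subset A_0$ with $A'\subset A''\cup B$, one must glue a recovery sequence $(u_j,v_j)$ for $\Fsucc(u,1,A'')$ with a recovery sequence $(\tilde u_j,\tilde v_j)$ for $\Fsucc(u,1,B)$ across the overlap $A''\cap B$ without creating extra surface or Cantor energy. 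The standard device is a ``fundamental estimate'': using a cut-off function $\varphi$ supported in the overlap region, one forms $w_j=\varphi u_j+(1-\varphi)\tilde u_j$ and $z_j=\varphi v_j+(1-\varphi)\tilde v_j$, and the error terms involve $\int |\nabla\varphi|^2|v_j-\tilde v_j|^2$ and, crucially because of the \emph{linear} growth of $f$, terms of the type $\int v_j^2 f(\tfrac{x}{\eps_j},\nabla\varphi\otimes(u_j-\tilde u_j))\le C\int v_j^2|\nabla\varphi||u_j-\tilde u_j|$; choosing $\varphi$ among a family of $N_\lambda$ cut-offs with $|\nabla\varphi|\le C/\lambda$ on an annulus of width $\lambda$ and averaging, these errors are controlled by $C\lambda^{-1}\int_{\text{annulus}}|u_j-\tilde u_j|$, which is infinitesimal since both sequences converge to $u$ in $L^1$. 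One must also handle the coercive lower-order terms $(1-z_j)^2/\eps_j+\eps_j|\nabla z_j|^2$, where the gluing is done on the $v$-variable exactly as in the classical Ambrosio--Tortorelli fundamental estimate. Once subadditivity is in hand, De Giorgi--Letta promotes $\Fsucc(u,1,\cdot)$ to (the trace of) a Borel measure, and the upper bound \eqref{e:upper bound Fsucc} makes it finite, hence Radon. Finally, property (vi) (translation invariance in $u$) is immediate: each $\Functeps_{\eps}$ is invariant under $u\mapsto u+s$ since $\nabla(u+s)=\nabla u$, and this invariance passes to the $\Gamma$-limit.
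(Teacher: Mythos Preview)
Your proposal is correct and follows essentially the same route as the paper: compactness of $\overline{\Gamma}$-convergence from \cite{Dalmaso1993}, the fundamental estimate to obtain subadditivity (which the paper outsources to \cite[Lemma~5.1]{AlFoc} while you spell out the cut-off averaging argument), De Giorgi--Letta for the measure property, the upper bound via smooth approximation in $BV$ combined with the lower semicontinuity of the $\Gamma$-limsup, and the lower bound by comparison with the Shah functional via \cite{AlicBrShah,AlFoc}. The only cosmetic difference is that the paper invokes \cite[Proposition~18.6]{Dalmaso1993} to pass from $\overline{\Gamma}$-convergence to genuine $\Gamma$-convergence on $BV$, which is the abstract version of the inner-regularity-plus-upper-bound step you describe.
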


\begin{proof}
Given any sequence of positive real numbers decreasing to zero \cite[Theorem~16.9]{Dalmaso1993} provides us with a subsequence $(\eps_j)$ such that
\begin{equation*}
    \overline{\Gamma}\textup{-}\lim_{j\to +\infty}\Functeps_{\eps_j}=\Fsucc,
\end{equation*}
where $\Fsucc:L^1_{\textup{loc}}(\R^n,\R^{N+1})\times \corA\longrightarrow [0,+\infty]$ is increasing, inner regular, and superadditive as a set function and lower semicontinuous in $L^1_{\textup{loc}}(\R^n,\R^{N+1})$ as a functional. By definition of $\overline{\Gamma}$-convergence, we have
\begin{equation}\label{e:gamma bar conv uguaglianza}
    \Fsucc'_{-}=\Fsucc=\Fsucc''_{-},
\end{equation}
where
\begin{equation*}
    \Fsucc'(\cdot,A)=\Gamma\text{-}\liminf_{j\to \infty}\Functeps_{\eps_j}(\cdot,A),  \quad\Fsucc''(\cdot,A)=\Gamma\text{-}\limsup_{j\to \infty}\Functeps_{\eps_j}(\cdot,A),
\end{equation*}
and
\begin{equation*}
   \Fsucc'_{-}(\cdot,A):=\sup_{A' \subset \subset A}\Fsucc'(\cdot,A'), \quad   \Fsucc''_{-}(\cdot,A):=\sup_{A' \subset \subset A}\Fsucc''(\cdot,A').
\end{equation*}
The locality property and the translation invariance of $\Fsucc$ are direct consequences of \eqref{e:gamma bar conv uguaglianza}, and of the locality and translation invariance of $\Fsucc'$ and $\Fsucc''$. 

Arguing exactly as in \cite[Lemma~5.1]{AlFoc}, for every $A\in \corA$, every $u\in L^1_{\textup{loc}}(\R^n,\R^N)$ such that $u\in BV(A,\R^N)$, every $A',A''\in \corA(A)$ and every $B'\subset \subset A'$, with $B'\in \corA(A)$ we can obtain that
\begin{equation*}
    \Fsucc''(u,1,B'\cup A'') \leq \Fsucc''(u,1,A')+ \Fsucc''(u,1,A''),
\end{equation*}
from which we can easily deduce that the inner regular envelope $\Fsucc''_{-}(u,1,\cdot)$ is subadditive on $\corA(A)$. Therefore,  thanks to the De Giorgi-Letta Criterion, we infer that the set function $ \Fsucc(u,1,\cdot):\corA(A) \to [0,+\infty] $ is the restriction to the open sets of a Borel measure on $A$. 

For every $A\in \corA$, and every $u\in L^1_{\textup{loc}}(\R^n,\R^N)$ with $u\in BV(A,\R^N),$ in view of \ref{e:crescita lineare}, we obtain 
\begin{equation*}
    \Fsucc''(u,1,A) \leq C(|Du|(A)+\calL^n(A)).
\end{equation*}
Hence, in particular
\begin{equation*}
   \Fsucc(u,1,A)\leq C(|Du|(A)+\calL^n(A)), 
\end{equation*}
so that using \cite[Proposition~18.6]{Dalmaso1993} we get
\begin{equation*}
    \Fsucc'(u,1,A)=\Fsucc(u,1,A)=\Fsucc''(u,1,A).
\end{equation*}
The latter eventually provides the $\Gamma$-convergence statement in \eqref{e:Gamma-conv sottosucc}.

{{To prove the lower bound inequality in item (iv) we argue as follows.
By \ref{e:crescita lineare}, a comparison argument and \cite[Proposition~4.1]{AlFoc}
(see \cite[Remark~3.5]{AlFoc}) we infer that
\[
\Fsucc(u,1,A)\geq C^{-1}|Du|(A\setminus J_u)+C^{-1}\int_{J_u\cap A}g(|u^+-u^-|)|\dd\calH^{n-1}
\]
where $g(s)=\frac{2s}{s+2}$ for all $s\geq 0$ (cf. \eqref{e:g esplicita AlFoc}).
Finally, note that for every $M>0$ there is $C_M>0$ such that
$g(s)\geq C_Ms$ for every $s\in[0,2M]$, and the conclusion follows at once.
}}
\end{proof}
The next three subsections are devoted to the proof of 
Theorem~\ref{t:Deterministic Gamma-conv}. Namely, in subsections~\ref{s:volume} - \ref{s:cantor} we identify, respectively, the three measure derivatives 
\[
\frac{\dd \Fsucc(u,1,\cdot)}{\dd \calL^n}, \quad  \frac{\dd \Fsucc(u,1,\cdot)}{\dd \calH^{n-1} \res J_u}, \quad \text{and} \quad \frac{\dd \Fsucc (u,1,\cdot)}{\dd |D^c u|}.
\]
In fact, we will prove that under the assumptions of Theorem~\ref{t:Deterministic Gamma-conv}, for every $A\in \corA$ and 
$u\in BV(A,\R^N)$ the following three equalities hold:
\begin{align*}
    \frac{\dd \Fsucc(u,1,\cdot)}{\dd \calL^n}(x)&=f\homm(\nabla u(x)) \quad \textup{for $\calL^n$-a.e. $x\in A$},  \\   \frac{\dd \Fsucc(u,1,\cdot)}{\dd \mathcal{H}^{n-1}}(x)&= g\homm([u](x),\nu_{u}(x)) \quad \textup{for $\mathcal{H}^{n-1}$-a.e. $x\in J_u\cap A$}, \\  \frac{\dd \Fsucc(u,1,\cdot)}{\dd |D^cu|}(x) &= f^{\infty}\homm({{\textstyle\frac{\dd D^c u}{\dd |D^c u|}(x)}}) \quad \textup{for $|D^cu|$-a.e. $x\in A$} .
\end{align*}
Since in the equalities above the right-hand sides do not depend on the subsequence $(\eps_j)_{j\in\N}$, we will be able to conclude that $\widehat F$ is subsequence independent and therefore the $\Gamma$-convergence result holds for the whole sequence $(\Functeps_{\eps})$ (cf.\ Theorem~\ref{t:Deterministic Gamma-conv}).

The strategy to prove the identities above uses, on one hand, the global method for relaxation in $BV$ \cite{BouchitteFonsecaMascarenhas} and, on the other hand, a direct (although involved) comparison argument. 

\medskip

For later use it is useful to recall the following notation: 
let $U\in \corA_{\infty}$ and let $G:BV(U,\R^N)\times \corA(U)\longrightarrow [0,\infty)$; 
for every $(w,A)\in BV(U,\R^N)\times\corA_\infty(U)$ set
\begin{equation}\label{e:equaz minfunz}
  m_G(w,A):=\inf\{G(u,A) \; : \; u\in L^1_{\textup{loc}}(\R^n,\R^{N}), \; u\in BV(A,\R^N), \; u=w \; \textup{on $\partial A$}\}.
\end{equation}
In addition, we use the notation $sc^-(L^1)G$ for the relaxation of $G$ with respect to the $L^1$ convergence, namely
$sc^-(L^1)G(u,A):=\Gamma(L^1)\hbox{-}\lim_jG(u;A)$ (cf.\ \cite{Dalmaso1993}).

In what follows we will use in several instances a truncation lemma that follows from  De~Giorgi's slicing and averaging argument on the codomain 
(see for instance \cite[Proposition~6.2]{AlFoc} and \cite[Proposition~3.2]{ContiFocardiIurlano2024}). We give here a detailed proof of it since the statement is slightly
different from the standard one. In particular, in Propositions~\ref{p:Homogenised volume integrand} and \ref{p:homogenised cantor integrand} we
choose $v\equiv 1$, while in Proposition~\ref{p:homogenised surface integrand} it is important that the constant $\gamma$ in 
the growth condition below equals $0$. We recall the notation $\mathcal{T}_k$ for the smooth truncation operators and $a_k$ for the related sequence
introduced in \ref{i:truncation}.
\begin{lemma}\label{l:truncation lemma}
Let $A\in \corA$ and $\mathcal{G}: GBV(A,\R^N)\times L^1(A,[0,1])\to [0,\infty]$ be the functional defined by
\begin{equation*}
    \mathcal{G}(u,v):=\int_A v^2(x)g(x,\nabla u(x)) \dx
\end{equation*}
where $g:\R^n\times \R^{N\times n}\to [0,\infty)$ is a Borel function for which there exists $\gamma\in [0,\infty)$ such that 
\begin{equation}\label{e:lemma di troncamento 2 equaz 1}
    c^{-1}|\xi|\leq g(x,\xi)\leq c(|\xi|+\gamma)
\end{equation}
for every $(x,\xi)\in \R^n\times \R^{N\times n}$, and for some $c>0$. 

Then for every $M\in \N$ and $(u,v)\in GBV(A,\R^N)\times L^1(A,[0,1])$ there exists $k\in \{M+1,\dots,2M\}$ such that $\mathcal{T}_k(u)\in BV\cap L^\infty(A,\R^N)$ with $\|\mathcal{T}_k(u)\|_{L^\infty}\leq a_{k+1}$, $\mathcal{H}^{n-1}(J_{\mathcal{T}_k(u)}\cap A)\leq \mathcal{H}^{n-1}(J_u\cap A)$ and
\begin{equation*}
\mathcal{G}(\mathcal{T}_k(u),v)\leq    \Big(1+\frac{c^2}{M}\Big)\mathcal{G}(u,v)+ \gamma c\calL^n(\{|u|>a_{M}\})\,.
\end{equation*}
\end{lemma}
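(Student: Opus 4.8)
The plan is to run De~Giorgi's classical slicing-and-averaging argument on the codomain of $u$, cutting the region $\{a_M<|u|<a_{2M}\}$ into the $M$ dyadic annuli $\{a_{k}\le |u|<a_{k+1}\}$, $k=M+1,\dots,2M$, and choosing by a pigeonhole argument the annulus on which the energy is smallest. More precisely, for $k\in\{M+1,\dots,2M\}$ write $A_k:=\{x\in A:a_k<|u(x)|<a_{k+1}\}$; these sets are pairwise disjoint and contained in $\{|u|>a_M\}$. On $A\setminus A_k$ the truncation $\mathcal T_k(u)$ is either locally equal to $u$ (on $\{|u|\le a_k\}$) or locally constant equal to $0$ (on $\{|u|\ge a_{k+1}\}$), so $\nabla \mathcal T_k(u)=D\mathcal T_k(u)\nabla u$ with $D\mathcal T_k(u)$ equal to the identity or to $0$ there, whereas on $A_k$ we only have $|\nabla\mathcal T_k(u)|\le \mathrm{Lip}(\mathcal T_k)|\nabla u|\le |\nabla u|$ by \ref{e:prop 2 Tk}. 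Using the upper bound in \eqref{e:lemma di troncamento 2 equaz 1} together with $0\le v\le 1$ we estimate
\begin{equation*}
\mathcal G(\mathcal T_k(u),v)\le \int_{A\setminus A_k} v\, g(x,\nabla u)\,\dx + c\int_{A_k} v\,(|\nabla u|+\gamma)\,\dx\le \mathcal G(u,v)+c\int_{A_k}v\,|\nabla u|\,\dx+\gamma c\,\calL^n(A_k).
\end{equation*}

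Next I would bound $\int_{A_k}v\,|\nabla u|\,\dx$ by the energy itself: by the lower bound in \eqref{e:lemma di troncamento 2 equaz 1}, $|\nabla u|\le c\,g(x,\nabla u)$, hence $c\int_{A_k}v\,|\nabla u|\,\dx\le c^2\int_{A_k}v\,g(x,\nabla u)\,\dx$. Since the $A_k$, $k=M+1,\dots,2M$, are pairwise disjoint subsets of $A$, we have $\sum_{k=M+1}^{2M}\int_{A_k}v\,g(x,\nabla u)\,\dx\le \mathcal G(u,v)$, so by the pigeonhole principle there exists $k\in\{M+1,\dots,2M\}$ with $\int_{A_k}v\,g(x,\nabla u)\,\dx\le \frac1M\,\mathcal G(u,v)$. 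For this same $k$, again using disjointness, $\calL^n(A_k)\le \calL^n(\{|u|>a_M\})$; actually to match the stated bound one simply notes $\calL^n(A_k)\le\calL^n(\{|u|>a_M\})$ directly. Plugging these two estimates into the displayed inequality gives
\begin{equation*}
\mathcal G(\mathcal T_k(u),v)\le \Big(1+\frac{c^2}{M}\Big)\mathcal G(u,v)+\gamma c\,\calL^n(\{|u|>a_M\}),
\end{equation*}
which is the asserted bound (with the harmless replacement of the constant $\gamma c$ by $\gamma C$, $C$ being the ambient constant, or one keeps $c$ throughout).

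It remains to record the structural properties of $\mathcal T_k(u)$. Since $u\in GBV(A,\R^N)$ and $\mathcal T_k\in C^1_c(\R^N,\R^N)$ with $\mathcal T_k$ bounded by $a_{k+1}$, the chain rule for $BV$ (or the very definition of $GBV$, cf.\ \cite{AFP}) gives $\mathcal T_k(u)\in BV\cap L^\infty(A,\R^N)$ with $\|\mathcal T_k(u)\|_{L^\infty}\le a_{k+1}$ by \ref{e:prop 2 Tk}. For the jump set, at every point $x\in J_u$ the one-sided traces $u^\pm(x)$ are mapped to $\mathcal T_k(u^\pm(x))$, so $J_{\mathcal T_k(u)}\subseteq J_u$ up to $\calH^{n-1}$-negligible sets (the inclusion can be strict precisely when $\mathcal T_k(u^+(x))=\mathcal T_k(u^-(x))$), whence $\calH^{n-1}(J_{\mathcal T_k(u)}\cap A)\le \calH^{n-1}(J_u\cap A)$. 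The only genuinely delicate point is the careful bookkeeping of $D\mathcal T_k(u)$ off $A_k$ — in particular verifying that on $\{|u|\le a_k\}$ one really has $\mathcal T_k(u)=u$ so that no extra contribution arises there, and on $\{|u|\ge a_{k+1}\}$ that $\mathcal T_k(u)\equiv 0$ contributes nothing — together with making the chain-rule manipulations rigorous directly in $GBV$ by a standard approximation with $BV$ truncates $\mathcal T_M(u)$; once this is set up the rest is the elementary averaging estimate above. This is a routine but slightly non-standard variant of \cite[Proposition~6.2]{AlFoc} and \cite[Proposition~3.2]{ContiFocardiIurlano2024}, the novelty being that we allow $\gamma=0$ and keep the multiplicative structure $v\,g(x,\nabla u)$.
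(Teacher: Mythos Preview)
Your approach is exactly the paper's (De~Giorgi averaging on the annuli $\{a_k\le |u|<a_{k+1}\}$, pigeonhole, chain rule for $\mathcal T_k$), and the final bound is correct. There is, however, one genuine slip in your first displayed inequality: on the set $\{|u|\ge a_{k+1}\}\subset A\setminus A_k$ you have $\nabla\mathcal T_k(u)=0$, so the integrand is $v\,g(x,0)$, and there is no reason this is bounded by $v\,g(x,\nabla u)$ --- the growth condition gives no monotonicity of $g(x,\cdot)$. Thus the inequality $\int_{A\setminus A_k} v\,g(x,\nabla\mathcal T_k(u))\,\dx\le \int_{A\setminus A_k} v\,g(x,\nabla u)\,\dx$ is unjustified.

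The fix is immediate: on $\{|u|\ge a_{k+1}\}$ use instead the upper bound $g(x,0)\le c\gamma$ together with $v\le 1$, so that this set contributes $c\gamma\,\calL^n(\{|u|\ge a_{k+1}\})$ to the error. Combined with your $c\gamma\,\calL^n(A_k)$ this gives $c\gamma\,\calL^n(\{|u|>a_k\})\le c\gamma\,\calL^n(\{|u|>a_M\})$, since $k\ge M+1$ and the $a_j$ are increasing. This is precisely how the paper handles it, and with this correction your argument is complete.
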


\begin{proof}
Let us fix $M\in \N$ and $(u,v)\in GBV(A,\R^N)\times L^1(A,[0,1])$ with $\mathcal{G}(u,v)<\infty$, otherwise the claim follows trivially. 
By averaging, there exists $k\in \{M+1,\dots,2M\}$ such that 
\begin{equation}\label{e:lemma di troncamento 2 equaz 2}
    \int_{A\cap \{a_k\leq |u|<a_{k+1}\}} v^2(x)g(x,\nabla u(x))\dx \leq \frac{1}{M}\mathcal{G}(u,v).
\end{equation}
By the properties of $GBV$ functions and the very definition of $\mathcal{T}_k$, we have that $\mathcal{T}_k(u)$ belongs to $BV(A,\R^N)\cap L^\infty(A,\R^N)$ with $\|\mathcal{T}_k(u)\|_{L^\infty}\leq a_{k+1}$, $J_{\mathcal{T}_k(u)}\cap A\subseteq  J_u\cap A$ and $\nabla (\mathcal{T}_k(u))(x)=\nabla \mathcal{T}_k(u(x))\nabla u(x)$ for $\calL^n$-a.e. $x\in A$. Furthermore, being ${\rm Lip}(\mathcal{T}_k)\leq 1$, we can check for every $y,v\in \R^N$ with $|v|=1$ that $|(\nabla \mathcal{T}_k(y))v|=|\partial_v \mathcal{T}_k(y)|\leq 1$ that provides $\|\nabla \mathcal{T}_k(y)\|_2\leq 1$ for every $y\in \R^N$ (here $\|\cdot\|_2$ stands for the matrix norm on $\R^{N\times N}$ induced by $|\cdot|$ on $\R^N$) and consequently 
\begin{equation}\label{e:lemma di troncamento 2 equaz 3}
   |\nabla (\mathcal{T}_k(u))(x)|\leq |\nabla u(x)| \quad \quad \textup{for $\calL^n$-a.e. $x\in A$}. 
\end{equation}
In particular, in virtue of $\mathcal{T}_k(y)={{y}}$ on $\{|y|< a_k\}$ and $\mathcal{T}_k(y)=0$ on $\{|y|\geq a_{k+1}\}$, we obtain
\begin{align*}
   & \mathcal{G}(\mathcal{T}_k(u),v)=\int_{A\cap \{|u|< a_k\}}v^2(x)g(x,\nabla u) \dx + \int_{A\cap \{a_k\leq |u|<a_{k+1} \}}v^2(x)g(x,\nabla (\mathcal{T}_k(u))) \dx + \\ & + \int_{A\cap \{|u|\geq a_{k+1}\}}v^2(x)g(x,0) \dx \leq \mathcal{G}(u,v)+ c \int_{A\cap \{a_k\leq |u|<a_{k+1} \}}v^2(x)|\nabla (\mathcal{T}_k(u))| \dx \\ & + c\gamma  \calL^n({|u| \geq a_{k}}) \leq \mathcal{G}(u,v)+ c\int_{A\cap \{a_k\leq |u|<a_{k+1} \}}v^2(x)|\nabla (u)| \dx + c\gamma  \calL^n({|u| > a_{M}}) \\ & \leq \Big(1+\frac{c^2}{M}\Big)\mathcal{G}(u,v)+c\gamma \calL^n({|u| > a_{M}}),
\end{align*}
where in the first inequality we used \eqref{e:lemma di troncamento 2 equaz 1}, in the second one \eqref{e:lemma di troncamento 2 equaz 3}, and finally in the last one \eqref{e:lemma di troncamento 2 equaz 1} and \eqref{e:lemma di troncamento 2 equaz 2}. 
\end{proof}
 
\subsection{Identification of the volume term}\label{s:volume}

This section is devoted to identify the measure derivative  
$\frac{\dd \Fsucc(u,1,\cdot)}{\dd \calL^n}$ with $f\homm$.
\begin{proposition}[Homogenised volume integrand]\label{p:Homogenised volume integrand}
Let $f\in \mathcal F(C,\alpha)$ satisfy \eqref{e:fhom-ass}. 
Let $\Fsucc$ be as in \eqref{e:Gamma-conv sottosucc}. 
Then,
for every $A\in \corA$ and every $u\in L^1_{\textup{loc}}(\R^n,\R^N)$, with $u\in BV(A,\R^N)\cap L^{\infty}(A,\R^N)$ there holds
\begin{equation*}
    \frac{\dd \Fsucc(u,1,\cdot)}{\dd \calL^n}(x)=f\homm(\nabla u(x)) \quad \textup{for $\calL^n$-a.e. $x\in A$},
\end{equation*}
where $f\homm$ is as in \eqref{e:fhom-ass}. 
\end{proposition}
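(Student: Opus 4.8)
The strategy is the classical blow-up / global-method argument for $BV$ integral representation (cf.\ \cite{BouchitteFonsecaMascarenhas}), combined with the cell formula \eqref{e:fhom-ass}. Fix $A\in\corA$ and $u\in BV\cap L^\infty(A,\R^N)$. By the measure property from Theorem~\ref{t:Sottosucc gamma-conv.}, $\Fsucc(u,1,\cdot)$ is a finite Radon measure on $A$, so its Radon--Nikodym derivative with respect to $\calL^n$ exists $\calL^n$-a.e.; the goal is to show it equals $f\homm(\nabla u(x))$. I would show the two inequalities separately.

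\textbf{Lower bound.} Fix a Lebesgue point $x_0$ of $\nabla u$ for the measure $\Fsucc(u,1,\cdot)$ (more precisely, a point at which the Besicovitch derivative exists, $x_0\notin J_u$, $x_0$ is not in the support of $D^cu$, and the blow-ups of $u$ converge to the affine map $\ell_{\nabla u(x_0)}$ in $L^1$). Using the definition of $\Fsucc$ as a $\Gamma$-limit and a diagonal argument, for cubes $Q_\rho(x_0)$ one compares $\Fsucc(u,1,Q_\rho(x_0))$ with $m_{\Functeps_{\eps_j}}(\cdot,Q_\rho(x_0))$; after rescaling by $\rho$ and sending $\rho\to0$ along a suitable sequence, the phase-field term $\frac{(1-v)^2}{\eps}+\eps|\nabla v|^2$ contributes nothing in the bulk regime (one can always take $v\equiv1$ as a recovery competitor for the bulk, since this term is an $o(\rho^n)$ perturbation when $v$ is forced to $1$ except on a negligible layer), and one is left with the Dirichlet-type minimisation of $\int v^2 f(\tfrac{y}{\eps},\nabla u)\,\dy$ with affine boundary datum, which by \ref{e:crescita lineare} is bounded below by $\minprobv^f(\ell_{\nabla u(x_0)},\cdot)$ on the rescaled cube. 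The cell formula \eqref{e:fhom-ass} then yields $\tfrac{\dd\Fsucc(u,1,\cdot)}{\dd\calL^n}(x_0)\ge f\homm(\nabla u(x_0))$.

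\textbf{Upper bound.} Conversely, one needs a recovery sequence. Near $x_0$ approximate $u$ by the affine function $\ell:=\ell_{\nabla u(x_0)}$ up to a small error; take near-optimal competitors $u_r$ for $\minprobv^f(\ell,Q_r)$ (after rescaling to $Q_\rho(x_0)$, this gives maps with the right boundary datum and energy $\le (f\homm(\nabla u(x_0))+o(1))\rho^n$), glue them into $u$ by a cut-off near $\partial Q_\rho(x_0)$ (this is where the Lipschitz estimate on $f$, i.e.\ Proposition~\ref{p:prop_fhom}(ii), and the boundary-layer argument à la \cite[Lemma~3.1]{BouchitteFonsecaMascarenhas} are needed to control the mismatch), and pair them with $v\equiv1$; the phase-field cost is zero. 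Passing to the $\eps\to0$ limit with $\eps$-periodic rescalings of $f$ requires subadditivity/the cell-formula consistency to absorb the microscopic oscillations. This produces, for $\calL^n$-a.e.\ $x_0$, the bound $\tfrac{\dd\Fsucc(u,1,\cdot)}{\dd\calL^n}(x_0)\le f\homm(\nabla u(x_0))$. Finally, since both bounds hold $\calL^n$-a.e., the two derivatives coincide; the truncation Lemma~\ref{l:truncation lemma} (with $v\equiv1$) is what allows one later to pass from $BV\cap L^\infty$ to general $GBV$, but for this proposition the $L^\infty$ bound is assumed so it is only needed implicitly to stay inside $BV$.

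\textbf{Main obstacle.} The delicate point is the interplay between the two scales: here the oscillation scale and the phase-field regularisation scale coincide (both equal $\eps$), so one cannot simply fix the microstructure and vary the regularisation, nor vice versa. Controlling the recovery sequence so that the phase-field term is genuinely negligible in the bulk regime while simultaneously the bulk integrand sees the homogenised density $f\homm$ — and ensuring the boundary-layer gluing does not destroy either estimate — is the heart of the matter; this is precisely why one passes through the abstract $\Gamma$-limit $\Fsucc$ and its measure property first, and only then localizes via blow-up, rather than attempting a direct construction.
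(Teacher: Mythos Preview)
Your upper-bound sketch (constructing a recovery sequence from near-optimal cell-problem competitors, rescaled and paired with $v\equiv 1$) is essentially what the paper does in its Step~1. One detail you omit: since $\Fsucc$ only satisfies the weak lower bound \eqref{e:lower bound Fsucc}, the paper first perturbs to $\Fsucc_q:=\Fsucc+q|Du|$ to restore the coercivity needed for \cite[Lemma~3.5]{BouchitteFonsecaMascarenhas} and the formula \eqref{e:BFM 3.20}, then lets $q\to0$ at the end.

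Your lower-bound argument, however, has a genuine gap. For the inequality $\tfrac{\dd\Fsucc(u,1,\cdot)}{\dd\calL^n}(x_0)\ge f\homm(\nabla u(x_0))$ you must work with an \emph{arbitrary} recovery sequence $(u_j,v_j)\to(u,1)$ realising $\Fsucc(u,1,A')$; you cannot ``take $v\equiv1$''. The statement that ``$\int v_j^2 f(\tfrac{y}{\eps_j},\nabla u_j)\,\dy$ \dots\ is bounded below by $\minprobv^f(\ell_{\nabla u(x_0)},\cdot)$'' is false in general: since $v_j^2\le1$, the degenerate coefficient can kill large gradients. In the linear-growth regime nothing prevents $|\nabla u_j|$ from concentrating precisely where $v_j$ is close to~$0$ (this is exactly how the surface energy is built), so on a small cube the bulk term $\int v_j^2 f\,\dy$ may be much smaller than $\int f\,\dy$ even at a Lebesgue point of $\nabla u$. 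The observation that $\{v_j\le\delta\}$ has small measure does not help, because gradients are only controlled in $L^1$.

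The paper resolves this via Lemma~\ref{l:Lemma di troncamento}: using the Modica--Mortola term and the coarea formula one cuts $u_j$ along a suitable level set of $v_j$ to produce $u_j^\delta\in SBV(A',\R^N)$ with
\[
\alpha_\delta\int_{A'} f\bigl(\tfrac{x}{\eps_j},\nabla u_j^\delta\bigr)\,\dx+\beta_\delta\,\calH^{n-1}(J_{u_j^\delta}\cap A')\le \Functeps_{\eps_j}(u_j,v_j,A')+C\calL^n(\{v_j\le\delta\}),
\]
thereby trading the degenerate weight $v_j^2$ for an honest $SBV$ energy $H^\delta_{\eps_j}$. After a further $L^\infty$-truncation (Lemma~\ref{l:truncation lemma}) and the blow-up at $x_0$, the jump term scales like $\rho^{1-n}\cdot\rho^{n-1}=O(1)$ but, divided by $\rho^n$, forces $\calH^{n-1}(J_{\hat u^\rho_j}\cap Q_1)\to0$ at Lebesgue points; the resulting competitors are then repaired to genuine $W^{1,1}$ functions with affine boundary datum via \cite[Lemma~4.1.3]{BouchitteFonsecaMascarenhas} (see Lemma~\ref{l:lemma del rilassato}~(vi)), and only then compared with the cell problem $\minprobv^f$. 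This decoupling of the phase field from the gradient via an $SBV$ truncation is the missing idea in your lower-bound sketch.
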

To prove Proposition \ref{p:Homogenised volume integrand}, we need the two following technical lemmas.  
\begin{lemma}\label{l:lemma del rilassato}
Let $g\in \mathcal{F}(C,\alpha)$ be given and define $\hat g:\R^n\times \R^{N\times n}\to [0,\infty)$ as
\begin{equation}\label{e:limsup del teo bfm}
      \hat g(x,\xi):=\limsup_{\eta \to 0} \inf\left\{\int_{Q}g(x+\eta z,\nabla w) \dd z \; : \; w-\ell_{\xi}\in W^{1,1}_0(Q,\R^N)\right\},
\end{equation}
Let $A\in \corA_{\infty}$ and let $E^g(\cdot, A)$ and $E^{\hat g}(\cdot, A)$ be defined as in 
\eqref{e:FunzHomob} with $h$ replaced by $g$ and $\hat g$, respectively. Moreover, consider the functionals
$F^g,F^{\hat g}:L^1(A,\R^N)\longrightarrow [0,\infty]$ given by
\begin{equation*}
   F^g(u):= \begin{cases}
      E^{g}(u,A) \; & \textup{if $u\in W^{1,1}(A,\R^N)$} \\
      +\infty \; & \textup{otherwise}
    \end{cases}
    , \quad  F^{\hat g}(u):= \begin{cases}
      E^{\hat g}(u,A) \; & \textup{if $u\in W^{1,1}(A,\R^N)$} \\
      +\infty \; & \textup{otherwise}.
    \end{cases}
\end{equation*}
Then the following statements hold:
\begin{enumerate}
 \item if $g$ is $1$-homogeneous in $\xi$, then the same holds for $\hat g$; 
 \item there exists $H\subseteq \R^n$ with $\calL^n(H)=0$ such that for every $x\in \R^n\setminus H$ and every $\xi \in \R^{N\times n}$
    \begin{equation*}
        \hat g(x,\xi)\leq g(x,\xi);
    \end{equation*}
       \item  for every $u\in W^{1,1}(A,\R^N)$
    \begin{equation*}
        sc^{-}(L^1)F^g(u)=F^{\hat g}(u);
    \end{equation*}
    \item for every $u\in BV(A,\R^N)$
    \begin{equation*}
       \Big|  sc^{-}(L^1)F^g(u)-\int_A \hat g(x,\nabla u)\dx\Big|\leq C|D^su|(A);
    \end{equation*}
    \item for every $u\in L^1(A,\R^N)$
    \begin{equation*}
                sc^{-}(L^1)F^g(u)=sc^{-}(L^1)F^{\hat g}(u);
    \end{equation*}
    \item for every $\xi\in \R^{N\times n}$
    \begin{equation*}
        \minprobv^g(\ell_{\xi},A)=\minprobv^{\hat g}(\ell_{\xi},A).
    \end{equation*}
\end{enumerate}
\end{lemma}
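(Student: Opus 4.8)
The plan is to recognise $\hat g$ as a \emph{blow-up density}. Fixing $w\in W^{1,1}(Q,\R^N)$ with $w-\ell_\xi\in W^{1,1}_0(Q,\R^N)$, the rescaling $\tilde w(y):=\eta\,w\big(\tfrac{y-x}{\eta}\big)$ belongs to $W^{1,1}(Q_\eta(x),\R^N)$, differs from $\ell_\xi$ by an additive constant on $\partial Q_\eta(x)$, and satisfies $\int_Q g(x+\eta z,\nabla w)\,\dd z=\eta^{-n}\Functminv^{g}(\tilde w,Q_\eta(x))$, where $\Functminv^{g}$ is as in \eqref{e:FunzHomob}; taking the infimum over $w$ and using the invariance of $\Functminv^{g}$ under addition of constants to the boundary datum, I get
\[
\hat g(x,\xi)=\limsup_{\eta\to 0}\frac{\minprobv^g(\ell_\xi,Q_\eta(x))}{\eta^n}\,.
\]
From \ref{e:crescita lineare} and Jensen's inequality (as in the proof of Proposition~\ref{p:prop_fhom}(iii)) one gets $C^{-1}|\xi|\le\hat g(x,\xi)\le C(|\xi|+1)$ for every $x,\xi$, hence also $\hat g^{\infty}(x,\xi)\le C|\xi|$; and arguing as in the proof of Proposition~\ref{p:prop_fhom}(ii) (via the estimate $\|\ell_{\xi_1}-\ell_{\xi_2}\|_{L^1(\partial Q_\eta(x))}\le c\,|\xi_1-\xi_2|\,\eta^n$ together with \cite[Lemma~3.1, Lemma~4.1.2 and Lemma~4.1.3]{BouchitteFonsecaMascarenhas}) one gets that $\hat g(x,\cdot)$ is Lipschitz with a constant depending only on $n,N,C$. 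With this representation, (i) is immediate: if $g(x,\cdot)$ is $1$-homogeneous then, by the linearity of $\xi\mapsto\ell_\xi$, so is $\xi\mapsto\minprobv^g(\ell_\xi,Q_\eta(x))$, and hence so is $\hat g(x,\cdot)$. For (ii), I would test the infimum in \eqref{e:limsup del teo bfm} with $w=\ell_\xi$, which gives $\hat g(x,\xi)\le\limsup_{\eta\to0}\tfrac{1}{\eta^n}\int_{Q_\eta(x)}g(y,\xi)\,\dy$; choosing a countable dense set $D\subset\R^{N\times n}$, the Lebesgue differentiation theorem yields an $\calL^n$-null set $H$ such that $\tfrac{1}{\eta^n}\int_{Q_\eta(x)}g(y,\zeta)\,\dy\to g(x,\zeta)$ for every $x\notin H$ and every $\zeta\in D$, whence $\hat g(x,\zeta)\le g(x,\zeta)$ there; the bound for an arbitrary $\xi$ follows by approximating $\xi$ by elements of $D$ and using the continuity of $g(x,\cdot)$ together with the (uniform) Lipschitz continuity of $\hat g(x,\cdot)$.

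The core step I would establish next is the integral representation of $sc^-(L^1)F^g$. Since $g$ is a Carath\'eodory integrand of linear growth, the relaxation theory of \cite{BouchitteFonsecaMascarenhas} gives, for every $u\in L^1(A,\R^N)$,
\[
sc^-(L^1)F^g(u)=\int_A\hat g(x,\nabla u)\,\dx+\int_A\hat g^{\infty}\Big(x,\tfrac{\dd D^su}{\dd|D^su|}\Big)\,\dd|D^su|\qquad\text{if }u\in BV(A,\R^N),
\]
and $sc^-(L^1)F^g(u)=+\infty$ otherwise (the excluded set being forced to lie in $BV$ by the linear coercivity and $BV$-compactness), the bulk density being precisely the quantity $\hat g$ defined in \eqref{e:limsup del teo bfm}. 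Granting this, (iii) is the special case $u\in W^{1,1}(A,\R^N)$, in which $D^su=0$ and hence $sc^-(L^1)F^g(u)=\int_A\hat g(x,\nabla u)\,\dx=F^{\hat g}(u)$; and (iv) follows because the singular term in the representation is nonnegative and, by the bound $\hat g^{\infty}(x,\cdot)\le C|\cdot|$ obtained above, does not exceed $C|D^su|(A)$.

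It then remains to deduce (v) and (vi) without re-running the representation argument. For (v): by (ii), $\hat g\le g$ $\calL^n$-a.e.\ on $A$, so $F^{\hat g}\le F^g$ on $L^1(A,\R^N)$ and therefore $sc^-(L^1)F^{\hat g}\le sc^-(L^1)F^g$; conversely, the ``$\le$'' half of (iii) gives $sc^-(L^1)F^g\le F^{\hat g}$ on $W^{1,1}(A,\R^N)$, an inequality that holds trivially on $L^1(A,\R^N)\setminus W^{1,1}(A,\R^N)$ since there $F^{\hat g}=+\infty$; relaxing both sides and using the $L^1$-lower semicontinuity of $sc^-(L^1)F^g$ yields $sc^-(L^1)F^g\le sc^-(L^1)F^{\hat g}$. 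For (vi): by \cite[Lemma~4.1.3]{BouchitteFonsecaMascarenhas} the Dirichlet minimum is unchanged under $L^1$-relaxation, i.e.\ $\minprobv^g(\ell_\xi,A)=m_{sc^-(L^1)F^g}(\ell_\xi,A)$ and likewise with $\hat g$ in place of $g$; combining this with (v) gives $\minprobv^g(\ell_\xi,A)=\minprobv^{\hat g}(\ell_\xi,A)$.

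The main obstacle is the identification, valid for $\calL^n$-a.e.\ $x$ and every $\xi$, of the relaxed bulk density with the averaged blow-up \eqref{e:limsup del teo bfm} — equivalently, with the pointwise quasiconvex envelope $Qg(x,\cdot)$ of $g(x,\cdot)$ — when $g$ is merely measurable in $x$. This is precisely the point at which one cannot invoke the ``global method'' of \cite{BouchitteFonsecaMascarenhas} directly, since that would require continuity of the functional under spatial translations (which may fail); instead one combines the relaxation results available for Carath\'eodory integrands with a Lebesgue-point analysis carried out simultaneously over a countable dense set of gradients and then propagated by the uniform Lipschitz dependence on $\xi$ established in the first paragraph, together with the invariance $\minprobv^g(\ell_\xi,Q_\eta(x))=\minprobv^{Qg}(\ell_\xi,Q_\eta(x))$ of Dirichlet minima under quasiconvexification.
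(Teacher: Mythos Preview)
Your proposal is correct and follows essentially the same route as the paper: items (iii) and (iv) are a direct consequence of \cite[Theorem~4.1.4]{BouchitteFonsecaMascarenhas}, which already identifies the relaxed bulk density with exactly the formula \eqref{e:limsup del teo bfm}, so the hedging in your final paragraph is unnecessary. The only minor difference is in (vi): the paper argues from (iii) by taking a recovery sequence for $F^{\hat g}(u)$ and using \cite[Lemma~2.6]{BouchitteFonsecaMascarenhas} to fix the boundary datum $\ell_\xi$, whereas you go through (v) and \cite[Lemma~4.1.3]{BouchitteFonsecaMascarenhas}; both arguments are equally short and valid.
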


\begin{proof}
Property $(i)$ readily follows from the definition of $\hat g$.
Instead, $(iii)$ and $(iv)$ are a direct consequence of \cite[Theorem~4.1.4]{BouchitteFonsecaMascarenhas}.

To prove $(ii)$ let $\xi \in \Q^{N\times n}$ be fixed, by definition we get 
\begin{equation*}
    \hat g(x,\xi)\leq \limsup_{\eta \to 0} \int_{Q}g(x+\eta z,\xi) \dd z = \limsup_{\eta \to 0} \frac{1}{\eta^n}\int_{Q_{\eta}(x)}g(z,\xi) \dd z 
\end{equation*}
for every $x\in \R^n$. Then, the Lebesgue Differentiation Theorem provides us with a set $H_\xi \subset \R^n$ such that $\mathcal L^n(H_\xi)=0$ and $\hat g(x,\xi)\leq g(x,\xi)$ for every $x \in \R^n \setminus H_\xi$. 
Therefore, we conclude by setting 
\[
H:=\bigcup_{\xi \in \Q^{N\times n}}H_\xi\,,
\]
and invoking the continuity of $g$, \ref{e:semicontinuità f e finf}, and the lower 
semicontinuity of $\hat g$ as the bulk energy density of the functional 
$sc^{-}(L^1)F^{\hat g}$.

The proof of $(v)$ follows straightforwardly from $(ii)$ and $(iii)$. 

To conclude the proof, we are left to show $(vi)$. We start noticing that in view of $(ii)$ we only need to prove that
\begin{equation*}
    \minprobv^g(\ell_{\xi},A)\leq \minprobv^{\hat g}(\ell_{\xi},A).
\end{equation*}
for every $\xi \in \R^{N\times n}$. 
To prove the inequality above, fix $\xi \in \R^{N\times n}$ and let $u\in W^{1,1}(A,\R^N)$ satisfy $u=\ell_{\xi}$ on $\partial A$. By $(iii)$ we can infer the existence of a sequence $(u_j)_{j\in\N} \subset W^{1,1}(A,\R^N)$ such that $u_j\to u$ in $L^1(A,\R^N)$ as $j\to \infty$ and 
\begin{equation*}
    \lim_{j\to \infty} E^g(u_j,A)=E^{\hat g}(u,A).
\end{equation*}
By \cite[Lemma 2.6 and Remark 2.7]{BouchitteFonsecaMascarenhas} we can find a sequence $(w_j)_{j\in\N} \subset W^{1,1}(A,\R^N)$ satisfying $w_j=\ell_{\xi}$ on $\partial A$ such that $w_j\to u$ in $L^1(A,\R^N)$ as $j\to \infty$ and
\begin{equation*}
    \limsup_{j\to \infty}E^{g}(w_j,A)\leq \liminf_{j\to \infty}E^{g}(u_j,A)=E^{\hat g}(u,A),
\end{equation*}
therefore the claim follows by the arbitrariness of $u$.
\end{proof}

Using a classical argument of Ambrosio, in the following lemma we prove a truncation result in the same spirit as in \cite[Lemma~4.4]{ContiFocardiIurlano2024}.
\begin{lemma}\label{l:Lemma di troncamento}
Let $\Functeps_{\eps}$ be the functionals defined in \eqref{e:funzionali approssimanti}. Then, for every $\delta\in (0,1)$, $A\in \corA$, and $(u,v)\in L^1_{\textup{loc}}(\R^n,\R^{N+1})$ with  $u\in W^{1,1}(A,\R^N) \cap L^{\infty}(A,\R^N)$ and ${{v\in}}$ $ W^{1,2}(A,[0,1])$, there exists $u^{\delta} \in L^1_{\textup{loc}}(\R^n,\R^N) \cap SBV(A,\R^N)$ (also depending on $A$) such that for every $\eps>0$ 
\begin{equation}\label{e:bound funz tronct.}
    H^{\delta}_{\eps}(u^{\delta},A) \leq \Functeps_{\eps}(u,v,A)+C\calL^n(\{v \leq \delta\}\cap A),
\end{equation}
where $H^{\delta}_{\eps}:L^1_{\textup{loc}}(\R^n,\R^N)\times \corA\longrightarrow [0,+\infty]$ is the functional given by
\begin{equation*}
    H^{\delta}_{\eps}(w,A):=
    \begin{cases}
        \displaystyle \alpha_{\delta}\int_A f\textstyle{(\frac{x}{\eps},\nabla w)}\dx + \beta_{\delta}\mathcal{H}^{n-1}(J_{w}\cap A) \; & \; \textup{if $w\in SBV(A,\R^N)$} \\
        +\infty \; & \; \textup{otherwise},
    \end{cases}
\end{equation*}
with $\trunca_{\delta}, \truncb_{\delta}>0$ such that
\begin{equation*}
    \lim_{\delta \to 1} \trunca_{\delta}= 1 \quad \textup{and} \quad \lim_{\delta\to 1}\truncb_{\delta}=0. 
\end{equation*}
Moreover, if $(u_{\eps},v_{\eps})\to (u,1)$ in $L^1(A,\R^{N+1})$ as $\eps \to 0$, then the corresponding $(u^\delta_\eps)$ satisfies 
\begin{equation}\label{e:convergenza troncate}
  u^{\delta}_{\eps}\to u \quad \textup{in $L^1(A,\R^N)$} \quad \textit{as $\eps\to 0$}.
\end{equation}
\end{lemma}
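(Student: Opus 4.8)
## Proof strategy for Lemma~\ref{l:Lemma di troncamento}

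The plan is to adapt the classical truncation argument of Ambrosio (as in \cite[Lemma~4.4]{ContiFocardiIurlano2024}) to the present phase-field setting. The idea is that on the set $\{v \approx 1\}$ the competitor $(u,v)$ behaves like a genuine $W^{1,1}$-function contributing essentially $\int f(\tfrac{x}{\eps},\nabla u)\dx$, while on $\{v \text{ small}\}$ we want to "cut'' $u$ and replace the lost bulk energy by a surface term, paying only a controlled amount against $\calL^n(\{v\leq\delta\}\cap A)$. Concretely, for $t\in(\delta,1)$ I would consider the super-level sets $\{v>t\}$ and define $u^\delta$ to coincide with $u$ on $\{v>t\}$ and to be extended (e.g.\ by a suitable constant, or by $0$) on $\{v\leq t\}$, so that the new jump set is contained in the reduced boundary $\partial^*\{v>t\}$. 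The resulting function lies in $SBV(A,\R^N)$ (here the $L^\infty$-bound on $u$ is used to ensure $u^\delta \in SBV$ and that the truncated bulk term stays finite), with $\calH^{n-1}(J_{u^\delta}\cap A)\leq \calH^{n-1}(\partial^*\{v>t\}\cap A)$.

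The key quantitative step is the coarea-type estimate: by the coarea formula for $BV$ (or for Sobolev functions via Sard-type arguments),
\[
\int_\delta^1 \calH^{n-1}(\partial^*\{v>t\}\cap A)\,\dd t \;\leq\; \int_{\{\delta<v<1\}\cap A}|\nabla v|\,\dx \;\leq\; \int_A |\nabla v|\,\dx,
\]
and by the elementary inequality $|\nabla v|\leq \tfrac12\big(\tfrac{(1-v)^2}{\eps} \cdot \tfrac{1}{(1-v)^2}\cdots\big)$ — more precisely, using that on $\{\delta<v<1\}$ one has $(1-v)^2 \geq$ a positive quantity only when $v$ is bounded away from $1$, one instead pairs $\eps|\nabla v|^2$ with $\tfrac{(1-v)^2}{\eps}$ via Young's inequality to get $|\nabla v|(1-v) \leq \tfrac12(\eps|\nabla v|^2 + \tfrac{(1-v)^2}{\eps})$, so that a weighted average over $t$ produces a level $t=t_\eps\in(\delta,1)$ for which $\calH^{n-1}(\partial^*\{v>t_\eps\}\cap A)$ is controlled by (a constant times) the phase-field part of $\Functeps_\eps(u,v,A)$ divided by $(1-\delta)$. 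Simultaneously, averaging over $t$ also lets me pick $t_\eps$ so that $\int_{\{t_\eps<v<1\}}f(\tfrac{x}{\eps},\nabla u)\dx$ is small; on $\{v>t_\eps\}\supseteq$ (most of) $A$ we have $v^2 > t_\eps^2 > \delta^2$, hence $\int_{\{v>t_\eps\}} f(\tfrac{x}{\eps},\nabla u)\dx \leq \delta^{-2}\int_A v^2 f(\tfrac{x}{\eps},\nabla u)\dx$. Collecting these, one sets $\trunca_\delta := \delta^{-2}$... — wait, this diverges as $\delta\to 1$ only if $\delta\to 1$; indeed $\delta^{-2}\to 1$, good — and $\truncb_\delta :=$ the constant coming from the coarea estimate divided by $(1-\delta)$; the surface term is then bounded by $\truncb_\delta \cdot \eps^{-1}\!\int_A(1-v)^2 + \eps|\nabla v|^2 \leq \truncb_\delta\cdot(\text{phase-field energy})$, and one must be slightly more careful so that $\truncb_\delta\to 0$. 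The correct bookkeeping is: choose $t_\eps$ realising the average, giving $\truncb_\delta \mathcal H^{n-1}(J_{u^\delta}\cap A)\leq \tfrac{C}{1-\delta}\cdot(\text{phase-field part})$, but then absorb the factor differently — one actually takes $\trunca_\delta \to 1$, $\truncb_\delta\to 0$ by a diagonal choice where the smallness of $\truncb_\delta$ is bought at the price of enlarging the exceptional set $\{v\leq\delta\}$, which is exactly the $C\calL^n(\{v\leq\delta\}\cap A)$ term in \eqref{e:bound funz tronct.} (the bulk density $f$ being bounded below away from the truncation, and the extension on $\{v\leq t_\eps\}$ contributing $\leq C\calL^n(\{v\leq t_\eps\}\cap A)\leq C\calL^n(\{v\leq\delta\}\cap A)$ once $t_\eps$ is taken close to $\delta$).

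For the convergence statement \eqref{e:convergenza troncate}: if $(u_\eps,v_\eps)\to(u,1)$ in $L^1$, then $\calL^n(\{v_\eps\leq\delta\}\cap A)\to 0$ for each fixed $\delta<1$, so the set on which $u^\delta_\eps$ differs from $u_\eps$ has vanishing measure; combined with the $L^\infty$-bound (uniform, since it comes from the truncation level $a_{k+1}$ or from $\|u\|_{L^\infty(A)}$) this gives $\|u^\delta_\eps - u_\eps\|_{L^1(A)}\to 0$, hence $u^\delta_\eps\to u$ in $L^1(A,\R^N)$.

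\medskip

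The main obstacle I expect is the \emph{bookkeeping of the constants} $\trunca_\delta,\truncb_\delta$: one must simultaneously (a) keep the bulk multiplier $\trunca_\delta$ bounded (indeed $\to 1$) while it is being degraded by the factor $v^{-2}\leq \delta^{-2}$ on the good set, (b) make the surface multiplier $\truncb_\delta$ tend to $0$ even though the natural coarea estimate produces a factor $(1-\delta)^{-1}$ which \emph{blows up} as $\delta\to 1$, and (c) ensure the error is genuinely of the form $C\calL^n(\{v\leq\delta\}\cap A)$ with a constant $C$ independent of $\delta$ and $\eps$. The resolution is a careful two-parameter choice of the truncation level $t_\eps\in(\delta,1)$ realising an averaged inequality over a window whose width is tuned to $\delta$: the surface term picks up a factor $\truncb_\delta$ which one can force to $0$ by \emph{not} insisting on capturing the whole phase-field energy, instead discarding part of it into the $\calL^n(\{v\leq\delta\})$ error via the lower bound $f(\tfrac{x}{\eps},\nabla u)\geq C^{-1}|\nabla u|\geq 0$ and the crude estimate $\eps^{-1}(1-v)^2\geq (1-\delta)^2/\eps$ on $\{v\leq\delta\}$, which forces $\calL^n(\{v\leq\delta\}\cap A)\leq \tfrac{\eps}{(1-\delta)^2}\Functeps_\eps(u,v,A)$ and hence lets the phase-field energy itself dominate a multiple of the surface term without a blowing-up constant. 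Threading all of this so that $\trunca_\delta\to1$, $\truncb_\delta\to0$ is the delicate point; everything else (membership in $SBV$, the jump-set bound, the $L^1$-convergence) is routine given the $BV$-coarea formula and the $L^\infty$-bound on $u$.
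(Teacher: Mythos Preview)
Your overall strategy—select a level $t$ of $v$ via the coarea formula, set $u^\delta:=u\chi_{\{v>t\}}$, and control the surface term by the Modica--Mortola quantity $\int_A(1-v)|\nabla v|\,\dx$ bounded via Young by the phase-field energy—is exactly the right one, and is what the paper does. But there is a genuine gap in your bookkeeping that makes the argument as written fail.

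The problem is your choice of level interval. You take $t_\eps\in(\delta,1)$, so $\{v\le t_\eps\}\supset\{v\le\delta\}$ and hence $\calL^n(\{v\le t_\eps\}\cap A)\ge\calL^n(\{v\le\delta\}\cap A)$, \emph{not} $\le$. Your claim that ``$\calL^n(\{v\leq t_\eps\}\cap A)\leq C\calL^n(\{v\leq\delta\}\cap A)$ once $t_\eps$ is taken close to $\delta$'' is false in general (think of $v$ taking values in a thin layer just above $\delta$ on a set of large measure). This is precisely why you then struggle to make $\beta_\delta\to0$ without a blowing-up constant: you are fighting the wrong inclusion.

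The fix, and what the paper does, is to choose the level \emph{below} $\delta$: apply the coarea formula to $\Phi_v:=\Phi\circ v$ with $\Phi(t)=t-t^2/2$ (so that $|\nabla\Phi_v|=(1-v)|\nabla v|$ is exactly the Modica--Mortola integrand) on the interval $(\Phi(\delta^2),\Phi(\delta))$. The mean-value selection gives $t^\delta$ with $\Phi^{-1}(t^\delta)\in(\delta^2,\delta)$, so that $\{v\le\Phi^{-1}(t^\delta)\}\subset\{v\le\delta\}$ and the error term is genuinely $C\calL^n(\{v\le\delta\}\cap A)$. This yields directly $\alpha_\delta=\delta^4$ (since $v^2\ge\delta^4$ on $\{v\ge\delta^2\}$) and $\beta_\delta=\Phi(\delta)-\Phi(\delta^2)$, and both limits $\alpha_\delta\to1$, $\beta_\delta\to0$ as $\delta\to1$ are immediate. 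No diagonal choice, no tuning of windows, no absorption tricks are needed.

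One smaller point on \eqref{e:convergenza troncate}: there is no uniform $L^\infty$ bound on the $u_\eps$ available here (the truncation is on the \emph{domain} via $\chi_{\{v_\eps>t\}}$, not on the range). The correct argument is equi-integrability: $L^1$-convergence of $(u_\eps)$ to $u$ gives equi-integrability, and $v_\eps\to1$ in $L^1$ gives $\calL^n(\{v_\eps\le\delta\}\cap A)\to0$, whence $\|u_\eps-u_\eps^\delta\|_{L^1(A)}=\|u_\eps\|_{L^1(\{v_\eps\le\Phi^{-1}(t^\delta)\}\cap A)}\to0$.
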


\begin{proof}
Let $\delta\in (0,1)$, $\eps>0$, $A\in \corA$, and $(u,v)\in W^{1,1}(A,\R^N)\times W^{1,2}(A,[0,1])$ be given. We have 
\begin{equation}\label{e:prima stima lemma tronc.}
    \Functeps_{\eps}(u,v,A) \geq \int_{\{v\geq \delta^2\}}\alpha_{\delta}f\textstyle{(\frac{x}{\eps},\nabla u)}\dx +\displaystyle \int_A (1-v)|\nabla v| \dx
\end{equation}
where $\alpha_\delta:=\displaystyle\min_{t\in [\delta^2,1]} t^2=\delta^4$. Set 
\[
\Phi(t):=\int_0^t (1-s)\,ds= t-\frac{t^{2}}{2} \quad  \text{and} \quad \Phi_{v}:=\Phi \circ v\in W^{1,2}(A). 
\]
By the Coarea Formula we can infer that
\begin{equation*}
   \int_A (1-v)|\nabla v| \dx=  \int_{A}|\nabla \Phi_{v}| \dx \geq \int_{\Phi(\delta^2)}^{\Phi(\delta)}\mathcal{H}^{n-1}(A\cap \partial ^* ( \{\Phi_{v}>t\}))\dd t,
\end{equation*}
therefore, there exists $t^{\delta}\in (\Phi(\delta^2),\Phi(\delta))$ such that
\begin{equation}\label{e:seconda stima lemma tronc.}
      \int_A (1-v)|\nabla v| \dx \geq (\Phi(\delta)-\Phi(\delta^2))\mathcal{H}^{n-1}(A\cap \partial ^* ( \{\Phi_{v}>t^{\delta}\})).
\end{equation}
Set $u^{\delta}:=u\chi_{\{v>\Phi^{-1}(t^{\delta})\}}$; we notice that $u^{\delta} \in L^1_{\textup{loc}}(\R^n,\R^N)\cap SBV(A,\R^N)$ since $\{v>\Phi^{-1}(t^{\delta})\}$ is a set of finite perimeter in $A$ and $u\in L^{\infty}(A,\R^N)$. 
Since by definition $J_{u^{\delta}}\cap A \subseteq \partial ^* ( \{\Phi_{v}>t^{\delta}\}) \cap A$, \eqref{e:seconda stima lemma tronc.} becomes 
\begin{equation}\label{e:seconda stima lemma tronc ref}
\int_A (1-v)|\nabla v| \dx \geq \beta_{\delta} \mathcal H^{n-1}(J_{u^{\delta}}\cap A),
\end{equation}
where $\beta_{\delta}:=\Phi(\delta)-\Phi(\delta^2)$.

Moreover, by the strict monotonicity of $\Phi$ on $[0,1]$, we get 
\begin{equation*}
  \int_{\{v\geq \delta^2\}}f\textstyle{(\frac{x}{\eps},\nabla u)}\dx \geq \displaystyle \int_{\{v> \Phi^{-1}(t^{\delta})\}}f\textstyle{(\frac{x}{\eps},\nabla u^{\delta})}\dx,
\end{equation*}
so that thanks to \ref{e:crescita lineare}, we obtain 
\begin{equation}\label{e:terza stima lemma tronc.}
    \alpha_{\delta}\int_{\{v\geq \delta^2\}}f\textstyle{(\frac{x}{\eps},\nabla u)}\dx+ C\calL^n(\{v \leq \delta\}\cap A) \geq \displaystyle \alpha_{\delta} \int_{A}f\textstyle{(\frac{x}{\eps},\nabla u^{\delta})}\dx.
\end{equation}    
Eventually, \eqref{e:bound funz tronct.} follows by gathering \eqref{e:prima stima lemma tronc.}, \eqref{e:seconda stima lemma tronc ref}, and \eqref{e:terza stima lemma tronc.}. 

Now let $(u_{\eps},v_{\eps})\to (u,1)$ in $L^1(A,\R^{N+1})$ as $\eps \to 0$ and consider  
\[
u_\eps^{\delta}:=u_\eps\chi_{\{v_\eps>\Phi^{-1}(t^{\delta})\}}.
\]
We observe that
\begin{equation}\label{e:ultima equaz lemma tronc.}
    \|u_{\eps}-u^{\delta}_{\eps}\|_{L^1(A)}\leq \|u_{\eps}\|_{L^1(\{v_{\eps}\leq \delta\}\cap A)}.
\end{equation}
Therefore \eqref{e:convergenza troncate} follows by \eqref{e:ultima equaz lemma tronc.} in view of the equi-integrability of $(u_{\eps})$ and the convergence in measure of $(v_{\eps})$ to $1$.
\end{proof}

We are now ready to identify the Radon-Nikodym derivative of $\Fsucc$ with respect to the Lebesgue measure with $f\homm$.
\begin{proof}[Proof of Proposition~\ref{p:Homogenised volume integrand}]
Fix $A\in \corA$ and $u\in L^1_{\textup{loc}}(\R^n,\R^N)$ with $u\in BV(A,\R^N)\cap L^{\infty}(A,\R^N)$. We divide the proof into two steps.

\textit{Step 1:} We claim that
\begin{equation*}
     \frac{\dd \Fsucc(u,1,\cdot)}{\dd \calL^n}(x)\leq f\homm(\nabla u(x)) \quad \textup{for $\calL^n$-a.e. $x\in A$}.
\end{equation*}
{In particular, we claim the previous equality to be true for every point
$x\in A$ for which the conditions $\rho^{-n}|Du|(Q_{\rho}(x))\to|\nabla u(x)|$ and
$\rho^{-n}|D^su|(Q_{\rho}(x))\to0$ as $\rho\to0$, and the Calder\'on-Zygmund Theorem holds for $x$ (cf. \cite[Theorem~3.83]{AFP}).}
For every $A\in\corA$ and $q\in \Q\cap (0,1)$ set 
$\Fsucc_q(u,A):=\Fsucc(u,1,A)+q|Du|(A)$.
Thanks to \cite[Lemma~3.5]{BouchitteFonsecaMascarenhas} we obtain that 
for $\calL^n$-a.e. $x\in A$
\begin{equation}\label{e:BFM 3.20}
    \frac{\dd \Fsucc(u,1,\cdot)}{\dd \calL^n}(x)+q |\nabla u(x)|=\lim_{\rho \to 0}\frac{m_{\Fsucc_q}(\ell_{\nabla u(x)},Q_{\rho}(x))}{\rho^n}
\end{equation}
where 
where $m_{\Fsucc_q}$ is as in \eqref{e:equaz minfunz}.
Let $x\in A$ be that \eqref{e:BFM 3.20} holds, and set $\xi:=\nabla u(x)$. 
In view of \eqref{e:fhom-ass}, for every $\rho>0$ we have
\begin{equation}\label{e:omogenizzata prop volum term}
    f\homm(\xi)=
    \lim_{r\to +\infty}\frac{\minprobv^f(\ell_{\xi},Q_{r}(\textstyle{\frac{r}{\rho}}x)}{r^n}.
\end{equation}
Fix $\eta\in (0,1)$. By \eqref{e:def minprobv}, for every $\rho, r >0$ there exists $w^{\rho}_r\in W^{1,1}(Q_{r}(\textstyle{\frac{r}{\rho}}x),\R^N)$ with $w^{\rho}_r=\ell_{\xi}$ on $\partial Q_{r}(\textstyle{\frac{r}{\rho}}x)$, such that
\begin{align}\label{e:def_mb}
    \int_{Q_{r}(\frac{r}{\rho}x)}f(y,\nabla w^{\rho}_r)\dd y& \leq \minprobv^f(\ell_{\xi},Q_{r}(\textstyle{\frac{r}{\rho}}x)) +\eta r^n\,. 
\end{align}
Thus, for every $\rho>0$, \eqref{e:omogenizzata prop volum term} and \eqref{e:def_mb} yield
\begin{equation*}
 \limsup_{r\to +\infty}\frac{1}{r^n}\int_{Q_{r}(\frac{r}{\rho}x)}f(y,\nabla w^{\rho}_r)\dy \leq f\homm(\xi)+\eta.  
\end{equation*}
Now, let $(\eps_j)_{j\in\N}$ be as in \eqref{e:Gamma-conv sottosucc} and set $r=\frac{\rho}{\eps_j}$. Define $u^{\rho}_{\eps_j}:\R^n\to \R^N$ as 
\begin{equation*}
    u^{\rho}_{\eps_j}(y):=
    \begin{cases}
       \eps_j w^{\rho}_r\left(\frac{y}{\eps_j}\right) \; & \; \textup{if $y\in Q_{\rho}(x)$} \\
        \ell_\xi(y) \; & \; \textup{if $y\in \R^n\setminus Q_{\rho}(x)$},
    \end{cases}
\end{equation*}
therefore $u^{\rho}_{\eps_j}\in W^{1,1}_{\textup{loc}}(\R^n,\R^N)$ with $u^{\rho}_{\eps_j}=\ell_{\xi}$ on $\R^n\setminus Q_{\rho}(x)$. Changing variables and again invoking \eqref{e:def_mb}, for every $\rho>0$ we get
\begin{equation}\label{e:stima gradienti lowerbound volume q}
    \limsup_{j \to +\infty} \frac{1}{\rho^n}\int_{Q_{\rho(1+\eta)}(x)}f\textstyle(\frac{y}{\eps_j},\nabla u^{\rho}_{\eps_j})\dy \leq  f\homm(\xi)+\eta+ C(|\xi|+1)((1+\eta)^n-1)
\end{equation}
where we also used \ref{e:crescita lineare}, the fact that $\nabla u^{\rho}_{\eps_j}=\xi$ on $Q_{\rho(1+\eta)}(x)\setminus \overline{Q_{\rho}(x)}$, and \eqref{e:Gamma-conv sottosucc}.

Appealing to \eqref{e:stima gradienti lowerbound volume q},  \ref{e:crescita lineare}, and the Poincaré Inequality, for every $\rho$ we can find  a subsequence of $(\eps_j)_{j\in\N}$ (not relabeled) such that $u^{\rho}_{\eps_j}$ converges in $L^1_{\textup{loc}}(\R^n,\R^N)$ to some $u^{\rho}\in L^1_{\textup{loc}}(\R^n,\R^N)\cap BV(Q_{\rho(1+\eta)}(x),\R^N)$ with $u^{\rho}=\ell_{\xi}$ on $\partial Q_{\rho(1+\eta)}(x)$. Moreover, by \eqref{e:Gamma-conv sottosucc}, \eqref{e:stima gradienti lowerbound volume q}, and \ref{e:crescita lineare}, for every $\rho >0$, we have that
\begin{align*}
    \frac{m_{\Fsucc_{{{q}}}}(\ell_{\xi},Q_{\rho(1+\eta)}(x))}{\rho^n}&\leq \frac{\Fsucc(u^{\rho},1,Q_{\rho(1+\eta)}(x))+q|Du^{\rho}|(Q_{\rho(1+\eta)}(x))}{\rho^n} \\ & \leq \textstyle\liminf_{j\to +\infty}\Big(\frac{\Functeps_{\eps_j}(u^{\rho}_{\eps_j},1,Q_{\rho(1+\eta)}(x))}{\rho^n}+\frac{q}{\rho^n} \int_{Q_{\rho(1+\eta)}(x)} |\nabla u^{\rho}_{\eps_j}|\dy \Big)  \nonumber \\ &  \leq    \big( f\homm(\xi)+\eta+ C(|\xi|+1)((1+\eta)^n-1))\big)(1+qC).  \\
\end{align*}
Eventually, by \eqref{e:BFM 3.20} and taking the limit as $\rho \to 0$ we get
 \begin{align*}
   (1+\eta)^n \Big(\frac{\dd \Fsucc(u,1,\cdot)}{\dd \calL^n}(x)+q{{|\xi|}}\Big)& =\lim_{\rho \to 0}\frac{m_{\Fsucc_{{{q}}}}(\ell_{\xi},Q_{\rho(1+\eta)}(x))}{\rho^n} \\ & \leq \big(f\homm(\xi)+\eta+ C(|\xi|+1)((1+\eta)^n-1)\big)(1+qC),
 \end{align*}
 hence the claim follows by letting $\eta,q \to 0$.
 
 \textit{Step 2:} We claim that
 \begin{equation*}
     \frac{\dd \Fsucc(u,1,\cdot)}{\dd \calL^n}(x)\geq f\homm(\nabla u(x)) \quad \textup{for $\calL^n$-a.e. $x\in A$}.
\end{equation*}
Let $A'\in\corA(A)$, by Theorem~\ref{t:Sottosucc gamma-conv.}, we can find a sequence $(u_j,v_j)_{j\in\N}\in L^1_{\textup{loc}}(\R^n,\R^N)$ such that $(u_j,v_j)\in W^{1,1}(A',\R^N)\times W^{1,2}(A',[0,1])$, $(u_{j},v_j) \to (u,1)$ in $L^1_{\textup{loc}}(\R^n,\R^{N+1})$, $v_j(x)\to 1$ for $\calL^n$-a.e. $x\in A'$ as $j\to +\infty$ and
\begin{equation}\label{e:sup finito energie prop volume part}
     \lim_{j\to +\infty}\Functeps_{\eps_j}(u_j,v_j,A')=\Fsucc(u,1,A'). 
\end{equation}
{Without loss of generality we may assume $u_j\in L^\infty(A';\R^n)$.
Indeed, it suffices to apply Lemma~\ref{l:truncation lemma} for every $j\in\N$ to $u_j$
with $M_j\to\infty$ and note that by construction
$\mathcal{T}_{k_j}(u_{j}) \to u$ in $L^1_{\textup{loc}}(\R^n,\R^N)$,
and by \eqref{e:sup finito energie prop volume part}
\[
\Fsucc(u,1,A')\leq  \lim_{j\to +\infty}\Functeps_{\eps_j}(\mathcal{T}_{k_j}(u_j),v_j,A')\leq
 \lim_{j\to +\infty}\Functeps_{\eps_j}(u_j,v_j,A')=\Fsucc(u,1,A').
\]
.}
Let $\delta\in (0,1)$ be fixed; {thus} by Lemma~\ref{l:Lemma di troncamento} we have
\begin{equation*}
    H^{\delta}_{\eps_j}(u^{\delta}_j,A') \leq \Functeps_{\eps_j}(u_j,v_j,A')+C\calL^n(\{v_j\leq \delta\}\cap A'),
\end{equation*}
where $(u^\delta_j)\subset SBV(A',\R^N)$ with $u^{\delta}_j\to u$ in $L^1(A',\R^N)$. Therefore by \eqref{e:sup finito energie prop volume part} we get
\begin{equation}\label{e:He_bdd}
    \liminf_{j\to +\infty} H^{\delta}_{\eps_j}(u^{\delta}_j,A')\leq \Fsucc(u,1,A'),
\end{equation}
since $(v_j)_{j\in\N}$ converges in measure to $1$ on $A'$. 

We now consider the measures $\mu^{\delta}_{j}$ defined  on $A'$ as follows
\begin{equation*}
    \mu^{\delta}_j:=\alpha_{\delta}f\textstyle{(\frac{x}{\eps_j},\nabla {u}^{\delta}_{j})} \mathcal{L}^n \res A' + \beta_{\delta} \mathcal{H}^{n-1} \res (J_{{u}^{\delta}_{j}}\cap A')\,.
\end{equation*}
Note that by \eqref{e:He_bdd}, 
there is a subsequence (not relabeled) and a finite Radon measure $\mu^{\delta}$ on $A'$ such that $\mu^{\delta}_j\stackrel{*}{\weakto} \mu^{\delta}$ 
as $j\to +\infty$.

Now let $x_0\in A'$ be a 
point of approximate differentiability of $u$, and additionally assume that 
\begin{equation}\label{e:uno_der} 
\lim_{\rho\to 0}\frac{\mu^{\delta}(Q_{\rho}(x_0))}{\rho^n}=\frac{\dd \mu^{\delta}}{\dd \calL^n}(x_0)\,.
\end{equation}
Such conditions determine a subset of full measure in $A'$. Then, consider the rescaled function $u^{\rho}:Q_1\to \R^N$ given by
\begin{equation*}
    u^{\rho}(y):=\frac{u(x_0+\rho y)-u(x_0)}{\rho},
\end{equation*}
thanks to \cite[Remark~3.72]{AFP} we have $u^{\rho}\to \ell_{\xi}$ in $L^1(Q_1,\R^N)$, where $\xi:=\nabla u(x_0)$. 

By the weak$^*$-convergence of $\mu^{\delta}_j$ towards $\mu^\delta$ we have 
\begin{align}  \nonumber
     \frac{\dd \mu^{\delta}}{\dd \calL^n}(x_0)&=\lim_{\rho \to 0}\frac{\mu^{\delta}(Q_{\rho}(x_0))}{\rho^n}=\lim_{\rho \to 0, \; \rho\in I(x_0)} \lim_{j\to +\infty} \frac{\mu^{\delta}_j(Q_{\rho}(x_0))}{\rho^n} \\\label{e:prima eq lower bound volume new} &=\lim_{\rho \to 0, \; \rho\in I(x_0)} \lim_{j\to +\infty} \rho^{-n}\Big(\alpha_{\delta}\int_{Q_{\rho}(x_0)}f\textstyle{(\frac{x}{\eps_j},\nabla {u}^{\delta}_{j})} \dx + \beta_{\delta} \mathcal{H}^{n-1}(J_{{u}^{\delta}_{j}}\cap Q_{\rho}(x_0))\Big) 
     \end{align}
where $I(x_0):=\{\rho \in (0,\frac{2}{\sqrt{n}}\textup{dist}(x_0,\partial A')) \colon \mu^{\delta}(\partial Q_{\rho}(x_0))=0\}$. 

For every $\rho$ and $j$, define the rescalings $u^{\rho}_j\in SBV(A',\R^N)$ by
\begin{equation*}
    u^{\rho}_j(y):=\frac{u^{\delta}_j(x_0+\rho y)-u(x_0)}{\rho}\,,
\end{equation*}
then $u^{\rho}_j\to u^{\rho}$ in $L^1(Q_1,\R^N)$ as $j\to +\infty$. Furthermore, thanks to \eqref{e:prima eq lower bound volume new} we get 
\begin{equation}\label{e:blow-up in x0 volume}
   \frac{\dd \mu^{\delta}}{\dd \calL^n}(x_0)=\lim_{\rho \to 0, \; \rho\in I(x_0)} \lim_{j\to +\infty}  \Big(\alpha_{\delta}\int_{Q_1}f\textstyle{(\frac{x_0+\rho y}{\eps_j},\nabla {u}^{\rho}_{j})} \dy + \displaystyle\frac{\beta_{\delta}}\rho 
   \mathcal{H}^{n-1}(J_{{u}^{\rho}_{j}}\cap Q_1)\Big). 
\end{equation}
Fix $M\in \N$, for every $\rho$ and $j$, we apply Lemma~\ref{l:truncation lemma} with $v\equiv 1$ so that there is $k_{\rho,j}\in \{M+1,\dots, 2M\}$ such that 
$\hat u^{\rho}_j:=\mathcal{T}_{k_{\rho,j}}(u^{\rho}_j)\in SBV(Q_1,\R^N)$, 
\begin{align}\label{e:media 2 volume}
    \int_{Q_1}\textstyle{f(\frac{x_0+\rho_i y}{\eps_i},\nabla \hat u^{\rho}_j)} \dd y \leq \displaystyle\Big(1+\frac{C^2}{M}\Big)\int_{Q_1}f\textstyle{(\frac{x_0+\rho_i y}{\eps_i},\nabla u^{\rho}_j)} \dd y + C\mathcal{L}^n(\{|u^{\rho}_j|\geq a_M\})\,.
\end{align}

Up to subsequences (not relabeled) we can assume that $k_{\rho,j}\in \{M+1,\dots,2M\}$ actually depends only on $\rho$. If we choose $a_M>\sup_{y\in Q_1}\ell_{\xi}(y)$ we get that 
\[
\lim_{\rho\to 0}\lim_{j\to +\infty}\hat u^{\rho}_j=\ell_{\xi} \quad \text{in $L^1(Q_1,\R^N)$}
\]
and
\begin{equation}\label{e:convergenza in misura}
    \lim_{\rho \to 0, \; \rho\in I(x_0)} \limsup_{j\to +\infty} \mathcal{L}^n(\{|u^{\rho}_j|\geq a_M\})=0,
\end{equation}
since we also have that $\displaystyle\lim_{\rho\to 0}\lim_{j\to +\infty}u^{\rho}_j=\ell_{\xi}$ in $L^1(Q_1,\R^N)$. 
In particular, for $M$ is large enough, by combining \eqref{e:blow-up in x0 volume}, \eqref{e:media 2 volume}, and \eqref{e:convergenza in misura} we can infer
\begin{equation}\label{e:blow-up x0 volume 2}
    \Big(1+\frac{C^2}{M}\Big) \frac{\dd \mu^{\delta}}{\dd \calL^n}(x_0)\geq \limsup_{\rho \to 0, \; \rho\in I(x_0)} \limsup_{j\to +\infty} \alpha_{\delta}\int_{Q_1}f\textstyle{(\frac{x_0+\rho y}{\eps_j},\nabla \hat{u}^{\rho}_{j})} \dy,
\end{equation}
and
\begin{equation}\label{e:misura salti sta andando a 0 volume}
\lim_{\rho \to 0, \rho\in I(x_0)}\limsup_{j\to +\infty}\mathcal{H}^{n-1}(J_{\hat u^{\rho}_j}\cap Q_1)=0,
\end{equation}
since $\mathcal{T}_{k_{\rho,j}}\in C^1(\R^N,\R^N)$, $\frac{\dd \mu^{\delta}}{\dd \calL^n}(x_0)$ is finite, and $\beta_{\delta}>0$. Set
\begin{equation*}
    \tau_{\rho,j}
:= \|\hat u^{\rho}_j-\ell_{\xi}\|_{L^1(Q_1,\R^N)}+\frac{\rho}{j};
    \end{equation*}
then, $\displaystyle\lim_{\rho \to 0}\lim_{j\to +\infty} \tau_{\rho,j}=0$. Thus, for every $\rho>0$ small and every $j$ large (depending on $\rho$) we have $\tau_{\rho,j}\in (0,1)$. 
Therefore, thanks to the Coarea formula and to the properties of the traces of $BV$ functions on rectifiable sets (see \cite[Theorem 3.77]{AFP}), there exists $\hat r_{\rho,j}\in (1-\tau_{\rho,j}^{1/2},1)$ such that
\begin{equation}\label{e:dato al bordo oculato}
   \int_{\partial Q_{\hat r_{\rho,j}}} \textstyle{|(\hat u^{\rho}_j)^--\ell_{\xi}| \dd \calH^{n-1} \leq \displaystyle \tau_{\rho,j}^{-1/2}\|\hat u^{\rho}_j-\ell_{\xi}\|_{L^1(Q_1,\R^N)}\leq \tau_{\rho,j}^{1/2}},
\end{equation}
where $(\hat u^{\rho}_j)^-$ is the inner trace of $\hat u^{\rho}_j$ on $\partial Q_{\hat r_{\rho,j}}$.
Therefore, defining the functions $w^{\rho}_{j}\in SBV(Q_1,\R^N)$ as  
\begin{equation*}
w^{\rho}_{j}(y):=
\begin{cases}
\hat u^{\rho}_j(y) \; & \textup{if $y\in Q_{\hat r_{\rho,j}}$} \\
 \ell_{\xi}(y) \; & \textup{if $y\in Q_1\setminus  Q_{\hat r_{\rho,j}}$},  
\end{cases}
\end{equation*}
thanks to \ref{e:crescita lineare} we have that 
\begin{equation*}
    \alpha_{\delta}\int_{Q_1}f\textstyle{(\frac{x_0+\rho y}{\eps_j},\nabla \hat{u}^{\rho}_{j})} \dy + \alpha_{\delta}(C|\xi|+1)\calL^n(Q_1\setminus Q_{\hat r_{\rho,j}}) \geq  \displaystyle\alpha_{\delta}\int_{Q_1}f\textstyle(\frac{x_0+\rho y}{\eps_j},\nabla w^{\rho}_{j}) \dy
\end{equation*}
and, since $\displaystyle\lim_{\rho\to 0}\lim_{j\to +\infty} \hat r_{\rho,j}=1$, from \eqref{e:blow-up x0 volume 2} we obtain
\begin{equation}\label{e:stima wrhoj}
    \Big(1+\frac{C^2}{M}\Big) \frac{\dd \mu^{\delta}}{\dd \calL^n}(x_0)\geq \limsup_{\rho \to 0, \; \rho\in I(x_0)} \limsup_{j\to +\infty} \alpha_{\delta}\int_{Q_1}f\textstyle{(\frac{x_0+\rho y}{\eps_j},\nabla w^{\rho}_{j})} \dy.
\end{equation}
Furthermore, thanks to \eqref{e:dato al bordo oculato}, \eqref{e:prop 2 Tk}, and  to the definition of $\hat u^{\rho}_j$ we can estimate the singular part of $Dw^{\rho}_j$ as follows 
\begin{equation}\label{e:stima gradiente singolare volume part 1}
 |D^sw^{\rho}_j|(Q_1)\leq    \int_{J_{w^{\rho}_j}\cap Q_1}|[w^{\rho}_j]|\dd \calH^{n-1} \leq \tau_{\rho,j}^{1/2}+2a_{2M+1}\mathcal{H}^{n-1}(J_{\hat{u}^{\rho}_{j}}\cap Q_1).
\end{equation}
 Now, for every $\rho>$ and $j\in \N$, consider functional $F_{\rho,j}:L^1(Q_1,\R^N)\longrightarrow [0,\infty]$ given by
 \begin{equation*}
 F_{\rho,j}(w):=
 \begin{cases}
     \displaystyle \int_{Q_1}f\textstyle{(\frac{x_0+\rho y}{\eps_j},\nabla w)} \dy \; & \textup{if $w\in W^{1,1}(Q_1,\R^N)$} \\
     \infty \; & \textup{otherwise}.
 \end{cases}
 \end{equation*}
 In view of Lemma~\ref{l:lemma del rilassato} (iv), for every $w\in SBV(Q_1,\R^N)$ we have
 \begin{equation}\label{e:teo rilassamento BFM in volume part 1}
   \Big|sc^-(L^1)F_{\rho,j}(w)-\int_{Q_1}f_{\rho,j}(y,\nabla w) \dy\Big| \leq C\int_{J_w\cap Q_1}|[w]| \dd \calH^{n-1}, \end{equation}
where $f_{\rho,j}:=\hat{g}$, with  $g(x,\xi):=f\left(\frac{x_0+\rho x}{\eps_j},\xi\right)$ for every $(x,\xi)\in \R^n\times\R^{N\times n}$ 
(cf. \eqref{e:limsup del teo bfm}). 
By {the definition of relaxed functional and correction of the boundary datum via} \cite[Lemma 2.6]{BouchitteFonsecaMascarenhas}, for every $\rho$ and $j$ we can find $\hat w^{\rho}_j\in W^{1,1}(Q_1,\R^N)$ with $\hat w^{\rho}_j=\ell_{\xi}$ on $\partial Q_1$ and such that
\begin{equation}\label{e:approx al rilassato volume part 1}
    \Big|sc^-(L^1)F_{\rho,j}(w^{\rho}_j)-  \int_{Q_1}f_{\rho,j}(y,\nabla \hat w^{\rho}_j) \dy \Big| \leq \frac{\rho}{j}.
\end{equation}
In particular, from \eqref{e:stima gradiente singolare volume part 1}, \eqref{e:teo rilassamento BFM in volume part 1} and \eqref{e:approx al rilassato volume part 1} 
and the equality $f_{\rho,j}(y,\xi)=\hat{f}\textstyle{(\frac{x_0+\rho y}{\eps_j},\xi)}$
which follows from formula \eqref{e:limsup del teo bfm}, we infer
\begin{align}\label{e:stima frho j}
 \int_{Q_1}f&\textstyle{(\frac{x_0+\rho y}{\eps_j},\nabla w^{\rho}_{j}) \dy} \geq \displaystyle
    \int_{Q_1}f_{\rho,j}(y,\nabla w^{\rho}_{j}) \dy
  \geq  sc^-(L^1)F_{\rho,j}(w^{\rho}_j)-C\int_{J_{w^{\rho}_j}\cap Q_1}|[w^{\rho}_j]| \dd \calH^{n-1} \nonumber\\ 
  & \geq \int_{Q_1}f_{\rho,j}(y,\nabla \hat w^{\rho}_j) \dy -\frac{\rho}{j}-C(\tau^{1/2}_{\rho,j}+2a_{2M+1}\calH^{n-1}(J_{\hat u^{\rho}_j}\cap Q_1))
  \nonumber\\ 
  &= \int_{Q_1}\hat f\textstyle{(\frac{x_0+\rho y}{\eps_j},\nabla \hat w^{\rho}_{j})} \dy-\frac{\rho}{j}-C(\tau^{1/2}_{\rho,j}+2a_{2M+1}\calH^{n-1}(J_{\hat u^{\rho}_j}\cap Q_1))\,.
\end{align}
Setting 
\[
r_{\rho,j}=\frac{\rho}{\eps_j} \quad \text{and} \quad\overline{w}^{\rho}_j(x):=r_{\rho,j}\hat w^{\rho}_j\textstyle{(\frac{x}{r_{\rho,j}}-\frac{x_0}{\rho})}
\]
we have $\overline{w}^{\rho}_j\in W^{1,1}(Q_{r_{\rho,j}}\textstyle{(\frac{r_{\rho,j}}{\rho}x_0)},\R^N)$ with $\overline{w}^{\rho}_j=\ell_{\xi}-\frac{1}{\eps_j}x_0$ on $\partial Q_{r_{\rho,j}}\textstyle{(\frac{r_{\rho,j}}{\rho}x_0)}$ and 
\begin{equation*}
  \int_{Q_1}\hat   f\textstyle{(\frac{x_0+\rho y}{\eps_j},\nabla \hat w^{\rho}_{j})} \dy=\displaystyle\frac1{r_{\rho,j}^n}\int_{Q_{r_{\rho,j}}(\frac{r_{\rho,j}}{\rho}x_0)}\hat f(x,\nabla \overline{w}^{\rho}_j)\dx. 
\end{equation*}
In particular, Lemma~\ref{l:lemma del rilassato} (vi) gives
\begin{align*}
\int_{Q_{r_{\rho,j}}(\frac{r_{\rho,j}}{\rho}x_0)}\hat f(x,\nabla \overline{w}^{\rho}_j)\dx \geq 
\minprobv^{\hat f}\textstyle(\ell_{\xi},Q_{r_{\rho,j}}(\frac{r_{\rho,j}}{\rho}x_0))
=\minprobv^{f}\textstyle(\ell_{\xi},Q_{r_{\rho,j}}(\frac{r_{\rho,j}}{\rho}x_0))\,.
\end{align*}
Therefore, \eqref{e:stima wrhoj}, \eqref{e:stima frho j} and \eqref{e:fhom-ass} yield
\begin{align*}
     \Big(1+\frac{C^2}{M}\Big) \frac{\dd \mu^{\delta}}{\dd \calL^n}(x_0) & \geq \limsup_{\rho \to 0, \; \rho\in I(x_0)} \limsup_{j\to +\infty} \alpha_{\delta}\frac{\minprobv^{f}\textstyle(\ell_{\xi},Q_{r_{\rho,j}}(\frac{r_{\rho,j}}{\rho}x_0))}{r_{\rho,j}^n}\\
     &=\alpha_{\delta}f\homm(\xi)=\alpha_{\delta}f\homm(\nabla u(x_0))\,
\end{align*}
and thus 
\begin{equation*}
    \frac{\dd \mu^{\delta}}{\dd \calL^n}(x_0)\geq\alpha_{\delta}f\homm(\nabla u(x_0))
\end{equation*}
by letting $M\to\infty$. Hence, recalling \eqref{e:He_bdd}, we deduce that 
\begin{equation*}
   \Fsucc(u,1,A')\geq  \liminf_{j\to \infty}H^{\delta}_{\eps_j}(u^{\delta}_j,A')= \liminf_{j\to \infty}\mu^{\delta}_j(A')\geq \mu^{\delta}(A')
   \geq\alpha_{\delta}\int_{A'}f\homm(\nabla u)\dx\,.
\end{equation*}
Eventually, the claim follows by letting $\delta \to 0$ and by the arbitrariness of $A' \in \corA(A)$. 
\end{proof}

\subsection{Identification of the surface term}\label{s:surface}

In this subsection we show that the Radon Nikodym derivative of $\Fsucc$ with respect to $\calH^{n-1}\res J_u$ equals to $g\homm$ for every $u\in BV$. 
\begin{proposition}[Homogenised surface integrand]\label{p:homogenised surface integrand}
Let $f\in \mathcal F(C,\alpha)$ satisfy \eqref{e:ghom-ass}. 
Let $\Fsucc$ be as in \eqref{e:Gamma-conv sottosucc}. 
Then,
for every $A\in \corA$ and every $u\in L^1_{\textup{loc}}(\R^n,\R^N)$, with $u\in BV(A,\R^N)\cap L^{\infty}(A,\R^N)$ there holds
\begin{equation*}
    \frac{\dd \Fsucc(u,1,\cdot)}{\dd \calH^{n-1} \res J_u}(x)=g\homm([u](x),\nu_u(x)) \quad \textup{for $\calH^{n-1}$-a.e. $x\in  J_u\cap A$},
\end{equation*}
where $g\homm$ is as in \eqref{e:ghom-ass}.
\end{proposition}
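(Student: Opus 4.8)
The plan is to follow the scheme of Proposition~\ref{p:Homogenised volume integrand}, combining the $BV$ global method of \cite{BouchitteFonsecaMascarenhas} with a direct comparison with the cell problems defining $g\homm$. Fix $A\in\corA$ and $u\in BV(A,\R^N)\cap L^\infty(A,\R^N)$. Since $\Fsucc(u,1,\cdot)$ is a finite Radon measure on $A$ (Theorem~\ref{t:Sottosucc gamma-conv.}), its density with respect to $\calH^{n-1}\res J_u$ exists $\calH^{n-1}$-a.e.\ on $J_u\cap A$; moreover, arguing as in Step~1 of the proof of Proposition~\ref{p:Homogenised volume integrand} --- i.e.\ applying the jump-point version of \cite[Lemma~3.5]{BouchitteFonsecaMascarenhas} to $\Fsucc_q:=\Fsucc+q|Du|$, $q\in\Q\cap(0,1)$, so that the $q|Du|$-contribution is matched on both sides and is thus irrelevant --- one reduces the statement to proving that at $\calH^{n-1}$-a.e.\ $x_0\in J_u\cap A$, writing $\zeta:=[u](x_0)$ and $\nu:=\nu_u(x_0)$,
\begin{equation}\label{e:piano riduzione superficie}
  \lim_{\rho\to0}\frac{m_{\Fsucc}\big(u_{x_0,\zeta,\nu},Q^{\nu}_{\rho}(x_0)\big)}{\rho^{n-1}}=g\homm(\zeta,\nu),
\end{equation}
with $m_{\Fsucc}$ as in \eqref{e:equaz minfunz}. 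By Corollary~\ref{c:comparison minimum problems} and Lemma~\ref{l:bordi regolarizzati}, in \eqref{e:piano riduzione superficie} the sharp profile $u_{x_0,\zeta,\nu}$ may be replaced, at lower-order cost, by the mollified profile $\overline u_{x_0,\zeta,\nu}$ of \ref{e:funzione salto}.

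For the inequality $\le$ in \eqref{e:piano riduzione superficie} I would build competitors by rescaling near-optimal solutions of the cell problem \eqref{e:def minprobs}. Given $\eta>0$ and $\rho>0$, I would apply \eqref{e:ghom-ass} with base point $x_0/\rho$: for $r:=\rho/\eps_j$ (which diverges as $j\to\infty$) pick $(w_j,z_j)$ admissible for $\minprobs^{f^{\infty}}(\overline u_{x_0/\eps_j,\zeta,\nu},Q^{\nu}_{r}(x_0/\eps_j))$ with $\Functmins^{f^{\infty}}(w_j,z_j,Q^{\nu}_{r}(x_0/\eps_j))\le(g\homm(\zeta,\nu)+\eta)r^{n-1}$ and, by \ref{e:crescita lineare}, $\int z_j^2|\nabla w_j|\le C(|\zeta|+1)r^{n-1}$. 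The pair $(u^{\rho}_{\eps_j},v^{\rho}_{\eps_j})(y):=(w_j,z_j)(y/\eps_j)$ on $Q^{\nu}_{\rho}(x_0)$, extended by $(\overline u^{\eps_j}_{x_0,\zeta,\nu},1)$ outside, is admissible, matches across $\partial Q^{\nu}_{\rho}(x_0)$, and converges in $L^1_{\textup{loc}}$ to $(u_{x_0,\zeta,\nu},1)$ as $j\to\infty$ (the $L^1$ bound coming from Poincaré's inequality and the gradient estimate above). Changing variables $y=\eps_j x$ and invoking Lemma~\ref{l:lemma funz recess integrale} with $t=\eps_j^{-1}$ --- which, thanks to the $\alpha$-rate in \ref{e:limite funz rec}, allows trading $\eps_j f(x,\eps_j^{-1}\nabla w_j)$ for $f^{\infty}(x,\nabla w_j)$ up to an error $O(\rho^{n-1+\alpha}+\rho^{n})$ --- one gets
\[
  \Functeps_{\eps_j}\big(u^{\rho}_{\eps_j},v^{\rho}_{\eps_j},Q^{\nu}_{\rho}(x_0)\big)\le\eps_j^{n-1}\Functmins^{f^{\infty}}\big(w_j,z_j,Q^{\nu}_{r}(x_0/\eps_j)\big)+o(\rho^{n-1})\le(g\homm(\zeta,\nu)+\eta)\rho^{n-1}+o(\rho^{n-1}).
\]
Since $\Fsucc$ is the $\Gamma$-limit along $(\eps_j)$ (cf.\ \eqref{e:Gamma-conv sottosucc}), passing to the limit, then letting $\eta\to0$ and $\rho\to0$, gives $\le$ in \eqref{e:piano riduzione superficie}.

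For the inequality $\ge$ in \eqref{e:piano riduzione superficie} I would fix $A'\in\corA(A)$, take a recovery sequence $(u_j,v_j)$ for $\Fsucc(u,1,A')$ along $(\eps_j)$, and set $\mu_j:=(v_j^2 f(\tfrac{x}{\eps_j},\nabla u_j)+\tfrac{(1-v_j)^2}{\eps_j}+\eps_j|\nabla v_j|^2)\calL^n\res A'$, so that (up to subsequence) $\mu_j\stackrel{*}{\weakto}\mu\le\Fsucc(u,1,\cdot)$; it then suffices to show $\tfrac{\dd\mu}{\dd\calH^{n-1}\res J_u}(x_0)\ge g\homm(\zeta,\nu)$ at $\calH^{n-1}$-a.e.\ $x_0\in J_u\cap A'$ where this density exists, $\mu(\partial Q^{\nu}_{\rho}(x_0))=0$ for a.e.\ $\rho$, and $u$ has approximate jump $u_{x_0,\zeta,\nu}$. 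For such $x_0$ and admissible $\rho$, the change of variables $x=\eps_j z$ recasts $\rho^{1-n}\mu_j(Q^{\nu}_{\rho}(x_0))$ as $r^{1-n}\int_{Q^{\nu}_{r}(x_0/\eps_j)}(\hat v_j^2 f(z,\eps_j^{-1}\nabla\hat u_j)+(1-\hat v_j)^2+|\nabla\hat v_j|^2)\,dz$ with $r=\rho/\eps_j$, $\hat u_j:=u_j(\eps_j\cdot)$, $\hat v_j:=v_j(\eps_j\cdot)$; bounding $f(z,\eps_j^{-1}\nabla\hat u_j)$ from below by $\eps_j^{-1}f^{\infty}(z,\nabla\hat u_j)$ up to $o(r^{n-1})$ (again Lemma~\ref{l:lemma funz recess integrale}, using the estimate $\int\hat v_j^2|\nabla\hat u_j|\lesssim r^{n-1}$ read off from the energy bound) yields a lower bound in terms of $\Functmins^{f^{\infty}}(\hat u_j,\hat v_j,Q^{\nu}_{r}(x_0/\eps_j))$. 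To make $(\hat u_j,\hat v_j)$ admissible for \eqref{e:def minprobs} I would first truncate $\hat u_j$ via Lemma~\ref{l:truncation lemma} applied to $f^{\infty}$ --- its $1$-homogeneity, i.e.\ $\gamma=0$ in \eqref{e:crescita funz recess}, being essential so that no extra bulk term appears --- bounding $\|\hat u_j\|_\infty$ while enlarging the jump set by at most $o(r^{n-1})$, and then glue the truncated pair to the boundary datum $(u_{x_0/\eps_j,\zeta,\nu},1)$ by a Fubini/slicing argument near $\partial Q^{\nu}_{r}(x_0/\eps_j)$, the extra jump energy being controlled via $\|\hat u_j\|_\infty$, Lemma~\ref{l:bordi regolarizzati}, Lemma~\ref{l:lemma prob di minimo superficie su cubi diversi} and \ref{e:funzione salto}. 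This gives $r^{1-n}\Functmins^{f^{\infty}}(\hat u_j,\hat v_j,Q^{\nu}_{r}(x_0/\eps_j))\ge r^{1-n}\minprobs^{f^{\infty}}(u_{x_0/\eps_j,\zeta,\nu},Q^{\nu}_{r}(x_0/\eps_j))-o(1)$; letting $j\to\infty$ (so $r\to\infty$) and using \eqref{e:ghom-ass} with base point $x_0/\rho$, then $\rho\to0$, yields $\tfrac{\dd\mu}{\dd\calH^{n-1}\res J_u}(x_0)\ge g\homm(\zeta,\nu)$, whence $\Fsucc(u,1,A')\ge\mu(A')\ge\int_{J_u\cap A'}g\homm([u],\nu_u)\,\dd\calH^{n-1}$ and the conclusion follows by arbitrariness of $A'$.

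The main obstacle is the lower bound, and specifically the coupled limit: one must rescale by $\eps_j$ so that the microscopic transition profile acquires unit characteristic length while the cube side $r=\rho/\eps_j$ diverges, and simultaneously (a)~replace $f(\cdot/\eps_j,\cdot)$ by the recession function $f^{\infty}$ on a sequence whose gradients concentrate along the jump plane --- legitimate precisely because of Lemma~\ref{l:lemma funz recess integrale} together with the $L^1$ gradient estimate it needs --- and (b)~enforce on $\partial Q^{\nu}_{r}(x_0/\eps_j)$ the inhomogeneous Dirichlet datum $u_{x_0/\eps_j,\zeta,\nu}$ without spoiling the energy estimate, for which the truncation Lemma~\ref{l:truncation lemma} (with $\gamma=0$) and the fundamental estimate Lemma~\ref{l:bordi regolarizzati} are the right tools. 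Making all the error terms --- from trading $f$ for $f^{\infty}$, from truncation, and from the boundary gluing --- genuinely infinitesimal in a single diagonal extraction over $(\rho,j)$ is the delicate bookkeeping; by contrast, the upper bound is a fairly direct construction by rescaling near-optimal solutions of \eqref{e:def minprobs}.
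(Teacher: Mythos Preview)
Your overall strategy is right, and the lower bound sketch is close in spirit to the paper's, but there is a genuine gap in your upper bound, and it originates already in your reduction step.

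\textbf{The reduction via $\Fsucc_q$ does not carry over to the surface case.} In the volume proof, the $q|Du^\rho|$ term for the constructed competitor is controlled by $qC\int f(\cdot,\nabla u^\rho_{\eps_j})$ via \ref{e:crescita lineare}, and this is of the right order. Here, by contrast, after rescaling a near-optimal pair $(w_j,z_j)$ for $\minprobs^{f^\infty}$ you only have $\int z_j^2|\nabla w_j|\le C r^{n-1}$, \emph{not} $\int|\nabla w_j|\le Cr^{n-1}$; hence neither $q|Du^\rho_{\eps_j}|$ nor the Poincar\'e step giving $L^1$-compactness is justified. Where $z_j$ is close to $0$ the gradient of $w_j$ can be arbitrarily large with no energy cost, so the ``matched on both sides'' cancellation you invoke simply fails. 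The paper avoids this by proving a variant of the BFM jump formula (Lemma~\ref{l:Lemma bfm per espressione blow-up parte di superficie}) for the minimisation problem $m^L_{\Fsucc}$ restricted to competitors with $\|\cdot\|_{L^\infty}\le L$, which is legitimate because $\Fsucc$ satisfies the truncated coercivity \eqref{e:lower bound Fsucc}. Then, in the upper bound, the paper \emph{truncates the cell-problem competitor first} via Lemma~\ref{l:truncation lemma} with $\gamma=0$ (here the $1$-homogeneity of $f^\infty$ is essential), obtaining $\|\hat w_j\|_{L^\infty}\le a_{2M+1}$; compactness in $L^1$ then comes from the Ambrosio--Tortorelli-type compactness \cite[Lemma~7.1]{AlFoc}, not from Poincar\'e, and the limit $u^\rho$ is an admissible $L^\infty$-bounded competitor for $m^{a_{2M+1}}_{\Fsucc}$.

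\textbf{Lower bound.} Your measure-theoretic reformulation is equivalent to the paper's direct argument on cubes, and the passage from $f$ to $f^\infty$ via Lemma~\ref{l:lemma funz recess integrale} is exactly what the paper does. The real work, however, is the boundary modification of \emph{both} $u$ and $v$: a Fubini/slicing argument suffices to cut the $u$-variable, but to force $v=1$ on $\partial Q^\nu_r$ while keeping $(1-v)^2+|\nabla v|^2$ under control requires more. The paper does this on the unit cube via a De Giorgi layer construction: it interpolates $(u^\rho_j,v^\rho_j)$ with $(w^\rho,1)$, where $w^\rho\in W^{1,1}$ strictly approximates $u_{\zeta,\nu}$, using cut-offs $\phi^\rho_{j,i}$ on $b^\rho_j=\lfloor\rho/\eps_j\rfloor$ concentric annuli of width $s^\rho_j=a^\rho_j/b^\rho_j$ with $a^\rho_j$ encoding the $L^1$/$L^2$ moduli of convergence; averaging over the layers kills the transition cost. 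This coupled choice of scales is the point you flag as ``delicate bookkeeping'', and it is genuinely needed.
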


To prove Proposition \ref{p:homogenised surface integrand} we need a preliminary lemma which is an extension to our setting of some results contained in \cite{BouchitteFonsecaMascarenhas}. 
\begin{lemma}\label{l:Lemma bfm per espressione blow-up parte di superficie}
Let $U\in \corA$ be fixed and let $G:BV(U,\R^N)\times \corA(U)\longrightarrow [0,\infty)$ be such that 
\begin{enumerate}[label=(\arabic*)]
    \item for every $u\in BV(U,\R^N)$ the set function $G(u,\cdot)$ is the restriction to $\corA(U)$ of a finite Radon measure on $U$;
    \item for every $A\in \corA(U)$ the functional $G(\cdot,A)$ is $L^1(A,\R^n)$-lower semicontinuous;
    \item there exists $K\in (0,\infty)$ such that 
    \begin{equation*}
        G(u,A)\leq K(\calL^n(A)+|Du|(A))
    \end{equation*}
    for every $u\in BV(U,\R^N)$ and every $A\in \corA(U)$.
    \item For every $M\in (0,\infty)$ there exists $K_M\in (0,\infty)$ such that 
    \begin{equation*}
        K_M|Du|(A) \leq G(u,A)
    \end{equation*}
    for every $u\in BV(U,\R^N)$ with $\|u\|_{L^{\infty}(U)}\leq M$ and every $A\in \corA(U)$.
\end{enumerate}
Then, if $w\in BV(U,\R^N)$ is such that $2\|w\|_{L^{\infty}(U)}\leq M$ we have that for $\calH^{n-1}$-a.e. $x\in J_w$
\begin{equation}\label{e:BFM jump}
    \frac{\dd G(w,\cdot)}{\dd \mathcal{H}^{n-1}\res J_w}(x)=\lim_{\rho \to 0}\frac{m_G^M(w,Q_{\rho}^{\nu_w(x)}(x))}{\rho^{n-1}}=\lim_{\rho \to 0}\frac{m_G^M(u_{x,[w](x),\nu_w(x)},Q_{\rho}^{\nu_w(x)}(x))}{\rho^{n-1}}
\end{equation}
where 
\begin{align}\label{e:mMG}
    m^{M}_G(w,A):= \inf \{G(v,A):\, v\in BV(A,\R^N),\,v=w \textup{ on $\partial A$},\,\|v\|_{L^{\infty}(A,\R^N)}\leq M\}.
\end{align}
\end{lemma}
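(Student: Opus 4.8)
The plan is to adapt the blow-up argument underlying the global method for relaxation of \cite{BouchitteFonsecaMascarenhas} — the same machinery already exploited in the proof of Proposition~\ref{p:Homogenised volume integrand} — the only genuine novelty being that here $G$ is not coercive on all of $BV(U,\R^N)$ but, by hypothesis~(4), only on the sublevel sets $\{\|u\|_{L^{\infty}(U)}\le M\}$. This is exactly why the minimum problems $m^{M}_G$ in \eqref{e:mMG} are constrained to competitors with $\|v\|_{L^\infty}\le M$: the constraint plays here the role that the coercifying perturbation $u\mapsto G(u,\cdot)+q|Du|(\cdot)$ plays in \cite[Section~3]{BouchitteFonsecaMascarenhas}. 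The hypothesis $2\|w\|_{L^{\infty}(U)}\le M$ guarantees that $w$ itself, the flat-jump profile $u_{x_0,[w](x_0),\nu_w(x_0)}$, and all the boundary corrections built below stay inside $\{\|\cdot\|_{L^{\infty}}\le M\}$, so that~(4) remains available throughout.

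We set $\mu:=G(w,\cdot)$, which by~(1) is a finite Radon measure on $U$ with $K_M|Dw|\le\mu\le K(\calL^n+|Dw|)$ by~(3)--(4). We then restrict attention to those $x_0\in J_w$ which, writing $\nu:=\nu_w(x_0)$, are Lebesgue points of $\theta:=\frac{\dd\mu}{\dd\,\mathcal H^{n-1}\res J_w}$ along the oriented cubes $Q^{\nu}_{\rho}(x_0)$, at which moreover $\rho^{1-n}\big(\calL^n+|D^a w|+|D^c w|\big)(Q^{\nu}_{\rho}(x_0))\to 0$ and $\rho^{1-n}\mathcal H^{n-1}(J_w\cap Q^{\nu}_{\rho}(x_0))\to 1$ as $\rho\to0$, $x_0$ is not an atom of $\mu$, and $w(x_0+\rho\,\cdot)\to u_{0,[w](x_0),\nu}$ in $L^{1}(Q^{\nu}_1)$; such a set has full $\mathcal H^{n-1}$-measure in $J_w$ by the rectifiability of $J_w$, the fine properties of $BV$ functions \cite[Ch.~3]{AFP}, and Besicovitch differentiation. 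At such $x_0$ the easy inequality $\limsup_{\rho\to0}\rho^{1-n}m^{M}_G(w,Q^{\nu}_{\rho}(x_0))\le\theta(x_0)$ is immediate, since $w$ is an admissible competitor, whence $m^{M}_G(w,\cdot)\le\mu(\cdot)$, and one divides by $\rho^{n-1}$.

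The core of the proof is the reverse bound $\theta(x_0)\le\liminf_{\rho\to0}\rho^{1-n}m^{M}_G(w,Q^{\nu}_{\rho}(x_0))$. Along a sequence $\rho_k\downarrow0$ realising the liminf we pick $\tfrac1k\rho_k^{n-1}$-optimal competitors $v_k$ on $Q^{\nu}_{\rho_k}(x_0)$ with $\|v_k\|_{L^\infty}\le M$; since $v_k=w$ on $\partial Q^{\nu}_{\rho_k}(x_0)$ in the sense of traces, the extension $\tilde v_k$ of $v_k$ by $w$ outside $Q^{\nu}_{\rho_k}(x_0)$ lies in $BV_{\mathrm{loc}}$, satisfies $\|\tilde v_k\|_{L^\infty}\le M$ and $\tilde v_k\to w$ in $L^1_{\mathrm{loc}}$, and, by locality of $G$, the measure property~(1), and~(3) (which kills the contribution carried by the null set $\partial Q^{\nu}_{\rho_k}(x_0)$), one has $G(\tilde v_k,\overline{Q^{\nu}_{\rho_k}(x_0)})=G(v_k,Q^{\nu}_{\rho_k}(x_0))\le m^{M}_G(w,Q^{\nu}_{\rho_k}(x_0))+\tfrac1k\rho_k^{n-1}$. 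After rescaling to the unit cube, $\hat v_k(y):=\tilde v_k(x_0+\rho_k y)$ is bounded in $BV(Q^{\nu}_1)\cap L^{\infty}$ — here~(4) bounds $|D\hat v_k|(Q^{\nu}_1)=\rho_k^{1-n}|Dv_k|(Q^{\nu}_{\rho_k}(x_0))$ by the rescaled energy — and, up to a standard adjustment of its trace on $\partial Q^{\nu}_1$ (cf.\ \cite[Lemma~2.6 and Remark~2.7]{BouchitteFonsecaMascarenhas}), converges in $L^1$ to a limit with boundary datum $u_{0,[w](x_0),\nu}$; lower semicontinuity~(2), combined with the blow-up behaviour of $\mu$ at $x_0$ and with~(1) and~(3), then forces these rescaled energies to have liminf at least $\theta(x_0)$, exactly as in the proof of \cite[Lemma~3.5]{BouchitteFonsecaMascarenhas}. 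I expect this to be the main obstacle, since the BFM blow-up has to be run in the absence of any spatial periodicity or scale invariance of $G$, while simultaneously keeping every competitor inside $\{\|\cdot\|_{L^{\infty}}\le M\}$ so that~(4) can be used.

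Finally, the second equality in \eqref{e:BFM jump} — replacing the datum $w$ by the flat jump $u_{x_0,[w](x_0),\nu}$ — follows by the boundary-datum comparison used in Lemma~\ref{l:bordi regolarizzati} and Corollary~\ref{c:comparison minimum problems} (cf.\ \cite[Lemma~3.1 and Lemma~4.1.2]{BouchitteFonsecaMascarenhas}): for suitably chosen radii $\rho$ one has $|m^{M}_G(w,Q^{\nu}_{\rho}(x_0))-m^{M}_G(u_{x_0,[w](x_0),\nu},Q^{\nu}_{\rho}(x_0))|\le K\int_{\partial Q^{\nu}_{\rho}(x_0)}|w-u_{x_0,[w](x_0),\nu}|\,\dd\mathcal H^{n-1}$, obtained by modifying any near-optimal competitor for one problem on a thin slab adjacent to $\partial Q^{\nu}_{\rho}(x_0)$ so as to match the other datum, with the extra cost estimated through~(3) and $2\|w\|_{L^\infty}\le M$ used to keep the modified function admissible; since the blow-up of $w$ at $x_0$ equals $u_{0,[w](x_0),\nu}$, the right-hand side is $o(\rho^{n-1})$ along these radii, so the two limits coincide. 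Combining the three identities yields \eqref{e:BFM jump}.
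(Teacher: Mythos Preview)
Your identification of the crux (the $L^\infty$-constraint replacing global coercivity) and your treatment of the upper bound and of the second equality in \eqref{e:BFM jump} are both fine. The genuine gap is in your lower bound
\[
\theta(x_0)\le\liminf_{\rho\to0}\rho^{1-n}m^{M}_G(w,Q^{\nu}_{\rho}(x_0)).
\]
You rescale the near-optimal competitors to the unit cube and then invoke ``lower semicontinuity~(2)''. But $G$ is an abstract set functional with no scaling law: there is no relation between $G$ evaluated on $Q^{\nu}_{\rho_k}(x_0)$ and anything defined on $Q^{\nu}_1$, so assumption~(2) --- lower semicontinuity of $G(\cdot,A)$ for a \emph{fixed} open set $A$ --- simply does not apply to the rescaled objects. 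If instead you try to use~(2) on a fixed cube $Q^{\nu}_R(x_0)$ with the extensions $\tilde v_k$, the contribution of the annulus $Q^{\nu}_R(x_0)\setminus\overline{Q^{\nu}_{\rho_k}(x_0)}$ (which equals $G(w,\cdot)$ there) swallows everything and you only recover the trivial bound $0\le\liminf m^{M}_G$. Blow-up via rescaling works for explicit integral functionals, not for an abstract $G$.

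The paper obtains the lower bound structurally, by reducing to the covering identity of \cite[Lemma~3.3]{BouchitteFonsecaMascarenhas},
\[
G(w,A)=\sup_{\delta>0}m_{G,\delta}(w,A),
\]
where $m_{G,\delta}$ is the infimum over fine Vitali-type covers $\{Q_i\}$ of $\sum_i m_G(w,Q_i)$. Once this identity is known, \cite[Lemma~3.5]{BouchitteFonsecaMascarenhas} yields \eqref{e:BFM jump} directly. The only thing to check is that Lemma~3.3 still holds with $m^{M}_G$ in place of $m_G$: one inequality is trivial ($w$ itself is admissible since $2\|w\|_{L^\infty}\le M$); for the other, one patches near-optimal competitors on the covering cubes into a single function on $A$ and uses~(2) and the coercivity~(4) to pass to the limit as $\delta\to0$. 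The point you are missing is that this patched competitor must be kept in $\{\|\cdot\|_{L^\infty}\le M\}$ --- which is automatic here since each piece is --- so that~(4) is available to force $L^1$-convergence. This is exactly the step where your hypothesis $2\|w\|_{L^\infty}\le M$ and the $L^\infty$-constraint in the definition of $m^{M}_G$ do their work; your rescaling sketch bypasses it without replacement.
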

\begin{proof}
The proof follows by combining a number of arguments from \cite[Section~3]{BouchitteFonsecaMascarenhas} which we briefly summarize. 
Appealing to \cite[Lemma~3.5 and formula (3.17) in Theorem~3.7]{BouchitteFonsecaMascarenhas} the equality in \eqref{e:BFM jump} can be established for functionals $G$ satisfying assumptions (1)-(3) above, 
and the stronger growth condition
\begin{equation}\label{e:G stronger growth condition}
        C|Du|(A) \leq G(u,A),
\end{equation}
for every $u\in BV(A,\R^N)$.

In their turn, \cite[Lemma~3.5 and formula (3.17) in Theorem~3.7]{BouchitteFonsecaMascarenhas} are a consequence of \cite[Lemmata~3.1 and 3.3]{BouchitteFonsecaMascarenhas}.
Namely, \cite[Lemmata~3.1]{BouchitteFonsecaMascarenhas} establishes the Lipschitz continuity of $m_G$ as in \eqref{e:equaz minfunz}, with respect to the traces
and is stated under the sole positivity of $G$. It is easy to check that 
an analogous result holds true for $m_G^M$ as in \eqref{e:mMG}.

Moreover, \eqref{e:G stronger growth condition} is used in \cite[Lemma~3.3]{BouchitteFonsecaMascarenhas} 
to prove the equality $G(u,A)=\sup_{\delta>0}m_{G,\delta}(u,A)$, where 
\begin{align*}
m_{G,\delta}(u,A):=\inf\Big\{\sum_{i\in\N}m_G(u,Q_{r_i}^{\nu_i}(x_i)):&\,Q_{r_i}^{\nu_i}(x_i)\subset A,
Q_{r_i}^{\nu_i}(x_i)\cap Q_{r_j}^{\nu_j}(x_j)=\emptyset, i\neq j,\\
&\mathrm{diam}Q_{r_i}^{\nu_i}(x_i)<\delta,\,\mu(A\setminus\cup_{i\in\N}Q_{r_i}^{\nu_i}(x_i))=0\Big\}\,,
\end{align*}
with $\mu:=\calL^n+|D^su|$.

Then to conclude we notice that the same identity holds true for $m_G^M$ under the assumptions (1)-(4). In fact, one inequality is trivial, while the other
can be obtained by exhibiting a competitor with the same $L^\infty$ bound.
\end{proof}

We are now ready to show Proposition~\ref{p:homogenised surface integrand}. 
\begin{proof}[Proof of Proposition~\ref{p:homogenised surface integrand}]
Let us fix $A\in \corA$ and $u\in L^1_{\textup{loc}}(\R^n,\R^N)$ with $u\in BV(A,\R^N)\cap L^{\infty}(A,\R^N)$. We divide the proof into two steps.

\textit{Step 1:} We claim that 
\begin{equation*}
      \frac{\dd \Fsucc(u,1,\cdot)}{\dd \calH^{n-1} \res J_u}(x)\leq g\homm([u](x),\nu_u(x)) \quad \textup{for $\calH^{n-1}$-a.e. $x\in J_u\cap A$}.
\end{equation*}
Thanks to Theorem~\ref{t:Sottosucc gamma-conv.} and Lemma~\ref{l:Lemma bfm per espressione blow-up parte di superficie}, we obtain for $\calH^{n-1}$-a.e. $x\in J_u\cap A$ that
\begin{equation}\label{e:BFM 3.25}
    \frac{\dd \Fsucc(u,1,\cdot)}{\dd \calH^{n-1}\res J_u}(x)=\lim_{\rho \to 0}\frac{m^L_{\Fsucc(\cdot,1,\cdot)}(u,Q^{\nu_u(x)}_{\rho}(x))}{\rho^{n-1}}=\lim_{\rho \to 0}\frac{m^L_{\Fsucc(\cdot,1,\cdot)}(u_{x,[u](x),\nu_u(x)},Q^{\nu_u(x)}_{\rho}(x))}{\rho^{n-1}},
\end{equation}
for every $L>0$ with $ 2\|u\|_{L^{\infty}(A)}\leq L$. 

Fix $x\in A\cap J_u$ such that \eqref{e:BFM 3.25} holds and set $\zeta=[u](x)$ and $\nu_u(x)=\nu$. By combining \eqref{e:ghom-ass} and Corollary~\ref{c:comparison minimum problems}, for every $\rho>0$ we get
\begin{equation}\label{e:ghom sup term}
    g\homm(\zeta,\nu)=\lim_{r\to +\infty}\frac{\minprobs^{f^{\infty}}(\overline{u}_{\frac{r}{\rho}x,\zeta,\nu},Q^{\nu}_r(\textstyle{\frac{r}{\rho}}x))}{r^{n-1}}.
\end{equation}
Let $\eta\in (0,1)$ be fixed. By \eqref{e:ghom sup term}, for every $\rho$ and every $r$ large enough, we have that 
\begin{equation*}
   \minprobs^{f^{\infty}}(\overline{u}_{\frac{r}{\rho}x,\zeta,\nu},Q^{\nu}_r(\textstyle{\frac{r}{\rho}}x)) 
   \leq g\homm(\zeta,\nu)r^{n-1}+\eta r^{n-1}.
\end{equation*}
Therefore there exist $w^{\rho}_{r}\in W^{1,1}(Q^{\nu}_{r}(\textstyle{\frac{r}{\rho}}x),\R^N)$ and $v^{\rho}_{r}\in W^{1,2}(Q^{\nu}_{r}(\textstyle{\frac{r}{\rho}}x),[0,1])$ with $(w^{\rho}_{r},v^{\rho}_{r})=(\overline{u}_{\frac{r}{\rho}x,\zeta,\nu},1)$ on $\partial Q^{\nu}_{r}(\textstyle{\frac{r}{\rho}}x)$, such that
\begin{align}\label{e:stima 1 sup term}
  &\Functmins^{f^{\infty}}\textstyle(w^{\rho}_{r}, v^{\rho}_{r},Q^{\nu}_r(\textstyle{\frac{r}{\rho}}x)) =\displaystyle
  \int_{Q^{\nu}_r(\frac{r}{\rho}x)}\left((v^{\rho}_{r})^2f^{\infty}(y,\nabla w^{\rho}_{r}) + (1- v^{\rho}_{r})^2+|\nabla v^{\rho}_{r}|^2 \right)\dy \leq %
  \nonumber\\ & 
  \leq  
  \minprobs^{f^{\infty}}\textstyle(\overline{u}_{\frac{r}{\rho}x,\zeta,\nu},Q^{\nu}_r(\textstyle{\frac{r}{\rho}}x)) +\eta r^{n-1}\leq g\homm(\zeta,\nu)r^{n-1}+2\eta r^{n-1}. 
\end{align}
Next apply Lemma~\ref{l:truncation lemma} with $\gamma=0$ to infer that
$\mathcal{T}_{k^{\rho}_{r}}(w^{\rho}_{r})=:\hat w^{\rho}_{r}\in W^{1,1}(Q^{\nu}_r(\textstyle{\frac{r}{\rho}}x),\R^N)$ satisfies 
\begin{align}\label{e:troncamento linfinito in limsup superficie}
    \Functmins^{f^{\infty}}\textstyle(\hat w^{\rho}_{r},v^{\rho}_{r},Q^{\nu}_r(\textstyle{\frac{r}{\rho}}x)) &
    \leq \displaystyle \Big(1+\frac{C^2}{M}\Big)\Functmins^{f^{\infty}}\textstyle(w^{\rho}_{r}, v^{\rho}_{r},Q^{\nu}_r(\textstyle{\frac{r}{\rho}}x)) \nonumber 
    \\ & \leq \Big(1+\frac{C^2}{M}\Big)(g\homm(\zeta,\nu)r^{n-1}+2\eta r^{n-1})\,.
\end{align}
Moreover, if $M\in \N$ is such that $(2|\zeta|+1)+ 2\|u\|_{L^{\infty}(A)}\leq a_{2M}$ then 
$\hat w^{\rho}_{r}=\overline{u}_{\frac{r}{\rho}x,\zeta,\nu}$ on $\partial Q^{\nu}_r(\textstyle{\frac{r}{\rho}}x)$ and 
$\|\hat w^{\rho}_{r}\|_{L^\infty(Q^{\nu}_r(\frac{r}{\rho}x))}\leq a_{2M+1}$.

In particular, by 
\eqref{e:troncamento linfinito in limsup superficie} we have that
\begin{equation}\label{e:stima L^1 gradiente limsup parte di superficie}
    \frac{1}{r^{n-1}}\int_{Q^{\nu}_r(\frac{r}{\rho}x)} (v^{\rho}_{r})^2|\nabla \hat w^{\rho}_{r}| \dy \leq C\Big(1+\frac{C^2}{M}\Big)( g\homm(\zeta,\nu)+2\eta).
\end{equation}
By Lemma~\ref{l:lemma funz recess integrale} and \eqref{e:stima L^1 gradiente limsup parte di superficie}, given $\rho>0$, for every $r$ large enough we have 
\begin{align}\label{e:stima 2 sup term}
   & \frac{1}{r^{n-1}}\int_{Q^{\nu}_r(\frac{r}{\rho}x)}\textstyle| (v^{\rho}_{r})^2f^{\infty}(y,\nabla \hat w^{\rho}_{r})-  (v^{\rho}_{r})^2\frac{\rho}{r}f(y,\frac{r}{\rho}\nabla \hat w^{\rho}_{r})|\dy \nonumber
   \\ & \leq K\rho+ \frac{K\rho^{\alpha}}{r^{(n-1)(1-\alpha)}} \Big(\int_{Q^{\nu}_{r}(\frac{r}{\rho}x)}(v^{\rho}_{r})^2|\nabla \hat w^{\rho}_{r}| \dy \Big)^{1-\alpha}\nonumber 
   \\ & \leq K\rho + K\rho^{\alpha}C^{1-\alpha}\Big(1+\frac{C^2}{M}\Big)^{1-\alpha}(g\homm(\zeta,\nu)+2\eta)^{1-\alpha}\,.
\end{align}
Therefore, recollecting \eqref{e:ghom sup term}, \eqref{e:stima 1 sup term} and \eqref{e:stima 2 sup term}, for every $\rho>0$ we get the following
\begin{align*}
     \limsup_{r\to +\infty} \frac{1}{r^{n-1}}&\int_{Q^{\nu}_{r}(\frac{r}{\rho}x)}\textstyle(  (v^{\rho}_{r})^2\frac{\rho}{r}f(y,\frac{r}{\rho}\nabla \hat w^{\rho}_{r})+(1-v^{\rho}_{r})^2+|\nabla v^{\rho}_{r}|^2 ) \dy  \\ 
&   \leq \Big(1+\frac{C^2}{M}\Big)(g\homm(\zeta,\nu) + 2\eta) +\tilde K(M,\rho,\eta),
\end{align*}
where $\tilde K(M,\rho,\eta):=K\rho + K\rho^{\alpha}C^{1-\alpha}\big(1+\frac{C^2}{M}\big)^{1-\alpha}(g\homm(\zeta,\nu)+2\eta)^{1-\alpha}$.

Given $\eps>0$ and $\rho>0$, we define $(\hat u^{\rho}_{\eps},\hat v^{\rho}_{\eps}):\R^n\to \R^{N+1}$ as follows
\begin{equation*}
\hat u^{\rho}_{\eps}(y):=
    \begin{cases}
    \hat w^{\rho}_{r}(\frac{ry}{\rho}) \; & \; \textup{if $y\in Q^{\nu}_{\rho}(x)$}\\
    \overline{u}^{\eps}_{x,\zeta,\nu} \; & \; \textup{if $y\in \R^n \setminus Q^{\nu}_{\rho}(x)$}\\
    \end{cases}
\quad \quad     \hat v^{\rho}_{\eps}(y):=
    \begin{cases}
       \hat v^{\rho}_{r}(\frac{ry}{\rho}) \; & \; \textup{if $y\in Q^{\nu}_{\rho}(x)$}\\
    1 \; & \; \textup{if $y\in \R^n \setminus Q^{\nu}_{\rho}(x)$}\\
    \end{cases}
\end{equation*}
with $r=\frac{\rho}{\eps}$. Thereby $\hat u^{\rho}_{\eps}\in W^{1,1}_{\textup{loc}}(\R^n,\R^{N})$  with $\|\hat u^{\rho}_{\eps}\|_{L^{\infty}(\R^n)}\leq a_{2M+1}$ and $\hat v^{\rho}_{\eps}\in W^{1,2}_{\textup{loc}}(\R^n,[0,1])$. Changing variables it is immediate to get
\begin{align*}
   &  \limsup_{j \to +\infty}\Big( \frac{1}{\rho^{n-1}}\int_{Q^{\nu}_{\rho}(x)} ((\hat v^{\rho}_{\eps_j})^2 \textstyle f(\frac{y}{\eps_j},\nabla \hat u^{\rho}_{\eps_j}) + \frac{(1-\hat v^{\rho}_{\eps_j})^2}{\eps_j}+\eps_j |\nabla \hat v^{\rho}_{\eps_j}|^2) \dy \Big)
   \\ & \leq \Big(1+\frac{C^2}{M}\Big)(g\homm(\zeta,\nu) + 2\eta) +\tilde K(M,\rho,\eta),
    \end{align*}
where $(\eps_j)_{j\in\N}$ is the sequence in Theorem~\ref{t:Sottosucc gamma-conv.} along which the $\Gamma$-convergence of $(\Functeps_{\eps})_{\eps>0}$ holds. 
Moreover, we observe that 
\begin{align*}
  &  \frac{1}{\rho^{n-1}}\int_{Q^{\nu}_{\rho(1+\eta)}(x) \setminus Q^{\nu}_{\rho}(x)}\textstyle( (\hat v^{\rho}_{\eps_j})^2 f(\frac{y}{\eps_j},\nabla \hat u^{\rho}_{\eps_j}) + \frac{(1-\hat v^{\rho}_{\eps_j})^2}{\eps_j}+\eps_j |\nabla \hat v^{\rho}_{\eps_j}|^2) \dy 
  \\ & \leq \frac{C}{\rho^{n-1}}\int_{Q^{\nu}_{\rho(1+\eta)}(x) \setminus Q^{\nu}_{\rho}(x)} (1+|\nabla \overline{u}^{\eps_j}_{x,\zeta,\nu}|)\dy 
  \\ & \leq C\rho\left((1+\eta)^n-1\right)+ \frac{C}{\rho^{n-1}}\int_{(Q^{\nu}_{\rho(1+\eta)}(x) \setminus Q^{\nu}_{\rho}(x))\cap \{|(y-x)\cdot \nu|\leq \eps_j/2\}} |\nabla \overline{u}^{\eps_j}_{x,\zeta,\nu}|\dy
  \\ &  \leq C\rho\left((1+\eta)^n-1\right)+ C|\zeta|\|\overline{\textup{u}}'\|_{L^\infty(\R)} \left((1+\eta)^{n-1}-1\right)\leq C((1+\eta)^n-1)(\rho+|\zeta|\|\overline{\textup{u}}'\|_{L^\infty(\R)}).
\end{align*}
 Therefore, for every $\rho$, 
 we have that
\begin{equation*}
   \sup_{j\in \N} \int_{Q^{\nu}_{\rho(1+\eta)}(x)}\textstyle ((\hat v^{\rho}_{\eps_j})^2 |\nabla \hat u^{\rho}_{\eps_j}| + \frac{(1-\hat v^{\rho}_{\eps_j})^2}{\eps_j}+\eps_j |\nabla \hat v^{\rho}_{\eps_j}|^2 \dy )<+\infty.
\end{equation*}
From $\|\hat u^{\rho}_{\eps_j}\|_{L^{\infty}(Q^{\nu}_{\rho}(x))}\leq a_{2M+1}$ and \cite[Lemma~7.1]{AlFoc} there exists a subsequence (not relabeled) 
of $(\eps_j)_{j\in\N}$ and $u^{\rho}\in L^1_{\textup{loc}}(\R^n,\R^N)$ such that $(\hat u^{\rho}_{\eps_j},\hat v^{\rho}_{\eps_j})\to (u^{\rho},1)$ in $L^1_{\textup{loc}}(\R^n,\R^{N+1})$, $u^{\rho}\in BV(Q^{\nu}_{\rho(1+\eta)}(x),\R^N)$, $u^{\rho}(y)=u_{x,\zeta,\nu}(y)$ for $\calL^n$-a.e. $y\in \R^n\setminus Q^{\nu}_{\rho}(x)$. By assumption (ii) {{in}} Theorem~\ref{t:Deterministic Gamma-conv} (cf. formula \eqref{e:Gamma-conv sottosucc}), it follows that for every $\rho$ 
\begin{align*}
   & \frac{m^{a_{2M+1}}_{\Fsucc(\cdot,1,\cdot)}(u_{x,\zeta,\nu},Q^{\nu}_{(1+\eta)\rho}(x))}{\rho^{n-1}}\leq \frac{\Fsucc(u^{\rho},1,Q^{\nu}_{(1+\eta)\rho}(x))}{\rho^{n-1}}\leq \liminf_{j\to +\infty}\frac{\Functeps_{\eps_j}(\hat u^{\rho}_{\eps_j},\hat v^{\rho}_{\eps_j},Q^{\nu}_{(1+\eta)\rho}(x))}{\rho^{n-1}} 
   \\ & \leq  \Big(1+\frac{C^2}{M}\Big)(g\homm(\zeta,\nu) + 2\eta) +\tilde K(M,\rho,\eta)+C((1+\eta)^n-1)(\rho+|\zeta|\|\overline{\textup{u}}'\|_{L^\infty(\R)}).
\end{align*}
In particular, thanks to \eqref{e:BFM 3.25} we have that
\begin{align*}
    & (1+\eta)^{n-1} \frac{\dd \Fsucc(u,1,\cdot)}{\dd \calH^{n-1}\res J_u}(x)=\lim_{\rho \to 0}\frac{{{m^{a_{2M+1}}_{\Fsucc(\cdot,1,\cdot)}}}(u_{x,\zeta,\nu},Q^{\nu}_{(1+\eta)\rho}(x))}{\rho^{n-1}}
    \\ & \leq  \Big(1+\frac{C^2}{M}\Big)(g\homm(\zeta,\nu) + 2\eta) + C|\zeta|\|\overline{\textup{u}}'\|_{L^\infty(\R)} \left((1+\eta)^{n}-1\right)
\end{align*}
and thereby, letting $M\to +\infty$ and $\eta\to 0$, we can conclude.

\textit{Step 2:} We claim that
\begin{equation*}
    \frac{\dd \Fsucc(u,1,\cdot)}{\dd \calH^{n-1}\res J_u}(x)\geq g\homm([u](x),\nu_{u}(x)) \quad \textup{for $\calH^{n-1}$-a.e. $x\in J_u\cap A$.} 
\end{equation*}
By Theorem~\ref{t:Sottosucc gamma-conv.} there exists $(u_j,v_j)\in L^1_{\textup{loc}}(\R^n,\R^N)$ with $u_{j}\in W^{1,1}(A,\R^N)$ and $v_{j}\in W^{1,2}(A,[0,1])$, such that $v_j(x)\to 1$ for $\calL^n$-a.e. $x\in A$ as $j\to +\infty$,
\begin{equation*}
    (u_j,v_j)\to (u,1) \quad \textup{in $L^1_{\textup{loc}}(\R^n,\R^{N+1})$} \quad \textup{and} \quad \lim_{j\to +\infty}\Functeps_{\eps_j}(u_j,v_j,A)=\Fsucc(u,1,A).
\end{equation*}
For ${{\calH^{n-1}}}$-a.e. $x\in J_u\cap A$ (cf. \cite[Theorem 3.77 and Proposition 3.92]{AFP}) we have 
\begin{equation}\label{e:densità n-1 dim punto di superficie}
    \lim_{\rho\to 0}\frac{1}{\rho^{n-1}}|Du|(Q^{\nu_u(x)}_{\rho}(x))=|[u](x)|\neq 0,
\end{equation}
\begin{equation}\label{e:densità n dim punto di superficie}
    \lim_{\rho \to 0}\frac{1}{\rho^n}\int_{Q^{\nu_u(x)}_{\rho}(x)}|u(y)-u_{x,[u](x),\nu_u(x)}(y)| \dy =0,
\end{equation}
\begin{equation}\label{e:radon-nyk parte di superficie}
    \lim_{\rho \to 0}\frac{\Fsucc(u,1,Q^{\nu_u(x)}_{\rho}(x))}{\rho^{n-1}}=\frac{\dd \Fsucc(u,1,\cdot)}{\dd \calH^{n-1} \res J_u}(x)<+\infty.
\end{equation}
Let us fix $x\in J_u\cap A$ such that \eqref{e:densità n-1 dim punto di superficie}-\eqref{e:radon-nyk parte di superficie} are satisfied, 
and set $\zeta:=[u](x)$ and $\nu:=\nu_u(x)$.
Using the lower bound inequality in the $\Gamma$-convergence of $(\Functeps_\eps)_{\eps>0}$ on $Q^{\nu_u(x)}_{\rho}(x)$ and  
$A\setminus Q^{\nu_u(x)}_{\rho}(x)$, the super-additivity of the inferior limit operator implies that 
$\displaystyle{\lim_{j\to +\infty}\Functeps_{\eps_j}(u_j,v_j,Q^{\nu}_{\rho}(x))=\Fsucc(u,1,Q^{\nu}_{\rho}(x))}$ 
for every $\rho \in I(x):=\{\rho \in (0,\frac{2}{\sqrt{n}}\textup{dist}(x,\partial A)) \; : \; \Fsucc(u,1,\partial Q^{\nu}_{\rho}(x))=0\}$.
Hence, we deduce that
\begin{equation}\label{e:radon-nyk. parte di superficie lower bound}
    \frac{\dd \Fsucc(u,1{{,}}\cdot)}{\dd \calH^{n-1}\res J_u}(x)=\lim_{I(x) \ni \rho \to 0}\lim_{j\to +\infty}\frac{1}{\rho^{n-1}}
    \int_{Q_{\rho}^{\nu}(x)}\textstyle(v_j^2f(\frac{y}{\eps_j},\nabla u_{j}) + \frac{(1-v_j)^2}{\eps_j}+\eps_j |\nabla v_{j}|^2) \dy.
\end{equation}
Now, we consider the rescalings $(u^{\rho}_{j},v^{\rho}_{j}), (u^{\rho},v^{\rho}):Q_1^{\nu}\to \R^{N+1}$ given by 
\begin{equation*}
    (u^{\rho}_{j}(y),v^{\rho}_{j}(y)):=(u_j(x+\rho y),v_j(x+\rho y)) \quad \textup{and} \quad (u^{\rho}(y),v^{\rho}(y)):=(u(x+\rho y),1)
\end{equation*}
Then $u^{\rho}_{j}\in W^{1,1}(Q_1^{\nu},\R^N)$, $v^{\rho}_{j}\in W^{1,2}(Q_1^{\nu},[0,1])$, $u^{\rho}\in BV(Q_1^{\nu},\R^N)$, and
$(u^{\rho}_{j},v^{\rho}_{j})\to(u^{\rho},1)$ in $L^1(Q_1^{\nu},\R^{N+1})$,
$u^{\rho}\to u_{\zeta,\nu}$ in $L^1(Q^{\nu},\R^N)$ by \eqref{e:densità n dim punto di superficie},
and $v^{\rho}_j\to 1$ in $L^2(Q^{\nu})$ for every $\rho$ by\eqref{e:radon-nyk. parte di superficie lower bound}.
Changing variables,  formula \eqref{e:radon-nyk. parte di superficie lower bound} rewrites as
\begin{equation*}
     \frac{\dd \Fsucc(u,1\cdot)}{\dd \calH^{n-1}\res J_u}(x)=\lim_{I(x) \ni \rho \to 0}\lim_{j\to +\infty}\int_{Q_1^{\nu}}\textstyle
     (\rho (v^{\rho}_{j})^2f(\frac{x+\rho y}{\eps_j},\frac{1}{\rho}\nabla u^{\rho}_{j}) 
     + \frac\rho{\eps_j}(1-v^{\rho}_{j})^2+\frac{\eps_j}\rho |\nabla v^{\rho}_{j}|^2 )\dy\,,
\end{equation*}
thus  by \ref{e:crescita lineare} we infer that
\begin{equation}\label{e:upperbound norma l1 gradienti prop lowerbound di superficie}
    \frac{\dd \Fsucc(u,1\cdot)}{\dd \calH^{n-1}\res J_u}(x)\geq C\limsup_{I(x) \ni \rho \to 0}\limsup_{j\to +\infty} \int_{Q_1^{\nu}} (v^{\rho}_{j})^2|\nabla u^{\rho}_{j}|\dy.
\end{equation}
Lemma~\ref{l:lemma funz recess integrale} implies that
\begin{equation}\label{e:cambio con funz di recessione in prop lowerbound superficie}
    \rho \int_{Q_1^{\nu}}(v^{\rho}_{j})^2\textstyle f(\frac{x+\rho y}{\eps_j},\frac{1}{\rho}\nabla u^{\rho}_{j}) \dy \displaystyle \geq \int_{Q_1^{\nu}}(v^{\rho}_{j})^2 f^{\infty}\textstyle(\frac{x+\rho y}{\eps_j},\nabla u^{\rho}_{j}) \dy - \displaystyle K\rho-K\rho^{\alpha}\Big(\int_{Q_1^{\nu}} (v^{\rho}_{j})^2|\nabla u^{\rho}_{j}|\dy\Big)^{1-\alpha}
\end{equation}
where $K$ is a constant that depends only on $C$ and $\alpha$. Thanks to \eqref{e:radon-nyk parte di superficie}, 
\eqref{e:upperbound norma l1 gradienti prop lowerbound di superficie} and \eqref{e:cambio con funz di recessione in prop lowerbound superficie} we get
\begin{equation}\label{e:radon-nyk col functeps infinito}
 \frac{\dd \Fsucc(u,1\cdot)}{\dd \calH^{n-1}\res J_u}(x)\geq \limsup_{I(x) \ni \rho \to 0}\limsup_{j\to +\infty} \Functeps^{\infty,x}_{\rho,\eps_j}(u^{\rho}_{j},v^{\rho}_{j},Q_1^{\nu}),
\end{equation}
where 
 \begin{equation*}
      \Functeps_{\rho,\eps}^{\infty,x}(u,v,A):=
      \int_A \textstyle(v^2f^{\infty}(\frac{x+\rho y}{\eps},\nabla u)+\frac{\rho}{\eps}(1-v)^2+\frac{\eps}{\rho}|\nabla v|^2) \dy.
   \end{equation*}
Now, for every $\rho$ and $j$ we consider the sequences $(a^{\rho}_{j})_{j\in\N},\, (b^{\rho}_{j})_{j\in\N}$ and $(s^{\rho}_{j})_{j\in\N}$ given by
\begin{equation}\label{e:scelta scala}
  a^{\rho}_{j}:=\rho +\|u^{\rho}_{j}-w^{\rho}\|^{\frac{1}{2}}_{L^1(Q_1^{\nu})}+\|v^{\rho}_{j}-1\|^{\frac{1}{2}}_{L^2(Q_1^{\nu})}, \quad b^{\rho}_{j}:=\left\lfloor \frac{\rho}{\eps_j} \right\rfloor \quad \textup{and} \quad s^{\rho}_{j}:=\frac{a^{\rho}_{j}}{b^{\rho}_{j}},
\end{equation}
where $(w^{\rho})$ is a sequence in $W^{1,1}(Q^{\nu}_1,\R^N)$ such that $w^{\rho}=u_{\zeta,\nu}$ on $\partial Q_1^{\nu}$ for every $\rho$,
\begin{equation}\label{e:proprieta wrho}
 |Dw^{\rho}|(Q_1^{\nu})\to |Du_{\zeta,\nu}|(Q_1^{\nu}) \quad \textup{and} \quad  w^{\rho}\to u_{\zeta,\nu} \; \; \textup{in $L^1(Q_1^{\nu},\R^N)$ as $\rho \to 0$},
\end{equation}
(see \cite[Lemma~2.5]{BouchitteFonsecaMascarenhas}), where $\lfloor s\rfloor$ denotes the integer part of $s\in\R$.
Fix $\rho$ small enough, such that $\rho+\|u^{\rho}-w^{\rho}\|^{\frac{1}{2}}_{L^1(Q_1^{\nu})}<\frac{1}{4}$ and then fix $j$ large enough such that $0<a^{\rho}_{j}<\frac{1}{2}$ and $2<b^{\rho}_{j}$. For every $i=0,\dots,b^{\rho}_{j}$ we define $Q^{\nu}_{\rho,j,i}$ as 
\begin{equation*}
    Q^{\nu}_{\rho,j,i}:=(1-a^{\rho}_{j}+is^{\rho}_{j})Q_1^{\nu},
\end{equation*}
while for every $i=1,..,b^{\rho}_{j}$ we consider the cut-off function $\phi^{\rho}_{j,i}\in C^{\infty}_{c}(Q^{\nu}_{\rho,j,i})$ such that $0\leq \phi^{\rho}_{j,i}\leq 1$, $\phi^{\rho}_{j,i}\equiv 1$ on $Q^{\nu}_{\rho,j,i-1}$ and $\|\nabla \phi^{\rho}_{j,i}\|_{L^\infty(\R^n)}\leq 2(s^{\rho}_{j})^{-1}$. Set for $i=1,..,b^{\rho}_{j}$
\begin{equation*}
    u^{\rho}_{j,i}:=\phi^{\rho}_{j,i-1}u^{\rho}_{j}+(1-\phi^{\rho}_{j,i-1})w^{\rho} \quad \textup{and} \quad v^{\rho}_{j,i}:=\phi^{\rho}_{j,i}v^{\rho}_{j}+(1-\phi^{\rho}_{j,i}).
\end{equation*} 
Then $u^{\rho}_{j,i}\in W^{1,1}(Q_1^{\nu},\R^N)$, $v^{\rho}_{j,i}\in W^{1,2}(Q_1^{\nu},[0,1])$ with $(u^{\rho}_{j,i},v^{\rho}_{j,i})=(u_{\zeta,\nu},1)$ on $\partial Q_1^{\nu}$. 
Moreover, for every $i=2,\dots,{{b^\rho_j}}$ we have the following
\begin{align*}
 & \Functeps^{\infty,x}_{\rho,\eps_j}(u^{\rho}_{j,i},v^{\rho}_{j,i},Q_1^{\nu}) \leq \Functeps^{\infty,x}_{\rho,\eps_j}(u^{\rho}_{j},v^{\rho}_{j},Q^{\nu}_{\rho,j,i-2}) 
 \\ &+\int_{Q_{\rho,j,i-1}^{\nu}\setminus Q^{\nu}_{\rho,j,i-2}}\textstyle(v^{\rho}_{j})^2f^{\infty}(\frac{x+\rho y}{\eps_j},\nabla u^{\rho}_{j,i}) \dy  + \displaystyle\int_{Q^{\nu}_{\rho,j,i-1}\setminus Q^{\nu}_{\rho,j,i-2}}\textstyle{}( \frac{\rho}{\eps_j}(1-v^{\rho}_{j})^2+\frac{\eps_j}{\rho} |\nabla v^{\rho}_{j}|^2)\dy \\ & 
 + \int_{Q_1^{\nu}\setminus Q^{\nu}_{\rho,j,i-1}}\textstyle(v^{\rho}_{j,i})^2f^{\infty}(\frac{x+\rho y}{\eps_j},\nabla w^{\rho}) \dy 
 +\displaystyle \int_{Q_{\rho,j,i}^{\nu}\setminus Q^{\nu}_{\rho,j,i-1}}\textstyle(\frac{\rho}{\eps_j}(1-v^{\rho}_{j,i})^2+\frac{\eps_j}{\rho} |\nabla v^{\rho}_{j,i}|^2 )\dy.
   \end{align*}
 We estimate separately the terms appearing above. We start with
\begin{equation*}
  \Functeps^{\infty,x}_{\rho,\eps_j}(u^{\rho}_{j},v^{\rho}_{j},Q^{\nu}_{\rho,j,i-2})  
  + \int_{Q^{\nu}_{\rho,j,i-1}\setminus Q^{\nu}_{\rho,j,i-2}}\textstyle{}( \frac{\rho}{\eps_j}(1-v^{\rho}_{j})^2+\frac{\eps_j}{\rho} |\nabla v^{\rho}_{j}|^2)\dy \displaystyle
  \leq \Functeps^{\infty,x}_{\rho,\eps_j}(u^{\rho}_{j},v^{\rho}_{j},Q_{{{1}}}^{\nu}).
\end{equation*}
Moreover, since 
$\nabla u^{\rho}_{j,i}=\phi^{\rho}_{j,i-1}\nabla u^{\rho}_{j} + (1-\phi^{\rho}_{j,i-1})\nabla w^{\rho} + \nabla \phi^{\rho}_{j,i-1} \otimes (u^{\rho}_{j}-w^{\rho})$,
we have that
\begin{align*}
& \int_{Q_{\rho,j,i-1}^{\nu}\setminus Q^{\nu}_{\rho,j,i-2}}\textstyle(v^{\rho}_{j})^2f^{\infty}(\frac{x+\rho y}{\eps_j},\nabla u^{\rho}_{j,i}) \dy \\ 
& \leq C \int_{Q_{\rho,j,i-1}^{\nu}\setminus Q^{\nu}_{\rho,j,i-2}}(v^{\rho}_{j})^2(|\nabla u^{\rho}_{j}|+|\nabla w^{\rho}|+|\nabla \phi^{\rho}_{j,i-1}||u^{\rho}_{j}-w^{\rho}|) \dy \\ 
&  \leq C^2 \int_{Q_{\rho,j,i-1}^{\nu}\setminus Q^{\nu}_{\rho,j,i-2}}\textstyle(v^{\rho}_{j})^2 f^{\infty}(\frac{x+\rho y}{\eps_j},\nabla u^{\rho}_{j}) \dy + \displaystyle C \int_{Q_{\rho,j,i-1}^{\nu}\setminus Q^{\nu}_{\rho,j,i-2}} |\nabla w^{\rho}| \dy \\ 
& + \frac{2C}{s^{\rho}_{j}}\int_{Q_{\rho,j,i-1}^{\nu}\setminus Q^{\nu}_{\rho,j,i-2}} |u^{\rho}_{j}-w^{\rho}| \dy. 
\end{align*}
Analogously, we obtain
\begin{equation*}
  \int_{Q_1^{\nu}\setminus Q^{\nu}_{\rho,j,i-1}}\textstyle(v^{\rho}_{j,i})^2f^{\infty}(\frac{x+\rho y}{\eps_j},\nabla w^{\rho}) \dy \displaystyle\leq C   \int_{Q_1^{\nu}\setminus Q^{\nu}_{\rho,j,i-1}}|\nabla w^{\rho}| \dy. 
\end{equation*}
Since $\nabla v^{\rho}_{j,i}=\phi^{\rho}_{j,i}\nabla v^{\rho}_{j}+(v^{\rho}_{j}-1)\nabla \phi^{\rho}_{j,i}$, 
we have that 
\begin{align*}
 & \int_{Q^{\nu}_{\rho,j,i}\setminus Q^{\nu}_{\rho,j,i-1}} \textstyle{}(\frac{\rho}{\eps_j}(1-v^{\rho}_{j,i})^2+\frac{\eps_j}{\rho} |\nabla v^{\rho}_{j,i}|^2 )\dy \\ 
 & \leq \frac{\rho}{\eps_j}\calL^n(Q^{\nu}_{\rho,j,i}\setminus Q^{\nu}_{\rho,j,i-1})+ 2 \int_{Q^{\nu}_{\rho,j,i}\setminus Q^{\nu}_{\rho,j,i-1}}  \frac{\eps_j}{\rho} |\nabla v^{\rho}_{j}|^2 \dy +  \frac{8\eps_j}{\rho (s^{\rho}_{j})^2}\int_{Q^{\nu}_{\rho,j,i}\setminus Q^{\nu}_{\rho,j,i-1}}|v^{\rho}_{j}-1|^2 \dy\,. 
  \end{align*}
In particular, thanks to the previous calculations and and recalling the definition of $s^{\rho}_{j}$, 
there exists $i^{\rho}_{j}\in \{2,\ldots,b^{\rho}_{j}\}$ such that 
\begin{align*}
 & \Functeps^{\infty,x}_{\rho,\eps_j}(u^{\rho}_{j,i^{\rho}_{j}},v^{\rho}_{j,i^{\rho}_{j}},Q_1^{\nu})\leq \frac{1}{b^{\rho}_{j}-1}\sum_{i=2}^{b^{\rho}_{j}}  \Functeps^{\infty,x}_{\rho,\eps_j}(u^{\rho}_{j,i},v^{\rho}_{j,i},Q_1^{\nu})\\ 
 & \leq \Functeps^{\infty,x}_{\rho,\eps_j}(u^{\rho}_{j},v^{\rho}_{j},Q_1^{\nu})+\frac{{{C^2+}}2}{b^{\rho}_{j}-1} \Functeps^{\infty,x}_{\rho,\eps_j}(u^{\rho}_{j},v^{\rho}_{j},Q_1^{\nu}) + C\int_{Q_1^{\nu}\setminus Q^{\nu}_{0,\rho,j}} |\nabla w^{\rho}| \dy \\
 & + \frac{2C b^{\rho}_{j}}{(b^{\rho}_{j}-1)a^{\rho}_{j}}\int_{Q_1^{\nu}\setminus Q^{\nu}_{\rho,j,0}}|u^{\rho}_{j}-w^{\rho}|\dy 
 +\frac{\rho}{\eps_j (b^{\rho}_{j}-1)}\calL^n(Q_1^{\nu}\setminus Q^{\nu}_{\rho,j,0}) \\ 
 & + \frac{8\eps_j(b^{\rho}_{j})^2}{\rho (a^{\rho}_{j})^2(b^{\rho}_{j}-1)}\int_{Q_1^{\nu}\setminus Q^{\nu}_{\rho,j,0}}|v^{\rho}_{j}-1|^2 \dy .
\end{align*}
  Hence, by the definition of $a^{\rho}_{j}$ and $b^{\rho}_{j}$ in \eqref{e:scelta scala} we deduce that
  \begin{align*}
   & \Functeps^{\infty,x}_{\rho,\eps_j}(u^{\rho}_{j,i^{\rho}_j},v^{\rho}_{j,i^{\rho}_j},Q_{{{1}}}^{\nu}) \leq 
   \Big(1+\frac{{{C^2+}}2}{b^{\rho}_{j}-1}\Big)\Functeps^{\infty,x}_{\rho,\eps_j}(u^{\rho}_{j},v^{\rho}_{j},Q_{{{1}}}^{\nu}) 
   \\ & +C \int_{Q^{\nu}\setminus Q^{\nu}_{\rho,j,0}} |\nabla w^{\rho}| \dy +4C a^{\rho}_{j} +3 \calL^n(Q_{{{1}}}^{\nu}\setminus Q^{\nu}_{\rho,j,0})
   +16(a^{\rho}_{j})^2.
  \end{align*}
Thus, setting $\kappa_{\rho}=1-\|u^{\rho}-w^{\rho}\|_{L^1(Q_{{{1}}}^{\nu})}$, from \eqref{e:radon-nyk col functeps infinito} and \eqref{e:scelta scala} we obtain
  \begin{equation*}
      \limsup_{I(x) \ni \rho \to 0}\limsup_{j\to +\infty}\Functeps^{\infty,x}_{\rho,\eps_j}(u^{\rho}_{j,i^{\rho}_j},v^{\rho}_{j,i^{\rho}_j},Q_{{{1}}}^{\nu}) \leq 
\frac{\dd \Fsucc(u,1\cdot)}{\dd \calH^{n-1}\res J_u}(x)
     + C\limsup_{I(x)\ni \rho \to 0}|Dw^{\rho}|(Q_{{{1}}}^{\nu}\setminus Q^{\nu}_{\kappa_\rho}),
  \end{equation*}
As $w^{\rho}\to u_{\zeta,\nu}$ strictly in $BV$ (cf. \eqref{e:proprieta wrho}), we have that $
|Dw^{\rho}|(Q_{{{1}}}^{\nu}\setminus Q^{\nu}_{\kappa_\rho})\to 0$ as $\rho \to 0$, and thus
  \begin{equation}\label{e:ultima stima lower bound superficie}
      \frac{\dd \Fsucc(u,1\cdot)}{\dd \calH^{n-1}\res J_u}(x) \geq  \limsup_{I(x) \ni \rho \to 0}\limsup_{j\to +\infty}\Functeps^{\infty,x}_{\rho,\eps_j}(u^{\rho}_{j,i^{\rho}_j},v^{\rho}_{j,i^{\rho}_j},Q_{{{1}}}^{\nu}).
  \end{equation}
 By the change of variable and the $1$-homogeneity of $f^{\infty}$ , we have
  \begin{align*}
    \Functeps^{\infty,x}_{\rho,\eps_j}(u^{\rho}_{j,i^{\rho}_j},v^{\rho}_{j,i^{\rho}_j},Q_{{{1}}}^{\nu}) 
    =\frac{1}{r_{\rho,j}^{n-1}}\int_{Q^{\nu}_{r_{\rho,j}}(\frac{r_{\rho,j}}{\rho}x)}\left(
    \overline{v}_{\rho,j}^2 f^{\infty}(y,\nabla \overline{u}_{\rho,j})+(1-\overline{v}_{\rho,j})^2+|\nabla \overline{v}_{\rho,j}|^2 \right)\dy,
  \end{align*}
where $r_{\rho,j}:=\frac{\rho}{\eps_j}$, $\overline{u}^{\rho}_{j}(y):=u^{\rho}_{j,i^{\rho}_j}\textstyle(\frac{y}{r_{\rho,j}}-\frac{x}{\rho})$ and  $\overline{v}^{\rho}_{j}(y):=v^{\rho}_{j,i^{\rho}_j}\textstyle(\frac{y}{r_{\rho,j}}-\frac{x}{\rho})$. In this way $\overline{u}^{\rho}_{j} \in W^{1,1}\textstyle(Q^{\nu}_{r_{\rho,j}}(\frac{r_{\rho,j}}{\rho}x),\R^N)$, $ \overline{v}^{\rho}_{j}\in W^{1,2}(\textstyle Q^{\nu}_{r_{\rho,j}}(\frac{r_{\rho,j}}{\rho}x),[0,1])$ with $(\overline{u}^{\rho}_{j} ,\overline{v}^{\rho}_{j})=(u_{\frac{r_{\rho,j}}{\rho}x,\zeta,\nu},1)$ on $\partial Q^{\nu}_{r_{\rho,j}}(\frac{r_{\rho,j}}{\rho}x)$. In particular, by \eqref{e:ultima stima lower bound superficie}, the definition of $\minprobs^{f^\infty}$ in \eqref{e:def minprobs} and the assumption (b) of Theorem~\ref{t:Deterministic Gamma-conv} we obtain
\begin{equation*}
      \frac{\dd \Fsucc(u,1\cdot)}{\dd \calH^{n-1}\res J_u}(x) \geq  \limsup_{\tilde I(x) \ni \rho \to 0}\limsup_{j\to +\infty} \frac{\minprobs^{f^{\infty}}(u_{\frac{r_{\rho,j}}{\rho}x,\zeta,\nu},Q^{\nu}_{r_{\rho,j}}(\frac{r_{\rho,j}}{\rho}x))}{r_{\rho,j}^{n-1}}=g\homm(\zeta,\nu),
\end{equation*}
deducing the claim.
\end{proof}

\subsection{Identification of the Cantor term}\label{s:cantor}
Eventually, in this subsection we identify the density of the Cantor part of the $\Gamma$-limit $\Fsucc$.
\begin{proposition}[Homogenised Cantor integrand]\label{p:homogenised cantor integrand}
Let $f\in \mathcal F(C,\alpha)$ satisfy \eqref{e:fhom-ass-ii}. 
Let $\Fsucc$ be as in \eqref{e:Gamma-conv sottosucc}. 
Then for every $A\in \corA$ and every $u\in L^1_{\textup{loc}}(\R^n,\R^N)$, with $u\in BV(A,\R^N)\cap L^{\infty}(A,\R^N)$, we have that
\begin{equation*}
    \frac{\dd \Fsucc(u,1,\cdot)}{\dd |D^cu|}(x)=f^{\infty}\homm\Big(\frac{\dd D^cu}{\dd |D^cu|}\Big) \quad \textup{for $|D^cu|$-a.e. $x\in A$},
\end{equation*}
where $f\homm^\infty$ is the recession function of $f\homm$ as in \eqref{e:fhom-ass}. 
\end{proposition}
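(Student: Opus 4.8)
The strategy is to exploit the fact that, by Propositions~\ref{p:Homogenised volume integrand} and \ref{p:homogenised surface integrand}, the measure $\Fsucc(u,1,\cdot)$ already has its Lebesgue and jump parts identified, so only the Cantor density remains, and to pin it down by a double inequality obtained through blow-up at $|D^cu|$-typical points. First I would fix $A\in\corA$ and $u\in BV\cap L^\infty(A,\R^N)$, and recall from Theorem~\ref{t:Sottosucc gamma-conv.} that $\Fsucc(u,1,\cdot)$ is a finite Radon measure, hence admits a Radon--Nikod\'ym decomposition with respect to $|D^cu|$. At $|D^cu|$-a.e.\ $x_0$ one has the standard blow-up properties (cf.\ \cite[Section~3.9 and Theorem~3.95]{AFP}): the rescaled measures $\rho^{1-n}|Du|(x_0+\rho\,\cdot)$ and $\rho^{1-n}|D^cu|(x_0+\rho\,\cdot)$ concentrate, $\frac{\dd D^cu}{\dd |D^cu|}(x_0)=:\xi$ is a rank-one matrix $\xi=a\otimes\nu$, and the blow-ups of $u$ converge (after rescaling by the measure of the cube) to a one-dimensional profile of the form $\psi((y-x_0)\cdot\nu)\,a$ with $\psi$ monotone; moreover $\frac{\dd\Fsucc(u,1,\cdot)}{\dd|D^cu|}(x_0)=\lim_{\rho\to0}\rho^{1-n}\Fsucc(u,1,Q_\rho(x_0))$ along a suitable set of radii.

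For the lower bound $\frac{\dd\Fsucc(u,1,\cdot)}{\dd|D^cu|}(x_0)\ge f^\infty\homm(\xi)$ I would argue as in Step~2 of Proposition~\ref{p:Homogenised volume integrand}: take recovery sequences $(u_j,v_j)$ for $\Fsucc(u,1,A')$, apply Lemma~\ref{l:Lemma di troncamento} to pass to $(u_j^\delta)\subset SBV$ with the bound $H^\delta_{\eps_j}(u_j^\delta,A')\le\Functeps_{\eps_j}(u_j,v_j,A')+C\calL^n(\{v_j\le\delta\})$, form the associated measures $\mu_j^\delta$, extract a weak-$*$ limit $\mu^\delta$, and blow up at a $|D^cu|$-typical point $x_0$. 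Using the truncation Lemma~\ref{l:truncation lemma} (with $v\equiv1$) and the relaxation Lemma~\ref{l:lemma del rilassato}(iv),(vi) exactly as in the volume case, the rescaled densities are bounded below by $\alpha_\delta\,\minprobv^f(\ell_{\xi_j},Q_{r}(\cdots))/r^n$ for suitable $\xi_j\to$ (a multiple of) $\xi$ along the blow-up; here one must be careful that on a Cantor point the blow-up gradient is not constant but a monotone profile, so one first projects the profile onto its mean slope over the cube --- this is where the rank-one structure $\xi=a\otimes\nu$ and $1$-homogeneity are used --- and then invokes \eqref{e:fhom-ass-ii} together with the identity $\minprobv^{f_t}(\ell_\xi,\cdot)=t^{-1}\minprobv^f(\ell_{t\xi},\cdot)$ and Proposition~\ref{p:finfhom prop}, which gives $f^\infty\homm(\xi)=\lim_r k^{1-n}r^{-n}\minprobv^{f^\infty}(\ell_\xi,Q^{\nu,k}_r(rx))$, to recover $\alpha_\delta f^\infty\homm(\xi)$ in the limit; finally let $\delta\to1$.

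For the upper bound $\frac{\dd\Fsucc(u,1,\cdot)}{\dd|D^cu|}(x_0)\le f^\infty\homm(\xi)$ I would build a competitor for $\Functeps_{\eps_j}$ in a small cube $Q_\rho(x_0)$ by rescaling near-optimal test functions for $\minprobv^{f^\infty}(\ell_\xi,Q_r(\cdot))$ on the thin slabs where $u$ varies, gluing them to the affine function $\ell_\xi$ (equivalently to the blow-up profile of $u$) outside, taking $v\equiv1$, and exploiting Lemma~\ref{l:lemma funz recess integrale} to pass between $f^\infty(y,t\nabla w)/t\cdot t$ and $f(y/\eps,\cdot)$ at scale $r=\rho/\eps_j$; the error terms are controlled by $\rho$ and by the measure of the transition region, which vanishes in the blow-up since $|D^cu|$ charges no $\calH^{n-1}$-finite set. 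Then the $\Gamma$-$\limsup$ inequality on $Q_{\rho(1+\eta)}(x_0)$, \eqref{e:fhom-ass-ii}, and Proposition~\ref{p:finfhom prop} give the bound up to factors $\to1$ as $\eta\to0$. \emph{The main obstacle} I expect is the lower bound: unlike at a Lebesgue point, the blow-up of $u$ at a Cantor point is a genuinely non-affine (though one-dimensional monotone) profile, so one cannot directly compare with the cell problem for a linear datum $\ell_\xi$; reducing to the affine datum requires a careful slicing/averaging argument in the direction $\nu$ together with quasiconvexity of $f\homm$ (Proposition~\ref{p:prop_fhom}(i)) and the subadditivity of $\minprobv^{f^\infty}$ in the domain, and making this compatible with the truncation estimates is the delicate point. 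A clean alternative is to avoid an independent computation altogether: once the Lebesgue and jump parts are known, one can invoke the lower semicontinuity of $\Fsucc(\cdot,1,A)$ together with an integral-representation / locality argument in $BV$ (in the spirit of \cite{BouchitteFonsecaMascarenhas}), forcing the Cantor density of any measure representation with the given bulk and surface integrands to coincide with $(f\homm)^\infty=f^\infty\homm$; I would present the direct blow-up argument but flag this as the structurally cleanest route.
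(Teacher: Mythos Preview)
Your upper bound sketch is essentially aligned with the paper: the paper also builds competitors with $v\equiv1$ by rescaling near-optimal functions for $\minprobv^{f^\infty}(\ell_{a\otimes\nu},\cdot)$, uses Lemma~\ref{l:lemma funz recess integrale} to pass from $f^\infty$ to $f$, and then invokes Proposition~\ref{p:finfhom prop}. The only ingredient you do not mention is that the blow-up formula at a Cantor point is not simply $\lim_\rho \rho^{1-n}\Fsucc(u,1,Q_\rho(x_0))$ but is provided by \cite[Lemma~3.9]{BouchitteFonsecaMascarenhas} in the form of a limit of $m_{\Fsucc_q}(\ell_{t_{\rho,k}a\otimes\nu},Q^{\nu,k}_\rho(x_0))/(k^{n-1}\rho^n t_{\rho,k})$ with $t_{\rho,k}\to\infty$, $\rho t_{\rho,k}\to0$. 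This is why one compares against an \emph{affine} datum even though the blow-up of $u$ is not affine.

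There is, however, a genuine gap in your lower bound. You correctly flag that the blow-up at a Cantor point is a one-dimensional monotone profile $u^k(y)=\chi_k(y\cdot\nu)a$, not affine, and that this is the obstruction to reducing to the cell problem $\minprobv^{f^\infty}(\ell_{a\otimes\nu},\cdot)$. But your proposed remedies --- ``project the profile onto its mean slope'' via quasiconvexity of $f\homm$, or subadditivity of $\minprobv^{f^\infty}$ --- are not how this is resolved, and it is not clear either would close the argument. The paper's mechanism is different and is the reason assumption~\eqref{e:fhom-ass-ii} is stated on the \emph{elongated parallelepipeds} $Q^{\nu,k}_r$ rather than on cubes: one blows up on $Q^{\nu,k}_\rho(x_0)$ with $t_{\rho,k}=|Du|(Q^{\nu,k}_\rho(x_0))/(k^{n-1}\rho^n)$, so that the limit profile $\chi_k$ has total variation $1/k^{n-1}$ and, being continuous at $\pm\tfrac12$, agrees with the affine function $\ell^k:=k^{1-n}\ell_{a\otimes\nu}+\mathrm{const}$ on the two faces $\partial^\perp Q_1^{\nu,k}$. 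One then modifies the (truncated, $SBV$) recovery sequence to attain $u^k$ on a layer near $\partial^\perp Q_1^{\nu,k}$ (a Fubini/trace selection controls the jump created there), and separately to attain $\ell^k$ on a layer near the lateral boundary $\partial^\|Q_1^{\nu,k}$; the second modification is controlled by a one-dimensional Poincar\'e inequality in the $\nu$ direction and produces an error $O(1/h_k)$ with $h_k\to\infty$ as $k\to\infty$. Only after both modifications does one have a competitor for $\minprobv^{f^\infty}(\ell_{a\otimes\nu},Q^{\nu,k}_{r})$, and the limit $k\to\infty$ (taken last) kills the remaining errors. Your outline has no analogue of this $k$-parameter device.

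Your alternative route --- deduce the Cantor density from the already-identified bulk and surface densities via a \cite{BouchitteFonsecaMascarenhas}-type integral representation --- is appealing but does not apply here: the paper explicitly notes (in the introduction) that the translation-continuity hypothesis $z\mapsto \Fsucc(u(\cdot-z),1,A+z)$ needed for that global representation may fail in the absence of periodicity, which is precisely why the three densities are identified directly.
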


\begin{proof}
Let us fix $A\in \corA$ and $u\in L^1_{\textup{loc}}(\R^n,\R^N)$ with $u\in BV\cap L^\infty(A,\R^N)$. We divide the proof into two steps.

\textit{Step 1:} We claim that 
\begin{equation*}
    \frac{\dd \Fsucc(u,1,\cdot)}{\dd |D^cu|}(x) \leq f^{\infty}\homm\Big(\frac{\dd D^cu}{\dd |D^cu|}\Big) \quad \textup{for $|D^cu|$-a.e. $x\in A$}.
\end{equation*}
By Alberti's Rank-one Theorem \cite{rankonealb} we know that for $|D^cu|$-a.e. $x\in A$ we have
\begin{equation}\label{e:rankonethm}
   \frac{\dd D^cu}{\dd |D^cu|}(x)=a(x)\otimes \nu(x) 
\end{equation}
where $(a(x),\nu(x))\in \R^N \times \Sf^{n-1}$. By Theorem~\ref{t:Sottosucc gamma-conv.} and by \cite[Lemma~3.9]{BouchitteFonsecaMascarenhas} we have that for $|D^c u|$-a.e. $x\in A$ there exists a doubly indexed positive sequence $(t_{\rho,k})$, with $\rho\in (0,\infty)$ and $k\in \N$, such that for every $k\in \N$ 
\begin{equation}\label{e:blow-up cantor 1 limsup}
    \textup{$t_{\rho,k}\to +\infty$ \quad and \quad $\rho {{t_{\rho,k}}}\to 0$ \quad as $\rho\to 0$,}
\end{equation}
and for every $q\in \Q\cap(0,1)$
\begin{equation}\label{e:blow-up cantor 2 limsup}
    \frac{\dd \Fsucc(u,1,\cdot)}{\dd |D^cu|}(x)+q a(x)\otimes \nu(x)=\lim_{k\to +\infty}\limsup_{\rho \to 0} \frac{m_{\Fsucc_q}(\ell_{t_{\rho,k}a(x)\otimes \nu(x)},Q^{\nu(x),k}_r(\textstyle{\frac{r}{\rho}}x)}{k^{n-1}\rho^nt_{\rho,k}}\,,
\end{equation}
where for every $A\in\corA$ and $q\in \Q\cap (0,1)$ let $\Fsucc_q(u,1,A):=\Fsucc(u,1,A)+q|Du|(A)$, and $Q^{\nu,k}_r(z)$ is the parallelepiped defined in \ref{e:cubo ruotato} of the notation list.
Let $x\in A$ be such that \eqref{e:rankonethm}-\eqref{e:blow-up cantor 2 limsup} hold true, and set $a:=a(x)$ and $\nu:=\nu(x)$. 
Thanks to Proposition~\ref{p:finfhom prop}, for every $\rho>0$ and every $k\in \N$ we have 
\begin{equation}\label{e:finfhom prop cantor}
    f\homm^{\infty}(a\otimes \nu)=\lim_{r\to \infty} \frac{\minprobv^{f^{\infty}}(\ell_{a\otimes \nu},Q^{\nu,k}_r(\textstyle{\frac{r}{\rho}}x))}{k^{n-1}r^n}.
\end{equation}
Let us fix $\eta\in (0,1)$. By the very definition of $\minprobv^{f^{\infty}}$, for every $k\in \N$, $\rho\in (0,1)$ and $r\in (0,\infty)$ 
there exists a function $\hat u^{\rho,k}_r\in W^{1,1}(Q^{\nu,k}_{r}(\textstyle{\frac{r}{\rho}}x),\R^N)$ with $\hat u^{\rho,k}_r=\ell_{a\otimes \nu}$ on $\partial Q^{\nu,k}_r(\textstyle{\frac{r}{\rho}}x)$ such that
\begin{equation}\label{e:stima 1 cantor part}
    \Functminv^{f^{\infty}}\left(\hat u^{\rho,k}_r,Q^{\nu,k}_r(\textstyle{\frac{r}{\rho}}x)\right)\leq \minprobv^{f^{\infty}}\left(\ell_{a\otimes \nu},Q^{\nu,k}_r(\textstyle{\frac{r}{\rho}}x)\right)+ \eta k^{n-1}r^n \leq C|a|k^{n-1}r^n + \eta k^{n-1}r^n
\end{equation}
and in particular
\begin{equation*}
   \int_{Q^{\nu,k}_r(\frac{r}{\rho}x)}|\nabla \hat u^{\rho,k}_r | \dy \leq C^2(|a|+1)k^{n-1}r^n.
\end{equation*}
Therefore, by Lemma~\ref{l:lemma funz recess integrale}, we have that
\begin{align}\label{e:stima 2 prop parte di cantor}
   & \frac{1}{k^{n-1}r^n}\int_{Q^{\nu,k}_r(\frac{r}{\rho}x)}\Big|f^{\infty}(y,\nabla \hat u^{\rho,k}_r)-\frac{1}{t_{\rho,k}}f(y,t_{\rho,k}\nabla \hat u^{\rho,k}_r)\Big|\dy  \\ & \leq \frac{K}{t_{\rho,k}}k^{n-1}r^n+ \frac{K}{t_{\rho,k}^{\alpha}}(k^{n-1}r^n)^{\alpha}\Big(\int_{Q^{\nu,k}_r(\frac{r}{\rho}x)} |\nabla u_r|\dy\Big)^{1-\alpha} \leq \frac{\Hat K}{t^{\alpha}_{\rho,k}}k^{n-1}r^n\nonumber 
\end{align}
where $\Hat K$ depends only on $C$, $\alpha$ and $a$. Collecting \eqref{e:finfhom prop cantor}-\eqref{e:stima 2 prop parte di cantor}, we infer that
\begin{equation*}
\limsup_{r\to +\infty}\frac{1}{k^{n-1}r^nt_{\rho,k}}\int_{Q^{\nu,k}_r(\frac{r}{\rho}x)}f(y,t_{\rho,k}\nabla \hat u^{\rho,k}_r) \dy \leq f\homm^{\infty}(a\otimes \nu) + \eta+ \frac{\Hat K}{t_{\rho,k}^{\alpha}}.
\end{equation*}
For $k\in \N$, $\rho\in (0,1)$ and $\eps\in (0,\infty)$ we define the function $u^{\rho,k}_{\eps}:\R^n\to \R^N$ given by 
\begin{equation*}
  u^{\rho,k}_{\eps}(y):=
  \begin{cases}
  \eps t_{\rho,k}\hat u^{\rho,k}_r(\frac{y}{\eps}) \; & \; \textup{if $y\in Q^{\nu,k}_{\rho}(x)$} \\
  t_{\rho,k}\ell_{a \otimes \nu} \; & \; \textup{if $y\in \R^n\setminus Q^{\nu,k}_{\rho}(x)$},
  \end{cases}
\end{equation*}
where $r:=\frac{\rho}{\eps}$. Thus $u^{\rho,k}_{\eps}\in W^{1,1}_{\textup{loc}}(\R^n,\R^N)$ with $u^{\rho,k}_{\eps}=t_{\rho,k} \ell_{a\otimes \nu}$ on $\partial Q^{\nu,k}_{\rho}(x)$ and changing variable we obtain
\begin{align}\label{e:stima 3 prop cantor part}
 \limsup_{\eps \to 0} \frac{1}{k^{n-1}\rho^n t_{\rho,k}}&\int_{Q^{\nu,k}_{\rho(1+\eta)}(x)}\textstyle f\left(\frac{x}{\eps},\nabla u^{\rho,k}_{\eps}\right) \dd y\nonumber \\ 
 & \leq f\homm^{\infty}(a\otimes \nu)+\eta+ \frac{\Hat K}{t^{\alpha}_{\rho,k}}+C\left(\frac{1}{t_{\rho,k}}+|a|\right)((1+\eta)^n-1),
\end{align}
since $u^{\rho,k}_{\eps}$ coincides with $t_{\rho,k} \ell_{a\otimes \nu}$ on $Q^{\nu,k}_{\rho(1+\eta)}(x)\setminus Q^{\nu,k}_{\rho}(x)$. By \eqref{e:stima 3 prop cantor part} and Poincaré inequality, we can extract a subsequence (not relabelled) of $(\eps_j)_{j\in\N}$, for every $\rho \in (0,1)$ and $k\in \N$, such that $u^{\rho,k}_{\eps_j}\to u^{\rho,k}$ in $L^1_{\textup{loc}}(\R^n,\R^N)$, where $u^{\rho,k}\in BV(Q^{\nu,k}_{\rho(1+\eta)}(x),\R^N)$ with 
$u^{\rho,k}=t_{\rho,k}\ell_{a \otimes \nu}$ on $Q^{\nu,k}_{\rho(1+\eta)}(x)\setminus Q^{\nu,k}_{\rho}(x)$. As a consequence of the $\Gamma$-convergence stated in Theorem~\ref{t:Sottosucc gamma-conv.}, of the superadditivity of the inferior limit operator, of \ref{e:crescita lineare} and of estimate \eqref{e:stima 3 prop cantor part} we obtain
\begin{align*}
   \frac{m_{\Fsucc_q}(\ell_{t_{\rho,k}a\otimes \nu},Q^{\nu,k}_{\rho(1+\eta)}(x))}{k^{n-1}\rho^nt_{\rho,k}} 
   & \leq \frac{\Fsucc_q(u^{\rho,k},1,Q^{\nu,k}_{\rho(1+\eta)}(x))}{k^{n-1}\rho^nt_{\rho,k}} \\ 
   & \leq  \liminf_{j \to +\infty}\Big(\frac{\Functeps_{\eps_j}(u^{\rho,k}_{\eps_j},1,Q^{\nu,k}_{\rho(1+\eta)}(x))}{k^{n-1}\rho^n t_{\rho,k}} +  \frac{q}{k^{n-1}\rho^n t_{\rho,k}}\int_{Q^{\nu,k}_{\rho(1+\eta)}(x)}|\nabla u^{\rho,k}_{\eps_j}| \dd y\Big) \\ 
   &\leq  (1+qC)\Big(f\homm^{\infty}(a\otimes \nu)+\eta+ \frac{\Hat K}{t^{\alpha}_{\rho,k}}+C\left(\frac{1}{t_{\rho,k}}+|a|\right)((1+\eta)^n-1)\Big).
\end{align*}
We can pass to the limit in the last inequality for $\rho\to 0$ and then for $k\to +\infty$, using \eqref{e:blow-up cantor 1 limsup} 
and \eqref{e:blow-up cantor 2 limsup} we arrive to
\begin{equation*}
    (1+\eta)^n\Big(\frac{\dd \Fsucc(u,1,\cdot)}{\dd |D^cu|}(x) +q a \otimes \nu \Big)
    \leq (1+qC)(f\homm^{\infty}(a\otimes \nu)+\eta+C|a|((1+\eta)^n-1))\,.
\end{equation*}
The claim follows by letting $\eta$ and $q\to 0$.
\smallskip

\textit{Step 2:} We claim that 
\begin{equation}\label{e:lowerbound parte di cantoor}
    \frac{\dd \Fsucc(u,1,\cdot)}{\dd |D^cu|}(x) \geq f^{\infty}\homm\Big(\frac{\dd D^cu}{\dd |D^cu|}\Big) \quad \textup{for $|D^cu|$-a.e. $x\in A$}\,.
\end{equation}
Let $A'\in\corA(A)$, then by Theorem~\ref{t:Sottosucc gamma-conv.} we can find a sequence $(u_j,v_j)_{j\in\N}\in L^1_{\textup{loc}}(\R^n,\R^{N+1})$ such that $(u_j,v_j)\in W^{1,1}(A',\R^N)\times W^{1,2}(A',[0,1])$, $(u_j,v_j)\to (u,1)$ in $L^1_{\textup{loc}}(\R^n,\R^{N+1})$, $v_j\to 1$ for $\calL^n$-a.e. $x\in A'$ as $j\to \infty$ and 
\begin{equation*}
    \lim_{j\to \infty}\Functeps_{\eps_j}(u_j,v_j,A')=\Fsucc(u,1,A').
\end{equation*}
{ Arguing as in Step~2 of the proof of Proposition~\ref{p:Homogenised volume integrand} we may assume $u_j\in L^\infty(A';\R^N)$.}
Fix $\delta\in (0,1)$; by Lemma~\ref{l:Lemma di troncamento} we have
\begin{equation*}
    H^{\delta}_{\eps_j}(u^{\delta}_j,A') \leq \Functeps_{\eps_j}(u_j,v_j,A')+C\calL^n(\{v_j\leq \delta\}\cap A'),
\end{equation*}
where $u^\delta_j\in SBV(A',\R^N)$ with $u^{\delta}_j\to u$ in $L^1(A',\R^N)$ as $j\to \infty$, and therefore
\begin{equation}\label{e:Hdeltaepsj vv hatF}
    \liminf_{j\to \infty} H^{\delta}_{\eps_j}(u^{\delta}_j,A')\leq \Fsucc(u,1,A').
\end{equation}
Define on $A'$ the measures $\mu^{\delta}_{j}$ given by
\begin{equation*}
    \mu^{\delta}_j:=\textstyle\alpha_{\delta}f\left(\frac{x}{\eps_j},\nabla {u}^{\delta}_{j}\right) \mathcal{L}^n \res A' 
    + \beta_{\delta} \mathcal{H}^{n-1} \res (J_{{u}^{\delta}_{j}}\cap A')\,.
\end{equation*}
By definition of $H^{\delta}_{\varepsilon}$ and the compactness of Radon measures, there exists subsequence (not relabeled) of $(\eps_j)_{j\in\N}$ 
and a finite Radon measure $\mu^{\delta}$ on $A'$ such that $\mu^{\delta}_j\to \mu^{\delta}$ weakly* in the sense of measures on $A'$ as $j\to \infty$. 
For $|D^c u|$-a.e. $x\in A'$ (cf. \cite[Proposition 3.92 and Theorem 3.94]{AFP}) there exists $a(x){{\in \Sf^{N-1}}}$ and $\nu(x)\in \Sf^{n-1}$ such that for every $k \in \N$ we have
\begin{equation}\label{e:rankone lowerbound}
    \lim_{\rho \to 0}\frac{Du(Q^{\nu(x),k}_{\rho}(x))}{|Du|(Q^{\nu(x),k}_{\rho}(x))}= \frac{\dd D^cu}{\dd |D^cu|}(x)=a(x)\otimes \nu(x)
\end{equation}
\begin{equation}\label{e:blow-up cantor lowebound 1}
    \lim_{\rho \to 0}\frac{|Du|(Q^{\nu(x),k}_{\rho}(x))}{\rho^n}=\infty
\end{equation}
\begin{equation}\label{e:blow-up cantor lowebound 2}
    \lim_{\rho\to 0}\frac{|Du|(Q^{\nu(x),k}_{\rho}(x))}{\rho^{n-1}}=0
\end{equation}
\begin{equation}\label{e:ultima blow-up cantor}
    \frac{\dd \mu^{\delta}}{\dd |Du|}(x)=\frac{\dd \mu^{\delta}}{\dd |D^cu|}(x).
\end{equation}
Fix now $x_0\in A'$ such that \eqref{e:rankone lowerbound}-\eqref{e:ultima blow-up cantor} hold true and set $a:=a(x_0)$ and $\nu:=\nu(x_0)$. For $k\in \N$ and $\rho$ set
\begin{equation*}
    t_{\rho,k}:=\frac{|Du|(Q^{\nu,k}_{\rho}(x_0))}{k^{n-1}\rho^n},
\end{equation*}
therefore
\begin{equation}\label{e:i trhok cantor}
    t_{\rho,k}\to \infty \quad \textup{and} \quad \rho t_{\rho,k}\to 0 \; \; \; \textup{as $\rho\to 0$}.
\end{equation}
and 
\begin{equation*}
    \frac{\dd \mu^{\delta}}{\dd |D^cu|}(x_0)
    =\lim_{\rho \to 0} \frac{\mu^{\delta}(Q^{\nu,k}_{\rho}(x_0))}{ t_{\rho,k}k^{n-1}\rho^n}
\end{equation*}
By the weak$^*$-convergence of $(\mu^{\delta}_{\eps_j})_{j\in\N}$ to $\mu^{\delta}$ we infer that
\begin{equation}\label{e:doppia convergenza radon Cantor}
    \frac{\dd \mu^{\delta}}{\dd |D^cu|}(x_0)=\lim_{\rho \to 0, \rho\in I(x_0)} \lim_{j\to \infty} \frac{\mu^{\delta}_j(Q^{\nu,k}_{\rho}(x_0))}{ t_{\rho,k}k^{n-1}\rho^n},
\end{equation}
where $I(x_0):=\{\rho \in (0,1):\,\mu^{\delta}(\partial Q^{\nu,k}_{\rho}(x_0))=0 \text{ for every $k\in \N$ s.t. } Q^{\nu,k}_{\rho}(x_0)\subset \subset A'\}$. 
Note that $I(x_0)$ has full measure in $(0,1)$. 
Fix $k\in\N$ and consider the rescaled functions $u^{\rho,k}_j,\,u^{\rho,k}:Q^{\nu,k}\to \R^N$ 
\begin{align*}
    u^{\rho,k}_j(y)&=\frac{1}{k^{n-1}\rho t_{\rho,k}}\Big(u^{\delta}_j(x_0+\rho y)-\frac{1}{k^{n-1}\rho^n}\int_{Q^{\nu,k}_{\rho}(x_0)}u^{\delta}_j(z) \dd z \Big) \\ u^{\rho,k}(y)&=\frac{1}{k^{n-1}\rho t_{\rho,k}}\Big(u(x_0+\rho y)-\frac{1}{k^{n-1}\rho^n}\int_{Q^{\nu,k}_{\rho}(x_0)}u(z) \dd z \Big).
\end{align*}
From now on we work at $k\in\N$ fixed and this will tend to $\infty$ only at the very end of the proof. Therefore, for those parameters infinitesimal
as $j\to\infty$ and $\rho\to0$ the possible dependence on $k$ will not be highlighted.
For every $\rho$ small enough, depending on $k$, $u^{\rho,k}_j\in SBV(Q_1^{\nu,k},\R^N)$, $u^{\rho,k}\in BV(Q_1^{\nu,k},\R^N)$,
$u^{\rho,k}_j\to u^{\rho,k}$ in $L^1(Q_1^{\nu,k},\R^N)$ as $j\to \infty$, and the function $u^{\rho,k}$ satisfies the following
\begin{equation*}
    \int_{Q^{\nu,k}}u^{\rho,k}(y) \dy=0 \quad \textup{with} \quad Du^{\rho,k}(Q_1^{\nu,k})=\frac{Du(Q^{\nu,k}_{\rho}(x_0))}{|Du|(Q^{\nu,k}_{\rho}(x_0))}\to a\otimes \nu \quad \textup{as $\rho \to 0$}.
\end{equation*}
Moreover, from \eqref{e:doppia convergenza radon Cantor} we obtain 
\begin{align}\label{e:seconda doppia convergenza Radon Cantor}
  &  \frac{\dd \mu^{\delta}}{\dd |D^c u|}(x_0)
  =\lim_{\rho\to 0,\rho\in I(x_0)}\lim_{j \to \infty}\frac{\mu^{\delta}_{j}(Q_{\rho}^{\nu,k}(x_0))}{k^{n-1}\rho^nt_{\rho,k}} \nonumber\\ 
  & =  \lim_{\rho\to 0,\rho\in I(x_0)}\lim_{j\to \infty}\Big(\frac{\alpha_{\delta}}{k^{n-1}\rho^n t_{\rho,k}}\int_{Q^{\nu,k}_{\rho}(x_0)}\textstyle f(\frac{x}{\eps_j},\nabla u^{\delta}_{j}) \dx + \displaystyle \frac{\beta_{\delta}}{k^{n-1}\rho^n t_{\rho,k}}\mathcal{H}^{n-1}(J_{u^{\delta}_{j}}\cap Q^{\nu,k}_{\rho}(x_0)) \Big)= \nonumber\\ 
  & = \lim_{\rho\to 0,\rho\in I(x_0)}\lim_{j\to \infty}\Big(\frac{\alpha_{\delta}}{ k^{n-1}t_{\rho,k}}\int_{Q_1^{\nu,k}}\textstyle f(\frac{x_0+\rho y}{\eps_j},k^{n-1}t_{\rho,k}\nabla u^{\rho,k}_{j}) \dx + \displaystyle\frac{\beta_{\delta}}{k^{n-1}\rho t_{\rho,k}}\mathcal{H}^{n-1}(J_{u^{\rho,k}_{j}}\cap Q_1^{\nu,k}) \Big). 
\end{align}
{{By \cite[Lemma~4.5]{ContiFocardiIurlano2024} 
(see also \cite[Lemma 5.1]{Larsen})}} there exists a subsequence (not relabeled), depending on $k$, such that
$u^{\rho,k}\to u^{k}$ in $L^1(Q_1^{\nu,k},\R^N)$ as $\rho \to 0$, where $u^k\in BV(Q_1^{\nu,k},\R^N)$, 
$u^k(y)=\chi_k(y\cdot \nu)a$ for every $y\in Q_1^{\nu,k}$,
$\chi_k:[-1/2,1/2]\to \R$ is a nondecreasing function such that $D\chi_k((-1/2,1/2))=\chi_k(1/2)- \chi_k(-1/2)=\frac{1}{k^{n-1}}$, $-\frac{1}{k^{n-1}}\leq \chi_k(-1/2)\leq 0 \leq \chi_k(1/2)\leq \frac{1}{k^{n-1}}$, and
\begin{equation*}
    \int_{-1/2}^{1/2}\chi_k(t) \dt=0.
\end{equation*}
Furthermore, being $\chi_k$ continuous in $-1/2$ and $1/2$, thanks to the trace's properties of $BV$ functions, we have that the inner trace of $u^k$ satisfies $u^k=\ell^k$ on $\partial ^\perp Q_1^{\nu,k}$ (cf. \ref{e:cubo ruotato} of the notation list) where 
\begin{equation*}
  \ell^k(y):=\frac{1}{k^{n-1}}\ell_{a\otimes \nu}(y)+\Big(\chi_k(1/2)-\frac{1}{2k^{n-1}}\Big) a.
\end{equation*}
To obtain a uniform $L^{\infty}$-bound on the scaled sequence, we let $M\in\N$ and use Lemma~\ref{l:truncation lemma} with $v\equiv 1$ to get
for every $k\in \N$, every $\rho$ small enough and every $j$ (up to a subsequence), $m_{\rho}\in \{M+1,\dots, 2M\}$ such that
\begin{equation}\label{e:bound L^1 gradienti cantor}
    \Big(1+\frac{C^2}{M}\Big) \frac{\dd \mu^{\delta}}{\dd |D^cu|}(x_0)\geq \limsup_{\rho \to 0, \; \rho\in I(x_0)} \limsup_{j\to \infty} \frac{\alpha_{\delta}}{k^{n-1}t_{\rho,k}}\int_{Q_1^{\nu,k}}\textstyle f\left(\frac{x_0+\rho y}{\eps_j},k^{n-1}t_{\rho,k}\nabla \hat{u}^{\rho,k}_{j}\right) \dy,
\end{equation}
{{where the \(\gamma\)-term in Lemma~\ref{l:truncation lemma} disappears because \(u^{\rho,k}_j\to u^k\) in \(L^1(Q_1^{\nu,k};\R^N)\) and \(u^k\in L^\infty\).}} Furthermore 
\[
\mathcal{H}^{n-1}(J_{\hat{u}^{\rho,k}_{j}}\cap Q_1^{\nu,k})+ \|\hat u^{\rho,k}_j-u^k\|_{L^1(Q_1^{\nu,k})}\leq 
\mathcal{H}^{n-1}(J_{{u}^{\rho}_{j}}\cap Q_1^{\nu,k})+ \|u^{\rho,k}_j-u^k\|_{L^1(Q_1^{\nu,k})}
\] 
where $\hat u^{\rho,k}_j:=\mathcal{T}_{m_{\rho}}(u^{\rho,k}_j)\in SBV(Q^{\nu,k}_1,\R^N)$. 
Therefore, choosing $M$ such that $a_M>|a|$,  
it follows from \eqref{e:i trhok cantor} and \eqref{e:seconda doppia convergenza Radon Cantor} that
\begin{equation}\label{e:la misura dei salti sta andando a zero cantor}
    \lim_{\rho \to 0,\rho\in I(x_0)} \lim_{j\to \infty}( \mathcal{H}^{n-1}(J_{\hat{u}^{\rho,k}_{j}}\cap Q_1^{\nu,k})+ \|\hat u^{\rho,k}_j-u^k\|_{L^1(Q_1^{\nu,k})})=0\,.
\end{equation}
Next we change the boundary datum $\hat u^{\rho,k}_j$ on a neighborhood of $\partial^\perp Q^{\nu,k}$ with $u^k$.
For every $\rho$ small enough and every $j$ large enough (depending on $\rho$), we have that
\begin{equation*}
    \|\hat u^{\rho,k}_j-u^k\|_{L^1(Q_1^{\nu,k})}+\frac{\rho}{j}=:\tau_{\rho,j}\in (0,1)
\end{equation*}
and thanks to Fubini's Theorem and the trace properties of $BV$ functions on rectifiable sets, there exists $q_{\rho,j}\in (1/2-\tau_{\rho,j}^{1/2},1/2)$
\begin{equation}\label{e:dato al bordo oculato 2}
    \int_{\partial^{\perp} R_{\nu}(B^{k}_{\rho,j})}|(\hat u^{\rho,k}_j)^--u^k| \dd \calH^{n-1} \leq \frac{ \|{{\hat u^{\rho,k}_j-u^k}}\|_{L^1(Q_1^{\nu,k})}}{\tau_{\rho,j}^{1/2}}\leq \tau_{\rho,j}^{1/2},
\end{equation}
where $(\hat u^{\rho,k}_j)^-$ is the inner trace of $\hat u^{\rho,k}_j$ on $R_{\nu}(B^{k}_{\rho,j})$, where 
$B^{k}_{\rho,j}:=(-\frac{k}{2},\frac{k}{2})^{n-1}\times (-q_{\rho,j},q_{\rho,j})$ and $R_\nu$ is the rotation in \ref{e:Rnu} of the notation list.
Defining the functions $w^{\rho,k}_{j}\in BV(Q^{\nu,k}_1,\R^N)$ as 
\begin{equation*}
w^{\rho,k}_{j}(y)=
\begin{cases}
\hat u^{\rho,k}_j(y) \; & \textup{if $y\in R_{\nu}(B^{k}_{\rho,j})$} \\
 u^k \; & \textup{if $y\in Q_1^{\nu,k}\setminus  R_{\nu}(B^{k}_{\rho,j})$}, 
\end{cases}
\end{equation*}
we have that 
\begin{align*}
  & \frac{\alpha_{\delta}}{k^{n-1}t_{\rho,k}}\int_{Q_1^{\nu,k}}\textstyle f(\frac{x_0+\rho y}{\eps_j},k^{n-1}t_{\rho,k}\nabla \hat{u}^{\rho,k}_{j}) \dy+ \alpha_{\delta}C\|\nabla u^k\|_{L^1(Q_1^{\nu,k}\setminus R_{\nu}(B^{k}_{\rho,j}))} +  \\  & + \frac{\alpha_{\delta}}{k^{n-1}t_{\rho,k}}\mathcal{L}^n(Q_1^{\nu,k}\setminus R_{\nu}(B^{k}_{\rho,j})) \geq \frac{\alpha_{\delta}}{k^{n-1}t_{\rho,k}}\int_{Q_1^{\nu,k}} \textstyle f(\frac{x_0+\rho y}{\eps_j},k^{n-1}t_{\rho,k}\nabla w^{\rho,k}_{j}) \dy.
\end{align*}
Since $\displaystyle{\lim_{\rho\to 0}\lim_{j\to \infty} \mathcal{L}^n(Q_1^{\nu,k}\setminus R_{\nu}(B^{k}_{\rho,j}))=0}$, 
we get from \eqref{e:bound L^1 gradienti cantor}
\begin{equation}\label{e:terzo doppio limite cantor}
    \Big(1+\frac{C^2}{M}\Big) \frac{\dd \mu^{\delta}}{\dd |D^cu|}(x_0)\geq \limsup_{\rho \to 0, \; \rho\in I(x_0)} \limsup_{j\to \infty} \frac{\alpha_{\delta}}{k^{n-1}t_{\rho,k}}\int_{Q_1^{\nu,k}}\textstyle f(\frac{x_0+\rho y}{\eps_j},k^{n-1}t_{\rho,k}\nabla w^{\rho,k}_{j}) \dy\,.
\end{equation}
In particular, we infer that 
\begin{equation}\label{e:bound gradiente wrho,kj}
    \Big(1+\frac{C^2}{M}\Big)  \frac{\dd \mu^{\delta}}{\dd |D^cu|}(x_0)\geq 
    C\alpha_{\delta}\limsup_{\rho \to 0, \; \rho\in I(x_0)} \limsup_{j\to \infty} \int_{Q_1^{\nu,k}}|\nabla w^{\rho,k}_j| \dy\,,
\end{equation}
and thus by \eqref{e:dato al bordo oculato 2} we conclude 
\begin{align*}
  |D^sw^{\rho,k}_j|(Q_1^{\nu,k})&\leq |D^s \hat u^{\rho,k}_j|(R_{\nu}(B^k_{\rho,j}))+|D^s u^k|(Q_1^{\nu,k}\setminus R_{\nu}(B^k_{\rho,j}))+\tau_{\rho,j}^{\frac{1}{2}} \\ 
  & \leq 2a_{2M+1}\mathcal{H}^{n-1}(J_{\hat{u}^{\rho,k}_j}\cap Q_1^{\nu,k})+ |D^s u^k|(Q_{{{1}}}^{\nu,k}\setminus R_{\nu}(B^k_{\rho,j}))+\tau_{\rho,j}^{\frac{1}{2}} \nonumber
\end{align*}
and therefore
\begin{equation}\label{e:bound singular part of the gradient cantor}
    \lim_{\rho\to 0, \rho\in I(x_0)}\lim_{j\to \infty}|D^sw^{\rho,k}_j|(Q_1^{\nu,k})=0.
\end{equation}
In addition, thanks to Lemma~\ref{l:lemma funz recess integrale} and \eqref{e:terzo doppio limite cantor} we deduce that 
\begin{equation}\label{e:quarto doppio limite cantor}
   \Big(1+\frac{C^2}{M}\Big) \frac{\dd \mu^{\delta}}{\dd |D^cu|}(x_0)\geq \limsup_{\rho \to 0, \; \rho\in I(x_0)} \limsup_{j\to \infty} \alpha_{\delta}\int_{Q_1^{\nu,k}}\textstyle f^{\infty}(\frac{x_0+\rho y}{\eps_j},\nabla w^{\rho,k}_{j}) \dy.
\end{equation}

We now change the boundary datum $w^{\rho,k}_j$ on a neighborhood of $\partial^{\|} Q_1^{\nu,k}$ with $\ell_k$.
Let $h_k\in (0,k)$ be such that $\calL^n(Q_1^{\nu,k}\setminus Q_1^{\nu,h_k})\to 0$ as $k\to \infty$, necessarily $h_k\to \infty$. 
Then, by Fubini's Theorem there exists $\lambda^k_{\rho,j} \in (k-h_k,k)$ such that  
\begin{equation}\label{e:traccia parallela cantor}
  \int_{\partial^{\|}Q_1^{\nu,\lambda^k_{\rho,j}}}|(w^{\rho,k}_j)^--\ell_k| \mathcal{H}^{n-1} \leq \frac{2}{h_k}\int_{Q_1^{\nu,k}}|w^{\rho,k}_j-\ell_k|\dy,
\end{equation}
where $(w^{\rho,k}_j)^-$ is the inner trace of $w^{\rho,k}_j$ on $\partial^{\|}Q_1^{\nu,\lambda^k_{\rho,j}}$. Furthermore, since $w^{\rho,k}_j=u^k=\ell_{k}$ 
on $\partial^{\perp}Q_1^{\nu,k}$, using Poincarè inequality on the one-dimensional restrictions of $w^{\rho,k}_j$ in the $\nu$ direction, we obtain that 
\begin{equation*}
    \int_{Q_1^{\nu,k}}|w^{\rho,k}_j-\ell_k|\dy \leq 2\Big|Dw^{\rho,k}_j-\frac{a\otimes \nu}{k^{n-1}}\Big|(Q_1^{\nu,k})\leq 2|Dw^{\rho,k}_j|(Q_1^{\nu,k})+2|a|\,.
\end{equation*}
Therefore, by \eqref{e:traccia parallela cantor} we infer that
\begin{equation}\label{e:seconda traccia parallela cantor}
      \int_{\partial^{\|}Q_1^{\nu,\lambda^k_{\rho,j}}}|(w^{\rho,k}_j)^--\ell_k| \mathcal{H}^{n-1} \leq  \frac{4}{h_k}|Dw^{\rho,k}_j|(Q_1^{\nu,k})+\frac{4}{h_k}|a|.
\end{equation}
Defining $\hat{w}^{\rho,k}_j$ as 
\begin{equation*}
  \hat{w}^{\rho,k}_j(y)=
  \begin{cases}
      w^{\rho,k}_j \; & \textup{if $y\in Q_1^{\nu,\lambda^k_{\rho,j}}$} \\
      \ell_k(y) \; & \textup{if $y\in Q_1^{\nu,k}\setminus  Q_1^{\nu,\lambda^k_{\rho,j}}$}
   \end{cases}
\end{equation*}
we have that $\hat{w}^{\rho,k}_j\in BV(Q_1^{\nu,k},\R^N)$, $\hat{w}^{\rho,k}_j=\ell_k$ on $\partial Q_1^{\nu,k}$, and by \eqref{e:seconda traccia parallela cantor}
\begin{align*}
   |D^s \hat{w}^{\rho,k}_j|(Q_1^{\nu,k}) &\leq |D^sw^{\rho,k}_j|(Q_1^{\nu,k})+ \frac{4}{h_k}|Dw^{\rho,k}_j|(Q_1^{\nu,k})+\frac{4}{h_k}|a|\\ 
   & \leq 2|D^s w^{\rho,k}_j|(Q_1^{\nu,k}) + \frac{4}{h_k}\|\nabla w^{\rho,k}_j\|_{L^1(Q_1^{\nu,k})}+\frac{4}{h_k}|a|.    
\end{align*}
In particular, by \eqref{e:bound singular part of the gradient cantor}, we obtain  
\begin{equation}\label{e:limsup k,rho,j}
     \limsup_{\rho \to 0, \rho\in I(x_0)}\limsup_{j\to \infty}  |D^s \hat{w}^{\rho,k}_j|(Q_1^{\nu,k})=\frac{4}{h_k}  \limsup_{\rho \to 0, \rho\in I(x_0)}\limsup_{j\to \infty}\|\nabla w^{\rho,k}_j\|_{L^1(Q_1^{\nu,k})}+ \frac{4|a|}{h_k}
\end{equation}
and, from \eqref{e:quarto doppio limite cantor},
\begin{align}\label{e:quarto bis doppio limite cantor}
  \Big(1+\frac{C^2}{M}\Big) \frac{\dd \mu^{\delta}}{\dd \calL^n}(x_0)&+\frac{C|a|\mathcal{L}^n(Q_1^{\nu,k}\setminus Q_1^{\nu,h_k})}{k^{n-1}}\nonumber\\ 
  & \geq \limsup_{\rho \to 0, \; \rho\in I(x_0)} \limsup_{j\to \infty} \alpha_{\delta}\int_{Q_1^{\nu,k}}\textstyle f^{\infty}(\frac{x_0+\rho y}{\eps_j},\nabla \hat{w}^{\rho,k}_{j}) \dy.
\end{align}
We now argue as in Proposition~\ref{p:Homogenised volume integrand} (cf. \eqref{e:teo rilassamento BFM in volume part 1}, \eqref{e:approx al rilassato volume part 1}), and for every $\rho$ and $j$ fixed we use Lemma~\ref{l:lemma del rilassato} to infer the existence of $\overline{w}^{\rho,k}_j\in W^{1,1}(Q_1^{\nu,k},\R^N)$ such that 
$\overline{w}^{\rho,k}_j=\ell_k$ on $\partial Q_1^{\nu,k}$ and
\begin{align*}
 \int_{Q_1^{\nu,k}}& \textstyle f^{\infty}(\frac{x_0+\rho y}{\eps_j},\nabla \hat{w}^{\rho,k}_{j}) \dy
 \geq \displaystyle \int_{Q_1^{\nu,k}} \textstyle \widehat{f^{\infty}}(\frac{x_0+\rho y}{\eps_j},\nabla \hat{w}^{\rho,k}_{j}) \dy\\ 
 &\geq
 sc^-(L^1)F^\infty_{\rho,j}(\hat{w}^{\rho,k}_{j})-C|D^s \hat{w}^{\rho,k}_j|(Q_1^{\nu,k})\\
& \geq\int_{Q_1^{\nu,k}}\textstyle\widehat{f^{\infty}}(\frac{x_0+\rho y}{\eps_j},\nabla \overline{w}^{\rho,k}_{j})\dy
-C|D^s \hat{w}^{\rho,k}_j|(Q_1^{\nu,k})- \displaystyle\frac\rho j\,.
\end{align*}
Therefore, by \eqref{e:bound gradiente wrho,kj}, \eqref{e:limsup k,rho,j}, \eqref{e:quarto bis doppio limite cantor} 
we conclude {{from}} the last inequality above that
\begin{align}\label{e:quinto doppio limite cantor}
  \Big(1+\frac{C^2}{M}-\frac{4}{h_k}\Big) &\frac{\dd \mu^{\delta}}{\dd \calL^n}(x_0) +\frac{C|a|\mathcal{L}^n(Q_1^{\nu,k}\setminus Q_1^{\nu,h_k})}{k^{n-1}}
  +C\alpha_\delta\frac{4|a|}{h_k}\nonumber \\ 
  & \geq \limsup_{\rho \to 0, \; \rho\in I(x_0)} \limsup_{j\to \infty} \alpha_{\delta}
  \int_{Q^{\nu,k}}\textstyle\widehat{f^\infty}(\frac{x_0+\rho y}{\eps_j},\nabla \overline{w}^{\rho,k}_{j}) \dy \,.
\end{align}
Set $r_{\rho,j}:=\frac{\rho}{\eps_j}$ and $\tilde{w}^{\rho,k}_j(x):=r_{\rho,j}\overline w^{\rho,k}_j\left(\frac{x}{r_{\rho,j}}-\frac{x_0}{\rho}\right)$, 
we have that $\tilde{w}^{\rho,k}_j\in W^{1,1}(Q^{\nu,k}_{r_{\rho,j}}\left(\frac{r_{\rho,j}}{\rho}x_0\right),\R^N)$ with $\tilde{w}^{\rho,k}_j=\ell_{k}-\frac{x_0}{\eps_j}$ on $\partial Q_{r_{\rho,j}}\left(\frac{r_{\rho,j}}{\rho}x_0\right)$ and, thanks to the $1$-homogeneity of $\widehat{f^{\infty}}$ (cf. item (i) in 
Lemma~\ref{l:lemma del rilassato}), we infer that
\begin{equation*}
   \int_{Q^{\nu,k}}\widehat{f^{\infty}}\left(\frac{x_0+\rho y}{\eps_j},\nabla \overline{w}^{\rho,k}_{j}\right) \dy
   =\frac{1}{k^{n-1}r_{\rho,j}^n}\int_{Q^{\nu,k}_{r_{\rho,j}}\big(\frac{r_{\rho,j}}{\rho}x_0\big)}\widehat{f^{\infty}}(y,k^{n-1}\nabla \tilde{w}^{\rho,k}_j) \dy.
\end{equation*}
In particular, since $k^{n-1}\tilde{w}^{\rho,k}_j=\ell_{a\otimes \nu}-\frac{k^{n-1}}{\eps_j}x_0$ on $\partial Q^{\nu,k}_{r_{\rho,j}}\left(\frac{r_{\rho,j}}{\rho}x_0\right)$, thanks to \eqref{e:quinto doppio limite cantor} we obtain
\begin{align*}
\Big(1+\frac{C^2}{M}-\frac{4}{h_k}\Big) &\frac{\dd \mu^{\delta}}{\dd \calL^n}(x_0) +\frac{C|a|\mathcal{L}^n(Q_1^{\nu,k}\setminus Q_1^{\nu,h_k})}{k^{n-1}}
+C\alpha_\delta\frac{4|a|}{h_k}\\ 
& \geq \limsup_{\rho \to 0, \; \rho\in I(x_0)} \limsup_{j\to \infty}\frac{\alpha_{\delta}}{k^{n-1}r_{\rho,j}^n}\int_{Q^{\nu,k}_{r_{\rho,j}}\left(\frac{r_{\rho,j}}{\rho}x_0\right)}\widehat{f^{\infty}}(y,k^{n-1}\nabla \tilde{w}^{\rho,k}_j) \dy \\ 
& \geq  \limsup_{\rho \to 0, \; \rho\in I(x_0)} \limsup_{j\to \infty}\alpha_{\delta}\frac{\minprobv^{\widehat{f^{\infty}}}\left(\ell_{a\otimes \nu},Q^{\nu,k}_{r_{\rho,j}}\left(\frac{r_{\rho,j}}{\rho}x_0\right)\right)}{k^{n-1}r_{\rho,j}^n} \\ 
& =  \limsup_{\rho \to 0, \; \rho\in I(x_0)} \limsup_{j\to \infty}\alpha_{\delta}\frac{\minprobv^{f^{\infty}}(\ell_{a\otimes \nu},Q^{\nu,k}_{r_{\rho,j}}\left(\frac{r_{\rho,j}}{\rho}x_0\right))}{k^{n-1}r_{\rho,j}^n}=\alpha_{\delta}f^{\infty}\homm(a\otimes \nu)\,,
\end{align*}
where the last-but-one equality follows from Lemma~\ref{l:lemma del rilassato} (vi), and 
the last equality {{follows}} from  Proposition~\ref{p:finfhom prop}.
Then, taking $k\to \infty$ and $M\to \infty$ in  this order, we infer that
\begin{equation*}
    \frac{\dd \mu^{\delta}}{\dd |D^c u|}(x_0)\geq \alpha_{\delta}f^{\infty}\homm(a\otimes \nu)=\alpha_{\delta}f^{\infty}\homm\Big(\frac{\dd D^c u}{\dd |D^c u|}(x_0)\Big).
\end{equation*}
Therefore, using \eqref{e:Hdeltaepsj vv hatF} we conclude that
\begin{equation*}
   \Fsucc(u,1,A')\geq  \liminf_{j\to \infty}H^{\delta}_{\eps_j}(u^{\delta}_j,A')= \liminf_{j\to \infty}\mu^{\delta}_j(A')\geq \mu^{\delta}(A')\geq \alpha_{\delta}\int_{A'} \textstyle f^{\infty}\homm(\frac{\dd D^c u}{\dd |D^c u|}(x))\dd |D^cu|,
\end{equation*}
thus \eqref{e:lowerbound parte di cantoor} follows by letting $\delta \to 0$ recalling that $\alpha_\delta\to 1$ and by the arbitrariness of
$A'\in\corA(A)$.
\end{proof}
Finally, we are in a position to prove the deterministic homogenisation result Theorem \ref{t:Deterministic Gamma-conv}. 
\begin{proof}[Proof of Theorem~\ref{t:Deterministic Gamma-conv}]
Theorem~\ref{t:Sottosucc gamma-conv.} implies that from any strictly positive infinitesimal sequence we can extract a subsequence $(\eps_j)$ such that 
  \begin{equation*}
     \Gamma(L^1_{\textup{loc}}(\R^n,\R^{N+1}))\text{-}\lim_{j \to \infty}\Functeps_{\eps_j}(u,1,A)=\Fsucc(u,1,A)
\end{equation*}
with $\Fsucc:L^1_{\textup{loc}}(\R^n,\R^{N+1})\times \corA \longrightarrow [0,+\infty]$. Moreover,
$\Fsucc(u,1,\cdot)$ is the restriction to open sets of a finite Radon measure on $A$ and $\Fsucc(u,1,A)\leq C(|Du|(A)+\calL^n(A))$
for every $A\in \corA$ and every $u\in L^1_{\textup{loc}}(\R^n,\R^N)$ such that $u\in BV(A,\R^N)$. 
Therefore, $\Fsucc(u,1,\cdot)$ is absolutely continuous respect to the measure $\calL^n \res A+ |D^cu|\res A+ \calH^{n-1}\res J_u\cap A$. 
Since $\calL^n \res A, |D^cu|\res A, \calH^{n-1}\res J_u\cap A$ are mutually singular,
by the properties of Radon-Nikodym derivatives, for every $B\in \corA(A)$ we have that
\begin{equation*}
    \Fsucc(u,1,B)=\int_B \frac{\dd \Fsucc(u,1,\cdot)}{\dd \calL^n} \dx + \int_B \frac{\dd \Fsucc(u,1,\cdot)}{\dd |D^cu|} \dd |D^cu| 
    + \int_{J_u\cap B} \frac{\dd \Fsucc}{\dd \calH^{n-1}\res J_u} \dd \calH^{n-1}\,.
\end{equation*}
In particular, if $A\in \corA$ and $u\in L^1_{\textup{loc}}(\R^n,\R^N)$ with $u\in BV(A,\R^N)\cap L^{\infty}(A,\R^N)$, 
Propositions~\ref{p:Homogenised volume integrand}, \ref{p:homogenised surface integrand} and \ref{p:homogenised cantor integrand} give that
\begin{equation*}
     \Gamma(L^1_{\textup{loc}}(\R^n,\R^{N+1}))\text{-}\lim_{j \to \infty}\Functeps_{\eps_j}(u,1,A)=\Fsucc(u,1,A)=F\homm(u,1,A),
\end{equation*}
where $F\homm$ is as in \eqref{e:det-G-lim}.
From \ref{e:crescita lineare}, for every $A\in \corA$ and every $(u,v)\in L^1_{\textup{loc}}(\R^n,\R^{N+1})$ with $(u,v)\in W^{1,1}(A,\R^N)\times W^{1,2}(A,[0,1])$ we have that 
\begin{equation*}
    \int_A\Big(C^{-1} v^2 |\nabla u| + \frac{(1-v)^2}{\eps_j}+ \eps_j |\nabla v|^2\Big)\dx\leq \Functeps_{\eps_j}(u,v,A),
\end{equation*}
hence, by \cite[Theorem 4.1]{AlicBrShah} and \cite[Remark~3.5]{AlFoc},
for every $(u,v)\in L^1_{\textup{loc}}(\R^n,\R^{N+1})$ such that $u \not \in GBV(A,\R^N)$ or $v\neq 1$ on $A$ we get
\begin{equation*}
   \Gamma(L^1_{\textup{loc}}(\R^n,\R^{N+1}))\text{-}\liminf_{j \to \infty}\Functeps_{\eps_j}(u,v,A)=F\homm(u,v,A)=+\infty\,.
\end{equation*}
Eventually, arguing exactly as in \cite[Section 6]{AlFoc} we obtain 
\begin{equation*}
     \Gamma(L^1_{\textup{loc}}(\R^n,\R^{N+1}))\text{-}\lim_{j \to \infty}\Functeps_{\eps_j}(u,1,A)=F\homm(u,1,A)
\end{equation*}
for every $A\in \corA$ and every $u\in L^1_{\textup{loc}}(\R^n,\R^N)$ such that $u\in GBV(A,\R^N)$. 
Indeed, the lower bound inequality for general $GBV$ maps follows easily from Lemma~\ref{l:truncation lemma} and 
the result in the $BV\cap L^\infty$-setting. Instead, the upper bound inequality is a consequence of the latter together 
with both the $L^1_{\textup{loc}}(\R^n,\R^{N+1})$ lower semicontinuity of $\Gamma\hbox{-}\limsup_{j\to\infty}\Functeps_{\eps_j}(\cdot,1,A)$ 
and the continuity of $F\homm$ along sequences of maps obtained via the smooth truncations $(\mathcal T_k)_{k\in\N}$,
namely $F\homm(\mathcal T_k(u),1,A)\to F\homm(u,1,A)$ as $k\to\infty$ for every $u\in GBV(A,\R^N)$ (cf. \cite[Lemma~6.1]{AlFoc}).

Since the $\Gamma$-limit does not depend on the extracted subsequence {{Urysohn's}} property of $\Gamma$-convergence yields the claim.
\end{proof}

\section{Stochastic homogenisation}\label{s:stochastic}

This section is devoted to the proof of the stochastic homogenisation result stated in Theorem~\ref{t:Stochastic homogenisation}. The proof will be achieved by showing that if $f$ is a stationary random integrand in the sense of Definition \ref{d:def stationary integrand}, then the assumptions of Theorem \ref{t:Deterministic Gamma-conv} are satisfied for $P$-a.e. $\omega \in \Omega$. Here a pivotal role is played by the Subadditive Ergodic Theorem, Theorem \ref{t:ergodic theorem}.  

\medskip

The following proposition establishes the existence and spatial homogeneity of $f\homm$. The proof can be found in \cite[Proposition~9.1]{CDMSZGlobal} and in \cite[Lemma~4.1]{RufZepp}.
\begin{proposition}[Homogenized random volume integrand]\label{p:Volume part stochastic proposition}
 Let $f$ be a stationary random integrand. Then there exist $\Omega'\in \calT$, with $P(\Omega')=1$ and a $\calT \otimes \Bor^{N\times n}$-measurable function $f\homm:\Omega \times \R^{N\times n}\to [0,+\infty)$ such that for every $\omega \in \Omega'$, $x\in \R^n$, $\xi \in \R^{N\times n}$, $\nu \in \Sf^{n-1}$ and $k\in \N$ 
 \begin{equation*}
     f\homm(\omega,\xi)=\lim_{r\to +\infty}\frac{\minprobv^{f_{\omega}}(\ell_{\xi},Q^{\nu,k}_{r}(rx))}{k^{n-1}r^n}=\lim_{r\to +\infty}\frac{\minprobv^{f_{\omega}}(\ell_{\xi},Q_{r})}{r^n}.
     \end{equation*}
If in addition $f$ is ergodic, then $f\homm$ is independent of $\omega$ and
 \begin{equation*}
     f\homm(\xi)=\lim_{r\to +\infty}\frac{1}{r^n}\int_{\Omega}\minprobv^{f_{\omega}}(\ell_{\xi},Q_{r}) \dd P(\omega).
 \end{equation*}
\end{proposition}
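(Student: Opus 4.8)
The plan is to recognise $A\mapsto\minprobv^{f_\omega}(\ell_\xi,A)$ as a subadditive process and to feed it into the Subadditive Ergodic Theorem, Theorem~\ref{t:ergodic theorem}. First I would fix $\xi\in\R^{N\times n}$ and define $\mu_\xi\colon\Omega\times\calI_n\to[0,+\infty)$ by $\mu_\xi(\omega,[a,b)):=\minprobv^{f_\omega}(\ell_\xi,[a,b))$ (using the convention identifying $[a,b)$ with its interior), and then verify the four axioms of Definition~\ref{d:subadditive process}. The $\calT$-measurability of $\omega\mapsto\mu_\xi(\omega,A)$ follows from the joint measurability (s-f1) of $f$ via a measurable-selection argument, after rewriting $\minprobv^{f_\omega}(\ell_\xi,A)$ as an infimum over a countable dense family of Lipschitz competitors. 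The covariance $\mu_\xi(\omega,A+z)=\mu_\xi(\tau_z\omega,A)$ comes from the stationarity relation of Definition~\ref{d:def stationary integrand} after the change of variables $y\mapsto y-z$, noting that subtracting the affine map $\xi z$ from a competitor leaves both the integrand (which sees only $\nabla u$) and the boundary constraint admissible. Subadditivity follows by gluing near-optimal competitors across a finite partition of $A$ into rectangles, since each of them matches $\ell_\xi$ on the corresponding boundary; and $0\le\mu_\xi(\omega,A)\le C(|\xi|+1)\calL^n(A)$ is \ref{e:crescita lineare} tested against $\ell_\xi$.

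Applying Theorem~\ref{t:ergodic theorem} then produces, for each fixed $\xi$, a $\calT$-measurable $\varphi_\xi\colon\Omega\to[0,+\infty)$ and a full-measure set $\Omega_\xi$ on which $\mu_\xi(\omega,A_t)/\calL^n(A_t)\to\varphi_\xi(\omega)$ \emph{along every} regular family $(A_t)_{t>0}$ in $\calI_n$. Since for each $x\in\R^n$ and $k\in\N$ the families $(Q_r(rx))_{r>0}$ and, more generally, the axis-parallel boxes $(Q^{e_i,k}_r(rx))_{r>0}$ are regular (with $A'_t$ a fixed enlargement), the limits in the statement along \emph{axis-parallel} rectangles all exist, equal $\varphi_\xi(\omega)$, and are in particular independent of $x$ and $k$. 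I would then set $f\homm(\omega,\xi):=\varphi_\xi(\omega)$.

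The remaining — and main — technical point is to pass to the \emph{rotated} rectangles $Q^{\nu,k}_r(rx)$, which do not belong to $\calI_n$. I would treat this at fixed $\omega$ by a comparison argument: partition $Q^{\nu,k}_r(rx)$ into $k^{n-1}$ translated rotated cubes of side $r$ and use subadditivity for the upper bound, the boundary Lipschitz estimate \eqref{e:lip} to repair the boundary data across the interfaces for the lower bound, and a further sandwiching of each rotated cube between axis-parallel cubes of comparable volume; the mismatch caused by the rotation is absorbed using the structure of the maps $\nu\mapsto R_\nu$ (rational entries for $\nu\in\Q^n$, continuity in $\nu$). This reduces the rotated limit to the axis-parallel one and shows it equals $\varphi_\xi(\omega)$ as well, independently of $x,\nu,k$; the details follow \cite[Lemma~4.1]{RufZepp} and \cite[Proposition~9.1]{CDMSZGlobal}.

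Finally I would make the exceptional set $\xi$-independent and establish measurability and the ergodic formula. Taking $\Omega':=\bigcap_{\xi\in D}\Omega_\xi$ for a countable dense $D\subset\R^{N\times n}$, the estimate \eqref{e:lip} — which bounds $|\minprobv^{f_\omega}(\ell_{\xi_1},A)-\minprobv^{f_\omega}(\ell_{\xi_2},A)|$ by a constant times $|\xi_1-\xi_2|\,\calL^n(A)$ — lets me pass to the limit for every $\xi$ on $\Omega'$ and shows $\xi\mapsto f\homm(\omega,\xi)$ Lipschitz; joint $\calT\otimes\Bor^{N\times n}$-measurability then follows from Carath\'eodory's criterion. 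If $\tau$ is ergodic, $\varphi_\xi$ is $P$-a.e.\ constant by Theorem~\ref{t:ergodic theorem}, and integrating the cube convergence over $\Omega$ — legitimate by dominated convergence, the integrands being bounded by $C(|\xi|+1)$ — yields the averaged formula, first for $\xi\in D$ and then for all $\xi$ by continuity. The shape-independence step (the rotated rectangles) is the part I expect to require the most care, together with the measurable-selection argument in the first step.
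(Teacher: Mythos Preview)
Your proposal is correct and follows essentially the same approach as the paper, which in fact does not give its own proof but simply refers to \cite[Proposition~9.1]{CDMSZGlobal} and \cite[Lemma~4.1]{RufZepp}; your sketch reproduces precisely the strategy of those references (subadditive process plus ergodic theorem on axis-parallel boxes, then comparison to pass to rotated rectangles, then Lipschitz continuity in $\xi$ to clean up the exceptional set), and you even cite them for the delicate step.
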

Propositions~\ref{p:finfhom prop} and \ref{p:Volume part stochastic proposition} readily imply the following result.
\begin{proposition}[Homogenized random Cantor integrand]\label{p:cantor part stochastic}
   Let $f$ be a stationary random integrand. Then there exist $\Omega'\in \calT$, with $P(\Omega')=1$ and a $\calT \otimes \Bor^{N\times n}$-measurable function $f^{\infty}\homm :\Omega \times \R^{N\times n}\to [0,+\infty)$ such that for every $\omega \in \Omega'$, every $\xi \in \R^{N\times n}$ every $k\in \N$, every $x\in \R^n$ and every $\nu \in \Sf^{n-1}$ 
   \begin{equation*}
       f^{\infty}\homm(\omega,\xi)=\lim_{t\to +\infty}\frac{f\homm(\omega,t\xi)}{t}
   \end{equation*}
   and
   \begin{equation*}
       f^{\infty}\homm(\omega,\xi)=\lim_{r\to +\infty}\frac{\minprobv^{f^{\infty}_{\omega}}(\ell_{\xi},Q^{\nu,k}_{r}(rx))}{k^{n-1}r^n}=\lim_{r\to +\infty}\frac{\minprobv^{f^{\infty}_{\omega}}(\ell_{\xi},Q_{r})}{r^n}.
   \end{equation*}
If in addition $f$ is ergodic, then $f^{\infty}\homm$ is independent of $\omega$ and
\begin{equation}
    f^{\infty}\homm(\xi)=\lim_{r\to +\infty}\frac{1}{r^n}\int_{\Omega}\minprobv^{f^{\infty}_{\omega}}(\ell_{\xi},Q_{r}) \dd P(\omega).
\end{equation}
\end{proposition}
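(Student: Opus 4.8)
The plan is to obtain Proposition~\ref{p:cantor part stochastic} by applying Proposition~\ref{p:Volume part stochastic proposition} to the integrand $f^\infty$ and then identifying the homogenised integrand so produced with the recession function of $f\homm$ via the deterministic Proposition~\ref{p:finfhom prop}.

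\textbf{First step.} I would check that $f^\infty:\Omega\times\R^n\times\R^{N\times n}\to[0,+\infty)$, defined by $f^\infty(\omega,x,\xi)=\lim_{t\to+\infty}f(\omega,x,t\xi)/t$, is itself a stationary random integrand. Measurability (s-f1) is immediate, being $f^\infty$ a pointwise limit of $\calT\otimes\Bor^n\otimes\Bor^{N\times n}$-measurable maps. For (s-f2) one invokes Remark~\ref{r:oss. funz. recessione}: for every $\omega$ the map $\xi\mapsto f^\infty(\omega,x,\xi)$ is continuous by \ref{e:semicontinuità f e finf}, satisfies $C^{-1}|\xi|\le f^\infty(\omega,x,\xi)\le C|\xi|$ by \eqref{e:crescita funz recess} (hence the linear growth \ref{e:crescita lineare} with the same constant $C$), and, being positively $1$-homogeneous in $\xi$, it coincides with its own recession function, so both \ref{e:semicontinuità f e finf} (applied to $(f^\infty)^\infty=f^\infty$) and \ref{e:limite funz rec} hold trivially; thus $f^\infty(\omega,\cdot,\cdot)\in\mathcal{F}(C,\alpha)$. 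Stationarity passes to the limit, $f^\infty(\omega,x+z,\xi)=\lim_t f(\tau_z\omega,x,t\xi)/t=f^\infty(\tau_z\omega,x,\xi)$, and ergodicity of the group $\tau$ is preserved.

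\textbf{Second step.} Apply Proposition~\ref{p:Volume part stochastic proposition} with $f^\infty$ in place of $f$: this yields $\Omega_1'\in\calT$ with $P(\Omega_1')=1$ and a $\calT\otimes\Bor^{N\times n}$-measurable $(f^\infty)\homm$ such that, for every $\omega\in\Omega_1'$ and every $x,\nu,k,\xi$,
\[
(f^\infty)\homm(\omega,\xi)=\lim_{r\to+\infty}\frac{\minprobv^{f^\infty_\omega}(\ell_\xi,Q^{\nu,k}_r(rx))}{k^{n-1}r^n}=\lim_{r\to+\infty}\frac{\minprobv^{f^\infty_\omega}(\ell_\xi,Q_r)}{r^n},
\]
and, if $\tau$ is ergodic, $(f^\infty)\homm$ is $\omega$-independent and equals $\lim_r r^{-n}\int_\Omega\minprobv^{f^\infty_\omega}(\ell_\xi,Q_r)\,\dd P(\omega)$. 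Take also the full-measure set $\Omega_2'$ given by Proposition~\ref{p:Volume part stochastic proposition} applied to $f$ itself, on which $f(\omega,\cdot,\cdot)\in\mathcal{F}(C,\alpha)$ satisfies hypothesis \eqref{e:fhom-ass-ii} with homogenised integrand $f\homm(\omega,\cdot)$ as in \eqref{e:fhom-ass}. Set $\Omega':=\Omega_1'\cap\Omega_2'$ and $f^\infty\homm:=(f^\infty)\homm$. For $\omega\in\Omega'$, applying Proposition~\ref{p:finfhom prop} to the deterministic integrand $f(\omega,\cdot,\cdot)$ (and noting $(f(\omega,\cdot,\cdot))^\infty=f^\infty_\omega$) gives that $\lim_{t\to+\infty}f\homm(\omega,t\xi)/t$ exists and equals $\lim_{r\to+\infty}\minprobv^{f^\infty_\omega}(\ell_\xi,Q^{\nu,k}_r(rx))/(k^{n-1}r^n)=f^\infty\homm(\omega,\xi)$; this is precisely the first claimed identity, while the remaining ones come from the second step. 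In particular $f^\infty\homm(\omega,\cdot)$ is the recession function of $\xi\mapsto f\homm(\omega,\xi)$, consistently with the notation, and it inherits joint measurability from $(f^\infty)\homm$.

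\textbf{Main obstacle.} The only genuinely delicate point is the verification in the first step that $f^\infty$ belongs to the class of stationary random integrands covered by Proposition~\ref{p:Volume part stochastic proposition}; once this is established, the statement follows by intersecting two almost-sure events and invoking Proposition~\ref{p:finfhom prop} pointwise in $\omega\in\Omega'$, the rest being routine bookkeeping.
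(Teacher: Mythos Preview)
Your proposal is correct and matches the paper's approach: the paper merely states that the result ``readily'' follows from Propositions~\ref{p:finfhom prop} and~\ref{p:Volume part stochastic proposition}, which is exactly what you do by applying Proposition~\ref{p:Volume part stochastic proposition} to $f$ (and to $f^\infty$) and then invoking Proposition~\ref{p:finfhom prop} pointwise in~$\omega$. Your separate application of Proposition~\ref{p:Volume part stochastic proposition} to $f^\infty$ is a minor elaboration not strictly needed---Proposition~\ref{p:finfhom prop} alone already establishes the existence of the limits $\lim_r \minprobv^{f^\infty_\omega}(\ell_\xi,Q^{\nu,k}_r(rx))/(k^{n-1}r^n)$ once the hypothesis \eqref{e:fhom-ass-ii} is secured for $f(\omega,\cdot,\cdot)$---but it does no harm and gives the ergodic formula and measurability cleanly.
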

The analogous result for the surface integrand is more involved and requires a new proof.
\begin{proposition}[Homogenized random surface integrand]\label{p:surface part stochastic}
    Let $f$ be a stationary random integrand. Then there exist $\Omega'\in \calT$, with $P(\Omega')=1$ and a $\calT \otimes \Bor^{N}\otimes \Bor^n_S$-measurable function $g\homm:\Omega \times \R^{N}\times \Sf^{n-1}\to [0,+\infty)$ such that for every $\omega \in \Omega'$, $x\in \R^n$, $\zeta \in \R^{N}$ and $\nu \in \Sf^{n-1}$
    \begin{equation*}
        g\homm(\omega,\zeta,\nu)=\lim_{r\to +\infty}\frac{\minprobs^{f^{\infty}_{\omega}}(u_{rx,\zeta,\nu},Q^{\nu}_r(rx))}{r^{n-1}}=\lim_{r\to +\infty}\frac{\minprobs^{f^{\infty}_{\omega}}(u_{\zeta,\nu},Q^{\nu}_r)}{r^{n-1}}.
    \end{equation*}
    If in addition $f$ is ergodic, then $g\homm$ is independent of $\omega$ and 
    \begin{equation}\label{e:formula ghom ergodic case}
        g\homm(\zeta,\nu)=\lim_{r\to +\infty}\frac{1}{r^{n-1}}\int_{\Omega}\minprobs^{f^{\infty}_{\omega}}(u_{\zeta,\nu},Q^{\nu}_r) \dd P(\omega).
    \end{equation}
\end{proposition}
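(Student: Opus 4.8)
The plan is to deduce the statement from the Subadditive Ergodic Theorem (Theorem~\ref{t:ergodic theorem}), in the spirit of the volume counterpart Proposition~\ref{p:Volume part stochastic proposition}, but with an $(n-1)$-dimensional subadditive process adapted to the surface scaling $r^{n-1}$. By the Lipschitz estimate in Lemma~\ref{l:bordi regolarizzati} (uniform in $r$, $\omega$, $\nu$) and by the arguments behind Proposition~\ref{p:la ghom}(i)--(ii) rerun at the level of the $\liminf_r$ and $\limsup_r$, it suffices to prove: for every $(\zeta,\nu)$ in the countable set $\Q^N\times(\Sf^{n-1}\cap\Q^n)$ and for $x=0$ there is a full-probability set on which $\lim_{r\to+\infty}\minprobs^{f^{\infty}_{\omega}}(u_{\zeta,\nu},Q^{\nu}_r)/r^{n-1}$ exists; the value for general $\zeta$ then follows from the Lipschitz bound, the $x$-independence (see below) from stationarity, and the value for irrational $\nu$ from the squeezing of $\liminf_r$ and $\limsup_r$ between rational directions, exactly as in the proof of Proposition~\ref{p:la ghom}(ii).

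So fix $\zeta\in\Q^N$ and $\nu\in\Sf^{n-1}\cap\Q^n$. By \ref{e:Rnu} the matrix $R_\nu$ is rational, hence there is $N_\nu\in\N$ with $N_\nu R_\nu\in\Z^{n\times n}$, so that $R_\nu(N_\nu z,0)\in\Z^n\cap\Pi^{\nu}$ for every $z\in\Z^{n-1}$. Let $\tau^{\nu}_z:=\tau_{R_\nu(N_\nu z,0)}$, $z\in\Z^{n-1}$, which is a group of $P$-preserving transformations, and for $A=[a,b)\in\calI_{n-1}$ set
\[
\mu_{\zeta,\nu}(\omega,A):=\minprobs^{f^{\infty}_{\omega}}\Big(\overline{u}_{\zeta,\nu},\,R_\nu\big(N_\nu A\times(-\tfrac{h_A}{2},\tfrac{h_A}{2})\big)\Big),\qquad h_A:=\max\{1,\,N_\nu\min_i(b_i-a_i)\}.
\]
One then checks that $\mu_{\zeta,\nu}$ is an $(n-1)$-dimensional subadditive process in the sense of Definition~\ref{d:subadditive process}: measurability in $\omega$ is routine; $\mu_{\zeta,\nu}(\omega,A+z)=\mu_{\zeta,\nu}(\tau^{\nu}_z\omega,A)$ because $f^{\infty}$ is stationary, $R_\nu(N_\nu z,0)\in\Z^n$, and $\overline{u}_{\zeta,\nu}$ is invariant under $\Pi^{\nu}$-translations; and $0\le\mu_{\zeta,\nu}(\omega,A)\le c(\zeta,\nu)\calL^{n-1}(A)$ by testing with $(\overline{u}_{\zeta,\nu},1)$ and \eqref{e:crescita funz recess}. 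The only genuinely new point is \emph{subadditivity}: if $A=\bigcup_iA_i$ is a grid partition then $h_{A_i}\le h_A$ (each side of $A_i$ is dominated by the corresponding side of $A$), so a near-optimal competitor on the box over $A_i$ can be prolonged by $(\overline{u}_{\zeta,\nu},1)$ to the taller column $R_\nu(N_\nu A_i\times(-\tfrac{h_A}{2},\tfrac{h_A}{2}))$ \emph{at no energy cost}, since on the added region $\{|y\cdot\nu|\ge h_{A_i}/2\ge\tfrac12\}$ one has $\nabla\overline{u}_{\zeta,\nu}=0$ and $f^{\infty}(y,0)=0$ by \eqref{e:crescita funz recess}; these columns tile the box over $A$ and their boundary data match on the shared vertical faces, so gluing them yields $\mu_{\zeta,\nu}(\omega,A)\le\sum_i\mu_{\zeta,\nu}(\omega,A_i)$. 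This ``free vertical extension'', which is special to the linear-growth setting, is precisely what makes the surface energy amenable to an $(n-1)$-dimensional subadditive process.

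Theorem~\ref{t:ergodic theorem} then produces $\Omega_{\zeta,\nu}\in\calT$ with $P(\Omega_{\zeta,\nu})=1$ and $\varphi_{\zeta,\nu}$ such that $\mu_{\zeta,\nu}(\omega,A_t)/\calL^{n-1}(A_t)\to\varphi_{\zeta,\nu}(\omega)$ along every regular family, for $\omega\in\Omega_{\zeta,\nu}$. Choosing $A_t=(-t/2,t/2)^{n-1}$ one has $h_{A_t}=N_\nu t$ for $t$ large, so the associated box is exactly the cube $Q^{\nu}_{N_\nu t}$; hence $\minprobs^{f^{\infty}_{\omega}}(\overline{u}_{\zeta,\nu},Q^{\nu}_{N_\nu t})/t^{n-1}\to\varphi_{\zeta,\nu}(\omega)$, and interpolating between consecutive multiples of $N_\nu$ by the free-extension estimate $\minprobs^{f^{\infty}_{\omega}}(\overline{u}_{\zeta,\nu},Q^{\nu}_{r'})\le\minprobs^{f^{\infty}_{\omega}}(\overline{u}_{\zeta,\nu},Q^{\nu}_r)+C|\zeta|((r')^{n-1}-r^{n-1})$ for $r'\ge r$, together with Corollary~\ref{c:comparison minimum problems} to pass from $\overline{u}_{\zeta,\nu}$ to $u_{\zeta,\nu}$, gives that $\lim_r\minprobs^{f^{\infty}_{\omega}}(u_{\zeta,\nu},Q^{\nu}_r)/r^{n-1}=:g\homm(\omega,\zeta,\nu)$ exists for $\omega\in\Omega_{\zeta,\nu}$. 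To handle the centre $rx$ with $x\ne0$, write $rx=\lfloor rx\rfloor+\{rx\}$ with $\lfloor rx\rfloor\in\Z^n$ and $\{rx\}$ bounded; stationarity gives $\minprobs^{f^{\infty}_{\omega}}(u_{rx,\zeta,\nu},Q^{\nu}_r(rx))=\minprobs^{f^{\infty}_{\omega'}}(u_{\{rx\},\zeta,\nu},Q^{\nu}_r(\{rx\}))$ with $\omega'=\tau_{\lfloor rx\rfloor}\omega$, and Lemmas~\ref{l:bordi regolarizzati} and \ref{l:lemma prob di minimo superficie su cubi diversi} show this differs from $\minprobs^{f^{\infty}_{\omega'}}(u_{\zeta,\nu},Q^{\nu}_r)$ by $o(r^{n-1})$; the same comparison shows that $g\homm(\cdot,\zeta,\nu)$ is $\tau_z$-invariant for every $z\in\Z^n$, hence a.s.\ equal to a truly invariant function, so the limit along $Q^{\nu}_r(rx)$ equals $g\homm(\omega,\zeta,\nu)$ as well. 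Intersecting the sets $\Omega_{\zeta,\nu}$ over the countable set of rational $(\zeta,\nu)$ together with the invariance sets yields the desired $\Omega'$, and joint measurability of $g\homm$ descends from that of the pre-limit minimisation problems. In the ergodic case $\varphi_{\zeta,\nu}$ is a.s.\ constant, so $g\homm$ is deterministic, and \eqref{e:formula ghom ergodic case} follows from the ergodic part of Theorem~\ref{t:ergodic theorem} and dominated convergence using the uniform bound $\minprobs^{f^{\infty}_{\omega}}(u_{\zeta,\nu},Q^{\nu}_r)\le c(\zeta)r^{n-1}$.

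The main obstacle is the simultaneous reconciliation of three features: the $\Z^n$-periodic structure of the randomness, a boundary datum that is a jump in a \emph{fixed} (and a priori irrational) direction $\nu$, and the dimensional mismatch between the scaling $r^{n-1}$ and $\calL^n(Q^{\nu}_r)\sim r^n$. One must arrange that translations of the base of the slab lie in $\Z^n$ (which forces $\nu$ rational and the auxiliary integer $N_\nu$), make the ``thickness'' of the slabs asymptotically irrelevant so that the $(n-1)$-dimensional process is genuinely subadditive — this is where the degeneracy $f^{\infty}(\cdot,0)=0$ is used decisively — and finally transfer, via the comparison results of Section~\ref{s:homgenised densities}, all the information back to the cubes $Q^{\nu}_r(rx)$ for arbitrary $x$ and, in a second step, to irrational $\nu$.
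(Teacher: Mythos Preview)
Your construction of the $(n-1)$-dimensional subadditive process and the extension from rational $(\zeta,\nu)$ to all $(\zeta,\nu)$ via Lipschitz continuity in $\zeta$ and continuity in $\nu$ on $\Hat{\Sf}^{n-1}_{\pm}$ follow the paper's Steps~1--2 closely (and your clamp $h_A\ge 1$ is in fact a bit more careful than the paper's choice, since it guarantees the vertical filling region lies entirely in $\{|y\cdot\nu|>1/2\}$ where $\nabla\overline{u}_{\zeta,\nu}=0$).

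However, your argument for the centre $rx$ with $x\neq 0$ has a genuine gap. You write
\[
\frac{\minprobs^{f^{\infty}_{\omega}}(u_{rx,\zeta,\nu},Q^{\nu}_r(rx))}{r^{n-1}}
=\frac{\minprobs^{f^{\infty}_{\omega_r}}(u_{\zeta,\nu},Q^{\nu}_r)}{r^{n-1}}+o(1),
\qquad \omega_r:=\tau_{\lfloor rx\rfloor}\omega,
\]
and then invoke the $\tau_z$-invariance of $g\homm$. But the invariance of the \emph{limit} function says nothing about the sequence $r\mapsto \minprobs^{f^{\infty}_{\omega_r}}(u_{\zeta,\nu},Q^{\nu}_r)/r^{n-1}$ with a \emph{moving} realisation $\omega_r$: the convergence $\minprobs^{f^{\infty}_{\omega'}}(u_{\zeta,\nu},Q^{\nu}_r)/r^{n-1}\to g\homm(\omega',\zeta,\nu)$ holds for each fixed $\omega'$ in a full-measure set, with no uniformity in $\omega'$, so along $\omega_r=\tau_{\lfloor rx\rfloor}\omega$ one cannot conclude anything. (Already for $x=\nu$, say, the integer shifts $\lfloor r\nu\rfloor$ visit infinitely many realisations and your reasoning would require a uniform rate.) This is precisely the well-known obstruction in passing from the origin to arbitrary centres in stochastic homogenisation of surface-type energies.

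The paper resolves it in two further steps that you are missing: first (Step~3), for each $z\in\Z^n$ and $\zeta,\nu$ rational, a Birkhoff-type density argument (imported from \cite[Theorem~6.1]{CDMSZStochom}) shows that on a full-measure set one can find, for every large $m$ and every $\delta$, a nearby integer $i\in\{m+1,\dots,m+\lfloor 5m\delta\rfloor\}$ such that $\minprobs^{f^{\infty}_{\omega}}(\overline{u}_{-iz,\zeta,\nu},Q^{\nu}_h(-iz))/h^{n-1}$ is $\eta$-close to $g\homm(\omega,\zeta,\nu)$ for all $h\ge j_0$; together with Lemma~\ref{l:lemma prob di minimo superficie su cubi diversi} this yields $\lim_k \minprobs^{f^{\infty}_{\omega}}(\overline{u}_{-kz,\zeta,\nu},Q^{\nu}_{r_k}(-kz))/r_k^{n-1}=g\homm(\omega,\zeta,\nu)$ along any integer sequence $r_k\ge k$. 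Second (Step~4), for general $x$ one approximates $x$ by $q\in\Q^n$, sets $z=hq\in\Z^n$, sandwiches $Q^{\nu}_{t_k}(t_kx)$ between cubes centred at $\lfloor s_k\rfloor z$, and uses Lemma~\ref{l:lemma prob di minimo superficie su cubi diversi} plus Step~3 to conclude. Your direct shift-by-$\lfloor rx\rfloor$ shortcut cannot replace this.

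A smaller point: in the ergodic case, the $(n-1)$-dimensional subgroup $(\tau^{\nu}_{z'})_{z'\in\Z^{n-1}}$ need not be ergodic even if $(\tau_z)_{z\in\Z^n}$ is, so ``$\varphi_{\zeta,\nu}$ is a.s.\ constant'' does not come for free from Theorem~\ref{t:ergodic theorem}. You do supply the right remedy --- $g\homm(\cdot,\zeta,\nu)$ is $\tau_z$-invariant for every $z\in\Z^n$ by the comparison Lemma~\ref{l:lemma prob di minimo superficie su cubi diversi} and stationarity, hence a.s.\ constant by ergodicity of the full group --- but make sure to present it as the actual argument rather than as a corollary of the $(n-1)$-dimensional ergodic theorem.
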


\begin{proof}
We divide the proof into a number of steps. 

\medskip

    \textit{Step 1:} Let $\overline{u}_{\zeta,\nu}$ be as in \ref{e:funzione salto} of the notation list. In this step we prove that for every $\zeta \in \Q^N$ and $\nu \in \Sf^{n-1}\cap \Q^{n}$ and for $P$-a.e. $\omega \in \Omega$ there exists the limit   
    \[\lim_{r\to +\infty}\frac{\minprobs^{f^{\infty}_{\omega}}(\overline{u}_{\zeta,\nu},Q^{\nu}_r)}{r^{n-1}}
    \] 
    and defines an $x$-independent random variable.

    To prove the claim let $\nu \in \Sf^{n-1}\cap \Q^{n}$ and $\zeta \in \Q^N$ be fixed, $R_{\nu}\in O(n)\cap \Q^{n\times n}$ be the orthogonal matrix as in \ref{e:Rnu} of the notation list, and $M_{\nu}$ be a positive integer such that $M_{\nu}R_{\nu}\in \Z^{n\times n}$, so that $M_{\nu}R_{\nu}(z',0)\in \Pi^{\nu}_0 \cap \Z^n$. Given $A'=[a_1,b_1)\times \dots \times [a_{n-1},b_{n-1})\in \calI_{n-1}$ we define the $n$-dimensional interval $T_{\nu}(A')$ as 
    \begin{equation}\label{e:def intervallo n-dim}
        T_{\nu}(A'):= M_{\nu}R_{\nu}(A'\times [-c,c)), \quad \textup{with $c:=\frac{1}{2}\max_{1\leq j \leq n-1}(b_j-a_j)$}.
    \end{equation}
For every $\omega \in \Omega$ and every $A'\in \calI_{n-1}$ we set
\begin{equation}\label{e:def. processo subadditivo}
    \mu_{\zeta,\nu}(\omega,A'):=\frac{1}{M^{n-1}_{\nu}}\minprobs^{f^{\infty}_{\omega}}(\overline{u}_{\zeta,\nu},T_{\nu}(A')).
\end{equation}
We now show that $\mu_{\zeta,\nu}: \Omega \times \calI_{n-1} \to [0,+\infty)$ defines an $(n-1)$-dimensional subadditive process on $(\Omega,\calT, P)$. The separability and completeness of $W^{1,1}(A,\R^N)\times W^{1,2}(A,[0,1])$ for every $A\in \corA$ combined with \cite[Lemma C.2]{RufRuf} and $(f2)$ in Definition~\ref{d:Admissible deterministic integrand} give the $\calT$-measurability of the map $\omega \mapsto \minprobs^{f^{\infty}_{\omega}}(\overline{u}_{\zeta,\nu},T_{\nu}(A'))$ for every $A'\in \calI_{n-1}$. 

Next, we prove that $\mu_{\zeta,\nu}$ is stationary with respect to an $(n-1)$-dimensional group of $P$-preserving transformations $(\tau^\nu_{z'})_{z'\in \Z^{n-1}}$. 
To this end, fix $z'\in \Z^{n-1}$ and $A'\in \calI_{n-1}$. By \eqref{e:def intervallo n-dim} we have that
\begin{equation*}
    T_{\nu}(A'+z')
    =M_{\nu}R_{\nu}(A'\times [-c,c))+M_{\nu}R_{\nu}(z',0)=T_{\nu}(A')+z'_{\nu},
\end{equation*}
where $z'_{\nu}:=M_{\nu}R_{\nu}(z',0)\in \Pi^{\nu}\cap \Z^n$. Thus by \eqref{e:def. processo subadditivo} we get
\begin{align}\label{e:cov_0}
    \mu_{\zeta,\nu}(\omega,A'+z')&=\frac{1}{M_{\nu}^{n-1}}\minprobs^{f^{\infty}_{\omega}}(\overline{u}_{\zeta,\nu},T_{\nu}(A'+z'))
    =\frac{1}{M_{\nu}^{n-1}}\minprobs^{f^{\infty}_{\omega}}(\overline{u}_{\zeta,\nu},T_{\nu}(A')+z'_{\nu}).
\end{align}
Now let $u,v$ be test functions in the definition of $\minprobs^{f^{\infty}_{\omega}}(\overline{u}_{\zeta,\nu},T_{\nu}(A')+z'_{\nu})$ and for $x\in T_\nu(A')$ set 
\[
\tilde u(x):= u(x+z'_\nu) \quad \text{and} \quad \tilde v(x):= v(x+z'_\nu).
\]
Then, a change of variables together with the stationarity of $f$ yield 
\[
S^{f^\infty(\omega)}(u,v,T_{\nu}(A')+z'_{\nu})=S^{f^\infty(\tau_{z'_{\nu}}\omega)}(\tilde u,\tilde v,T_{\nu}(A')).
\]
Set $(\tau^\nu_{z'})_{z'\in \Z^{n-1}}:=(\tau_{z'_\nu})_{z'\in \Z^{n-1}}$; we notice that 
$(\tau^\nu_{z'})_{z'\in \Z^{n-1}}$ is well defined since $z'_{\nu}\in \mathbb Z^n$ and it defines a group of $P$-preserving transformations on $(\Omega, P, \mathcal T)$. Then, the equality above can be rewritten as
\begin{equation}\label{e:cov}
S^{f^\infty(\omega)}(u,v,T_{\nu}(A')+z'_{\nu})=S^{f^\infty(\tau^\nu_{z'}\omega)}(\tilde u,\tilde v,T_{\nu}(A')).
\end{equation}
Moreover, since $z'_{\nu}\in \Pi^{\nu}\cap \Z^n$ we also have that $\tilde u=\bar u_{\zeta,\nu}$ on $\partial T_{\nu}(A')$. Thus gathering \eqref{e:cov_0} and \eqref{e:cov}, by the arbitrariness of $\tilde u, \tilde v$ we infer 
\[
\mu_{\zeta,\nu}(\omega,A'+z')=\mu_{\zeta,\nu}(\tau^\nu_{z'}\omega,A'),
\]
and hence the stationarity of $\mu_{\zeta,\nu}$ with respect to $(\tau^\nu_{z'})_{z'\in \Z^{n-1}}$. 
    
    To show that $\mu_{\zeta,\nu}$ is subadditive in $\calI_{n-1}$, fix $\omega\in \Omega$ 
    and $A'\in \calI_{n-1}$ and let $(A'_i)_{1\leq i\leq M}\subset \calI_{n-1}$ be a finite family of pairwise disjoint sets such that $A'=\cup_{i=1}^MA'_i$. 
    For every $\eta>0$ and $i\in\{1,\dots,M\}$, let $(u_i,v_i) \in W^{1,1}(T_{\nu}(A'_i),\R^N)\times W^{1,2}(T_{\nu}(A'_i),[0,1])$ with $(u_i,v_i)=(\overline{u}_{\zeta,\nu},1)$ on $\partial T_{\nu}(A'_i)$ such that
    \begin{equation*}
        \int_{T_{\nu}(A'_i)}\left(v_i^2f^{\infty}(\omega,y,\nabla u_i)+(1-v_i)^2+ |\nabla v_i|^2\right) \dy 
        \leq \minprobs^{f^{\infty}_{\omega}}(\overline{u}_{\zeta,\nu},T_{\nu}(A'_i))+\eta.
    \end{equation*}
Note that 
by construction we always have $\cup_{i=1}^M T_{\nu}(A'_i)\subseteq T_{\nu}(A')$, thus we define 
\begin{equation*}
(u(y),v(y)):=
    \begin{cases}
       (u_i(y),v_i(y)) \; & \; \textup{if $y\in T_{\nu}(A'_i)$} \\
       (\overline{u}_{\zeta,\nu},1) \; & \; \textup{if $y\in T_{\nu}(A')\setminus \cup_i T_{\nu}(A'_i)$}.
    \end{cases}
\end{equation*}
In particular, $(u,v)\in W^{1,1}(T_{\nu}(A'),\R^N)\times W^{1,2}(T_{\nu}(A'),[0,1])$ with $(u,v)=(\overline{u}_{\zeta,\nu},1)$ on $\partial T_{\nu}(A')$. 
Hence, we get
\begin{align*}
 & \minprobs^{f^{\infty}_{\omega}}(\overline{u}_{\zeta,\nu},T_{\nu}(A')) \leq 
 \int_{T_{\nu}(A')}\left(v^2f^{\infty}(\omega,y,\nabla u)+(1-v)^2+ |\nabla v|^2\right) \dy \\ 
 & = \sum_{i=1}^M \int_{T_{\nu}(A'_i)}\left( v_i^2f^{\infty}(\omega,y,\nabla u_i)+(1-v_i)^2+ |\nabla v_i|^2 \right)\dy 
 \leq \sum_{i=1}^M\minprobs^{f^{\infty}_{\omega}}(\overline{u}_{\zeta,\nu},T_{\nu}(A'_i)) + M\eta
\end{align*}
and the subadditivity follows by the arbitrariness of $\eta> 0$. 

Finally, we show that $\mu_{\zeta,\nu}$ is bounded. To this end we observe that for every $A'\in \calI_{n-1}$ and every $\omega\in \Omega$ we have 
\begin{equation*}
    \mu_{\zeta,\nu}(\omega,A')=\frac{1}{M^{n-1}_{\nu}}\minprobs^{f^{\infty}_{\omega}}(\overline{u}_{\zeta,\nu},T_{\nu}(A'))\leq \frac{C}{M_{\nu}^{n-1}}\int_{T_{\nu}(A')}|\nabla \overline{u}_{\zeta,\nu}| \dy \leq C|\zeta|\|\overline{\textup{u}}'\|_{L^{\infty}(\R)}\calL^{n-1}(A'),
\end{equation*}
where we used $T_{\nu}(A')\cap \Pi^{\nu}=M_{\nu}R_{\nu}(A'\times \{0\})$ and $\{|\nabla (\overline{u}_{\zeta,\nu}(y)|>0\}\subseteq    \{|y\cdot \nu|\leq 1/2\}$. Therefore, for every $\zeta \in \Q^N$ and $\nu \in \Sf^{n-1}\cap \Q^{n}$, ${\mu_{\zeta,\nu}}$ defines a subadditive process.  
Then, we can apply Theorem~\ref{t:ergodic theorem}  to deduce the existence of a $\calT$-measurable function $\psi_{\nu,\zeta}:\Omega \to [0,+\infty)$, and a set $\Omega_{\zeta,\nu} \in \calT$ with $P(\Omega_{\zeta,\nu})=1$, such that
for every $\omega \in \Omega_{\zeta,\nu}$ 
\begin{equation*}
   \psi_{\zeta,\nu}(\omega)= \lim_{r\to +\infty}\frac{\mu_{\zeta,\nu}(\omega,Q'_r)}{r^{n-1}}= \lim_{r\to +\infty}\frac{\minprobs^{f^{\infty}_{\omega}}(\overline{u}_{\zeta,\nu},Q^{\nu}_{ {{{M_{\nu}}}}r})}{({{{M_{\nu}}}}r)^{n-1}}\,.
\end{equation*}
where $Q'_r:=Q_r\cap\{x_n=0\}$, with $r>0$, {{is a $(n-1)$-regular family of sets (cf. Definition~\ref{d:regular family}).}}
\smallskip

\textit{Step 2:} In this step we prove the existence of $\tilde \Omega \in \mathcal T$ with $P(\tilde \Omega)=1$ such that for every $\omega \in \tilde \Omega$ and for every $\zeta \in \R^N$ and $\nu \in \Sf^{n-1}$ the following limit exists
\[
\lim_{r\to +\infty}\frac{\minprobs^{f^{\infty}_{\omega}}(\overline{u}_{\zeta,\nu},Q^{\nu}_r)}{r^{n-1}}\,
\] 
and defines an $x$-independent $\calT\otimes \Bor^N \otimes \Bor^n_S$-measurable function. 

To prove the claim let $\tilde \Omega$ denote the intersection of the sets $\Omega_{\zeta,\nu}$, as in Step 1, for $\zeta \in \Q^N$ and $\nu \in \Sf^{n-1}\cap \Q^{n}$. 
Clearly, $\tilde \Omega \in \calT$ and $P(\tilde \Omega)=1$. Let $\underline{g},\overline{g}:\tilde \Omega \times \R^N \times \Sf^{n-1}\to [0,+\infty]$ be the functions given by
 \begin{equation*}
\underline{g}(\omega,\zeta,\nu):=\liminf_{r\to +\infty}\frac{\minprobs^{f^{\infty}_{\omega}}(\overline{u}_{\zeta,\nu},Q^{\nu}_r)}{r^{n-1}}\,,\qquad
\overline{g}(\omega,\zeta,\nu):=\limsup_{r\to +\infty}\frac{\minprobs^{f^{\infty}_{\omega}}(\overline{u}_{\zeta,\nu},Q^{\nu}_r)}{r^{n-1}}.
 \end{equation*}
 By Step 1, for every $\omega \in \tilde \Omega$, every $\zeta \in \Q^{N}$ and every $\nu \in \Sf^{n-1}\cap \Q^n$ we have that 
 \begin{equation}\label{e:esistenza limite sui razionali}
     \underline{g}(\omega,\zeta,\nu)=\overline{g}(\omega,\zeta,\nu)\,.
 \end{equation}
 Furthermore, fixed $\omega \in \tilde \Omega$ and $\nu \in \Sf^{n-1}$, arguing as in Proposition~\ref{p:la ghom} (i) we have
 \begin{equation}\label{e:liminf lipschitz}
     |\underline{g}(\omega,\zeta_1,\nu)-\underline{g}(\omega,\zeta_2,\nu)|
     +     |\overline{g}(\omega,\zeta_1,\nu)-\overline{g}(\omega,\zeta_2,\nu)|\leq 2C\calH^{n-1}(\partial Q_1)|\zeta_1-\zeta_2|
 \end{equation}
for every $\zeta_1, \zeta_2\in \R^N$. 
 From \eqref{e:esistenza limite sui razionali} and \eqref{e:liminf lipschitz} we deduce that for every $\omega \in \tilde \Omega$, every $\zeta \in \R^N$ and every $\nu \in \Sf^{n-1}\cap \Q^n$
 \begin{equation}\label{e:esistenza per normali razionali}
       \underline{g}(\omega,\zeta,\nu)=\overline{g}(\omega,\zeta,\nu)\,,
 \end{equation}
 and that $\underline{g}(\cdot,\zeta,\nu):\tilde \Omega \to [0,+\infty)$ is $\calT$-measurable for every $\zeta \in \R^N$ and every $\nu \in \Sf^{n-1}\cap \Q^n$.
 
 We now claim that for every $\omega \in \tilde \Omega$ and every $\zeta \in \R^N$, the restrictions of the functions $\nu \mapsto \underline{g}(\omega,\zeta,\nu)$ and  $\nu \mapsto \overline{g}(\omega,\zeta,\nu)$ to the sets $\Hat \Sf^{n-1}_{\pm}$ are continuous. We show only the continuity of $\underline{g}$ on $\Hat \Sf^{n-1}_+$, the proof for $\overline{g}$ is analogous. To this end, let $\omega \in \tilde \Omega$, $\zeta \in \R^N$, $\nu \in \Hat \Sf^{n-1}_+$, then by density 
 let $(\nu_j)_{j\in\N}\subset\Hat \Sf^{n-1}_+\cap \Q^n$ be such that $\nu_j \to \nu$ as $j\to +\infty$. By the continuity of $\nu \mapsto R_{\nu}$ on $\Hat \Sf^{n-1}_+$, for every $\delta \in (0,1/2)$ there exists a $j_{\delta} \in \N$ such that
 \begin{equation}\label{e:cubi compattamente contenuti}
    Q^{\nu}_r \subset \subset Q^{\nu_j}_{(1+\delta)r} \subset \subset Q^{\nu}_{(1+2\delta)r}
 \end{equation}
 for every $j\geq j_{\delta}$ and every $r>0$.
 
 Setting $\kappa_j:=\max \{|R_{\nu_j}(e_i)\cdot \nu| \; : \; i=1,\dots,n-1\}$ we have that $\kappa_j \to 0$ as $j\to +\infty$, thanks to the continuity of $\nu \mapsto R_{\nu}$ on $\Hat \Sf^{n-1}_{\pm}$. We observe that for every $y\in \overline{Q^{\nu}_{r(1+\delta)}}$, we have $y=y'+(y\cdot \nu_j)\nu_j$ where 
 \begin{equation*}
     y'\in  R_{\nu_j}\Big(\big(-\frac{r}{2}(1+\delta),\frac{r}{2}(1+\delta)\big)^{n-1}\times \{0\}\Big)
 \end{equation*}
 and in particular, if in addition $|y\cdot \nu|\leq \frac{1}{2}$, for $j$ large enough depending only on $\delta$, we get 
 \begin{equation*}
   \textstyle  |y\cdot \nu_j|\leq \frac{|y'\cdot \nu|}{|\nu_j \cdot \nu|}+\frac{1}{2(1-\delta)}< \frac{(n-1)\kappa_jr (1+\delta)}{2(1-\delta)}+1=K(\delta)r\kappa_j+1,
 \end{equation*}
 where $K(\delta):=\frac{(n-1) (1+\delta)}{2(1-\delta)}$. Then, by applying Lemma \ref{l:lemma prob di minimo superficie su cubi diversi}, with $R=K(\delta)r\kappa_j+1$, we obtain
  \begin{align*}
    \minprobs^{f^{\infty}_{\omega}}(\overline{u}_{\zeta,\nu_j},Q^{\nu_j}_{r(1+\delta)})
    &\leq \minprobs^{f^{\infty}_{\omega}}(\overline{u}_{\zeta,\nu},Q^{\nu}_{r}) + \eta r^{n-1} \\ & + \tilde K(2\delta (1+2\delta)^{n-2}r^{n-1}+(K(\delta)r\kappa_j+1)|\zeta|(1+\delta)^{n-2}r^{n-2}).
    \end{align*} 
 Therefore, dividing by $r^{n-1}$, passing to the liminf as $r\to +\infty$, and to the limsup as $j\to+\infty$, and finally 
 letting $\eta,\delta \to 0$ we obtain
 \begin{equation*}
    \limsup_{j\to +\infty}\underline{g}(\omega,\zeta,\nu_j)\leq \underline{g}(\omega,\zeta,\nu). 
 \end{equation*}
 An analogous argument using the cubes $Q^{\nu_j}_{(1-\delta)r}$ shows that
\begin{equation*}
    \underline{g}(\omega,\zeta,\nu)\leq \liminf_{j\to +\infty} \underline{g}(\omega,\zeta,\nu_j)
\end{equation*}
 implying the claim. 

In particular, thanks to \eqref{e:esistenza per normali razionali}  we deduce that for every $\omega \in \tilde \Omega$, $\zeta \in \R^N$ and $\nu \in \Sf^{n-1}$
 \begin{equation}\label{e:esistenza per tutte le normali}
       \underline{g}(\omega,\zeta,\nu)=\overline{g}(\omega,\zeta,\nu)\,.
 \end{equation}
The $\calT$-measurability of $\underline{g}(\cdot,\zeta,\nu):\tilde \Omega \to [0,+\infty)$  for every $\zeta \in \R^N$ and $\nu \in \Sf^{n-1}$
follows from the analogous property for $\nu\in \Sf^{n-1}\cap \Q^n$. Furthermore, the map
$\underline{g}(\omega,\cdot,\cdot):\R^N\times \Hat\Sf^{n-1}_{\pm}\to [0,+\infty)$ is continuous for every $\omega \in \tilde \Omega$ thanks to 
 \eqref{e:liminf lipschitz}.

 Thus, defining $g\homm :\Omega \times \R^N \times \Sf^{n-1}\to [0,+\infty)$ by
 \begin{equation*}
 g\homm(\omega,\zeta,\nu):= 
\begin{cases}
        \underline{g}(\omega,\zeta,\nu) \; & \; \textup{if $\omega \in \tilde \Omega$} \\
        \frac{2|\zeta|}{C(|\zeta|+2)} \; & \; \textup{if $\omega \not \in \tilde \Omega$}, \\
     \end{cases}
\end{equation*}
 we have that $g\homm$ is $\calT\otimes \Bor^N \otimes \Bor^n_S$-measurable and, thanks to Corollary~\ref{c:comparison minimum problems},
 \begin{equation}\label{e:ghom achieved in stoch hom}
     g\homm(\omega,\zeta,\nu)=\lim_{r\to +\infty}\frac{\minprobs^{f^{\infty}_{\omega}}(\overline{u}_{\zeta,\nu},Q^{\nu}_r)}{r^{n-1}}=\lim_{r\to +\infty}\frac{\minprobs^{f^{\infty}_{\omega}}(u_{\zeta,\nu},Q^{\nu}_r)}{r^{n-1}}
 \end{equation}
 for every $\omega\in \tilde \Omega$, every $\zeta \in \R^N$ and every $\nu \in \Sf^{n-1}$.\\

\textit{Step 3:} In this step we show the existence of $\Omega'\in \calT$ with $\Omega'\subseteq \tilde \Omega$ and $P(\Omega')=1$, such that for every $\omega\in \Omega'$,
$z\in \Z^n$, $\zeta\in \Q^N$, $\nu \in \Sf^{n-1}\cap \Q^n$, and for every integer sequence $(r_k)$ with $r_k\geq k$ for every $k$ 
\begin{equation}\label{e:esistenza limite per interi step 4 stoch hom}
    \lim_{k\to +\infty}\frac{\minprobs^{f^{\infty}_{\omega}}(\overline{u}_{-kz,\zeta,\nu},Q^{\nu}_{r_k}(-kz))}{r_k^{n-1}}=g\homm(\omega,\nu,\zeta)\,.
\end{equation}
Let $z\in \Z^n$, $\zeta\in \Q^N$, $\nu \in \Sf^{n-1}\cap\Q^n$, $\eta>0$ and $\delta\in (0,1/4)$. Arguing exactly as in \cite[Theorem 6.1]{CDMSZStochom} we can prove the existence of a set $\Omega^{\zeta,\nu,\eta}_z \in \calT$, with $\Omega^{\zeta,\nu,\eta}_z\subseteq \tilde \Omega$, $P(\Omega^{\zeta,\nu,\eta}_z)=1$, and an integer $m_0=m_0(\zeta,\nu,\eta,z,\omega,\delta)>\frac{1}{\delta}$ satisfying the following property: for every $\omega\in \Omega^{\zeta,\nu,\eta}_z$ and for every integer $m\geq m_0$ there exists $i=i(\zeta,\nu,\eta,z,\omega,\delta,m)\in \{m+1,\dots,m+\ell\}$, with $\ell:=\lfloor 5m\delta \rfloor$, such that
\begin{equation}\label{e:stima ghom su discrete size}
    \Big|\frac{\minprobs^{f^{\infty}_{\omega}}(\overline{u}_{-iz,\zeta,\nu},Q^{\nu}_h(-iz))}{h^{n-1}}-g\homm(\omega,\zeta,\nu)\Big|\leq \eta \quad \textup{for every $h\in \N$ with $h\geq j_0$},
\end{equation}
where $j_0=j_0(\zeta,\nu,\eta,z,\omega,\delta)$, and $\lfloor s\rfloor$ denotes the integer part of $s\in\R$. 

Define $\Omega'$ as the intersection of the sets $\Omega^{\zeta,\nu,\eta}_z$ for $\zeta\in \Q^N$, $\nu \in \Sf^{n-1}\cap \Q^n$, $\eta\in \Q$, with $\eta>0$ and $z\in \Z^n$. Thus $\Omega'\subseteq \tilde \Omega$ and $P(\Omega')=1$. Let $\omega\in\Omega'$ and $r_k$ be as required, $\delta>0$ with $20\delta(|z|+1)<1$ and $\eta\in \Q$ with $\eta>0$. For every $k\geq 2m_0(\zeta,\nu,\eta,z,\omega,\delta)$, let $\underline{r}_k, \overline{r}_k \in \N$ be defined as
\begin{equation*}
    \underline{r}_k:=r_k-2(i_k-k)\lfloor |z|+1\rfloor \quad \textup{and} \quad \overline{r}_k:=r_k+2(i_k-k)\lfloor |z|+1\rfloor,
\end{equation*}
where 
\begin{equation}\label{e:discrete size}
    i_k=i(\zeta,\nu,\eta,z,\omega,\delta,k)\in \{k+1,\dots ,k+\lfloor 5k\delta\rfloor\},
\end{equation}
therefore, by construction, we have that $Q^{\nu}_{\underline{r}_k}(-i_kz)\subset \subset Q^{\nu}_{r_k}(-kz) \subset \subset Q^{\nu}_{\overline{r}_k}(-i_kz)$.

Since $20\delta(|z|+1)<1$, $k\leq r_k$ and $i_k-k\leq 5k\delta $ by \eqref{e:discrete size}, for every $y\in Q^{\nu}_{\overline{r}_k}(-i_{k}z)$ such that $|(y+i_kz)\cdot \nu|\leq \frac{1}{2}$, we obtain that 
\begin{align*}
|(y+kz)\cdot \nu|&= |(y+i_kz)\cdot \nu+ (kz-i_kz)\cdot \nu| \leq  \frac{1}{2}+
    |(kz-i_kz)\cdot \nu|    \\ 
    & \leq (i_k-k)|z|+\frac{1}{2}\leq 5k\delta |z| +\frac{1}{2}\leq 5r_k \delta |z|+\frac{1}{2},
\end{align*}
and ${r}_k-\underline{r}_k=2(i_k-k)\lfloor |z|+1 \rfloor \leq 10k\delta \lfloor |z|+1 \rfloor \leq 10 r_k \delta \lfloor |z|+1 \rfloor<\frac{r_k}{2}$. Applying Lemma \ref{l:lemma prob di minimo superficie su cubi diversi}, with $R=5r_k\delta |z|+\frac{1}{2}$, we obtain
\begin{align*}
   & \minprobs^{f^{\infty}_{\omega}}(\overline{u}_{-kz,\zeta,\nu},Q^{\nu}_{r_k}(-kz)) \leq \minprobs^{f^{\infty}_{\omega}}(\overline{u}_{-i_kz,\zeta,\nu},Q^{\nu}_{\underline{r}_k}(-i_kz))+\eta \underline{r}^{n-1}_k \\ & + \frac{\tilde K }{2}(10\delta ( |z|+1 ) r_k^{n-1}+(10r_k\delta |z|+1)|\zeta|r_k^{n-2}).
\end{align*}
In particular, from the latter estimate, \eqref{e:stima ghom su discrete size} and $\underline{r}_k\leq r_k$, for every $k$ large enough such that $\underline{r}_k\geq j_0(\zeta,\nu,\eta,z,\omega,\delta)$, we obtain
\begin{align*}
   & g\homm(\omega,\zeta,\nu)+\eta \geq  \frac{\minprobs^{f^{\infty}_{\omega}}(\overline{u}_{-i_kz,\zeta,\nu},Q^{\nu}_{\underline{r}_k}(-i_kz))}{\underline{r}_k^{n-1}} \geq \frac{\minprobs^{f^{\infty}_{\omega}}(\overline{u}_{-i_kz,\zeta,\nu},Q^{\nu}_{\underline{r}_k}(-i_kz))}{{r}_k^{n-1}}    \\ & \geq \frac{\minprobs^{f^{\infty}_{\omega}}(\overline{u}_{-kz,\zeta,\nu},Q^{\nu}_{r_k}(-kz))}{r_k^{n-1}}-\eta-\frac{\tilde K }{2}(10\delta ( |z|+1 )+(10\delta |z|+\frac{1}{r_k})|\zeta|)
\end{align*}
and thus, taking the limsup for $k\to +\infty$ and letting $\eta,\delta \to 0$, we get
\begin{equation*}
    g\homm(\omega,\zeta,\nu) \geq \limsup_{k\to +\infty}\frac{\minprobs^{f^{\infty}_{\omega}}(\overline{u}_{-kz,\zeta,\nu},Q^{\nu}_{r_k}(-kz))}{{r}_k^{n-1}}. 
\end{equation*}
Arguing analogously with the external cubes $Q^{\nu}_{\overline{r}_k}(-i_kz)$ we get
\begin{equation*}
 g\homm(\omega,\zeta,\nu) \leq  \liminf_{k\to +\infty}\frac{\minprobs^{f^{\infty}_{\omega}}(\overline{u}_{-kz,\zeta,\nu},Q^{\nu}_{r_k}(-kz))}{{r}_k^{n-1}} , 
\end{equation*}
obtaining the claim. 

\textit{Step 4:} Let $\Omega'$ be the set introduced in Step~3, then for every $\omega\in \Omega'$, $x\in \R^n$, $\zeta\in \Q^N$ 
and $\nu \in \Sf^{n-1}\cap\Q^n$ there holds 
\begin{equation}\label{e:esistenza limite per ogni x con zeta e nu razionali}
\lim_{r\to +\infty}\frac{\minprobs^{f^{\infty}_{\omega}}(\overline{u}_{rx,\zeta,\nu},Q^{\nu}_r(rx))}{r^{n-1}}=g\homm(\omega,\zeta,\nu)\,.
\end{equation}
Fix $\omega,x,\zeta,\nu$ as required, $\eta\in(0,\frac12)$, $q\in \Q^n$ with $|x-q|<\eta$, and $h\in \Z$ such that $z:=hq\in \Z^n$. Consider a sequence of real numbers $t_k\to +\infty$ as $k\to +\infty$ and let $s_k:=\frac{t_k}{h}$. Fixing an integer $j>2|z|+1$ and setting $r_k:=\lfloor t_k+2\eta t_k\rfloor +j$ we have that
$Q^{\nu}_{t_k}(t_kx) \subset \subset Q^{\nu}_{r_k}(\lfloor s_k \rfloor z)$. Since $|(t_kx-\lfloor s_k \rfloor z)\cdot \nu|\leq |t_k x - t_k q|+|s_k z- \lfloor s_k \rfloor z|\leq t_k \eta +|z|$, for every $y\in Q^{\nu}_{r_k}(\lfloor s_k \rfloor z )$ such that $|(y-t_kx)\cdot \nu |\leq \frac{1}{2}$ we have that $|(y-\lfloor s_k \rfloor z)\cdot \nu|\leq t_k \eta +|z|+\frac{1}{2}$.
In particular, for $k$ large enough depending only on $z$, we can apply Lemma \ref{l:lemma prob di minimo superficie su cubi diversi}, with $R=t_k \eta +|z|+\frac{1}{2}$, to obtain
\begin{align}\label{e:penultimo passagio parte stocastica}
   & \minprobs^{f^{\infty}_{\omega}}(\overline{u}_{\lfloor s_k \rfloor z,\zeta,\nu},Q^{\nu}_{r_k}(\lfloor s_k \rfloor z))  \leq \minprobs^{f^{\infty}_{\omega}}(\overline{u}_{t_kx,\zeta,\nu},Q^{\nu}_{t_k}(t_kx)) + \eta r_k^{n-1} \nonumber \\ & + \tilde K((2\eta r_k+j)r_k^{n-2}+(r_k\eta+|z|+\frac{1}{2})|\zeta|r_k^{n-2}),  
\end{align}
where we used $r_k\geq t_k$. From \eqref{e:penultimo passagio parte stocastica},
dividing by $t_k^{n-1}$ and recalling that $r_k\geq t_k\geq s_k \geq \lfloor s_k \rfloor $, we obtain that
\begin{align*}
   &
   \frac{ \minprobs^{f^{\infty}_{\omega}}(\overline{u}_{\lfloor s_k \rfloor z,\zeta,\nu},Q^{\nu}_{r_k}(\lfloor s_k \rfloor z)) }{r_k^{n-1}} - \eta - \tilde K\Big((2\eta +\frac{j}{r_k})+(\eta+\frac{|z|}{r_k}+\frac{1}{2r_k})|\zeta|\Big)
   \leq \frac{ \minprobs^{f^{\infty}_{\omega}}(\overline{u}_{t_kx,\zeta,\nu},Q^{\nu}_{t_k}(t_kx))}{t_k^{n-1}}.
\end{align*}
Since $\omega\in \Omega'$ and $r_k\geq \lfloor s_k \rfloor$, we can apply \eqref{e:esistenza limite per interi step 4 stoch hom}, taking the $\liminf$ as $k\to \infty$ and letting $\eta \to 0$ we obtain
\begin{equation*}
    g\homm(\omega,\zeta,\nu)\leq \liminf_{k\to +\infty}\frac{ \minprobs^{f^{\infty}_{\omega}}(\overline{u}_{t_kx,\zeta,\nu},Q^{\nu}_{t_k}(t_kx))}{t_k^{n-1}}.
\end{equation*}
Arguing analogously we obtain
\begin{equation*}
  g\homm(\omega,\zeta,\nu)\geq \limsup_{k\to +\infty}\frac{ \minprobs^{f^{\infty}_{\omega}}(\overline{u}_{t_kx,\zeta,\nu},Q^{\nu}_{t_k}(t_kx))}{t_k^{n-1}}.
\end{equation*}
deducing the claim, {{thanks}} to the generality of the sequence $(t_k)_{k\in\N}$. 
\smallskip

\textit{Step 5:} Let $\Omega'$ be the set introduced in Step~3, then for every $\omega\in \Omega'$, $x\in \R^n$, $\zeta \in \R^N$, 
and $\nu \in \Sf^{n-1}$ 
\[
 g\homm(\omega,\zeta,\nu)=\lim_{r\to +\infty}\frac{ \minprobs^{f^{\infty}_{\omega}}(\overline{u}_{rx,\zeta,\nu},Q^{\nu}_{r}(rx))}{r^{n-1}}\,.
\]
For $\omega,\,x,\,\zeta,\nu$ as above define
\begin{equation*}
    \underline{g}(\omega,x,\zeta,\nu):=\liminf_{r\to +\infty}\frac{ \minprobs^{f^{\infty}_{\omega}}(\overline{u}_{rx,\zeta,\nu},Q^{\nu}_{r}(rx))}{r^{n-1}}\,,\qquad
    \overline{g}(\omega,x,\zeta,\nu):=\limsup_{r\to +\infty}\frac{ \minprobs^{f^{\infty}_{\omega}}(\overline{u}_{rx,\zeta,\nu},Q^{\nu}_{r}(rx))}{r^{n-1}}.
\end{equation*}
Arguing exactly as in Proposition~\ref{p:la ghom} (i)  and in Step~2, we obtain from Step~4 that 
\begin{equation}\label{e:penultima esistenza limite ghom}
   \underline{g}(\omega,x,\zeta,\nu)  = g\homm(\omega,\zeta,\nu)=\overline{g}(\omega,x,\zeta,\nu)
\end{equation}
for every $\omega \in \Omega'$, $x\in \R^n$, $\zeta \in \R^N$, and $\nu \in \Sf^{n-1}\cap \Q^n$. 

Now let $\omega \in \Omega'$, $x\in \R^n$, $\zeta\in \R^N$ and $\nu \in \Hat\Sf^{n-1}_{+}$, by density there is $(\nu_j)_{j\in\N}$ in $\Hat\Sf^{n-1}_{+}\cap \Q^n$ 
such that $\nu_j\to \nu$ as $j\to +\infty$. Thanks to the continuity on $\Hat \Sf^{n-1}_+$ of the map $\nu \mapsto R_{\nu}$, for every $\delta \in (0,\frac12)$ 
there exists $j_{\delta}$, such that
\begin{equation}\label{e:cubi compatt contenuti step 4 stoch hom}
 Q^{\nu}_{r}(rx) \subset \subset Q^{\nu_j}_{(1+\delta)r}(rx)\subset \subset Q^{\nu}_{(1+2\delta)r}(rx)
\end{equation}
for every $j\geq j_{\delta}$ and every $r>0$. Let us fix $j\geq j_{\delta}$, $r>0$ and $\eta>0$. Setting $c_j:=\max \{|R_{\nu_j}(e_i)\cdot \nu| \; : \; i=1,\dots,n-1\}$ we have that $c_j \to 0$ as $j\to +\infty$, by continuity of $\nu \mapsto R_{\nu}$ on $\Hat \Sf^{n-1}_{\pm}$, and recalling that $R_\nu\in O(n)$ and $R_\nu e_n=\nu$ (cf. \ref{e:Rnu} of the notation list). 
For every $y\in \overline{Q^{\nu}_{r(1+\delta)}(rx)}$ we have that $y-rx=y'+((y-rx)\cdot \nu_j)\nu_j$ where 
 \begin{equation*}
    y'\in R_{\nu_j}\Big(\big(-\frac{r}{2}(1+\delta),\frac{r}{2}(1+\delta)\big)^{n-1}\times \{0\}\Big),
 \end{equation*}
with, if $j$ is large enough depending only on $\delta$,
\begin{equation*}
   \textstyle  |(y-rx)\cdot \nu_j|\leq \frac{|y'\cdot \nu|}{|\nu_j \cdot \nu|}+\frac{1}{2(1-\delta)}< \frac{(n-1)c_jr (1+\delta)}{2(1-\delta)}+1=K(\delta)rc_j+1,
 \end{equation*}
 where $K(\delta):=\frac{(n-1) (1+\delta)}{2(1-\delta)}$, if in addition $|(y-rx)\cdot \nu|\leq \frac{1}{2}$.  
 Therefore, we can apply Lemma \ref{l:lemma prob di minimo superficie su cubi diversi}, with $R=K(\delta)rc_j+1$, and we get
 \begin{align*}
    \minprobs^{f^{\infty}_{\omega}}(\overline{u}_{rx,\zeta,\nu_j},Q^{\nu_j}_{r(1+\delta)}(rx))
    &\leq \minprobs^{f^{\infty}_{\omega}}(\overline{u}_{rx,\zeta,\nu},Q^{\nu}_{r}(rx)) + \eta r^{n-1} \\ & + \tilde K(2\delta (1+2\delta)^{n-2}r^{n-1}+(K(\delta)rc_j+1)|\zeta|(1+\delta)^{n-2}r^{n-2}).
\end{align*}
Dividing by $r^{n-1}$ and letting $r\to +\infty$, we obtain
\begin{align*}
   (1+\delta)^{n-1}&\underline{g}\big(\omega,\frac{x}{1+\delta},\zeta,\nu_j\big)\leq \underline{g}(\omega,x,\zeta,\nu) \\ 
   & +\eta + \tilde K(2\delta (1+2\delta)^{n-2}+K(\delta)c_j|\zeta|(1+\delta)^{n-2})\,. 
\end{align*}
Hence, we may use \eqref{e:penultima esistenza limite ghom} as  $\nu_j \in \Sf^{n-1}\cap \Q^n$ and deduce 
by taking the superior limit as $j\to +\infty$ and letting $\eta\to 0$ in the latter estimate
\begin{align*}
 (1+\delta)^{n-1}\limsup_{j\to +\infty} g\homm(\omega,\zeta,\nu_j)&=
 (1+\delta)^{n-1}\limsup_{j\to +\infty}\underline{g}\big(\omega,\frac{x}{1+\delta},\zeta,\nu_j\big)\\ 
    & \leq \underline{g}(\omega,x,\zeta,\nu)+  2\tilde K\delta (1+2\delta)^{n-2}.
\end{align*}
Therefore, by the continuity of $g\homm$ established in Step~2, letting $\delta \to 0$ we obtain
\begin{equation*}
  g\homm(\omega,\zeta,\nu)\leq \underline{g}(\omega,x,\zeta,\nu). 
\end{equation*}
Arguing analogously we have $\overline{g}(\omega,x,\zeta,\nu)\leq   g\homm(\omega,\zeta,\nu)$,
and recalling Corollary~\ref{c:comparison minimum problems} we conclude. 

\textit{Step 5:} In this step we show that if $f$ is ergodic then $g\homm$ is deterministic. 

Set $\Hat \Omega=\bigcap_{z\in \Z^n}\tau_z(\tilde \Omega)$; we clearly have that $\Hat \Omega \in \calT$, $\Hat \Omega \subseteq \tilde \Omega$ and $\tau_z(\Hat \Omega)=\Hat \Omega$ for every $z\in \Z^n$. Moreover, since $\tau_z$ is a $P$-preserving transformation and $P(\tilde \Omega)=1$, we have that $P(\Hat \Omega)=1$. We claim that 
\begin{equation}\label{e:invarianza ghom rispetto al gruppo}
    g\homm(\tau_z \omega,\zeta,\nu)\leq g\homm(\omega,\zeta,\nu)
\end{equation}
for every $\omega \in \Hat \Omega$, every $\zeta \in \R^N$ and every $\nu \in \Sf^{n-1}$. Fix $z\in \Z^n$, $\omega \in {{\Hat \Omega}}$ and $\nu \in \Sf^{n-1}$. For every $r>3|z|$, let $(u_r,v_r)\in W^{1,1}(Q^{\nu}_r,\R^N)\times W^{1,2}(Q^{\nu}_r,[0,1])$, with $(u_r,v_r)=(\overline{u}_{\zeta,\nu},1)$ on $\partial Q^{\nu}_r$ such that 
\begin{equation}\label{e:prob minimo per il caso ergodico}
    \int_{Q^{\nu}_r}\left(v_r^2f^{\infty}(\omega,y,\nabla u_r) + (1-v_r)^2+|\nabla v_r|^2 \right)\dy 
    \leq \minprobs^{f_{\omega}^{\infty}}(\overline{u}_{\zeta,\nu},Q^{\nu}_r)+1. 
\end{equation}
By the stationarity of $f$ (and hence of $f^{\infty}$) we infer that 
\begin{equation}\label{e:finfty stationarity}
    \minprobs^{f^{\infty}_{\tau_z\omega}}(\overline{u}_{\zeta,\nu},Q^{\nu}_r)=\minprobs^{f^{\infty}_{\omega}}(\overline{u}_{z,\zeta,\nu},Q^{\nu}_r(z)).
\end{equation}
Observe that $Q^{\nu}_r \subset \subset Q^{\nu}_{r+3|z|}(z)$ for every $r>3|z|$, and for every $y\in Q^{\nu}_{r+3|z|}(z)$ such that $|y\cdot \nu|\leq \frac{1}{2}$ we have that
\begin{equation*}
  1\geq   \frac{1}{2}+|\nu\cdot z|\geq |y\cdot \nu|=|(y-z)\cdot \nu+ z\cdot \nu| +|\nu \cdot z| \geq |(y-z)\cdot \nu|.
\end{equation*}
Then we can apply Lemma \ref{l:lemma prob di minimo superficie su cubi diversi}, with $R=1$, and for every $\eta>0$ we obtain
\begin{align*}
  \minprobs^{f^{\infty}_{\omega}}&(\overline{u}_{z,\zeta,\nu},Q^{\nu}_{r+3|z|}(z) ) \leq  \minprobs^{f_{\omega}^{\infty}}(\overline{u}_{\zeta,\nu},Q^{\nu}_r)+\eta r^{n-1}\\ &+\tilde K(r+3|z|)^{n-2}(3|z|+|\zeta|).
\end{align*}
Therefore, by definition of $g\homm$, $\Hat \Omega \subseteq \tilde \Omega$, and  \eqref{e:finfty stationarity} we obtain
\begin{align*}
   & g\homm(\tau_z\omega,\zeta,\nu)=\lim_{r\to +\infty}\frac{\minprobs^{f^{\infty}_{\tau_z(\omega)}}(\overline{u}_{\zeta,\nu},Q^{\nu}_{r})}{r^{n-1}}=\lim_{r\to +\infty}\frac{\minprobs^{f^{\infty}_{\omega}}(\overline{u}_{z,\zeta,\nu},Q^{\nu}_{r}(z))}{r^{n-1}} \\ 
   & =\lim_{r\to +\infty}\frac{\minprobs^{f^{\infty}_{\omega}}(\overline{u}_{z,\zeta,\nu},Q^{\nu}_{r+3|z|}(z))}{r^{n-1}}\leq \lim_{r\to +\infty}\frac{\minprobs^{f^{\infty}_{\omega}}(\overline{u}_{\zeta,\nu},Q^{\nu}_{r})}{r^{n-1}} \leq g\homm(\omega,\zeta,\nu)
\end{align*}
thus deducing the claim. 

By \eqref{e:invarianza ghom rispetto al gruppo} and the properties of $(\tau_z)_{z\in \Z^n}$, we clearly infer that 
\begin{equation*}
    g\homm(\tau_z\omega,\zeta,\nu)=g\homm(\omega,\zeta,\nu)
\end{equation*}
and hence, using the same argument as in \cite[Corollary~6.3]{CDMSZStochom}, if $(\tau_z)_{z\in \Z^n}$ is ergodic we deduce that $g\homm$ does not depend on $\omega$ and thus is deterministic. To conclude, we just observe that the representation of $g\homm(\zeta,\nu)$ as in \eqref{e:formula ghom ergodic case} 
is a direct consequence of \eqref{e:ghom achieved in stoch hom}, and the Dominated Convergence theorem (cf.  \eqref{e:crescita ghom}). 
\end{proof}

Finally, we are in aposition to prove the main result of this paper, Theorem~\ref{t:Stochastic homogenisation}.
\begin{proof}[Proof of Theorem~\ref{t:Stochastic homogenisation}]
The proof readily follwos by combining Theorem~\ref{t:Deterministic Gamma-conv}, Proposition~\ref{p:Volume part stochastic proposition}, \ref{p:cantor part stochastic}, and \ref{p:surface part stochastic}.
\end{proof}

\section*{Acknowledgements}  
F.~Colasanto wishes to thank  the excellence cluster ``Mathematics M\"unster: Dynamics--Geometry--Structure'' for the financial support, moreover he thanks the hospitality of the Institute for Applied Mathematics of the University of M\"unster where this work was initiated. 
C.~I.~Zeppieri was supported by the Deutsche Forschungsgemeinschaft (DFG, German Research Foundation) under Germany's Excellence Strategy EXC 2044 -390685587, Mathematics M\"unster: Dynamics--Geometry--Structure. 

F.~Colasanto and M.~Focardi have been supported by the European Union - Next Generation EU, Mission 4 Component 1 CUP B53D2300930006, codice 
2022J4FYNJ, PRIN2022 project ``Variational methods for stationary and evolution problems with singularities and interfaces''.
F.~Colasanto and M.~Focardi are members of GNAMPA - INdAM.

{The authors would like to thank the referee for a careful and thorough reading of the manuscript, as well as for their constructive comments and suggestions, which have helped to improve the presentation of the paper.}


\bibliography{biblio}

@article{dalmaso-modica,
	author = {Dal Maso, G. and Modica, L.},
	date-added = {2025-03-13 11:42:17 +0100},
	date-modified = {2025-03-13 11:42:25 +0100},
	fjournal = {Journal f\"ur die Reine und Angewandte Mathematik. [Crelle's Journal]},
	issn = {0075-4102,1435-5345},
	journal = {J. Reine Angew. Math.},
	mrclass = {28D05 (49A50 49B60)},
	mrnumber = {850613},
	mrreviewer = {Autorreferat},
	pages = {28--42},
	title = {Nonlinear stochastic homogenization and ergodic theory},
	volume = {368},
	year = {1986}}

@article{AbddaimiMichailleLicht1997,
  author  = {Abddaimi, Y. and Michaille, G. and Licht, C.},
  title   = {Stochastic homogenization for an integral functional of a quasiconvex function with linear growth},
  journal = {Asymptotic Analysis},
  volume  = {15},
  number  = {2},
  pages   = {183--202},
  year    = {1997}
}

@article{Pham2010a,
author = {Pham, Kim and Marigo, Jean-Jacques},
doi = {10.1016/j.crme.2010.03.009},
file = {:home/roberto/Documents/Articles/2010/Pham, Marigo_2010.pdf:pdf},
journal = {Comptes Rendus Mecanique},
number = {4},
pages = {191--198},
title = {{The variational approach to damage: I. The foundations}},
volume = {338},
year = {2010}
}

@article{Pham2010c,
author =  {Pham, Kim and Marigo, Jean-Jacques},
doi = {10.1016/j.crme.2010.03.012},
issn = {1631-0721},
journal = {Comptes Rendus Mecanique},
keywords = {Damage,Stability,Variational methods},
month = {apr},
number = {4},
pages = {199--206},
publisher = {ELSEVIER FRANCE-EDITIONS SCIENTIFIQUES MEDICALES ELSEVIER},
title = {{The variational approach to damage: II. The gradient damage models}},
type = {Article},
volume = {338},
year = {2010}
}

@article {Focardi2001,
    AUTHOR = {Focardi, Matteo},
     TITLE = {On the variational approximation of free-discontinuity
              problems in the vectorial case},
   JOURNAL = {Math. Models Methods Appl. Sci.},
  FJOURNAL = {Mathematical Models and Methods in Applied Sciences},
    VOLUME = {11},
      YEAR = {2001},
    NUMBER = {4},
     PAGES = {663--684},
      ISSN = {0218-2025},
   MRCLASS = {49J45 (35B27 35J20 35J60 74G99)},
  MRNUMBER = {1832998},
MRREVIEWER = {Shuzi Zhou},
       DOI = {10.1142/S0218202501001045},
       URL = {https://doi.org/10.1142/S0218202501001045},
}

@article {CicaleseFocardiZeppieri,
    AUTHOR = {Cicalese, Marco and Focardi, Matteo and Zeppieri, Caterina
              Ida},
     TITLE = {Phase-field approximation of functionals defined on
              piecewise-rigid maps},
   JOURNAL = {J. Nonlinear Sci.},
  FJOURNAL = {Journal of Nonlinear Science},
    VOLUME = {31},
      YEAR = {2021},
    NUMBER = {5},
     PAGES = {Paper No. 78, 25},
      ISSN = {0938-8974,1432-1467},
   MRCLASS = {49J45 (49Q20 74B20 74G65)},
  MRNUMBER = {4292777},
MRREVIEWER = {Serena\ Guarino Lo Bianco},
       DOI = {10.1007/s00332-021-09733-1},
       URL = {https://doi.org/10.1007/s00332-021-09733-1},
}

@article {BachCicaleseRuf,
    AUTHOR = {Bach, Annika and Cicalese, Marco and Ruf, Matthias},
     TITLE = {Random finite-difference discretizations of the
              {A}mbrosio-{T}ortorelli functional with optimal mesh-size},
   JOURNAL = {SIAM J. Math. Anal.},
  FJOURNAL = {SIAM Journal on Mathematical Analysis},
    VOLUME = {53},
      YEAR = {2021},
    NUMBER = {2},
     PAGES = {2275--2318},
      ISSN = {0036-1410,1095-7154},
   MRCLASS = {49M25 (49J45 49J55 68U10)},
  MRNUMBER = {4244537},
MRREVIEWER = {Luca\ Lussardi},
       DOI = {10.1137/20M1312927},
       URL = {https://doi.org/10.1137/20M1312927},
}

@book {BrDeF,
    AUTHOR = {Braides, Andrea and Defranceschi, Anneliese},
     TITLE = {Homogenization of multiple integrals},
    SERIES = {Oxford Lecture Series in Mathematics and its Applications},
    VOLUME = {12},
 PUBLISHER = {The Clarendon Press, Oxford University Press, New York},
      YEAR = {1998},
     PAGES = {xiv+298},
      ISBN = {0-19-850246-X},
   MRCLASS = {49J45 (35B27 74Q05)},
  MRNUMBER = {1684713},
MRREVIEWER = {Vasily\ V.\ Zhikov},
}

@article {CDMSZGammaconv,
    AUTHOR = {Cagnetti, Filippo and Dal Maso, Gianni and Scardia, Lucia and
              Zeppieri, Caterina Ida},
     TITLE = {{$\Gamma$}-convergence of free-discontinuity problems},
   JOURNAL = {Ann. Inst. H. Poincar\'e{} C Anal. Non Lin\'eaire},
  FJOURNAL = {Annales de l'Institut Henri Poincar\'e{} C. Analyse Non
              Lin\'eaire},
    VOLUME = {36},
      YEAR = {2019},
    NUMBER = {4},
     PAGES = {1035--1079},
      ISSN = {0294-1449,1873-1430},
   MRCLASS = {49J45 (35B27 49Q20 74Q05)},
  MRNUMBER = {3955110},
MRREVIEWER = {Koji\ Kikuchi},
       DOI = {10.1016/j.anihpc.2018.11.003},
       URL = {https://doi.org/10.1016/j.anihpc.2018.11.003},
}

@article {CDMSZStochom,
    AUTHOR = {Cagnetti, Filippo and Dal Maso, Gianni and Scardia, Lucia and
              Zeppieri, Caterina Ida},
     TITLE = {Stochastic homogenisation of free-discontinuity problems},
   JOURNAL = {Arch. Ration. Mech. Anal.},
  FJOURNAL = {Archive for Rational Mechanics and Analysis},
    VOLUME = {233},
      YEAR = {2019},
    NUMBER = {2},
     PAGES = {935--974},
      ISSN = {0003-9527,1432-0673},
   MRCLASS = {49J45 (28D15 35B27 60G10)},
  MRNUMBER = {3951697},
MRREVIEWER = {Luca\ Lussardi},
       DOI = {10.1007/s00205-019-01372-x},
       URL = {https://doi.org/10.1007/s00205-019-01372-x},
}

@article {CDMSZGlobal,
    AUTHOR = {Cagnetti, Filippo and Dal Maso, Gianni and Scardia, Lucia and
              Zeppieri, Caterina Ida},
     TITLE = {A global method for deterministic and stochastic
              homogenisation in {$BV$}},
   JOURNAL = {Ann. PDE},
  FJOURNAL = {Annals of PDE. Journal Dedicated to the Analysis of Problems
              from Physical Sciences},
    VOLUME = {8},
      YEAR = {2022},
    NUMBER = {1},
     PAGES = {Paper No. 8, 89},
      ISSN = {2524-5317,2199-2576},
   MRCLASS = {49J45 (49Q20 60H30 74Q05)},
  MRNUMBER = {4406891},
       DOI = {10.1007/s40818-022-00119-4},
       URL = {https://doi.org/10.1007/s40818-022-00119-4},
}

@book {AFP,
    AUTHOR = {Ambrosio, Luigi and Fusco, Nicola and Pallara, Diego},
     TITLE = {Functions of bounded variation and free discontinuity
              problems},
    SERIES = {Oxford Mathematical Monographs},
 PUBLISHER = {The Clarendon Press, Oxford University Press, New York},
      YEAR = {2000},
     PAGES = {xviii+434},
      ISBN = {0-19-850245-1},
   MRCLASS = {49-02 (49J45 49K10 49Qxx)},
  MRNUMBER = {1857292},
MRREVIEWER = {J. E. Brothers},
}

@article {AlFoc,
    AUTHOR = {Alicandro, Roberto and Focardi, Matteo},
     TITLE = {Variational approximation of free-discontinuity energies with
              linear growth},
   JOURNAL = {Commun. Contemp. Math.},
  FJOURNAL = {Communications in Contemporary Mathematics},
    VOLUME = {4},
      YEAR = {2002},
    NUMBER = {4},
     PAGES = {685--723},
      ISSN = {0219-1997},
   MRCLASS = {49J45},
  MRNUMBER = {1938490},
MRREVIEWER = {U. D'Ambrosio},
       DOI = {10.1142/S0219199702000816},
       URL = {https://doi.org/10.1142/S0219199702000816},
}

@article {ContiFocardiIurlano2024,
    AUTHOR = {Conti, Sergio and Focardi, Matteo and Iurlano, Flaviana},
     TITLE = {Phase-field approximation of a vectorial, geometrically
              nonlinear cohesive fracture energy},
   JOURNAL = {Arch. Ration. Mech. Anal.},
  FJOURNAL = {Archive for Rational Mechanics and Analysis},
    VOLUME = {248},
      YEAR = {2024},
    NUMBER = {2},
     PAGES = {Paper No. 21, 60},
      ISSN = {0003-9527,1432-0673},
   MRCLASS = {74N20 (49J10 49J45 74R99)},
  MRNUMBER = {4718579},
MRREVIEWER = {Ken\ Shirakawa},
       DOI = {10.1007/s00205-024-01962-4},
       URL = {https://doi.org/10.1007/s00205-024-01962-4},
}

@book {Dalmaso1993,
    AUTHOR = {Dal Maso, Gianni},
     TITLE = {An introduction to {$\Gamma$}-convergence},
    SERIES = {Progress in Nonlinear Differential Equations and their
              Applications, 8},
 PUBLISHER = {Birkh\"auser Boston Inc.},
   ADDRESS = {Boston, MA},
      YEAR = {1993}
 }

@article {BouchitteFonsecaMascarenhas,
    AUTHOR = {Bouchitt\'{e}, Guy and Fonseca, Irene and Mascarenhas, Maria Luisa},
     TITLE = {A global method for relaxation},
   JOURNAL = {Arch. Rational Mech. Anal.},
  FJOURNAL = {Archive for Rational Mechanics and Analysis},
    VOLUME = {145},
      YEAR = {1998},
    NUMBER = {1},
     PAGES = {51--98},
      ISSN = {0003-9527},
   MRCLASS = {49J45 (73B27 73V25)},
  MRNUMBER = {1656477},
MRREVIEWER = {Giovanni Alberti},
       DOI = {10.1007/s002050050124},
       URL = {https://doi.org/10.1007/s002050050124},
}

@article {rankonealb,
    AUTHOR = {Alberti, Giovanni},
     TITLE = {Rank one property for derivatives of functions with bounded
              variation},
   JOURNAL = {Proc. Roy. Soc. Edinburgh Sect. A},
  FJOURNAL = {Proceedings of the Royal Society of Edinburgh. Section A.
              Mathematics},
    VOLUME = {123},
      YEAR = {1993},
    NUMBER = {2},
     PAGES = {239--274},
      ISSN = {0308-2105,1473-7124},
   MRCLASS = {49Q20 (28A75)},
  MRNUMBER = {1215412},
MRREVIEWER = {W.\ P.\ Ziemer},
       DOI = {10.1017/S030821050002566X},
       URL = {https://doi.org/10.1017/S030821050002566X},
}

@article {Larsen,
    AUTHOR = {Larsen, Christopher},
     TITLE = {Quasiconvexification in {$W^{1,1}$} and optimal jump
              microstructure in {BV} relaxation},
   JOURNAL = {SIAM J. Math. Anal.},
  FJOURNAL = {SIAM Journal on Mathematical Analysis},
    VOLUME = {29},
      YEAR = {1998},
    NUMBER = {4},
     PAGES = {823--848},
      ISSN = {0036-1410,1095-7154},
   MRCLASS = {49J45 (49N60 73C50 73V25)},
  MRNUMBER = {1617734},
MRREVIEWER = {Pablo\ Pedregal},
       DOI = {10.1137/S0036141095295991},
       URL = {https://doi.org/10.1137/S0036141095295991},
}

@article {AlicBrShah,
    AUTHOR = {Alicandro, Roberto and Braides, Andrea and Shah, Jayant},
     TITLE = {Free-discontinuity problems via functionals involving the
              {$L^1$}-norm of the gradient and their approximations},
   JOURNAL = {Interfaces Free Bound.},
  FJOURNAL = {Interfaces and Free Boundaries. Modelling, Analysis and
              Computation},
    VOLUME = {1},
      YEAR = {1999},
    NUMBER = {1},
     PAGES = {17--37},
      ISSN = {1463-9963},
   MRCLASS = {49J45 (35B25 35J60 49J10 65J05)},
  MRNUMBER = {1865104 (2002j:49020)},
MRREVIEWER = {Gunther H. Peichl},
       DOI = {10.4171/IFB/2},
       URL = {http://dx.doi.org/10.4171/IFB/2},
}

@article {Licht,
    AUTHOR = {Licht, Christian and Michaille, G\'erard},
     TITLE = {Global-local subadditive ergodic theorems and application to
              homogenization in elasticity},
   JOURNAL = {Ann. Math. Blaise Pascal},
  FJOURNAL = {Annales Math\'ematiques Blaise Pascal},
    VOLUME = {9},
      YEAR = {2002},
    NUMBER = {1},
     PAGES = {21--62},
      ISSN = {1259-1734,2118-7436},
   MRCLASS = {28D15 (35B27 47A35 49J45 74B20 74Q05)},
  MRNUMBER = {1914260},
MRREVIEWER = {Bing\ Hong\ Wang},
       URL = {http://www.numdam.org/item?id=AMBP_2002__9_1_21_0},
}

@article {Ackoglu,
    AUTHOR = {Akcoglu, Mustafa and Krengel, Ulrich},
     TITLE = {Ergodic theorems for superadditive processes},
   JOURNAL = {J. Reine Angew. Math.},
  FJOURNAL = {Journal f\"ur die Reine und Angewandte Mathematik. [Crelle's
              Journal]},
    VOLUME = {323},
      YEAR = {1981},
     PAGES = {53--67},
      ISSN = {0075-4102,1435-5345},
   MRCLASS = {28D05 (60G99)},
  MRNUMBER = {611442},
MRREVIEWER = {Yves\ Derriennic},
       DOI = {10.1515/crll.1981.323.53},
       URL = {https://doi.org/10.1515/crll.1981.323.53},
}

@article {RufZepp,
    AUTHOR = {Ruf, Matthias and Zeppieri, Caterina Ida},
     TITLE = {Stochastic homogenization of degenerate integral functionals
              with linear growth},
   JOURNAL = {Calc. Var. Partial Differential Equations},
  FJOURNAL = {Calculus of Variations and Partial Differential Equations},
    VOLUME = {62},
      YEAR = {2023},
    NUMBER = {4},
     PAGES = {Paper No. 138, 36},
      ISSN = {0944-2669,1432-0835},
   MRCLASS = {49J55 (26B30 49J45 60G10)},
  MRNUMBER = {4578456},
MRREVIEWER = {Tuan\ Anh\ Nguyen},
       DOI = {10.1007/s00526-023-02476-9},
       URL = {https://doi.org/10.1007/s00526-023-02476-9},
}

@article {RufRuf,
    AUTHOR = {Ruf, Matthias and Ruf, Thomas},
     TITLE = {Stochastic homogenization of degenerate integral functionals
              and their {E}uler-{L}agrange equations},
   JOURNAL = {J. \'Ec. polytech. Math.},
  FJOURNAL = {Journal de l'\'Ecole polytechnique. Math\'ematiques},
    VOLUME = {10},
      YEAR = {2023},
     PAGES = {253--303},
      ISSN = {2429-7100,2270-518X},
   MRCLASS = {49J45 (49J55 60G10)},
  MRNUMBER = {4542814},
       DOI = {10.5802/jep.218},
       URL = {https://doi.org/10.5802/jep.218},
}

@article{AT90,
  title={Approximation of functional depending on jumps by elliptic functional via {$\Gamma$}-convergence},
  author={Ambrosio, Luigi and Tortorelli, Vincenzo Maria},
  journal={Communications on Pure and Applied Mathematics},
  volume={43},
  number={8},
  pages={999--1036},
  year={1990},
  publisher={Wiley Online Library}
}

@article {AT,
    AUTHOR = {Ambrosio, Luigi and Tortorelli, Vincenzo Maria},
     TITLE = {On the approximation of free discontinuity problems},
   JOURNAL = {Boll. Un. Mat. Ital. B (7)},
  FJOURNAL = {Unione Matematica Italiana. Bollettino. B. Serie VII},
    VOLUME = {6},
      YEAR = {1992},
    NUMBER = {1},
     PAGES = {105--123},
     CODEN = {BLUMAM},
   MRCLASS = {49J27 (65K10 78A99)},
  MRNUMBER = {1164940 (93e:49010)},
MRREVIEWER = {Ll. G. Chambers},
}

@Article {ContiFocardiIurlano2016,
   author = {Conti, Sergio and Focardi, Matteo and Iurlano, Flaviana},
   title = {Phase field approximation of cohesive fracture models},
      journal = {Annales de l'Institut Henri Poincar{\'e} / Analyse non lin{\'e}aire},
   year = {2016},
   volume=33,
   pages={1033-1067},
    DOI = {10.1016/j.anihpc.2015.02.001},
     bnote = {doi: 10.1016/j.anihpc.2015.02.001},
   web_url2 = {http://arxiv.org/abs/1405.6883},
}

@article{ContiFocardiIurlano2025,
   AUTHOR = {Conti, Sergio and Focardi, Matteo and Iurlano, Flaviana},
     TITLE = {Superlinear free-discontinuity models: relaxation and phase field approximation},
   JOURNAL = {preprint arXiv:2505.00852},
  FJOURNAL = {},
    VOLUME = {},
      YEAR = {2025},
    NUMBER = {},
     PAGES = {},
      ISSN = {},
   MRCLASS = {},
  MRNUMBER = {},
       DOI = {},
       URL = {},
}

@Article {Colasanto2024,
   author = {Colasanto, Francesco},
   title = {Phase field approximation of cohesive functionals in the vectorial case},
      journal = {in preparation},
   year = {2025},
}

@article {Bourdin2000b,
    AUTHOR = {Bourdin, Blaise and Francfort, Gilles Andre and Marigo, Jean-Jacques},
     TITLE = {Numerical experiments in revisited brittle fracture},
   JOURNAL = {J. Mech. Phys. Solids},
  FJOURNAL = {Journal of the Mechanics and Physics of Solids},
    VOLUME = {48},
      YEAR = {2000},
    NUMBER = {4},
     PAGES = {797--826},
      ISSN = {0022-5096},
   MRCLASS = {74R10 (74G15 74G65 74S05)},
  MRNUMBER = {1745759},
       DOI = {10.1016/S0022-5096(99)00028-9},
       URL = {https://doi.org/10.1016/S0022-5096(99)00028-9},
}

@article{Bourdin2008,
author = {Bourdin, Blaise and Francfort, Gilles Andre and Marigo, J.-J.},
doi = {10.1007/s10659-007-9107-3},
file = {:home/roberto/Documents/Articles/2008/Bourdin, Francfort, Marigo_2008.pdf:pdf},
issn = {0374-3535},
journal = {Journal of Elasticity},
month = {mar},
number = {1},
pages = {5--148},
title = {{The Variational Approach to Fracture}},
url = {http://www.springerlink.com/content/31370771n83r2637},
volume = {91},
year = {2008}
}

@article{Marigo1998,
author = {Francfort, Gilles Andre and Marigo, Jean-Jacques},
doi = {10.1016/S0022-5096(98)00034-9},
file = {:home/roberto/Documents/Articles/1998/Francfort, Marigo_1998.pdf:pdf},
issn = {00225096},
journal = {J. Mech. Phys. Solids},
keywords = {Energy methods,Energy release rate,Fracture,Fracture toughness,Variational calculus},
number = {8},
pages = {1319--1342},
title = {{Revisiting brittle fracture as an energy minimization problem}},
url = {http://www.sciencedirect.com/science/article/B6TXB-3XDH3V4-2/2/6078bc5b38861a78a6b1044a60e0a4bb},
volume = {46},
year = {1998}
}

@article{Shah95,
author = {Shah, Jayant},
title = {Shape recovery from noisy images by curve evolution},
journal = {IASTED International Conference on Signal and Image Processing (SIP-95)},
year = {1995}, 
volume = {1},
pages = {461--464},
}

@book{Shah96-d,
author = {Shah, Jayant},
title = {Uses of elliptic approximation in Computer Vision},
booktitle = {Variational methods for discontinuous structures ({C}omo 1994)},
series = {Progr. Nonlinear Differential Equations Appl.},
volume = {25},
pages = {19--34},
publisher = {Birkh\"{a}user, Basel},
year = {1996}, 
}

@article{Shah96-a,
author = {Shah, Jayant},
title = {Curve evolution and segmentation functionals: Application to color images},
journal = {IEEE International Conference on Image Processing},
year = {1996}, 
volume = {1},
pages = {461--464},
}

@article{Shah96-b,
author = {Shah, Jayant},
title = {Common framework for curve evolution, segmentation and anisotropic diffusion},
journal = {Proceedings of the IEEE Computer Society Conference on Computer Vision and Pattern Recognition},
year = {1996}, 
pages = {136--142},
}

@article{Shah96-c,
author = {Shah, Jayant},
title = {Riemannian drums, anisotropic curve evolution, and segmentation},
journal = {Journal of Visual Communication and Image Representation},
year = {2000}, 
pages = {142--153},
}

@Article {ACF-I,
   author = {Alessi, Roberto and Colasanto, Francesco and Focardi, Matteo},
   title = {Phase-field modelling of cohesive fracture. {P}art {I}: {$\Gamma$}-convergence results},
      journal = {preprint},
   year = {2025},
}

@Article {ACF-II,
   author = {Alessi, Roberto and Colasanto, Francesco and Focardi, Matteo},
   title = {Phase-field modelling of cohesive fracture. {P}art {II}: Reconstruction of the cohesive law},
      journal = {preprint},
   year = {2025},
}

@Article {ACF-III,
   author = {Alessi, Roberto and Colasanto, Francesco and Focardi, Matteo},
   title = {Phase-field modelling of cohesive fracture. {P}art {III}: From mathematical results to engineering applications},
      journal = {preprint},
   year = {2025},
}

@article {MumfordShah,
    AUTHOR = {Mumford, David and Shah, Jayant},
     TITLE = {Optimal approximations by piecewise smooth functions and
              associated variational problems},
   JOURNAL = {Comm. Pure Appl. Math.},
  FJOURNAL = {Communications on Pure and Applied Mathematics},
    VOLUME = {42},
      YEAR = {1989},
    NUMBER = {5},
     PAGES = {577--685},
      ISSN = {0010-3640},
   MRCLASS = {49F20 (41A30 92A27)},
  MRNUMBER = {997568},
MRREVIEWER = {Constantin Udri\c{s}te},
       DOI = {10.1002/cpa.3160420503},
       URL = {https://doi.org/10.1002/cpa.3160420503},
}

@article {BEMZ,
    AUTHOR = {Bach, Annika and Esposito, Teresa and Marziani, Roberta and
              Zeppieri, Caterina Ida},
     TITLE = {Gradient damage models for heterogeneous materials},
   JOURNAL = {SIAM J. Math. Anal.},
  FJOURNAL = {SIAM Journal on Mathematical Analysis},
    VOLUME = {55},
      YEAR = {2023},
    NUMBER = {4},
     PAGES = {3567--3601},
      ISSN = {0036-1410,1095-7154},
   MRCLASS = {74Q05 (49J45 49Q20)},
  MRNUMBER = {4629851},
MRREVIEWER = {Serge\ Dumont},
       DOI = {10.1137/22M1499145},
       URL = {https://doi.org/10.1137/22M1499145},
}

@article {BMZ,
    AUTHOR = {Bach, Annika and Marziani, Roberta and Zeppieri, Caterina Ida},
     TITLE = {{$\Gamma $}-convergence and stochastic homogenisation of
              singularly-perturbed elliptic functionals},
   JOURNAL = {Calc. Var. Partial Differential Equations},
  FJOURNAL = {Calculus of Variations and Partial Differential Equations},
    VOLUME = {62},
      YEAR = {2023},
    NUMBER = {7},
     PAGES = {Paper No. 199, 54},
      ISSN = {0944-2669,1432-0835},
   MRCLASS = {49J45 (49Q20 74Q05)},
  MRNUMBER = {4616640},
MRREVIEWER = {Erik\ J.\ Balder},
       DOI = {10.1007/s00526-023-02540-4},
       URL = {https://doi.org/10.1007/s00526-023-02540-4},
}

@incollection {BEMZ-1,
    AUTHOR = {Bach, Annika and Esposito, Teresa and Marziani, Roberta and
              Zeppieri, Caterina Ida},
     TITLE = {Interaction between oscillations and singular perturbations in
              a one-dimensional phase-field model},
 BOOKTITLE = {Research in mathematics of materials science},
    SERIES = {Assoc. Women Math. Ser.},
    VOLUME = {31},
     PAGES = {3--31},
 PUBLISHER = {Springer, Cham},
      YEAR = {[2022] \copyright 2022},
      ISBN = {978-3-031-04495-3; 978-3-031-04496-0},
   MRCLASS = {49J45 (49J05)},
  MRNUMBER = {4559569},
       DOI = {10.1007/978-3-031-04496-0\_1},
       URL = {https://doi.org/10.1007/978-3-031-04496-0_1},
}

@article {BBZ,
    AUTHOR = {Bach, Annika and Braides, Andrea and Zeppieri, Caterina Ida},
     TITLE = {Quantitative analysis of finite-difference approximations of
              free-discontinuity problems},
   JOURNAL = {Interfaces Free Bound.},
  FJOURNAL = {Interfaces and Free Boundaries. Mathematical Analysis,
              Computation and Applications},
    VOLUME = {22},
      YEAR = {2020},
    NUMBER = {3},
     PAGES = {317--381},
      ISSN = {1463-9963},
   MRCLASS = {49M25 (49J45 65N06)},
  MRNUMBER = {4146334},
       DOI = {10.4171/ifb/443},
       URL = {https://doi.org/10.4171/ifb/443},
}

@article {BEZ,
    AUTHOR = {Burger, Martin and Esposito, Teresa and Zeppieri, Caterina Ida},
     TITLE = {Second-order edge-penalization in the {A}mbrosio-{T}ortorelli
              functional},
   JOURNAL = {Multiscale Model. Simul.},
  FJOURNAL = {Multiscale Modeling \& Simulation. A SIAM Interdisciplinary
              Journal},
    VOLUME = {13},
      YEAR = {2015},
    NUMBER = {4},
     PAGES = {1354--1389},
      ISSN = {1540-3459},
   MRCLASS = {49J45 (35B25 35J20 49M99 68U10)},
  MRNUMBER = {3429728},
MRREVIEWER = {Matteo Negri},
       DOI = {10.1137/15M1020848},
       URL = {https://doi.org/10.1137/15M1020848},
}

@article{Wu2017,
author = {Wu, Jian-Ying},
doi = {10.1016/j.jmps.2017.03.015},
issn = {0022-5096},
journal = {J. Mech. Phys. Solids},
number = {Supplement C},
pages = {72--99},
publisher = {Elsevier Ltd},
title = {{A unified phase-field theory for the mechanics of damage and quasi-brittle failure}},
url = {http://dx.doi.org/10.1016/j.jmps.2017.03.015 http://www.sciencedirect.com/science/article/pii/S0022509616308341},
volume = {103},
year = {2017}
}

@article{Wu2018b,
author = {Wu, Jian-Ying and Nguyen, Vinh Phu},
doi = {10.1016/j.jmps.2018.06.006},
issn = {00225096},
journal = {Journal of the Mechanics and Physics of Solids},
pages = {20--42},
publisher = {Elsevier Ltd},
title = {{A length scale insensitive phase-field damage model for brittle fracture}},
url = {https://doi.org/10.1016/j.jmps.2018.06.006},
volume = {119},
year = {2018}
}

@article{Feng2021,
author = {Feng, Ye and Fan, Jiadi and Li, Jie},
doi = {10.1016/j.jmps.2021.104464},
issn = {00225096},
journal = {Journal of the Mechanics and Physics of Solids},
number = {March},
pages = {104464},
publisher = {Elsevier Ltd},
title = {{Endowing explicit cohesive laws to the phase-field fracture theory}},
url = {https://doi.org/10.1016/j.jmps.2021.104464},
volume = {152},
year = {2021}
}

@article{Lammen2025,
author = {Lammen, H. and Conti, S. and Mosler, J.},
doi = {10.1016/j.jmps.2025.106038},
issn = {00225096},
journal = {Journal of the Mechanics and Physics of Solids},
month = {apr},
pages = {106038},
title = {{Approximating arbitrary traction–separation-laws by means of phase-field theory — Mathematical foundation and numerical implementation}},
url = {https://linkinghub.elsevier.com/retrieve/pii/S0022509625000146},
volume = {197},
year = {2025}
}
\bibliographystyle{plain}

\end{document}